\newtheorem{theorem}{Theorem}[section]
\newtheorem{cor}[theorem]{Corollary}
\newtheorem{lema}[theorem]{Lemma}
\newtheorem{prop}[theorem]{Proposition}
\newtheorem{example}[theorem]{Example}
\newtheorem{obs}[theorem]{Remark}
\newtheorem{defini}[theorem]{Definition}
\newtheorem{defprop}[theorem]{Proposition-Definition}
\newtheorem{conj}{Conjecture}
\newtheorem*{th*}{Question}
\newtheorem{thmx}{Theorem}
\theoremstyle{plain} % just in case the style had changed
\newcommand{\thistheoremname}{}
\newtheorem*{genericthm*}{\thistheoremname}
\newenvironment{namedthm*}[1]
  {\renewcommand{\thistheoremname}{#1}%
   \begin{genericthm*}}
  {\end{genericthm*}}
\numberwithin{equation}{section}
\newcommand{\C}{\mathbb{C}}
\newcommand{\N}{\mathbb{N}}
\newcommand{\R}{\mathbb{R}}
\newcommand{\A}{\mathbb{A}}   
\newcommand{\D}{\mathbb{D}}
\newcommand{\Int}{\mathrm{Int}}
\newcommand{\an}{\mathrm{an}}
\newcommand{\red}{\mathrm{red}}
\newcommand{\T}{\mathcal{T}}
\newcommand{\ev}{\mathrm{Ev}}
\newcommand{\mor}{\mathrm{Mor}}
\newcommand{\M}{\mathrm{M}}
\newcommand{\al}{\alpha}
\begin{document}

\title[Non-Archimedean normal families] % running head version
{Non-Archimedean normal families}

\author{Rita Rodríguez Vázquez}
\address[Rita Rodríguez Vázquez]{CMLS, École polytechnique, CNRS, Université Paris-Saclay, 91128 Palaiseau Cedex, France
}
%% Note the doubled @@:
\email{rita.rodriguez-vazquez@polytechnique.edu}

\date{December 22, 2017}

\begin{abstract}
We present several results on the compactness of the space of morphisms between  analytic spaces in the sense of Berkovich. 
We show that under certain conditions on the source, every sequence of analytic maps having an affinoid target has a subsequence that converges pointwise to a continuous map.
We also study the class of continuous maps that arise in this way.
 Locally, they turn analytic after a certain base change.
 Our results naturally lead to a definition of normal families.
We give some applications  to the dynamics of an endomorphism $f$ of the projective space. 
 We introduce two notions of Fatou set and generalize to the non-Archimedan  setting a theorem of Ueda stating that  
 every Fatou component is hyperbolically imbedded in the projective space.
\end{abstract}

\maketitle

\addtocontents{toc}{\protect\setcounter{tocdepth}{1}}
\tableofcontents

\section{Introduction}

The classical Montel's theorem asserts that any family of holomorphic functions on a domain in $\C^n$ with values in a ball
is relatively compact for the topology of the local uniform convergence \cite{Montel}.
The proof uses Cauchy's estimates to obtain a uniform bound on the derivatives. By Ascoli-Arzelà's theorem  the family then is equicontinuous and the result follows.

 This result has several applications in complex dynamics.
It also plays an important role in the study of Kobayashi hyperbolic complex analytic spaces.
For instance, it is closely related to Zalcman's reparametrization lemma \cite{Zalcman}, which is a key ingredient  in the proof of Brody's Lemma \cite{Brody}, characterizing compact  Kobayashi hyperbolic complex analytic spaces in terms of the non-existence of entire curves.
\medskip

The aim of this paper is to study 
the compactness properties of the space of morphisms between analytic spaces defined over a non-Archimedean complete field, in analogy to the classical Montel's Theorem.
We therefore fix a non-Archimedean complete valued field  $k$ that is nontrivially valued.

\medskip

An approach to this problem using equicontinuity has already been treated in the literature.
Hsia gave in  \cite{Hs} an equicontinuity criterion  for families of meromorphic functions on  a disk.
In \cite{SilverKawagreen}, the  Fatou set  of a morphism of the projective space is defined as the equicontinuity locus of the family of iterates with respect to the chordal metric.
 However, this approach is limited by the fact that one cannot apply Ascoli-Arzelà's theorem in this context. 

\medskip

We will work on  analytic spaces as defined in  \cite{Berk,Berk2}.
 The main reason is that analytic spaces in the sense of Berkovich have good topological properties: they are locally compact and locally pathwise connected, what makes them a more adapted framework to arguments of analytic nature. 
The analytic spaces we shall be mostly interested in  are Berkovich analytifications  of  projective varieties. Recall that the set of closed points of such a variety forms a dense subset  of its analytification with empty interior if $k$ is not trivially valued.
We shall refer to these points  as \emph{rigid points}. 
The previously mentioned equicontinuity results concern only the set of rigid points.

\medskip

More recently, Favre, Kiwi and Trucco proved  an analogue of Montel's theorem  on the Berkovich analytic projective line $\mathbb{P}^{1, \an}_k$, see \cite{FKT}.
They show that when $k$ is algebraically closed and has residue  characteristic 0, then every sequence of analytic maps from \emph{any} open connected subset $X$ of $\mathbb{P}^{1, \an}_k$  avoiding three points  has a subsequence that is pointwise converging to a continuous map $X \to \mathbb{P}^{1, \an}_k$. 
They made extensive use of Berkovich's geometry 
and their strategy benefits from the tree structure of $\mathbb{P}^{1, \an}_k$.
 
\medskip

We explore the higher dimensional case, and consequently use deeper facts from Berkovich theory.
Of particular relevance for us is the theorem by Poineau stating that   compact analytic spaces are sequentially compact, see \cite{Poineau}.
This result is non-trivial, since Berkovich spaces are not metrizable in general.
 We show:

\begin{thmx}\label{thm a}
Let $k$ be a non-Archimedean complete field that is nontrivially valued.
Let $X$ be a good, reduced, $\sigma$-compact, boundaryless $k$-analytic space.  Let  $Y$ be a strictly  $k$-affinoid space. 

Then, every sequence of analytic maps $f_n: X \to Y$ admits a pointwise converging subsequence whose limit is continuous.
\end{thmx}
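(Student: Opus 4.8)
The plan is to reduce, in two steps, to a statement about bounded families of analytic functions on a compact piece of $X$, and then to feed this into Poineau's sequential compactness theorem \cite{Poineau}. \emph{Reduction of the target.} Since $Y$ is strictly $k$-affinoid, a presentation $\mathcal O_{Y}(Y)=k\langle T_{1},\dots,T_{N}\rangle/I$ provides a closed immersion $Y\hookrightarrow\D^{N}$, where $\D=\M(k\langle T\rangle)$ is the closed unit disc; as $Y$ is compact, its image is closed in $\D^{N}$. Hence it suffices to treat $Y=\D^{N}$, and then — composing with the $N$ coordinate projections $\D^{N}\to\D$ and extracting a subsequence once for each coordinate — it suffices to treat $Y=\D$. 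So the goal becomes: every sequence of analytic maps $g_{n}\colon X\to\D$ (equivalently, every sequence of analytic functions on $X$ with values in $\D$) has a pointwise convergent subsequence with continuous limit; the limit of the original $f_{n}$ then takes values in the closed subset $Y\subseteq\D^{N}$, hence is a continuous map $X\to Y$.

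\emph{Reduction of the source.} Since $X$ is $\sigma$-compact and locally compact, I write $X=\bigcup_{m\ge 1}U_{m}$ with $U_{m}$ open, $\overline{U_{m}}$ compact and $\overline{U_{m}}\subseteq U_{m+1}$; since $X$ is good, I cover each $\overline{U_{m}}$ by finitely many affinoid domains of $X$. If, on each such affinoid domain, every sequence has a subsequence converging pointwise to a continuous map, then diagonalizing over the finitely many domains covering $\overline{U_{m}}$, and then over $m$, produces a single subsequence converging pointwise on all of $X$, whose limit is continuous since continuity is local. Thus one is reduced to a single affinoid domain $V$ of $X$; it is essential to remember that $V$ sits inside the \emph{boundaryless} (and reduced) space $X$, because this is exactly what prevents the restricted family $\{g_{n}|_{V}\}$ from degenerating near $\partial V$. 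Already the functions $T^{n}$ illustrate the point: on the open unit disc they converge pointwise to the constant $0$, but on the closed unit disc $\D$ they all fix the Gauss point, so no subsequence of them converges pointwise to a map continuous there — and the closed unit disc, unlike the open one, is not boundaryless.

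\emph{The core.} On an affinoid domain $V$ lying inside the boundaryless space $X$, what remains is a sequential compactness statement, and here Poineau's theorem is indispensable: neither $X$ nor $\D$ is metrizable and $k$ need not be locally compact, so neither Ascoli-Arzelà nor a naive extraction of convergent subsequences of Taylor coefficients is available. My plan is to prove a uniform local estimate for bounded analytic maps into $\D$ — exploiting the non-Archimedean growth of such maps together with the boundaryless hypothesis near $V$ — which reduces pointwise convergence on $V$ to pointwise convergence along a suitable \emph{countable} subset $S\subseteq V$, and at the same time forces the limit to be continuous. Granting this, one applies Poineau's sequential compactness of the compact analytic space $\D$ once for each point of $S$ and diagonalizes over the countable set $S$, obtaining a subsequence with $g_{n_{j}}(s)$ convergent in $\D$ for every $s\in S$, hence converging pointwise on all of $V$ to a continuous map.

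I expect this last step to be the main obstacle: finding the right uniform control of bounded analytic maps into $\D$ — fine enough that countably many evaluations determine the limit and guarantee its continuity — when source and target are not metrizable and $k$ is not locally compact, so that the only compactness available is Poineau's sequential compactness of compact analytic spaces, which can be brought to bear only after the problem has been cut down to a countable set of evaluations. The interplay of this estimate with the boundaryless hypothesis, which is what rules out the $T^{n}$-type degeneration above, is the technical heart of the argument.
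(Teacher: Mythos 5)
Your outer reductions (embed $Y$ in a closed polydisk, use $\sigma$-compactness plus a diagonal extraction to localize, observe that the limit lands in the closed subset $Y$) are in the same spirit as the paper, but the proposal has two genuine problems, one of which is fatal as written.

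First, the reduction from $Y=\bar{\D}^N$ to $Y=\bar{\D}$ by composing with the coordinate projections is incorrect. A point of the Berkovich polydisk $\bar{\D}^N$ is a seminorm on $k\{T_1,\dots,T_N\}$, and convergence there means convergence of $|P(f_n(x))|$ for \emph{every} $P$, not just for $P=T_i$. For instance, take rigid points $\zeta_n\in k$ with $|\zeta_n|=1$ and $|\zeta_n-\zeta_m|=1$ for $n\neq m$: both coordinates of $(\zeta_n,\zeta_n)\in\bar{\D}^2$ converge to the Gauss point of $\bar{\D}$, yet evaluating $T_1-T_2$ shows the sequence does not converge to the Gauss point of $\bar{\D}^2$. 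So coordinate-wise extraction does not yield pointwise convergence in $\bar{\D}^N$; the paper accordingly works with $\bar{\D}^s$ directly and never makes this reduction.

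Second, and more importantly, the step you yourself flag as ``the technical heart'' --- a uniform local estimate reducing pointwise convergence on $V$ to convergence on a countable set $S$, after which you would apply Poineau's theorem in the target at each $s\in S$ --- is not supplied, and it is exactly where all the difficulty of the theorem sits. The paper explicitly notes that equicontinuity/Ascoli-type control is unavailable in this setting, and its solution is structurally different: it does not apply sequential compactness in the target at countably many points, but instead builds an infinite-dimensional compact parameter space $\mor(\D^r,\bar{\D}^s)=\mathcal{M}(\T^{r,s}_\infty)$ whose rigid points are the analytic maps $\D^r\to\bar{\D}^s$, proves (by extending Poineau's Shilov-boundary arguments to this infinite-dimensional spectrum) that this space is Fr\'echet--Urysohn, extracts a convergent subsequence of \emph{parameters} $\al_{n_j}\to\al$, and then shows (Theorem \ref{thm mor}) that convergence of parameters forces pointwise convergence of the associated continuous maps $\ev(\al_{n_j})\to\ev(\al)$. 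The boundaryless hypothesis enters not through an estimate but through the reduction to basic tubes and the extension of maps from affinoid pieces $X_\rho$ to closed polydisks $\bar{\D}^r(\rho)$ with $\rho<1$ (Propositions \ref{prop distinguished} and \ref{basic tube intersection}), so that only the open polydisk case is ever needed. Finally, the statement is for an arbitrary nontrivially valued complete $k$, and the paper first proves the algebraically closed case and then descends via the base change $\pi_{K/k}$ for $K$ a completed algebraic closure; your proposal does not address this reduction, and the parametrization machinery genuinely requires $k$ algebraically closed (e.g.\ for the density of rigid points and the section $\sigma_{K/k}$). Without a concrete candidate for your uniform estimate, the proposed route does not constitute a proof.
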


The seemingly complicated hypothesis on the source space $X$ are not such in fact.
We refer the reader to \S \ref{section intro berko} for a detailed discussion on the technical assumptions on $X$.
For the moment, let us indicate that two important classes of $k$-analytic spaces satisfy these properties: analytifications of algebraic varieties and connected components of the analytic interior of an affinoid space. 
The latter will be referred to as  \emph{basic tubes}.
They have been thoroughly studied by Bosch and Poineau, see \cite{Bosch, Poineaucomposantes}. 

\smallskip

Remark that the boundaryless assumption is crucial, as
 problems arise even in the affinoid case.
 Indeed, as pointed out  in \cite[\S 4.2]{FKT}, consider for instance the sequence of analytic maps from the closed unit disk $\bar{\D}$ into itself $f_n: z\mapsto z^{2^{n!}}$. For every $n \in \N$, the Gauss point is a fixed point for $f_n$.
One can show that $f_n$ is pointwise converging, but its limit map is zero on the whole open unit disk and hence not continuous.

\medskip

 In view of Theorem \ref{thm a}, we say that a family of analytic maps $\mathcal{F}$ from a boundaryless $k$-analytic space $X$  into  a compact  space $Y$ is normal at a point point $x \in X$ if there exists a neighbourhood $V \ni x$ such that every sequence $\{ f_n \}$ in $\mathcal{F}$ admits a subsequence $f_{n_j}$ that is pointwise converging on $V$ to some continuous map $f: V \to Y$.

\bigskip

We now turn to the problem of describing the limits of pointwise converging analytic maps. 
As opposed to the complex setting,  one cannot expect the limit maps from Theorem \ref{thm a} to be analytic. 
Indeed, when $k$ is algebraically closed and non-trivially valued, any constant map $f:X \to Y$, $f\equiv y \in Y$, can be realized as the limit of constant analytic maps.
However, $f$ is analytic if and only if $y$ is rigid.
\medskip

Inspite of not being analytic in general, the continuous limit maps obtained in Theorem \ref{thm a} are of a very particular kind:
 they turn analytic  after a suitable base change.
In order to specify this phenomenon precisely,  we rely again in a crucial way on the results of Poineau.
Let $X$ be  a $k$-analytic space.
For every complete extension $K$ of $k$, we denote by $\pi_{K/k}: X_K \to X$ the usual base change morphism. 
Every $k$-point in $X$ defines a $K$-point in $X_K$ in a natural manner. 
When the base field $k$ is algebraically closed, Poineau \cite{Poineau} shows that this inclusion admits a unique \emph{continuous} extension 
 $\sigma_{K/k}: X \to X_K$, which by construction defines a section of $\pi_{K/k}$.

\begin{thmx}\label{thm b}
Let $k$ be a non-Archimedean algebraically closed  complete   field that is nontrivially valued and $X$ a good, reduced, boundaryless strictly $k$-analytic space.  Let  $Y$ be a $k$-affinoid space. Let $f_n: X \to Y$ be a sequence of analytic maps converging pointwise to a continuous map $f$. 

Then, for any point $x\in X$ one can find an affinoid neighbourhood $Z$ of $x$, a complete extension $K/k$ and a $K$-analytic map $F: Z_K \to Y_K$ such that 
\begin{equation*}
f|_Z  = \pi_{K/k} \circ F \circ \sigma_{K/k}.
\end{equation*}
\end{thmx}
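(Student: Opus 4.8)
The plan is to produce, on a sufficiently small affinoid neighbourhood of $x$, a single complete extension $K/k$ and a single $K$-analytic lift $F$, by forming an ``ultralimit'' of the sequence $(f_n)$ and then recognising it through Poineau's section $\sigma_{K/k}$. \emph{Reductions.} Since $Y$ is $k$-affinoid we embed it as a closed analytic subspace of a closed polydisc with coordinates $T_1,\dots,T_m$ and radii $r_1,\dots,r_m$; let $Q_1,\dots,Q_s$ be finitely many analytic functions on the polydisc cutting $Y$ out. Composing with this embedding (a topological embedding, so pointwise convergence is preserved) we regard the $f_n$ and $f$ as maps to the polydisc, with $Q_t(f_n)=0$; it then suffices to build $F$ with values in the base change of the polydisc and check afterwards that it factors through $Y_K$. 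As $X$ is good we pick an affinoid neighbourhood $Z_0$ of $x$ and then a relatively compact affinoid neighbourhood $Z\subset Z_0$ of $x$ (this is where boundarylessness is used). The restriction $\mathcal{O}(Z_0)\to\mathcal{O}(Z)$ is completely continuous, so each bounded family $(f_{n,l}|_Z)_n$ (with $f_{n,l}=T_l\circ f_n$) lies in a fixed compactoid of $\mathcal{O}(Z)$, hence in the closed absolutely convex hull of a null sequence $(e_{l,i})_i$; write $f_{n,l}|_Z=\sum_i d_{n,l,i}\,e_{l,i}$ with $|d_{n,l,i}|\le 1$. Finally fix a non-principal ultrafilter $\mathcal{U}$ on $\N$.

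\emph{Construction of $K$ and $F$.} Let $\tilde k_{\mathcal{U}}$ be the valued-field ultrapower of $k$: bounded sequences in $k$ modulo those whose absolute values tend to $0$ along $\mathcal{U}$, with $|[(a_n)_n]|:=\lim_{\mathcal{U}}|a_n|$; it is a valued extension of $k$ through constant sequences, and $K:=\widehat{\tilde k_{\mathcal{U}}}$ is a non-Archimedean complete extension of $k$. Set $\gamma_{l,i}:=[(d_{n,l,i})_n]\in\tilde k_{\mathcal{U}}\subset K$; since $\|e_{l,i}\|\to 0$ and $|\gamma_{l,i}|_K\le 1$, the series $G_l:=\sum_i\gamma_{l,i}\,(e_{l,i}\otimes 1)$ converges in $\mathcal{O}(Z)\,\hat\otimes_k K=\mathcal{O}(Z_K)$. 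This yields a morphism $F=(G_1,\dots,G_m)$ from $Z_K$ to the base change of the polydisc; informally, $G_l$ is ``$\lim_{\mathcal{U}}f_{n,l}$''.

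\emph{Identification.} Fix $z\in Z$. By \cite{Poineau}, $\sigma_{K/k}(z)$ is the canonical point of the fibre $\pi_{K/k}^{-1}(z)$, whose complete residue field is the completion of $\mathcal{O}(Z)$-valued\,--\,no: of $\mathcal{H}(z)\,\hat\otimes_k K$ for the tensor seminorm, which is \emph{multiplicative} precisely because $k$ is algebraically closed. The $\mathcal{U}$-evaluation $\mathcal{H}(z)\otimes_k\tilde k_{\mathcal{U}}\to\prod_{\mathcal{U}}\mathcal{H}(z)$, $b\otimes[(a_n)_n]\mapsto[(a_nb)_n]$, is a multiplicative cross-seminorm extending both valuations, and one checks (using algebraic closedness of $k$) that it agrees with the tensor seminorm; it therefore induces an isometric embedding $\mathcal{H}(\sigma_{K/k}(z))\hookrightarrow\widehat{\prod_{\mathcal{U}}\mathcal{H}(z)}$, under which — by the uniform convergence furnished by the compactoid — $G_l(\sigma_{K/k}(z))$ maps to the class $[(f_{n,l}(z))_n]$. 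Hence for every analytic function $P$ on the polydisc,
\[
\bigl|P\bigl(F(\sigma_{K/k}(z))\bigr)\bigr|_{\mathcal{H}(\sigma_{K/k}(z))}=\bigl|[(P(f_n(z)))_n]\bigr|=\lim_{\mathcal{U}}\bigl|P\bigr|_{f_n(z)}=\lim_{n}\bigl|P\bigr|_{f_n(z)}=\bigl|P\bigr|_{f(z)},
\]
the last step being pointwise convergence $f_n(z)\to f(z)$ in the polydisc. Taking $P=T_l$ identifies $\pi_{K/k}\bigl(F(\sigma_{K/k}(z))\bigr)$ with $f(z)$, i.e. $f|_Z=\pi_{K/k}\circ F\circ\sigma_{K/k}$. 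Taking $P=T_l$ and $P=Q_t$ and using that the sup-seminorm on the reduced space $Z_K$ is, by the corresponding property of $\sigma_{K/k}$ on the finite Shilov boundary $\partial Z$, computed on $\sigma_{K/k}(\partial Z)$, gives $\|G_l\|_{\sup}\le r_l$ and $Q_t(F)=0$ on $Z_K$; so $F$ factors through a $K$-analytic map $Z_K\to Y_K$, as required.

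\emph{Main obstacle.} The heart of the matter is the Identification step: converting the purely set-theoretic datum ``$f_n(z)\to f(z)$'' into an honest analytic object over a genuine complete field. Two points carry the weight, and I expect them to be the most delicate: first, that Poineau's residue field at $\sigma_{K/k}(z)$ — equivalently the tensor seminorm on $\mathcal{H}(z)\,\hat\otimes_k K$ — is recovered by the $\mathcal{U}$-evaluation, which is where algebraic closedness of $k$ is used in an essential way; and second, the behaviour of Shilov boundaries under the base change $\pi_{K/k}$, needed to propagate the equalities from the points $\sigma_{K/k}(z)$ to all of $Z_K$. On the technical side, the preliminary passage to a relatively compact neighbourhood, which makes $(f_n|_Z)$ compactoidal and the series defining $G_l$ convergent over $K$, is exactly what forces the neighbourhood $Z$ in the statement to be allowed to shrink.
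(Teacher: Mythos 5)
Your argument is correct in outline but follows a genuinely different route from the paper's. The paper reduces to $Y=\bar{\D}^s$, extends $f_n|_Z$ through a distinguished closed immersion to a polydisc, and encodes each $f_n$ as a rigid point $\al_n$ of the infinite-dimensional parameter space $\mor(\D^r,\bar{\D}^s)=\mathcal{M}(\T^{r,s}_\infty)$; compactness and the Fréchet--Urysohn property of that space produce a limit point $\al$, the limit map is identified with $\ev(\al)$, and the extension is $K=\mathcal{H}(\al)$ with $F$ the tautological lift of $\al$ (Theorem \ref{thm lifting}). You instead take $K$ to be a completed ultrapower of $k$ and build $F$ as the $\mathcal{U}$-limit of the coefficient sequences, using complete continuity of restriction to an inner affinoid neighbourhood to produce a common null family $(e_{l,i})$. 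What your route buys: you bypass the $\mor(\D^r,\bar{\D}^s)$ machinery entirely, and your $K$ is uniform (independent of $x$, $X$ and $Y$), whereas the paper's $K=\mathcal{H}(\al)$ depends on the limit point. What the paper's route buys is reusability: $\mor(\D^r,\bar{\D}^s)$ is also the engine of Theorem A and of the dynamical applications.

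Two asserted steps carry real content and must be written out. First, the identity ``$\mathcal{U}$-evaluation $=$ tensor seminorm'' on $\mathcal{H}(z)\otimes_k\tilde k_{\mathcal{U}}$: the inequality $\le$ is formal, but $\ge$ requires that for every $\alpha<1$ every finite-dimensional $k$-subspace of $\mathcal{H}(z)$ admit an $\alpha$-orthogonal basis, so that $\bigl|\sum_j a_n^{(j)}b_j\bigr|\ge\alpha\max_j|a_n^{(j)}|\,|b_j|$ can be applied for each fixed $n$ (the scalars $a_n^{(j)}$ lying in $k$) before passing to the $\mathcal{U}$-limit. This almost-orthogonality over an algebraically closed base is the Gruson-type input behind Poineau's universality theorem; it is not the statement of that theorem, so it needs an explicit citation or proof — without it the first-projection seminorm on $L\otimes_kL$ for a finite extension $L/k$ shows the equality can fail. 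Second, the claim that $\sup_{Z_K}|h|$ is computed on $\sigma_{K/k}(\Gamma(Z))$, which you need both for $\|G_l\|_{\sup}\le r_l$ (so that $F$ lands in the polydisc at all) and for $Q_t\circ F=0$; this follows from $\Gamma(Z_K)\subseteq\pi_{K/k}^{-1}(\Gamma(Z))$ together with the fact that on each fibre $\mathcal{M}(\mathcal{H}(\gamma)\hat{\otimes}_kK)$ the supremum is attained at the peaked point $\sigma_{K/k}(\gamma)$, again by universality. With these two points supplied, the proof is complete.
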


It would be interesting to find a $K$-analytic map $F$ such that the stronger condition 
 $\pi_{K/k} \circ F  = f \circ \pi_{K/k}$ holds, but our proof does not show this.

\medskip

Let us explain the proof of Theorem \ref{thm b} in the case where $X$ is the open $r$-dimensional polydisk $\D^r$ and $Y$ the closed $s$-dimensional polydisk $\bar{\D}^s$. 
The key idea is to view the set of all analytic maps from  $\D^r$ to $\bar{\D}^s$ as the set of rigid points of an infinite dimensional polydisk $\mor (\D^r, \bar{\D}^s)$.
This procedure can be easily illustrated in the polynomial case.
Observe that a polynomial map sending $\D^r$ into $\bar{\D}^s$ is given by finitely many coefficients in the base field $k$ with norm at most 1, and so  defines a rigid point in an appropriate dimensional closed unit polydisk. This procedure can be done similarly for general analytic maps. 
In this case, the coefficients define a rigid point in an infinite dimensional polydisk denoted $\mor (\D^r, \bar{\D}^s)$.

Now take a sequence $f_n: \D^r \to \bar{\D}^s$ of analytic maps, associated to a sequence of rigid points $\{ \al_n \}$ in $\mor (\D^r, \bar{\D}^s)$. 
It can be showed that the fact that $f_n$ converges pointwise to some continuous map $f$
 amounts for $\al_n$ to converging to some point  $\al$ in $\mor (\D^r, \bar{\D}^s)$. Observe that $\al$ is not rigid in general, but 
  after a base change by  $\mathcal{H}(\al)$,  the complete residue field at $\al$, the point $\al$ can be lifted to a rigid point in $\mor (\D^r, \bar{\D}^s)_{\mathcal{H}(\al)}$.
  This point defines a $\mathcal{H}(\al)$-analytic map $F: \D^r_{\mathcal{H}(\al)} \to \bar{\D}^s_{\mathcal{H}(\al)}$ that satisfies the  equality $f= \pi_{\mathcal{H}(\al)/k} \circ F \circ \sigma_{\mathcal{H}(\al)/k}$. 
Observe that $F$  is not canonical, as it depends on the choice of the rigid point in $\mor (\D^r, \bar{\D}^s)_{\mathcal{H}(\al)}$ lying over $\al$.

\smallskip

We go beyond Theorem \ref{thm b} and
 show that to any point $\alpha$ in $\mor (\D^r, \bar{\D}^s)$ one can associate a
   continuous map from  $\D^r$ to $\bar{\D}^s$   in a continuous way,
in the sense that for any sequence of points $\al_n$ in $\mor (\D^r, \bar{\D}^s)$ converging to $\al \in \mor (\D^r, \bar{\D}^s)$, 
 the corresponding sequence of continuous maps converges everywhere pointwise to the continuous map associated to $\al$. 
In Section \ref{section mor}  we detail this correspondence.

\medskip

This result suggests the following definition.
 We say that a continuous map $f$  between analytic spaces
is \emph{weakly analytic} if it is locally of the form $f  = \pi_{K/k} \circ F \circ \sigma_{K/k}$ for some complete extension $K$ of $k$ and some $K$-analytic map $F$.
In fact, weakly analytic maps can be characterized as being locally the pointwise limit of analytic maps.  
In \S \ref{section weakly analytic} we shall prove that  weakly analytic maps share many properties  with analytic maps, such as  an isolated zero principle on curves.

\bigskip

We give  applications of Theorem \ref{thm a}  to the dynamics of  an  endomorphism $f$ of the $k$-analytic projective space $\mathbb{P}^{N, \an}_k$  of degree at least 2.
Kawaguchi and Silverman associated a   non-Archimedean Green function $G_f$ to $f$ in \cite{Silvkawadynamics,SilverKawagreen}, 
generalizing the  classical complex construction by Hubbard \cite{HubbardHenon} and Fornaess and Sibony \cite{FS2}.
We attach to $f$ two different notions of Fatou sets.
We  define the \emph{normal Fatou set} $F_{\mathrm{norm}} (f)$ of $f$   as the normality locus of the family of the iterates $\{f^n\}$.
Next, we define the \emph{harmonic Fatou set} $F_{\mathrm{harm}} (f)$ as the set where
 the non-Archimedean Green function $G_f$ of $f$ introduced by Kawaguchi-Silverman is strongly pluriharmonic in the sense of \cite{CLheights}.

\smallskip

In Proposition \ref{prop fatou pluriharm} we show
that the harmonic Fatou set of $f$ can be characterized in terms 
of a sort of equicontinuity property for the iterates of $f$.
 Its proof follows its complex  counterpart.
It is now a consequence of Theorem \ref{thm a} that $F_{\mathrm{harm}} (f)$ is contained in $F_{\mathrm{norm}} (f)$.
We conjecture that for every endomorphism $f$ of the projective space the two Fatou sets coincide.

 \medskip
 
 There are two main results on the geometry of the Fatou set of an endomorphism of the complex projective space  of degree at least 2, see \cite{Sibony} for a complete survey.
 Every Fatou component is a Stein space \cite{FS2} and is hyperbolically imbedded in  $\mathbb{P}^N_\C$ in the sense of Kobayashi \cite{Ueda}.
  
\smallskip
 
Here we shall focus our attention on the  hyperbolicity properties of the harmonic Fatou components in the non-Archimedean setting. 
To motivate our next result, recall that  a subspace $\Omega$  of a complex analytic space $Y$ is hyperbolically imbedded 
if the Kobayashi distance on $\Omega$ does not degenerate towards its boundary  \cite{Kobbook,Langcomplex}.
If $\Omega$ is relatively compact in $Y$, then $\Omega$ is hyperbolically imbedded in $Y$ if and only if 
the family $\mathrm{Hol}(\D, \Omega)$ of holomorphic maps from the open unit disc $\D$ to $\Omega$
 is relatively locally compact in $\mathrm{Hol}(\D, Y)$, see \cite[\S II, Theorem 1.2]{Langcomplex}.

\smallskip

 In our context, we prove:

\begin{thmx}\label{thm c}
Let $f:\mathbb{P}^{N, \an} \to \mathbb{P}^{N, \an}$ be an endomorphism  of degree at least 2.
Let $\Omega$ be a connected component of the harmonic Fatou set  $F_{\mathrm{harm}} (f)$ of $f$, and let $U$ be any connected open subset of $\mathbb{P}^{1, \an}$. 

Then, every sequence of analytic maps 
 $g_n : U \to \Omega$ admits a subsequence $g_{n_j}$ that is pointwise converging to a continuous map $U \to \mathbb{P}^{N,an}$.
\end{thmx}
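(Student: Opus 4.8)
The plan is to reduce the statement to Theorem \ref{thm a} by producing, around each point of $U$, an affinoid target containing the relevant image. Fix a sequence of analytic maps $g_n : U \to \Omega$. The first step is to note that $\Omega$ is an open connected subset of $\mathbb{P}^{N,\an}$ on which the Green function $G_f$ is strongly pluriharmonic; in particular $G_f$ is continuous on $\Omega$ and, being pluriharmonic there, is locally bounded. I would fix a point $u_0 \in U$ and work first under the assumption that the images $g_n(U)$ do not escape to the boundary of $\Omega$; more precisely, I would like to find a single $k$-affinoid domain $Y \subset \Omega$ and pass to a subsequence so that $g_n(U) \subset Y$ for all $n$. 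To do this, invoke the characterization of $F_{\mathrm{harm}}(f)$ from Proposition \ref{prop fatou pluriharm} as a form of equicontinuity of the iterates $\{f^m\}$: this gives control on how far $f^m \circ g_n$ can spread, and combined with the fact that $G_f \circ f^m = (\deg f)^m\, G_f$, one extracts from the pluriharmonicity of $G_f$ on $\Omega$ that a suitable sublevel set $\{G_f \le c\}$ inside $\Omega$ is a compact (indeed affinoid, after shrinking) neighbourhood of any prescribed compact piece of $\Omega$ which the $g_n$ cannot leave. This is the step I expect to be the main obstacle: translating the strong pluriharmonicity of $G_f$ and the equicontinuity of the iterates into an honest affinoid $Y$ with $g_n(U) \subset Y \subset \Omega$, uniformly in $n$ (after passing to a subsequence), since one must rule out the boundary-escape phenomenon illustrated by the map $z \mapsto z^{2^{n!}}$.

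Granting such a $Y$, the core of the argument is an application of Theorem \ref{thm a} with source $X = U$ and target $Y$. We must check the hypotheses on the source: $U$ is an open connected subset of $\mathbb{P}^{1,\an}_k$, hence a good, reduced, $\sigma$-compact, boundaryless $k$-analytic space — it is an open subspace of the analytification of a projective variety, so it inherits goodness and reducedness, it is $\sigma$-compact because $\mathbb{P}^{1,\an}_k$ is compact and metrizable-by-exhaustion in the relevant sense, and it has empty boundary because it is open in a boundaryless space. We may take $Y$ to be strictly $k$-affinoid after possibly enlarging the ground field or rescaling; if $k$ is not algebraically closed or the affinoid produced is not strictly $k$-affinoid, I would base change to $\widehat{\bar k}$, apply the theorem there, and descend the pointwise limit using the section $\sigma_{K/k}$, or alternatively note that Theorem \ref{thm a} is stated for strictly $k$-affinoid $Y$ and arrange $Y$ to be of this type by choosing $c$ so that $\{G_f \le c\}$ is cut out by functions with values in $|k^\times|$. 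Then Theorem \ref{thm a} yields a subsequence $g_{n_j}$ converging pointwise on $U$ to a continuous map $g : U \to Y$.

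Finally, I would compose with the inclusion $Y \hookrightarrow \mathbb{P}^{N,\an}$ to view $g$ as a continuous map $U \to \mathbb{P}^{N,\an}$, which is exactly the assertion. One subtlety to address: if the sequence $g_n$ does exhibit partial boundary-escape, the single-affinoid reduction above may fail globally on $U$, so I would instead argue locally — around each $u \in U$ choose a connected affinoid neighbourhood $W \ni u$ in $U$ and an affinoid $Y_W \subset \Omega$ with $g_n(W) \subset Y_W$ after passing to a subsequence depending on $W$ — and then use $\sigma$-compactness of $U$ together with a diagonal extraction over a countable exhaustion of $U$ by such $W$'s to produce a single subsequence converging pointwise on all of $U$; continuity of the limit is a local statement and so follows from the continuity on each $W$ given by Theorem \ref{thm a}. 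The only place the harmonic Fatou hypothesis is used is in guaranteeing, for each such $W$, the existence of the confining affinoid $Y_W$ inside $\Omega$, which is precisely where Proposition \ref{prop fatou pluriharm} and the functional equation for $G_f$ enter.
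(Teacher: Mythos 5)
There is a genuine gap at the central step of your argument. You propose to confine the images $g_n(U)$ (or locally $g_n(W)$) inside a single affinoid $Y\subset\Omega$, and you correctly flag this as the main obstacle — but it is not merely an obstacle, it is false in general. Nothing prevents the values $g_n(u)$ from accumulating on the topological boundary of $\Omega$ (already constant maps with values tending to $\partial\Omega$ show no affinoid $Y\subset\Omega$ can contain all images), and indeed the conclusion of Theorem \ref{thm c} only asserts that the limit is a continuous map $U\to\mathbb{P}^{N,\an}$, not a map into $\Omega$ or into a compact subset of it. Proposition \ref{prop fatou pluriharm} cannot rescue this: it controls lifts of the \emph{iterates} $f^m$ over a basic tube in the Fatou set, not arbitrary analytic maps $g_n$ into $\Omega$, and the functional equation for $G_f$ gives no barrier keeping $g_n(U)$ away from $\partial\Omega$.

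The paper's proof confines the maps in a fixed affinoid \emph{upstairs in the cone} $\A^{N+1,\an}\setminus\{0\}$, not inside $\Omega$. One exhausts $U$ by basic tubes $W_m$ and affinoids $X_m$ (Proposition \ref{prop tubes BR}), lifts $g_n|_{X_m}$ to $\widehat{g_n}^m:X_m\to\rho^{-1}(\Omega)$ using $H^1(X_m,\mathcal{O}^\times)=0$ for affinoid subsets of $\mathbb{P}^{1,\an}$, observes that $G_f\circ\widehat{g_n}^m$ is harmonic on $W_m$ because $G_f$ is strongly pluriharmonic on $\rho^{-1}(\Omega)$, and then invokes Proposition \ref{lema bound harmonic functions} (a Poisson-formula estimate) to produce invertible $h_n^m$ with $\sup_{W_m}|G_f\circ\widehat{g_n}^m-\log|h_n^m||\le C$, with $C$ depending only on $W_m$. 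The renormalized lifts $\widetilde{g_n}^m=\widehat{g_n}^m/h_n^m$ then take values in the fixed compact region $\mathrm{M}=\{\,|G_f|\le C\,\}$, which is compact by Theorem \ref{thm properties green}; Theorem \ref{thm a} applies to these lifts, and one descends via $\rho$ and extracts diagonally. Note that this is precisely where the one-dimensionality of $U$ enters (existence of global lifts over $X_m$ and the Poisson bound are facts about $\mathbb{P}^{1,\an}$); your proposal makes no essential use of $\dim U=1$, which is inconsistent with the paper's remark that the case of a general basic tube source remains open.
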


Note that in the non-Archimedean setting checking the normality for \emph{every} open subset $U$ of $\mathbb{P}^{1,\an}$ is stronger than just for the open unit disk, as opposed to the complex case, see \cite[Theorem 5.1.5]{Kobbook}.
 For instance, every sequence of analytic maps $f_n: \D \to \A^{1, \an} \setminus \{ 0 \}$ admits a subsequence converging to a continuous map,
  whereas this is not true if one replaces the source by the punctured open unit disk.

It remains open whether in Theorem \ref{thm c} one can take $U$ to be any basic tube.

\medskip

We have the following Picard-type result:

\begin{thmx}\label{thm d}
Let $\Omega$ be a connected component of the harmonic Fatou set  $F_{\mathrm{harm}} (f)$ 
 of an endomorphism $f: \mathbb{P}^{N, \an} \to \mathbb{P}^{N, \an}$ of degree $d\ge 2$.
Then every analytic map  from $\A^{1,\an}\setminus \{ 0 \}$ to $\Omega$ is constant.
\end{thmx}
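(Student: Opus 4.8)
The plan is to deduce Theorem \ref{thm d} from Theorem \ref{thm c} together with the structure of $\A^{1,\an} \setminus \{0\}$ and the isolated zero principle for weakly analytic maps on curves mentioned in \S \ref{section weakly analytic}. Suppose $g: \A^{1,\an} \setminus \{0\} \to \Omega$ is analytic and non-constant, and consider the sequence of iterates $g_n := g \circ m_{c_n}$ where $m_{c}(z) = cz$ is multiplication by a nonzero scalar $c \in k$. (More precisely one should rescale by a sequence $c_n$ with $|c_n| \to \infty$ and, separately, $|c_n| \to 0$, so as to probe the behaviour of $g$ near both ends of the punctured line.) Each $g_n$ is an analytic map from $\A^{1,\an}\setminus\{0\}$ — equivalently from the annulus-exhaustion of it — into $\Omega$. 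Restricting to a fixed connected open $U \subset \mathbb{P}^{1,\an}$, say the open unit disk minus the origin, or better an honest open annulus, Theorem \ref{thm c} applies and yields a subsequence $g_{n_j}$ converging pointwise to a continuous map $h : U \to \mathbb{P}^{N,\an}$.

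First I would record that the limit $h$ is weakly analytic: by Theorem \ref{thm b} (applicable since $\Omega \subset \mathbb{P}^{N,\an}$ is boundaryless and locally one may replace $\mathbb{P}^{N,\an}$ by an affinoid, after composing with suitable coordinate charts), the pointwise limit of analytic maps is locally of the form $\pi_{K/k}\circ F \circ \sigma_{K/k}$, hence weakly analytic. Next I would identify $h$ explicitly. Because $g$ extends analytically across $\{0\}$ after the scaling degenerates — the rescaled maps $g\circ m_{c_n}$ "zoom in" on a neighbourhood of either $0$ or $\infty$ — the limit $h$ should be the constant map equal to the value (in the appropriate valued-field sense) that $g$ takes at the puncture, or at $\infty$. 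The key point is a growth/boundedness argument: since $g$ takes values in $\Omega$ and $\Omega$ is relatively compact in $\mathbb{P}^{N,\an}$ (being a Fatou component, with $\overline{\Omega} \ne \mathbb{P}^{N,\an}$), the image of $g$ has compact closure avoiding at least one hyperplane's worth of directions, and this forces $h$ to be constant — equivalently, $g$ cannot oscillate between two distinct values as one approaches an end.

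Then I would upgrade "$h$ constant" to "$g$ constant". Here the isolated zero principle for weakly analytic maps on curves enters, or more directly the classical non-Archimedean fact that a bounded analytic function on $\A^{1,\an}\setminus\{0\}$ that extends continuously (in the Berkovich sense) across the ends is constant: writing $g$ in coordinates on $\A^{1,\an}\setminus\{0\} = \mathrm{Spec}\,k[z,z^{-1}]^{\an}$ as Laurent series $\sum_{n\in\Z} a_n z^n$, relative compactness of the image of $g$ in $\mathbb{P}^{N,\an}$ bounds $|a_n|$ uniformly; combined with the limiting behaviour at both ends extracted above (which kills all $a_n$ with $n \ne 0$), one gets $g$ constant. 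This is essentially the non-Archimedean Liouville theorem applied componentwise after reducing, via Theorem \ref{thm c}, to the statement that no non-constant entire-type curve lands in a harmonic Fatou component.

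The main obstacle I anticipate is the reduction that lets us apply Theorem \ref{thm c}: that theorem is stated for $U$ a connected open subset of $\mathbb{P}^{1,\an}$, and $\A^{1,\an}\setminus\{0\}$ is such a space, but to extract information about $g$ itself (rather than a rescaled subsequence) one must choose the rescaling family $\{m_{c_n}\}$ carefully so that (i) each $g\circ m_{c_n}$ is genuinely defined on a fixed $U$, and (ii) the pointwise limit $h$ is pinned down — not merely "some continuous map" but the specific constant. Ensuring (ii) requires controlling the limit on a large enough skeleton of $U$ (e.g. the segment joining the two Gauss-type points of an annulus) and using the boundaryless/relative-compactness hypothesis on $\Omega$ to rule out a non-constant limit; making this pinning-down rigorous, rather than just "heuristically the curve must degenerate", is the delicate part. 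An alternative route avoiding rescaling is to apply Theorem \ref{thm c} directly with $U = \A^{1,\an}\setminus\{0\}$ to the constant sequence $g_n \equiv g$ — which trivially converges — gaining nothing, so the rescaling (Zalcman-type reparametrisation) genuinely seems necessary, and that is where the real work lies.
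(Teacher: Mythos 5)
There is a genuine gap, and it sits exactly where you yourself locate ``the real work'': your reparametrization family does not produce a contradiction. Precomposing with the multiplications $m_{c_n}(z)=c_nz$ only moves $g$ by automorphisms of $\A^{1,\an}\setminus\{0\}$; normality (Theorem~\ref{thm c}) then gives a pointwise convergent subsequence, but nothing forces the limit to be constant, and nothing identifies it with ``the value of $g$ at the puncture'' (which need not exist --- $g$ need not extend across $0$ or $\infty$). The paper's proof of the key Proposition~\ref{prop punctured brody} instead uses the maps $g_n(z)=g(x_{n!}z^{n!})$: the power map $z\mapsto z^{n!}$ collapses all $m$-th roots of unity with $m\le n$ onto $1$, the scalars $x_n$ are chosen (using surjectivity of the tangent map at type~II preimages of a fixed point, or, in the second case, Picard's big theorem to guarantee a fixed rigid value $a_0$ is attained at points $x_n$ of prescribed growing radius) so that $g_n$ sends every $R_m$ to $a_0$, while $g_n(x_g)=g(\eta_{0,|x_{n!}|})$ converges to a different point $y_\infty$; since roots of unity accumulate at the Gauss point, the limit cannot be continuous. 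This collapsing mechanism, together with the case distinction on whether some type~II point has infinitely many preimages on the ray $\{\eta_{0,r}\}$, is the content of the proof and is absent from your proposal.

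Your fallback ``Liouville'' step is also unsound. Relative compactness of the image in $\mathbb{P}^{N,\an}$ is vacuous ($\mathbb{P}^{N,\an}$ is compact), and a Fatou component has no reason to have closure avoiding a hyperplane; without landing in a bounded affinoid of an affine chart you get no uniform bound on the Laurent coefficients of the $G_i$, which are in any case only defined up to a common invertible factor. (The map $z\mapsto[1:z]$ has ``relatively compact image'' and is not constant.) The correct reduction in the paper is different: one projects $[G_0:\cdots:G_N]\mapsto[G_0:G_1]$ to reduce to a non-constant analytic map into $\mathbb{P}^{1,\an}$ and runs the root-of-unity argument there. I would encourage you to rework the proof around the maps $z\mapsto x_{n!}z^{n!}$ and the choice of the points $x_n$.
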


\medskip

This paper is structured as follows. In Section \ref{section intro berko} we review some basic facts about Berkovich spaces  and summarize several results on universal points from \cite{Poineau} that will be needed in the sequel.
In section \ref{section polynomial} we prove a version of Theorem \ref{thm a} for polynomial maps of uniformly bounded degree.
In Section \ref{section mor} we describe the structure of the topological space that parametrizes the continuous maps that appear as pointwise limits of analytic maps between polydisks. 
Section \ref{section preuve montel}  comprises the proofs of Theorem \ref{thm a} and Theorem \ref{thm b}.
The properties  continuous maps that are limits of analytic maps are  studied in \S \ref{section weakly analytic}.
 Finally, in Section \S \ref{section dynamics} we give applications to dynamics of the previous results and prove Theorem \ref{thm c} and Theorem \ref{thm d}.

\subsection*{Acknowledgements}

I would like to thank my advisor Charles Favre for his constant support during the preparation of the paper.
I would also like to show my gratitude to Jérôme Poineau and Antoine Ducros for answering numerous questions, and to Junyi Xie for his attentive reading  and precious remarks.

This research was supported by the ERC grant Nonarcomp no. 307856.

\section{General facts on analytic spaces}\label{section intro berko}

Throughout this paper, $k$ is a field endowed with a  non-Archimedean complete norm $|.|$. 
We will \emph{always} assume that $k$ is nontrivially valued. 
 Except in \S \ref{sec:other base field}, $k$ will be algebraically closed.

We write $|k^\times|= \{ |x| :   x\in k^\times \} \subseteq \R_+$ for its value group and  $k^\circ = \{x\in k:  |x| \leq 1\}$ for its ring of integers. The latter is a local ring with maximal ideal $k^{\circ  \circ}= \{ x\in k: |x| < 1\}$.
The \emph{residue field of $k$} is  $\tilde{k}=k^\circ / k^{\circ  \circ}$.

\medskip

The basic reference for this section is Berkovich's original text \cite{Berk}. See also \cite{Temkinintro} for a more recent survey.

\subsection{Analytic spaces}

Pick a positive integer $N$ and an $N$-tuple of positive real numbers $r=(r_1, \cdots, r_N)$. Denote by $k\{r^{-1} T \}$ the set of power series $f=\sum_I a_I T^I$, $I=(i_1, \cdots, i_N)$, with coefficients $a_I\in k$ such that $|a_I| r^I \to 0$ as $|I|:= i_1+ \cdots + i_N$ tends to infinity.  The norm $\| \sum_I a_I T^I\| = \max_I |a_I|r^I$ makes $k\{r^{-1} T \}$ into a Banach $k$-algebra.
When $r=(1, \cdots, 1)$, the previous algebra is called the Tate algebra and we denote it by $\mathcal{T}_n$.

 Let $\varphi: \mathcal{B} \to \mathcal{A}$ be a morphism of Banach $k$-algebras.
 The residue norm on $\mathcal{B}/\mathrm{Ker} \varphi$  is defined by $|a| = \inf_{\varphi(b)=a} |b|$, and we say that $\varphi$ 
  is admissible if the residue norm is equivalent to the restriction  to the image of $\varphi$ of the norm on $\mathcal{A}$.
\medskip

A Banach $k$-algebra $\mathcal{A}$ is called \emph{affinoid} if there exists an admissible surjective morphism of $k$-algebras $k\{r^{-1} T \} \to \mathcal{A}$.
If  $r_i \in  |k^\times|$ for all $i$, then $\mathcal{A}$ is said to be strictly affinoid. 
\medskip

For any $k$-affinoid algebra  $\mathcal{A}$, we denote by $X=\mathcal{M(A)}$ the set of all multiplicative seminorms on $\mathcal{A}$ that are bounded by the norm $\| . \| $ on $\mathcal{A}$.
 Given $f\in \mathcal{A}$, its image under a seminorm $x\in \mathcal{M(A)}$ is denoted by $|f(x)| \in \R_+$. 
The set $\mathcal{M(A)}$ is called the \emph{analytic spectrum} of $\mathcal{A}$ and  is endowed with the weakest topology such that all the functions of the form $x\mapsto |f(x)|$ with $f\in \mathcal{A}$ are continuous. The resulting topological space $X$ is nonempty, compact Hausodorff \cite[Theorem 1.2.1]{Berk} and naturally carries a sheaf of analytic functions $\mathcal{O}_X$ such that $\mathcal{O}_X(X)= \mathcal{A}$, see \cite[\S 2.3]{Berk}). The locally ringed space $(X, \mathcal{O}_X)$  is called a $k$-affinoid space.
\medskip

Given a point $x \in X= \mathcal{M(A)}$, the fraction field of $\mathcal{A}/\mathrm{Ker}(x)$ naturally inherits from $x$ a norm extending the one on $k$.
Its completion is  the \emph{complete residue field at $x$} and denoted by $\mathcal{H}(x)$.
When $\mathcal{H}(x)$ is a finite extension of $k$ (or equivalently when $\mathcal{H}(x) = k$, since $k$ is supposed to be algebraically closed), we say that $x$ is \emph{rigid}.  The set $X(k)$ of rigid points of $X$  is dense in $X$. 
\medskip

A character on $\mathcal{A}$ is a bounded homomorphism $\mathcal{A} \to K$, where $K$ is any complete extension of $k$.
 Two characters $ \chi_1: \mathcal{A} \to K_1$ and $\chi_2 : \mathcal{A} \to K_2$ are equivalent
  if there exists a character $\chi: \mathcal{A} \to L$ and inclusions $i_1 : L \to K_1$ and $i_2 : L \to K_2$ such that $ i_1 \circ \chi = \chi_1$ and  $i_2 \circ \chi = \chi_2$.
  
 Composing the character $\mathcal{A} \to K$ with the norm on $K$ gives rise to a seminorm on $\mathcal{A}$ that is bounded, and thus corresponds to a point $ x \in \mathcal{M(A)}$. 
 Equivalent characters give rise to the same point. Conversely, every  point $x \in \mathcal{M(A)}$ induces a character $\chi_x: \mathcal{A} \to \mathcal{H}(x)$ in a natural way.
Any other  character $\mathcal{A} \to K$ giving rise to $x$ can be decomposed as $\mathcal{A} \to \mathcal{H}(x) \hookrightarrow K$. 
 
 \medskip

The closed polydisk  of dimension $N$ and polyradius $r = (r_1, \cdots, r_N)\in (\R^+_*)^N $ is defined to be $\bar{\D}^N (r) := \mathcal{M}(k\{r^{-1} T \})$.  
When $r = (1, \cdots, 1)$ we just write $\bar{\D}^N$, and when $N=1$ we denote it by $\bar{\D}$. 
The Gauss point $x_g\in\bar{\D}^N$ is the point associated to the norm 
$$|( \sum a_I T^I) (x_g)| :=  \max |a_I| . $$
\medskip

General analytic spaces are ringed spaces $(X, \mathcal{O}_X)$ obtained by gluing together affinoid spaces. Difficulties arise in the gluing construction as affinoid spaces are compact, and we refer to \cite{Berk,Berk2} for a precise definition. Analytic spaces are locally compact and locally path-connected \cite[Theorem 3.2.1]{Berk}.
 Given an analytic space $X$, we denote by $|X|$ its underlying topological space. 

\medskip

The following topological  result, due to  Poineau, will be systematically used throughout the paper:

\begin{theorem}[\cite{Poineau}]\label{seq compactness}
Every $k$-analytic  space $X$ is a Fréchet-Urysohn space. In particular, every compact subset of $X$ is sequentially compact.
\end{theorem}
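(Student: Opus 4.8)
The plan is to reduce the statement to the compact case and then invoke the angelicity of affinoid spectra, which is the genuinely hard input (this is Poineau's theorem \cite{Poineau}).

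First I would record the formal properties of the Fréchet--Urysohn condition that make the reduction work: it is inherited by closed subspaces and by finite unions of closed subspaces. Indeed, if $Z=Z_1\cup\cdots\cup Z_m$ with each $Z_i$ closed and Fréchet--Urysohn and $A\subseteq Z$, then $\overline A=\bigcup_i\overline{A\cap Z_i}$, so any $x\in\overline A$ lies in some $\overline{A\cap Z_i}$ and is a sequential limit of points of $A\cap Z_i\subseteq A$. Since $k$-analytic spaces are locally compact and every point has a neighbourhood that is a finite union of affinoid domains, and since an affinoid domain is compact, hence closed in the ambient Hausdorff space, it suffices to prove that every $k$-affinoid space $\mathcal M(\mathcal A)$ is Fréchet--Urysohn; writing $\mathcal A=k\{r^{-1}T\}/I$ realizes $\mathcal M(\mathcal A)$ as a closed subspace of a polydisk $\bar\D^N(r)$, so one may even assume $X=\bar\D^N(r)$. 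The ``in particular'' clause is then a general fact: a compact Fréchet--Urysohn space is sequentially compact (a cluster point of a given sequence is, by the Fréchet--Urysohn property, reached by a subsequence), and an arbitrary compact subset of a $k$-analytic space is covered by finitely many affinoid domains, so one extracts convergent subsequences in finitely many steps.

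Now for the compact case itself. The difficulty is that $\bar\D^N(r)$, and more generally $\mathcal M(\mathcal A)$, is not metrizable in general --- already a type-$2$ point of $\bar\D^1$ carries uncountably many branches as soon as $\tilde k$ is uncountable --- so no shortcut via second countability is available. The notion to use is that of an \emph{angelic} space, i.e.\ one in which every relatively countably compact subset $A$ is relatively compact and every point of $\overline A$ is a limit of a sequence from $A$: a compact angelic space is automatically Fréchet--Urysohn, and angelicity is inherited by subspaces. One is thus reduced to showing that $\mathcal M(\mathcal A)$, with its weak topology, is angelic.

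For this last point the strategy of \cite{Poineau} is to embed $\mathcal M(\mathcal A)$, as a topological space, into a space $C_p(K)$ of real-valued continuous functions on a suitable \emph{compact} space $K$ with the topology of pointwise convergence, and then to invoke Grothendieck's classical theorem that $C_p(K)$ is angelic when $K$ is compact, together with the fact that angelicity passes to (closed) subspaces. A point $x$ of $\mathcal M(\mathcal A)$ is a bounded multiplicative seminorm $f\mapsto|f(x)|$; it restricts to the fixed norm of $k$ and is continuous for the Banach norm, hence is determined by its restriction to the unit ball $\mathcal A^{\circ}$, and the weak topology is generated by the evaluations $x\mapsto|f(x)|$, $f\in\mathcal A^{\circ}$. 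The hard part --- and the step I expect to be the main obstacle --- is to organize the relevant functions into a genuinely compact test space $K$ (built from the unit ball, or from the characters $\mathcal A\to K'$ into complete extensions of $k$, with a topology for which all evaluations $f\mapsto|f(x)|$ are continuous and $K$ is compact) and to check that $\mathcal M(\mathcal A)\hookrightarrow C_p(K)$ is a closed topological embedding: the naive candidate, the closure of $\{\,|f(\cdot)|:f\in\mathcal A^{\circ}\,\}$ inside $\R^{\mathcal M(\mathcal A)}$, is not visibly a set of continuous functions, and verifying that it is amounts to a double-limit interchange that is essentially the angelicity one is trying to prove. Everything upstream and downstream of the construction of $K$ is formal.
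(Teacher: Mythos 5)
Your reductions are all correct but they are the easy half of the statement: Fréchet--Urysohn is indeed inherited by closed subspaces and finite unions of closed subspaces, analytic spaces are locally finite unions of affinoid domains, affinoid spaces sit as closed subspaces of polydisks, and a compact (even countably compact) Fréchet--Urysohn space is sequentially compact. The entire content of the theorem, however, is the affinoid case, and there you stop exactly where the work begins. You never produce the compact test space $K$, and you yourself observe that the only candidate you can name --- the pointwise closure of $\{\,|f(\cdot)| : f\in\mathcal{A}^{\circ}\,\}$ in $\R^{\mathcal{M}(\mathcal{A})}$ --- is not visibly a set of continuous functions; but continuity of the map $\mathcal{M}(\mathcal{A})\to C_p(K)$ requires precisely that every element of $K$ be continuous on $\mathcal{M}(\mathcal{A})$, which is the double-limit interchange the theorem encodes. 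As written the plan is circular at its only nontrivial step. There is also a structural obstacle to the plan itself: the natural test space is the unit ball of $\mathcal{A}$, which is not compact (it is an infinite-dimensional ball over a field that need not even be locally compact), so Grothendieck's theorem for compact $K$ is not the available input; one would at best need its extension to non-compact test spaces, and constructing the appropriate structure on $\mathcal{A}^{\circ}$ is again the hard part.

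Note that the paper quotes this theorem from \cite{Poineau} without proof, but it reproduces the actual argument in an analogous infinite-dimensional setting in \S\ref{section mor}, and that argument is of a completely different nature: it is a descent along base-field extensions, not an embedding into a function space. Given $A\subseteq X$ and $x\in\overline{A}$, one first invokes \cite[Th\'eor\`eme 4.22]{Poineau} (the model for Proposition \ref{prop bord de shilov}) to find a subfield $l\subseteq k$ of countable type over the prime field such that for every intermediate field $l\subseteq l'\subseteq k$ the point $x$ is the \emph{unique} point of the Shilov boundary of the fibre $\pi_{k/l'}^{-1}(\pi_{k/l'}(x))$. Over such an $l'$ the relevant affinoid spaces are first countable (cf.\ the proof of Theorem \ref{mor is FU}), so one can choose $a_n\in A$ with $\pi_{k/l'}(a_n)\to\pi_{k/l'}(x)$; the uniqueness of the Shilov point in the fibre is then what forces $a_n\to x$ in $X$ itself, after enlarging $l'$ within its countable-type class. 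If you want to complete your proof, this descent mechanism --- a countable-type subfield plus the Shilov-boundary rigidity of fibres --- is the missing idea, and it replaces, rather than supplements, the search for a compact $K$.
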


In the following, we will always deal with \emph{good} analytic spaces,
which is formed by the subcategory of analytic spaces 
that are locally ringed spaces modelled on affinoid spaces. 
In other words, any point in a good analytic space admits a neighbourhood isomorphic to an affinoid space.

\medskip

For any point $x$ in a $k$-analytic space $X$, the stalk $\mathcal{O}_{X,x}$ is a local $k$-algebra with maximal ideal $\mathfrak{m}_x$. It inherits an absolute value extending the one on $k$, and the completion of  $\mathcal{O}_{X,x}/ \mathfrak{m}_x$  is again called the \emph{completed residue field of $x$} and denoted by $\mathcal{H}(x)$. In particular, when $X$ is an affinoid space, this definition coincides with the previous one.

\medskip

The open polydisk of dimension $N$ and polyradius  $r = (r_1, \ldots, r_N)\in (\R^+_*)^N $ is the set
$$\D^N_k(r) =\{ x \in \bar{\D}^N_k (r) : |T_i(x)| <r_i,  i=1, \ldots, N \}.$$ 
It can be naturally endowed with a structure 
of good analytic space by writing it as the  increasing union of $N$-dimensional polydisks $\D^N_k(\rho)$ whose radii $\rho = (\rho_1, \cdots, \rho_N)\in (\R^+_*)^N $ satisfy $\rho_i < r_i$ for all $i=1, \ldots, N$.

\subsection{Analytic maps}\label{section maps}

 A morphism of $k$-affinoid spaces $\mathcal{M(A)} \to \mathcal{M(B)}$ is by definition one induced by a bounded morphism of Banach $k$-algebras $\varphi: \mathcal{B} \to \mathcal{A}$. 
The fibre of  $\varphi^\sharp: \mathcal{M(A)} \to \mathcal{M(B)}$ over a point 
 $y \in \mathcal{M(B)}$  is isomorphic to $\mathcal{M(A} \hat{\otimes}_{\mathcal{B}} \mathcal{H}(y))$, see \S \ref{subsection universal} for the notion of complete tensor product.
  Indeed, let $y\in \mathcal{M(B)}$ and let $\chi_y: \mathcal{B}\to \mathcal{H}(y)$ be the associated character.
  By definition, a point $x \in \mathcal{M(A)}$ is mapped to $y$ if and only if the composition  $\mathcal{B} \stackrel{\varphi}{\to} \mathcal{A}\to \mathcal{H}(x)$ factors through $\mathcal{H}(y)$, which is equivalent to the character $\chi_x$ factorizing through the $\mathcal{B}$-algebra morphism $\mathcal{A} \hat{\otimes}_{\mathcal{B}} \mathcal{H}(y) \to \mathcal{H}(x)$.
Pick  $ x \in \mathcal{M(A} \hat{\otimes}_{\mathcal{B}} \mathcal{H}(y))$ and let $\mathcal{A} \hat{\otimes}_{\mathcal{B}} \mathcal{H}(y) \to \mathcal{H}(x)$ be the associated character. 
The latter is equivalent to the data of morphisms $\mathcal{H}(y) \to  \mathcal{H}(x)$ and $\mathcal{A} \to \mathcal{H}(x)$ such that the composition $\mathcal{B} \stackrel{\varphi}{\to} \mathcal{A}\to \mathcal{H}(x)$ equals $\mathcal{B} \to \mathcal{H}(y) \to \mathcal{H}(x)$. In other words, the image of $x$ in $\mathcal{M(A)}$ is mapped to $y$ by $\varphi$.

\medskip

A morphism $\mathcal{M(A)} \to \mathcal{M(B)}$  is  a  \emph{closed immersion} when $\varphi$ is surjective and admissible.

\medskip

A surjective morphism $\varphi: \mathcal{T}_N \to \mathcal{A}$ is called \emph{distinguished} if the quotient norm $|.|_\varphi$ induced by $\varphi$ agrees with the supremum norm on $\mathcal{A}$, see \cite[\S 6.4.3]{BGR}.
 We say that $\mathcal{A}$ is distinguished if such an epimorphism exists.

  It can be shown that over an algebraically closed field $k$, every reduced algebra (i.e. without non-trivial nilpotents) is distinguished \cite[Theorem 6.4.3/1]{BGR}. The key property of distinguished epimorphisms is that the reduction $\widetilde{\mathcal{A}}$ is isomorphic to the quotient $\widetilde{\mathcal{T}}_N / \widetilde{\mathrm{ker} (\varphi)}$.

\medskip

From the definition one obtains the following useful result:

\begin{prop}\label{prop distinguished}
Let $X$ be a $k$-affinoid space and let $X \to \bar{\D}^N$ be a closed immersion induced by a distinguished morphism of Banach algebras.
Then, every analytic map on $X$ with values in a polydisk $\bar{\D}^M$ extends to an analytic map $\bar{\D}^N \to \bar{\D}^M$.
\end{prop}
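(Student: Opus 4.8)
The plan is to translate everything into the language of Banach algebras and to lift a bounded homomorphism $\mathcal{O}_X(X) \to \mathcal{T}_M$ along the given distinguished presentation of $X$. Write $A = \mathcal{O}_X(X)$, and let $\varphi \colon \mathcal{T}_N \to A$ be the distinguished epimorphism inducing the closed immersion $X \hookrightarrow \bar{\D}^N$, so that the quotient norm $|\cdot|_\varphi$ on $A$ coincides with the supremum norm. Giving an analytic map $X \to \bar{\D}^M$ is the same as giving a bounded $k$-algebra homomorphism $\psi \colon \mathcal{T}_M \to A$, which in turn amounts to choosing $M$ elements $g_1, \dots, g_M \in A$ with $\|g_j\|_{\sup} \le 1$; indeed $\mathcal{T}_M = k\{S_1, \dots, S_M\}$ is free in the appropriate sense and boundedness of $\psi$ forces $\|\psi(S_j)\| \le 1$, while conversely any such choice of $g_j$ extends uniquely to a bounded homomorphism because a power series $\sum_I a_I S^I$ with $|a_I| \to 0$ maps to the convergent series $\sum_I a_I g^I$ in $A$.

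First I would use the hypothesis that $\varphi$ is distinguished: since $|g_j|_\varphi = \|g_j\|_{\sup} \le 1$, for each $j$ and each $\varepsilon > 0$ one can find a preimage in $\mathcal{T}_N$ of norm $\le 1 + \varepsilon$. The cleaner move is to note that the unit ball of $A$ for the supremum norm is exactly the image of the unit ball of $\mathcal{T}_N$: this is precisely the content of the reduction statement $\widetilde{A} \cong \widetilde{\mathcal{T}}_N / \widetilde{\ker \varphi}$ recalled just before the proposition, together with the fact that $\|\cdot\|_{\sup}$ on $A$ is the quotient norm and takes values in $|k^\times| \cup \{0\}$ when we only care about the ball of radius $1$ (here I would invoke that over an algebraically closed $k$ the relevant norms on a reduced affinoid are attained). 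So I can choose $h_j \in \mathcal{T}_N$ with $\|h_j\| \le 1$ and $\varphi(h_j) = g_j$. Then $S_j \mapsto h_j$ defines a bounded homomorphism $\Psi \colon \mathcal{T}_M \to \mathcal{T}_N$, i.e. an analytic map $\bar{\D}^N \to \bar{\D}^M$, and by construction $\varphi \circ \Psi = \psi$, which says exactly that this map restricts to the given map $X \to \bar{\D}^M$ on the closed subspace $X$.

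The step I expect to be the main obstacle is the norm-$1$ lifting: a priori the distinguishedness of $\varphi$ only gives that the quotient \emph{semi}norm equals the supremum seminorm, which yields preimages of norm $\le 1 + \varepsilon$ for every $\varepsilon$ but not necessarily of norm $\le 1$. I would resolve this either by appealing to the integral structure — the statement $\widetilde{A} = \widetilde{\mathcal{T}}_N/\widetilde{\ker\varphi}$ says $A^{\circ}$, the ring of power-bounded elements, which for reduced $A$ over algebraically closed $k$ is the closed unit ball, is the image of $\mathcal{T}_N^{\circ}$ — or, failing an exact lift, by observing that a lift of norm $\le 1 + \varepsilon$ for small $\varepsilon$ is still harmless: one can rescale, or simply note that $h_j \in \mathcal{T}_N$ with $\|h_j\| \le 1+\varepsilon$ still defines an analytic map into a slightly larger polydisk and then absorb the discrepancy using that $|k^\times|$ is dense in $\R_{>0}$ to adjust the target, but the intended argument is surely the clean one via $\widetilde{A} \cong \widetilde{\mathcal{T}}_N/\widetilde{\ker\varphi}$, which gives the lift on the nose. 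A routine check that the construction is compatible with the evident commutative square, and that $\Psi$ is unique up to nothing that matters, completes the proof.
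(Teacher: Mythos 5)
Your proposal is correct and follows essentially the same route as the paper: identify the map $X \to \bar{\D}^M$ with elements $f_1,\dots,f_M$ of the affinoid algebra of sup-norm at most $1$, use the distinguished epimorphism $\T_N \to \mathcal{A}$ to lift each to an element of $\T_N$ of norm at most $1$, and read off the extension $\bar{\D}^N \to \bar{\D}^M$. The paper simply asserts the norm-preserving lift; your extra discussion of why the lift exists on the nose (via $\widetilde{\mathcal{A}} \cong \widetilde{\T}_N/\widetilde{\ker\varphi}$) fills in the one step the paper leaves implicit.
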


\begin{proof}
Let $\mathcal{A}$ be the underlying affinoid algebra of $X$.
Pick an analytic map $f: X \to \bar{\D}^M$, 
which by definition is given by elements $f_1 , \ldots , f_M \in \mathcal{A}$ with $|f_i |_{\sup} \le 1$.
Fix a   distinguished epimorphism $\T_N \to \mathcal{A}$.
For $l= 1, \ldots , M$, we may lift $f_l$ to an element $g_l$ in $\T_N$ having the same norm.
The resulting analytic map $g = (g_1, \ldots , g_M): \bar{\D}^N \to \bar{\D}^M$ agrees with $f$ on the affinoid space $X$.
\end{proof}

Given any two $k$-analytic spaces $X$ and $Y$, we let $\mathrm{Mor}_k(X, Y)$ be the set of all  analytic maps from $X$ to $Y$.

\subsection{Analytification of algebraic varieties}

To every algebraic variety $X$ over $k$ one can associate a $k$-analytic space $X^\an$ in a functorial way; see \cite[\S 3.4]{Berk} for a detailed construction. 

In the case of an affine variety $X= \mathrm{Spec}(A)$, where $A$ is a finitely generated  $k$-algebra, then the set $X^{\an}$ consists of all the multiplicative seminorms on $A$ whose restriction to $k$ coincides with the norm on $k$. This set is endowed with the weakest topology such that all the maps of the form $x \in X^{\an} \mapsto |f(x)|$ with $f\in A$ are continuous. 

Observe that any $k$-point $x\in X$ corresponds to a morphism of $k$-algebras $A\to k$ and its composition with the norm on $k$
defines a rigid point in $X^{\an}$. Since $k$ is algebraically closed, one obtains in this way an identification of the set of closed points in $X$ with the set of rigid points in $X^{\an}$.

Let $X$ be a general algebraic variety  and fix  an affine open cover.
The analytification of a general algebraic variety $X$  is obtained by glueing together the analytification of its affine charts in natural way. Analytifications of algebraic varieties are good analytic spaces, and closed points are in natural bijection with rigid points as in the affine case.

\subsection{Boundary and interior}\label{section boundary}

Any $k$-analytic space $X$ comes with natural notions of boundary and interior. We shall restrict our attention to good $k$-analytic spaces.
\medskip

A point $x$ in an affinoid space $X$ lies in the \emph{interior} of $X$ if there exists a closed immersion $\varphi: X \to \bar{\D}^N (r)$ for some polyradius $r$ and some integer $N$ such that $\varphi (x)$ lies in the open polydisk $\D^N (r)$.

If $X$ is a good analytic space, a point $x$  belongs to its interior if it admits an affinoid neighbourhood $U$ such that $x$ belongs to the interior of $U$.
We let $\Int(X)$ be  the open set consisting of all the interior points in $X$. 
Its complement $\partial (X)$ is called the boundary of $X$. It is a closed subset of $X$.

The analytification of any algebraic variety is boundaryless.

\medskip

In the remaining of this section, we explain how to compute the interior of a strictly $k$-affinoid space $X=\mathcal{M(A)}$. Recall that the spectral radius of $f \in \mathcal{A}$ is defined by
$$\rho (f) = \lim_{n \to \infty} \| f^n\|^{1/n},$$
where $\| \cdot \|$ is the Banach norm on $\mathcal{A}$. 
The supremum seminorm on  $\mathcal{A}$ is 
defined by $|f|_{\sup} := \sup \{ |f(x)| : x \in \mathcal{M(A)} \} $ for $f \in \mathcal{A}$.
The spectral radius and the supremum seminorm agree \cite[Theorem 1.3.1]{Berk}.
 
When $\mathcal{A}$ is reduced, then $\rho$ is  a norm equivalent to $\|\cdot \|$. 
The set $\mathcal{A}^{\circ} = \{f\in \mathcal{A}: \rho (f) \le 1 \}$ is a subring of $\mathcal{A}$ and   
$\mathcal{A}^{\circ \circ} = \{f\in \mathcal{A}: \rho (f) < 1 \}$ an ideal.
The reduction of $\mathcal{A}$ is  then defined as $\widetilde{\mathcal{A}}:= \mathcal{A}^{\circ}/\mathcal{A}^{\circ \circ}$, and 
the reduction of $X$ is $\widetilde{X} = \mathrm{Spec}(\widetilde{A})$.

Observe that  Noether's normalization Lemma \cite[Corollary  6.1.2/2]{BGR} implies that 
for any  strictly $k$-affinoid algebra $\mathcal{A}$, the reduction 
 $\widetilde{\mathcal{A}}$ is a finitely generated $\tilde{k}$-algebra, and thus  $\widetilde{X}$ is an affine variety over the residue field $\tilde{k}$.
The reduction of the closed polydisk $\bar{\D}^N_k$ is the affine space $\A^N_{\tilde{k}}$.

\medskip

 The reduction map $\mathrm{red}: X \to \widetilde{X}$ is defined as follows. Every bounded morphism of Banach $k$-algebras $\mathcal{A} \to \mathcal{B}$ induces a morphism between their reductions $\widetilde{\mathcal{A}} \to \widetilde{\mathcal{B}}$. 
In particular, from the character $\chi_x: \mathcal{A} \to \mathcal{H}(x)$
associated to a point $x\in X$ we obtain a  $\tilde{k}$-algebra morphism $\widetilde{\chi}_x: \widetilde{\mathcal{A}} \to \widetilde{\mathcal{H}(x)}$. We set $\mathrm{red}(x) := \mathrm{Ker}(\widetilde{\chi}_x)$. This map is anticontinuous for the Zariski topology, meaning that the inverse image of a closed set is an open set.

\begin{lema}\label{interieur - points fermes}
Let $X$ be a strictly $k$-affinoid space. Then,
\begin{equation*}
\Int(X) = \{ x\in X: \mathrm{red}(x) \mbox{ is a closed point}\}.
\end{equation*}
\end{lema}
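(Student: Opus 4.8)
The strategy is to compare the reduction of $X$ with the reduction of an ambient polydisk into which $X$ embeds, and to reduce the statement to the analogous assertion for the polydisk. Fix a closed immersion $\varphi\colon X\hookrightarrow \bar\D^N(r)$ induced by an admissible surjection $k\{r^{-1}T\}\twoheadrightarrow\mathcal A$ (we may even assume it is distinguished after adjusting radii, so that $\widetilde{\mathcal A}\cong\widetilde{\mathcal T}_N/\widetilde{\ker\varphi}$, identifying $\widetilde X$ with a closed subvariety of $\widetilde{\bar\D^N(r)}$). First I would record the functoriality: the reduction map commutes with the closed immersion, i.e. the square relating $\mathrm{red}_X$, $\mathrm{red}_{\bar\D^N(r)}$, $\varphi$ and the inclusion $\widetilde X\hookrightarrow\widetilde{\bar\D^N(r)}$ commutes, and a point of $\widetilde X$ is closed in $\widetilde X$ iff it is closed in the ambient affine space (since $\widetilde X$ is closed). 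Likewise, by the very definition of the interior of an affinoid space, $x\in\Int(X)$ iff $\varphi(x)\in\D^N(r)$; and the interior of $X$ does not depend on the chosen embedding. So it suffices to prove the statement for $X=\bar\D^N(r)$ itself.

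\textbf{The polydisk case.} For the closed polydisk $\bar\D^N(r)$, the reduction is $\A^N_{\tilde k}$ (when $r=(1,\dots,1)$; in general an affine space over $\tilde k$ after suitable scaling, so I will treat $r=(1,\dots,1)$ and indicate the rescaling). A point $x\in\bar\D^N$ lies in $\D^N$ exactly when $|T_i(x)|<1$ for all $i$, i.e. when each coordinate function $T_i$ lies in $\mathcal H(x)^{\circ\circ}$, i.e. when $\widetilde{T_i(x)}=0$ in $\widetilde{\mathcal H(x)}$ for all $i$; this says precisely that $\widetilde{\chi}_x$ kills each $\widetilde{T}_i$, hence that $\mathrm{red}(x)=\ker\widetilde{\chi}_x$ contains $(\widetilde T_1,\dots,\widetilde T_N)$, which is the maximal ideal of the origin; since $(\widetilde T_1,\dots,\widetilde T_N)$ is maximal, this forces $\mathrm{red}(x)$ to equal it, a closed point. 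Conversely, if $|T_i(x)|=1$ for some $i$, then $\widetilde{T}_i\notin\mathrm{red}(x)$, so $\mathrm{red}(x)$ is a prime ideal not containing that coordinate; I would then argue it is not maximal — e.g. because $\widetilde{\mathcal H(x)}$ has transcendence degree (over $\tilde k$) equal to the number of indices $i$ with $|T_i(x)|$ not in $|k^\times|$ plus the dimension coming from the residue extension, and in any case the residue field $\widetilde{\mathcal T}_N/\mathrm{red}(x)$ is not algebraic over $\tilde k$, so the point is not closed. More concretely: if $|T_1(x)|=1$ then $\widetilde T_1\bmod\mathrm{red}(x)$ is a nonzero element, and one checks it is transcendental over $\tilde k$ (otherwise it would be a root of a polynomial with coefficients in $k^\circ$, forcing $|T_1(x)|$-values that pin $x$ into a smaller disk, contradicting $|T_1(x)|=1$ being attained only at points whose reduction is a generic-type point), whence $\mathrm{red}(x)$ is contained in a strictly larger prime and is not maximal.

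\textbf{Assembling.} Combining the two directions in the polydisk case gives the equality $\Int(\bar\D^N(r))=\{x:\mathrm{red}(x)\text{ closed}\}$, and then the functoriality of the first paragraph transports it to an arbitrary strictly $k$-affinoid $X$: $x\in\Int(X)\iff\varphi(x)\in\D^N(r)\iff\mathrm{red}_{\bar\D^N(r)}(\varphi(x))$ is closed in $\A^N_{\tilde k}\iff\mathrm{red}_X(x)$ is closed in $\widetilde X$, where the last step uses that $\widetilde X$ is a closed subvariety of $\A^N_{\tilde k}$ and that $\mathrm{red}_{\bar\D^N(r)}\circ\varphi$ factors through $\widetilde X\hookrightarrow\A^N_{\tilde k}$ as $\iota\circ\mathrm{red}_X$. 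I expect the main obstacle to be the converse direction in the polydisk case — showing that a reduction point lying on a coordinate hyperplane's complement is genuinely non-closed — which is really the statement that the ``generic'' reduction corresponds to the Shilov/Gauss-type points; this is where one must invoke that for reduced strictly affinoid algebras $\rho$ is a norm and the reduction is a finitely generated $\tilde k$-algebra (Noether normalization), so that dimension/transcendence-degree bookkeeping is available. Everything else is formal functoriality of the reduction functor.
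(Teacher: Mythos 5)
There is a genuine gap, and it starts with the definition of the interior. You write that ``by the very definition, $x\in\Int(X)$ iff $\varphi(x)\in\D^N(r)$'' for your fixed closed immersion $\varphi$. But the definition in \S\ref{section boundary} is existential: $x\in\Int(X)$ iff \emph{there exists some} closed immersion sending $x$ into the open polydisk. In particular $\Int(\bar\D^N)$ is strictly larger than $\D^N$: composing with the translation $T\mapsto T-a$, $|a|\le 1$, shows that every residue polydisk $\mathrm{red}^{-1}(\tilde a)$ over a closed point of $\A^N_{\tilde k}$ lies in the interior (the paper uses exactly this later: the components of $\Int(\bar\D^r)$ are in bijection with the $\tilde k$-points of $\A^r_{\tilde k}$). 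Your ``polydisk case'' therefore proves the wrong statement: it correctly identifies $\D^N$ with $\mathrm{red}^{-1}(\text{origin})$, but the lemma for $X=\bar\D^N$ asserts $\Int(\bar\D^N)=\mathrm{red}^{-1}(\{\text{closed points}\})$. Your converse step --- that $|T_i(x)|=1$ for some $i$ forces $\mathrm{red}(x)$ to be non-closed --- is false: take $N=1$ and the rigid point $x$ with $T(x)=1$; then $\mathrm{red}(x)=(\widetilde T-1)$ is closed, $\widetilde T\bmod\mathrm{red}(x)=1$ is algebraic over $\tilde k$, and $x$ does lie in $\Int(\bar\D)$ (which is $\bar\D$ minus the Gauss point).

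Because the condition is existential, the direction ``$x\in\Int(X)\Rightarrow\mathrm{red}_X(x)$ closed'' cannot be obtained by testing a single embedding; one needs an intrinsic characterization of interior points. This is exactly what the paper invokes: Berkovich's Proposition 2.5.2, which says that for $x\in\Int(X)$ the image of $\widetilde\chi_x:\widetilde{\mathcal A}\to\widetilde{\mathcal H(x)}$ is integral over $\tilde k$, hence $\widetilde{\mathcal A}/\ker\widetilde\chi_x$ is a field and $\mathrm{red}(x)$ is closed. Your transcendence-degree bookkeeping does not substitute for this, since the relevant dichotomy is not ``$|T_i(x)|<1$ for all $i$'' versus its negation. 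The other direction of your argument can be salvaged along the lines of the paper's proof: if $\mathrm{red}_X(x)$ is closed, then so is its image in $\A^N_{\tilde k}$, and after composing $\varphi$ with a translation of $\bar\D^N$ one arranges $\varphi(x)\in\D^N$, which \emph{does} witness $x\in\Int(X)$ under the existential definition.
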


\begin{proof}
Let $\varphi: X \to \bar{\D}^N$ be a closed immersion. The following diagram is commutative by construction:
\begin{equation*}
 \xymatrix{
X \ar[d]_{\mathrm{red}} \ar[r]^{\varphi} & \bar{\D}^N \ar[d]^{\mathrm{red}}\\
\widetilde{X}  \ar[r]^{\widetilde{\varphi}} &\A^N_{\tilde{k}}\\
}
\end{equation*}
Let $\mathcal{A}$ be the underlying affinoid algebra of $X$ and pick any $x\in X$. If  its reduction  $\tilde{x} = \mathrm{red} (x)$  is a closed point then so is $\widetilde{\varphi}( \tilde{x})$.
The inverse image of $\widetilde{\varphi}(\tilde{x})$ is isomorphic to an open polydisk.  Up to  composing $\varphi$ with an automorphism of $\bar{\D}^N$, we may assume that $\mathrm{red}^{-1} (\widetilde{\varphi}(\tilde{x}))$ is isomorphic to
$\D^N$.
The commutativity of the diagram implies that $\varphi(x)$ lies in $\D^N$.

 Pick a point $x \in \Int (X)$.
By \cite[Proposition 2.5.2]{Berk}, the image of the
morphism of $\tilde{k}$-algebras $\widetilde{\chi}_x: \widetilde{\mathcal{A}} \to \widetilde{\mathcal{H}}(x)$ induced by  $\chi_x$ is integral over $\tilde{k}$. This implies that $\tilde{\chi}_x(\widetilde{\mathcal{A}})\simeq\widetilde{\mathcal{A}}/\mathrm{Ker}(\widetilde{\chi}_x)$ is a field.
 Thus, $\tilde{x}$ is a closed point of $\widetilde{X}$.
\end{proof}

\begin{prop}\label{interior- finite morphism}
Let  $X= \mathcal{M(A)}$ and $Y=\mathcal{M(B)}$ be  $k$-affinoid spaces, and let $f:X \to Y$ be a finite morphism. Then, $\Int(X)= f^{-1}(\Int (Y))$.
\end{prop}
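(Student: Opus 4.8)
The plan is to reduce the statement to the corresponding fact about reductions via Lemma \ref{interieur - points fermes}, using that a finite morphism of affinoid spaces induces an integral extension on reductions. First I would recall that for a finite morphism $f: X = \mathcal{M}(\mathcal{A}) \to Y = \mathcal{M}(\mathcal{B})$, the algebra $\mathcal{A}$ is finite as a $\mathcal{B}$-module, and after replacing the Banach norms by equivalent ones (we may, since both algebras are to be assumed reduced in order to speak of reductions — but more carefully one argues directly) the map $\mathcal{B}^\circ \to \mathcal{A}^\circ$ makes $\widetilde{\mathcal{A}}$ an integral — in fact module-finite — $\widetilde{\mathcal{B}}$-algebra; this is the content of \cite[Proposition 2.5.2]{Berk} applied to the finite morphism, giving a commutative square relating $\mathrm{red}_X$, $\mathrm{red}_Y$ and the induced $\widetilde{f}: \widetilde{X} \to \widetilde{Y}$ on affine $\tilde{k}$-varieties.

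Next, the key algebraic input is the lying-over / going-up behavior of integral extensions: if $\widetilde{f}: \widetilde{X} \to \widetilde{Y}$ is induced by an integral ring extension, then a point $\tilde{x} \in \widetilde{X}$ is closed if and only if its image $\widetilde{f}(\tilde{x})$ is closed in $\widetilde{Y}$. One direction is immediate: the image of a closed point under a morphism whose fibers are finite (integral extensions have finite fibers over fields) need not in general be closed, so instead I would argue that closedness of $\tilde{x}$ is equivalent to $\widetilde{\mathcal{A}}/\widetilde{\mathfrak{p}}$ being a field where $\widetilde{\mathfrak{p}} = \mathrm{red}_X(x)$, and since $\widetilde{\mathcal{A}}/\widetilde{\mathfrak{p}}$ is integral over $\widetilde{\mathcal{B}}/\widetilde{\mathfrak{q}}$ with $\widetilde{\mathfrak{q}} = \widetilde{f}(\widetilde{\mathfrak{p}})$, the standard fact that an integral domain integral over a subring is a field iff that subring is a field finishes it. Thus $\mathrm{red}_X(x)$ is closed $\iff \mathrm{red}_Y(f(x))$ is closed.

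Combining with Lemma \ref{interieur - points fermes} applied to both $X$ and $Y$, and the commutativity $\mathrm{red}_Y \circ f = \widetilde{f} \circ \mathrm{red}_X$, we get
\[
x \in \Int(X) \iff \mathrm{red}_X(x) \text{ closed} \iff \mathrm{red}_Y(f(x)) \text{ closed} \iff f(x) \in \Int(Y),
\]
which is exactly $\Int(X) = f^{-1}(\Int(Y))$. One subtlety I would be careful about: Lemma \ref{interieur - points fermes} and the reduction formalism as stated require \emph{strictly} $k$-affinoid and reduced algebras, whereas the proposition is stated for general $k$-affinoid $X, Y$; I would handle this either by invoking the more general graded-reduction formalism of Berkovich–Temkin (replacing $\tilde{k}$ by the graded residue ring), or — more in the spirit of this paper — by noting that the interior is insensitive to nilpotents and to rescaling, so one reduces to the strictly affinoid reduced case by a base change of the radii and passing to $X_{\mathrm{red}}$.

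\textbf{Main obstacle.} The delicate point is matching the reduction picture across a finite morphism when the algebras are not strictly affinoid or not reduced — i.e.\ justifying that "finite morphism $\Rightarrow$ integral on (graded) reductions" and that the reduction square commutes in enough generality. The purely affine-algebraic step (integral domain integral over a subring is a field iff the subring is) is routine; the geometric bookkeeping of reductions is where care is needed.
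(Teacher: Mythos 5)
Your proposal is correct and follows essentially the same route as the paper: both reduce the statement to Lemma \ref{interieur - points fermes}, use the commutative square relating the reduction maps to $\widetilde{f}$, invoke integrality of $\widetilde{\mathcal{B}} \to \widetilde{\mathcal{A}}$ (the paper cites \cite[Theorem 6.3.5/1]{BGR}), and conclude via the fact that a domain integral over a subring is a field iff the subring is. The paper likewise restricts its written proof to the strictly affinoid case and defers the general case to Temkin's graded reduction, exactly as you anticipate.
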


This result is a consequence from \cite[Proposition 2.5.8]{Berk} and \cite[Corollary 2.5.13]{Berk}. Here we give a proof in the strictly $k$-affinoid case. 

\begin{proof}
We prove the result only in the strictly affinoid case. In order to adapt this proof to the general one, one needs to use Temkin's graded reduction of affinoid algebras (\cite{Temkinproperties1,Temkinproperties2}). 

The morphism $f: X \to Y$ induces the following commutative diagram:
\begin{equation*}
 \xymatrix{
X \ar[d]_{\mathrm{red}} \ar[r]^{f} & Y \ar[d]^{\mathrm{red}}\\
\mathrm{Spec}(\widetilde{\mathcal{A}})  \ar[r]^{\tilde{f}} &\mathrm{Spec}(\widetilde{\mathcal{B}})\\
}
\end{equation*} 
Let $x$ be a point in $\Int(X)$. By Lemma \ref{interieur - points fermes}, its image $f(x)$ belongs to $\Int (Y)$.

Let now $x \in X$ be such that $f(x) = y$ lies in $\Int(Y)$. By the previous lemma, we have to show that $\mathrm{red}(x)$ is a closed point of $\widetilde{X}$.
 Consider the ring homomorphism $\varphi: \widetilde{\mathcal{B}} \to \widetilde{\mathcal{A}}$ inducing $\tilde{f}$. It induces a morphism $\varphi^\prime: \widetilde{\mathcal{B}}/ \mathrm{ker}(\widetilde{\chi}_y) \to \widetilde{\mathcal{A}}/ \mathrm{ker}(\widetilde{\chi}_x)$, as the diagram above is commutative.
Observe that $\varphi$ is integral, since it is finite (\cite[Theorem 6.3.5/1]{BGR}), and thus $\varphi^\prime$ is also integral.
As $y \in \Int(Y)$, by Lemma \ref{interieur - points fermes} the quotient $\widetilde{\mathcal{B}}/ \mathrm{ker}(\widetilde{\chi}_y)$ is a field.
This implies that $\widetilde{\mathcal{A}}/\ker(\chi_x)$ is a field and thus that $\mathrm{red}(x)$ is a closed point.
\end{proof}

\subsection{Basic tubes}\label{section tubes}

We introduce the following terminology.

\begin{defini}
A $k$-analytic space  $X$ is called a basic tube  if there exists a reduced equidimensional strictly $k$-affinoid space $\hat{X}$ and a  closed point $\tilde{x}$ in its reduction  such that $X$ is isomorphic to $\mathrm{red}^{-1}(\tilde{x})$.
\end{defini}

 By convention, a basic tube is reduced.

\begin{theorem}\label{thm:basic tube}
A basic tube is connected.
\end{theorem}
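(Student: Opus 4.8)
The statement is that a basic tube $X = \mathrm{red}^{-1}(\tilde{x})$ is connected, where $\hat{X} = \mathcal{M}(\mathcal{A})$ is a reduced equidimensional strictly $k$-affinoid space and $\tilde{x}$ is a closed point of $\widetilde{X} = \mathrm{Spec}(\widetilde{\mathcal{A}})$. The plan is to reduce to the case of a polydisk via Noether normalization and finite morphisms, and then use the known fact that (open) polydisks are connected.

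First I would invoke Noether normalization at the level of reductions: since $\widetilde{\mathcal{A}}$ is a finitely generated $\tilde{k}$-algebra of some dimension $d$, there is a finite injection $\tilde{k}[T_1,\dots,T_d] \hookrightarrow \widetilde{\mathcal{A}}$, and by the theory of distinguished/finite morphisms of affinoid algebras this lifts to a finite morphism $\psi : \hat{X} \to \bar{\mathbb{D}}^d$ of strictly $k$-affinoid spaces inducing $\widetilde{\psi} : \widetilde{X} \to \mathbb{A}^d_{\tilde{k}}$. (One uses here that $\mathcal{A}$ is reduced, hence distinguished over the algebraically closed field $k$, so that the reduction of $\psi$ is exactly the chosen Noether normalization; see \cite[Theorem 6.4.3/1]{BGR} and the discussion preceding Proposition \ref{prop distinguished}.) Since $\widetilde{\psi}$ is finite, the image $\widetilde{\psi}(\tilde{x})$ is a closed point $\tilde{y}$ of $\mathbb{A}^d_{\tilde{k}}$, and $\mathrm{red}^{-1}(\tilde{y}) \subseteq \bar{\mathbb{D}}^d$ is, after translating, the open polydisk $\mathbb{D}^d$, which is connected.

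Next I would use that $\psi$ is finite and that the reduction map commutes with $\psi$ (the commutative square relating $\mathrm{red}$ on $\hat{X}$ and on $\bar{\mathbb{D}}^d$, as in the proof of Proposition \ref{interior- finite morphism}) to conclude that $\psi$ restricts to a finite surjective morphism $X = \mathrm{red}^{-1}(\tilde{x}) \to \mathrm{red}^{-1}(\tilde{y}) = \mathbb{D}^d$. Finiteness of this restriction follows because the fiber of $\mathrm{red}$ over $\tilde{x}$ maps into the fiber over $\tilde{y}$, and $\psi$ itself is finite; surjectivity follows from the going-up/lying-over property for the integral extension $\widetilde{\mathcal{A}}$ over the polynomial ring, localized at $\tilde{y}$. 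Then connectedness of $X$ would follow from connectedness of $\mathbb{D}^d$ \emph{provided} the fibers of $\psi|_X$ do not disconnect things — concretely, one wants that $X$ has no partition into two nonempty disjoint (relatively) open-and-closed pieces, which would map to a similar partition of $\mathbb{D}^d$ unless the two pieces meet in a common fiber. Ruling that out is where I expect the real work to be.

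\textbf{Main obstacle.} The delicate point is precisely that a finite surjection onto a connected space need not have connected source, so connectedness of $\mathbb{D}^d$ alone is not enough. The standard fix is to control the fibers: one shows that each fiber $\psi^{-1}(z) \cap X$ for $z \in \mathbb{D}^d$ is connected — indeed, over the completed residue field $\mathcal{H}(z)$ the fiber is $\mathcal{M}(\mathcal{A} \widehat{\otimes}_{\mathcal{T}_d} \mathcal{H}(z))$, a finite $\mathcal{H}(z)$-analytic space, whose connected components correspond to the idempotents of that finite algebra; and since $\mathcal{A}$ is reduced and equidimensional and the extension is finite, one argues (using that $\tilde{x}$ is a single closed point and the geometry of $\widetilde{\mathcal{A}}$ localized there, e.g. that $\mathrm{Spec}$ of the local ring at $\tilde{x}$ is connected) that the fiber cannot split. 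Combined with properness/compactness of the fibers and the connectedness of the base, a standard topological argument (a finite map with connected fibers over a connected, locally connected base, together with openness or at least the fact that the induced decomposition of the base by "number of components" is locally constant) forces $X$ to be connected. I would organize the proof around this fiberwise analysis, citing \cite{Berk} for the structure of fibers of finite morphisms and for local connectedness of analytic spaces, and being careful that "equidimensional" is what prevents the reduction $\widetilde{\mathcal{A}}$ from having embedded or lower-dimensional components that could create extra idempotents near $\tilde{x}$.
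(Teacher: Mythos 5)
There is a genuine gap, and it sits exactly where you predicted the "real work" would be: the claim that each fiber $\psi^{-1}(z)\cap X$ is connected is false, so the fiberwise argument you propose to organize the proof around cannot work. Take $\mathcal{A}=k\{y\}$ with $\mathrm{char}(k)\neq 2$, so $\hat{X}=\bar{\D}$ and the basic tube over the closed point $\tilde{y}=0$ is $X=\D$. A perfectly legitimate Noether normalization is $\T_1=k\{x\}\hookrightarrow k\{y\}$, $x\mapsto y^{2}$, whose reduction $\tilde{k}[x]\hookrightarrow\tilde{k}[y]$ is finite; the induced map $\psi:\bar{\D}\to\bar{\D}$ sends $X=\D$ onto $\mathrm{red}^{-1}(0)=\D$. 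But for any rigid $a$ with $0<|a|<1$ the fiber $\psi^{-1}(a)\cap X=\{\pm\sqrt{a}\}$ consists of two points and is disconnected, even though the tube $X$ is connected. So connectedness of the fibers is neither true nor the mechanism by which the theorem holds, and "reduced, equidimensional, single closed point $\tilde{x}$" does not repair it: the idempotent count of $\mathcal{A}\hat{\otimes}_{\T_d}\mathcal{H}(z)$ genuinely jumps from fiber to fiber. (The first part of your plan — lifting the Noether normalization of $\widetilde{\mathcal{A}}$ to a finite map $\hat{X}\to\bar{\D}^{d}$ compatible with reduction, and getting a proper surjection $X\to\D^{d}$ — is fine; it is the descent of connectedness along that map that breaks.)

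For comparison: the paper does not prove this statement at all. It records it as a deep theorem of Bosch (for $k$ algebraically closed), later generalized to arbitrary base fields by Poineau, and simply cites those references. That is not an accident — the connectedness of formal fibers of a reduced affinoid space is a substantial result whose known proofs go through the full reduction/formal-model theory (stability of the residue field, behaviour of the supremum seminorm under finite extensions, etc.), not through a soft topological argument over the polydisk. If you want to include an argument here, the honest options are either to cite Bosch and Poineau as the paper does, or to commit to reproducing a genuine proof, which is a different order of difficulty from the sketch above.
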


The fact that any basic tube over an algebraically closed field is connected is a deep theorem due to
\cite{Bosch}, which was generalized to arbitrary base fields in  \cite{Poineaucomposantes}.

\begin{example}
Let $a_1, \cdots, a_m$ be type II points in $\mathbb{P}^{1, \an}$ as defined in Berkovich classification of points in  $\mathbb{P}^{1, \an}$, see \cite[\S 1.4.4]{Berk}.
Then every connected component of $\mathbb{P}^{1, \an} \setminus \{ a_1, \cdots, a_m\}$ is a basic tube. 
\end{example}

\begin{prop}\label{prop tube cc aff}
A $k$-analytic space $X$ is a basic tube if and only if it is isomorphic to  a connected component of the interior of some equidimensional strictly $k$-affinoid space.
\end{prop}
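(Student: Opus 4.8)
\textbf{Proof plan for Proposition \ref{prop tube cc aff}.}

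The plan is to prove the two implications separately, using Lemma \ref{interieur - points fermes} to translate between the "fibre of the reduction map" description of a basic tube and the "connected component of the interior" description. First I would recall that for a reduced equidimensional strictly $k$-affinoid space $\hat{X}$ with reduction $\widetilde{X} = \mathrm{Spec}(\widetilde{\mathcal{A}})$, Lemma \ref{interieur - points fermes} gives $\Int(\hat{X}) = \mathrm{red}^{-1}(\{\text{closed points of } \widetilde{X}\})$, and that $\mathrm{red}$ is anticontinuous for the Zariski topology.

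For the forward implication, suppose $X \cong \mathrm{red}^{-1}(\tilde{x})$ for a closed point $\tilde{x}$ of $\widetilde{X}$, with $\hat{X}$ reduced and equidimensional. By Theorem \ref{thm:basic tube} the set $\mathrm{red}^{-1}(\tilde{x})$ is connected, and by Lemma \ref{interieur - points fermes} it is contained in $\Int(\hat{X})$ since $\tilde{x}$ is closed; it is also open in $\hat{X}$ because $\{\tilde{x}\}$ is Zariski-closed and $\mathrm{red}$ is anticontinuous, hence $\mathrm{red}^{-1}(\tilde{x})$ is open in $\Int(\hat{X})$. So $X$ is a connected open subset of $\Int(\hat{X})$. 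To see it is an entire connected component, I would check that it is also closed in $\Int(\hat{X})$: if $y \in \Int(\hat{X})$ lies in the closure of $\mathrm{red}^{-1}(\tilde{x})$, then since $\mathrm{red}$ maps closures into closures of images of closures... more precisely, $\mathrm{red}^{-1}(\tilde{x})$ being open means its complement in $\Int(\hat{X})$ is $\bigcup_{\tilde{z} \neq \tilde{x}} \mathrm{red}^{-1}(\tilde{z}) = \mathrm{red}^{-1}(\widetilde{X}\setminus\{\tilde{x}\})$, and $\widetilde{X}\setminus\{\tilde{x}\}$ is Zariski-open so its preimage is closed in $\hat{X}$ by anticontinuity, hence closed in $\Int(\hat{X})$; therefore $\mathrm{red}^{-1}(\tilde{x})$ is clopen in $\Int(\hat{X})$, and being nonempty and connected it is a connected component of $\Int(\hat{X})$.

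For the converse, let $Y$ be a connected component of $\Int(\hat{X})$ for some equidimensional strictly $k$-affinoid $\hat{X}$; replacing $\hat{X}$ by $\hat{X}_{\mathrm{red}}$ (which does not change the underlying topological space, the reduction, or the interior) I may assume $\hat{X}$ is reduced. By Lemma \ref{interieur - points fermes}, $\Int(\hat{X}) = \mathrm{red}^{-1}(C)$ where $C$ is the set of closed points of $\widetilde{X}$. The decomposition of $\Int(\hat{X})$ into the sets $\mathrm{red}^{-1}(\tilde{x})$, $\tilde{x} \in C$, is a partition into clopen subsets (each is open by anticontinuity applied to the closed point, each is closed by anticontinuity applied to its Zariski-open complement, as above), and each is connected by Theorem \ref{thm:basic tube}. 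Hence this is exactly the partition of $\Int(\hat{X})$ into connected components, and $Y = \mathrm{red}^{-1}(\tilde{x})$ for some closed point $\tilde{x}$; thus $Y$ is a basic tube.

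The main obstacle I anticipate is the passage between "connected open" and "connected component", i.e.\ verifying that each fibre $\mathrm{red}^{-1}(\tilde{x})$ is genuinely \emph{closed} in $\Int(\hat{X})$ and not merely open — this is what forces the fibres to be full connected components rather than proper open pieces, and it relies on the anticontinuity of $\mathrm{red}$ applied to the Zariski-open set $\widetilde{X} \setminus \{\tilde{x}\}$ together with the fact that closed points are Zariski-closed in the finite-type $\tilde{k}$-scheme $\widetilde{X}$. A secondary point to handle carefully is the reduction step $\hat{X} \rightsquigarrow \hat{X}_{\mathrm{red}}$ in the converse, ensuring equidimensionality and the reduction are preserved so that Theorem \ref{thm:basic tube} applies.
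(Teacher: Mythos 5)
Your proof is correct and follows essentially the same route as the paper's: both directions rest on Lemma \ref{interieur - points fermes}, the anticontinuity of the reduction map, and the connectedness of $\mathrm{red}^{-1}(\tilde{x})$ from Theorem \ref{thm:basic tube}. You are merely more explicit than the paper in verifying that each fibre $\mathrm{red}^{-1}(\tilde{x})$ is clopen in $\Int(\hat{X})$ and in reducing to the case where $\hat{X}$ is reduced, both of which are sound refinements.
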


\begin{obs}
Every good reduced boundaryless $k$-analytic space has a basis of open neighbourhooods that are basic tubes.
\end{obs}

\begin{proof}
Let $V$ be any connected component of the interior of an equidimensional strictly $k$-affinoid space  $\hat{X}$.
By Lemma \ref{interieur - points fermes}, $\red (V)$ is contained in the set of closed points of the reduction of $\hat{X}$.
If $\red (V)$ contains at least two distinct points, then $V$ can be written as a disjoint union of nonempty open sets, contradicting the connectednes. Hence, $\red (V)$ is a singleton.

Let conversely  $X = \red^{-1} (\tilde{x})$ be a basic tube, where $\tilde{x}$ is a closed point in the reduction of an equidimensional strictly $k$-affinoid space $\hat{X}$. Clearly, $X$ is contained in some connected component $V$ of $\Int(\hat{X})$.
The previous argument shows that $\red (V) = \{ \tilde{x}\}$.
\end{proof}

Recall that a topological space is $\sigma$-compact if it is the union of countably many compact subspaces.  For instance,   open Berkovich polydisks or the analytification of an algebraic variety are $\sigma$-compact spaces. Observe that there exist simple examples of $k$-analytic spaces which are not
$\sigma$-compact, e.g. the closed unit disk of dimension $N
\ge 2$ with the Gauss point removed over a base field $k$ with uncountable reduction $\tilde{k}$.

\begin{prop}\label{basic tube intersection}
For every basic tube $X$ there exist a strictly $k$-affinoid space $\hat{X}$ and a distinguished closed immersion into some closed polydisk $\hat{X} \to \bar{\D}^N$ such that $X$ is isomorphic to $\hat{X} \cap \D^N$.
\end{prop}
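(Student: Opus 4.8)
The plan is to start from the definition of a basic tube and upgrade the defining data to a distinguished closed immersion into a polydisk, then identify the tube as the intersection with the open polydisk. So let $X$ be a basic tube; by definition there is a reduced equidimensional strictly $k$-affinoid space $\hat X = \mathcal{M}(\mathcal{A})$ and a closed point $\tilde x$ of $\widetilde{X} = \mathrm{Spec}(\widetilde{\mathcal{A}})$ with $X \cong \mathrm{red}^{-1}(\tilde x)$. Since $k$ is algebraically closed and $\mathcal{A}$ is reduced, by \cite[Theorem 6.4.3/1]{BGR} (quoted in \S\ref{section maps}) $\mathcal{A}$ is distinguished, so there is a distinguished epimorphism $\varphi: \mathcal{T}_N \to \mathcal{A}$; this gives a distinguished closed immersion $\hat X \hookrightarrow \bar{\D}^N$, and the key property of distinguished epimorphisms is that the induced map on reductions identifies $\widetilde{\mathcal{A}}$ with $\widetilde{\mathcal{T}}_N / \widetilde{\mathrm{ker}(\varphi)}$, i.e. $\widetilde{\varphi}: \widetilde X \hookrightarrow \mathbb{A}^N_{\tilde k}$ is a closed immersion.

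Next I would use the commutative reduction diagram
\[
\xymatrix{
\hat X \ar[d]_{\mathrm{red}} \ar[r]^{\varphi} & \bar{\D}^N \ar[d]^{\mathrm{red}}\\
\widetilde X \ar[r]^{\widetilde\varphi} & \A^N_{\tilde k}
}
\]
already exploited in the proof of Lemma \ref{interieur - points fermes}. Since $\widetilde\varphi$ is a closed immersion, the closed point $\tilde x \in \widetilde X$ maps to a closed point $\tilde y = \widetilde\varphi(\tilde x) \in \A^N_{\tilde k}$, and $\widetilde\varphi^{-1}(\tilde y) = \{\tilde x\}$. As in the proof of Lemma \ref{interieur - points fermes}, after composing $\varphi$ with a translation automorphism of $\bar{\D}^N$ we may assume $\tilde y$ is the origin, so that $\mathrm{red}^{-1}(\tilde y) = \D^N$ inside $\bar{\D}^N$. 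Commutativity of the diagram then gives
\[
\mathrm{red}^{-1}(\tilde x) = \mathrm{red}^{-1}\bigl(\widetilde\varphi^{-1}(\tilde y)\bigr) = \varphi^{-1}\bigl(\mathrm{red}^{-1}(\tilde y)\bigr) = \varphi^{-1}(\D^N) = \hat X \cap \D^N,
\]
which is exactly the claimed description once we identify $\hat X$ with its image under the closed immersion $\varphi$.

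The one point that needs a little care — and which I expect to be the main (mild) obstacle — is the set-theoretic equality $\mathrm{red}^{-1}(\widetilde\varphi^{-1}(\tilde y)) = \varphi^{-1}(\mathrm{red}^{-1}(\tilde y))$: this is not merely formal chasing of the diagram, because one must know that a point $x \in \hat X$ lies over $\tilde x$ iff $\varphi(x)$ lies over $\tilde y$, using both commutativity and the injectivity of $\widetilde\varphi$ on points. Concretely, $\mathrm{red}(\varphi(x)) = \widetilde\varphi(\mathrm{red}(x))$, and since $\widetilde\varphi$ is injective, $\mathrm{red}(\varphi(x)) = \tilde y$ forces $\mathrm{red}(x) = \tilde x$; the reverse implication is immediate. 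Everything else — the reduction to the origin via an automorphism of $\bar{\D}^N$ (a monomial/translation change of coordinates on $\mathcal{T}_N$ preserving distinguishedness), and the standard fact that $\mathrm{red}^{-1}(0) = \D^N$ — is routine and already used verbatim in the preceding lemmas. One should also note that replacing $\varphi$ by its composition with such an automorphism keeps it distinguished, since automorphisms of $\bar{\D}^N$ preserve the supremum norm.
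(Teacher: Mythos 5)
Your proposal is correct and follows essentially the same route as the paper: invoke \cite[Theorem 6.4.3/1]{BGR} to get a distinguished closed immersion $\varphi:\hat X\to\bar{\D}^N$, use that $\widetilde\varphi$ is a closed immersion to translate $\tilde x$ to the origin, and identify $X=\mathrm{red}^{-1}(\tilde x)$ with $\hat X\cap\D^N$ via the commutative reduction diagram. Your extra care about the set-theoretic equality (using injectivity of $\widetilde\varphi$ for the reverse inclusion) is a point the paper's proof leaves implicit, but it is the same argument.
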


In particular, $X$ is boundaryless and $\sigma$-compact.

\begin{proof}
Let $\hat{X}= \mathcal{M(A)}$ be an equidimensional reduced $k$-affinoid space and let $\tilde{x}$ be a closed point  in its reduction such that $\red^{-1}(\tilde{x})$ is isomorphic to $X$. 
As  $k$ is algebraically closed and $\mathcal{A}$ is reduced, there exists a distinguished closed immersion $\varphi: \hat{X} \to \bar{\D}^N$, see \cite[Theorem 6.4.3/1]{BGR}.
Hence,   $\widetilde{\mathcal{A}}$ is isomorphic to $\tilde{k}[T_1, \cdots , T_N] / \widetilde{\ker (\varphi)}$ by  \cite[Corollary 6.4.3/5]{BGR}.

The induced morphism $\mathrm{Spec}(\widetilde{\mathcal{A}}) \to \A^N_{\tilde{k}}$ is a closed immersion by \cite[Proposition 6.4.3/3]{BGR}, since $\varphi$ is distinguished.
We may assume that $\tilde{x}$ is mapped  to $0$. We conclude that $x$ is mapped to a point in  $\red^{-1}(0)$, which is isomorphic to $\D^N$. % by \cite[Proposition 2.2]{BL1}.
\end{proof}

\subsection{Universal points and base changes}\label{subsection universal}

Let $\mathcal{A}$ and $\mathcal{B}$ be two Banach $k$-algebras and denote by $|.|_{\mathcal{A}}$ and $ |.|_{\mathcal{B}}$ their respective Banach norms. On the tensor product $\mathcal{A} \otimes_k \mathcal{B}$ we have the seminorm that associates to every $f \in \mathcal{A} \otimes_k \mathcal{B}$ the quantity
$$|| f || = \inf \max |a_i|_{\mathcal{A}} \cdot |b_i|_{\mathcal{B}},$$
where the infimum is taken over all the possible expressions of $f$ of the form $f= \sum_i a_i \otimes b_i$ with $a_i \in \mathcal{A}$ and $b_i \in \mathcal{B}$.
 The seminorm $||.||$ induces the \emph{tensor norm} on the quotient  $\mathcal{A}\otimes_k \mathcal{B} / \{ ||f||=0\}$, whose completion is a Banach $k$-algebra satisfying a suitable natural universal property. This algebra is called the \emph{complete tensor product} of $\mathcal{A}$ and $\mathcal{B}$ and  we denote it by $\mathcal{A}\hat{\otimes}_k \mathcal{B}$, see \cite[\S 2.1.7]{BGR}). 

\medskip

Given a $k$-affinoid algebra $\mathcal{A}$ and a complete extension $K$ of $k$, the $K$-algebra $\mathcal{A} \hat{\otimes}_k K$ is in fact $K$-affinoid. 
One defines the scalar extension of the $k$-affinoid space $X= \mathcal{M(A)}$ by $K$ as the  $K$-affinoid space $X_K := \mathcal{M}(\mathcal{A} \hat{\otimes}_k K)$. The natural morphism $\mathcal{A}\to \mathcal{A}\hat{\otimes}_k K$ induces a base change morphism  
 $\pi_{K/k}: X_K \to X$ which is continuous and surjective. This construction can be done similarly for general $k$-analytic spaces.
\medskip

Recall the following definition from \cite{Berk,Poineau}:
\begin{defini}
Let $X$ be a $k$-analytic space.
 A point $x$ in $X$  is \emph{universal} if for every complete extension $K$ of $k$ the tensor norm on  $\mathcal{H}(x) \hat{\otimes}_k K$ is multiplicative.
\end{defini}

The key feature of universal points is that they can be canonically lifted to any scalar extension. To explain this fact we may 
suppose that $X$  is an affinoid space with underlying algebra $\mathcal{A}$. Pick any universal point $x\in X$ and fix any complete extension $K$ of $k$.
The $k$-algebra morphism $\mathcal{A} \to \mathcal{H}(x)$ corresponding to  the point $x$  induces a $K$-algebra morphism $\mathcal{A}\hat{\otimes}_k K \to \mathcal{H}(x)\hat{\otimes}_k K$. 

Since $x$ is universal, the tensor norm on $\mathcal{H}(x) \hat{\otimes}_k K$ is multiplicative, and so  the composition of $\mathcal{A}\hat{\otimes}_k K \to \mathcal{H}(x)\hat{\otimes}_k K$  with the tensor norm defines a point in $X_K$.
The point in $X_K$ obtained by these means is denoted by $\sigma_{K/k}(x)$. 

Observe that if $x\in X$ is rigid, then so is  $\sigma_{K/k}(x)$, and that $\sigma_{K/k}$ is a section of $\pi_{K/k}$ on the set of universal points of $X$. 

\begin{theorem}[\cite{Poineau}]
Let $k$ be an algebraically closed complete field and $X$ a $k$-analytic space. Then,
every point $x \in X$ is universal, and the map $\sigma_{K/k}: X \to X_K$ is continuous. 
\end{theorem}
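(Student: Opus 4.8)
The statement to prove is Poineau's theorem: over an algebraically closed complete field $k$, every point of a $k$-analytic space $X$ is universal, and $\sigma_{K/k}\colon X\to X_K$ is continuous for every complete extension $K/k$. Since this is cited from \cite{Poineau}, my proposal is to explain how one would reconstruct the argument rather than to re-derive it from scratch; the two assertions are genuinely of a different nature, so I would treat them separately. For universality, the plan is to reduce to the affinoid case (universality is a property of the complete residue fields $\mathcal{H}(x)$, which only depend on a neighbourhood of $x$, and good spaces are locally affinoid), and then to a statement purely about valued field extensions: one must show that if $k$ is algebraically closed and $L/k$ is a complete valued field extension, then for every complete extension $K/k$ the tensor seminorm on $L\hat{\otimes}_k K$ is a (multiplicative) norm, i.e. $L$ and $K$ are \emph{linearly disjoint in a normed sense}. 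The key input here is that over an algebraically closed $k$ one has good control of the residue field and value group: any complete extension is ``without defect'' in the relevant sense, and one can use a Schauder-type basis / orthogonal basis argument (cf. \cite{BGR}, \S 2.1) to check that the tensor norm is multiplicative. I expect this to follow by first treating the case where $L$ is a finite extension (trivial, since $k$ is algebraically closed, so $L=k$ on the residue level only after completion — actually $L$ can be transcendental), then $L$ a ``simple analytic'' extension $\mathcal{H}(x_{g})$ of a disk, and then a limit/approximation argument, using that $\mathcal{H}(x)$ for $x$ in a $k$-affinoid space is built up from such pieces.

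\textbf{Continuity of $\sigma_{K/k}$.} For the second assertion, the plan is to check continuity locally, so assume $X=\mathcal{M}(\mathcal{A})$ is $k$-affinoid; then $X_K=\mathcal{M}(\mathcal{A}\hat{\otimes}_k K)$, and the topology on $X_K$ is generated by the functions $y\mapsto |g(y)|$ for $g\in\mathcal{A}\hat{\otimes}_k K$. By density it suffices to test continuity of $x\mapsto |g(\sigma_{K/k}(x))|$ on elements $g=\sum_{i} a_i\otimes\lambda_i$ with $a_i\in\mathcal{A}$, $\lambda_i\in K$ (finite sums). By construction of $\sigma_{K/k}(x)$ as the seminorm on $\mathcal{A}\hat{\otimes}_k K$ obtained from the character $\mathcal{A}\to\mathcal{H}(x)$ followed by the tensor norm on $\mathcal{H}(x)\hat{\otimes}_k K$, one has $|g(\sigma_{K/k}(x))| = \bigl\| \sum_i \chi_x(a_i)\otimes\lambda_i \bigr\|_{\mathcal{H}(x)\hat{\otimes}_k K}$. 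The problem is that the right-hand side is a norm on a field $\mathcal{H}(x)$ that varies with $x$, so one cannot naively bound it. The idea is to estimate this tensor norm from below and above by quantities that vary continuously in $x$: an upper bound $\max_i |a_i(x)|\,|\lambda_i|$ is immediate and continuous; the lower bound is the subtle point, and here one uses the multiplicativity coming from universality together with an orthogonality/Gruson-type argument to show the tensor norm is computed by an honest orthogonal basis whose ``coordinates'' of $\chi_x(a_i)$ depend continuously on $x$. Concretely I would fix an orthogonal $k$-basis-like decomposition of the relevant finite-dimensional slice of $K$ over $k$ and express $\|\cdot\|_{\mathcal{H}(x)\hat{\otimes}_k K}$ as a max of finitely many $|{\cdot}(x)|$-type terms.

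\textbf{Main obstacle.} The hard part is the lower bound in the continuity argument — i.e. showing that $x\mapsto \|\sum_i\chi_x(a_i)\otimes\lambda_i\|_{\mathcal{H}(x)\hat{\otimes}_k K}$ is lower semicontinuous (upper semicontinuity being easy), so that together one gets continuity. This is exactly where the hypothesis that $k$ is algebraically closed does real work: it guarantees (via universality, just established) that the tensor seminorms in play are multiplicative and ``defectless'', so that a finite family $\lambda_1,\dots,\lambda_n\in K$ can, after a small perturbation preserving norms, be taken $k$-orthogonal, and then $\|\sum_i c_i\otimes\lambda_i\| = \max_i |c_i|\cdot|\lambda_i|$ for $c_i$ in \emph{any} complete extension — in particular $c_i = \chi_x(a_i)$, giving $|g(\sigma_{K/k}(x))| = \max_i |a_i(x)|\cdot|\lambda_i|$, which is manifestly continuous in $x$. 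The remaining subtlety is that a general $g\in\mathcal{A}\hat{\otimes}_k K$ is only a limit of such finite sums, so one finishes with a uniform-approximation argument: the estimate $\bigl| |g(\sigma_{K/k}(x))| - |g'(\sigma_{K/k}(x))| \bigr| \le \|g-g'\|$ holds uniformly in $x$, so continuity passes to the limit. I would also record the global case: continuity is local on $X$, basic tubes / affinoid neighbourhoods cover $X$, and the sections glue because $\sigma_{K/k}$ is characterized as the unique continuous extension of the natural map on rigid points (which are dense), so local continuous sections automatically agree on overlaps.
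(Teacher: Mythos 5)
The paper does not prove this statement: it is quoted verbatim from Poineau's \emph{Les espaces de Berkovich sont ang\'eliques} and used as a black box, so there is no internal proof to compare your attempt against. Judged on its own terms, your proposal is an outline rather than a proof, and the two halves are of very unequal quality.

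For the continuity of $\sigma_{K/k}$ your plan is essentially the right one and matches the standard argument: reduce to the affinoid case, test against finite sums $g=\sum_i a_i\otimes\lambda_i$ (density plus the uniform estimate $\bigl||g(\sigma_{K/k}(x))|-|g'(\sigma_{K/k}(x))|\bigr|\le\|g-g'\|$ handles general $g$), and compute the tensor norm via a cartesian system. One caveat: over a field with dense value group (as $k$ algebraically closed and nontrivially valued必 is) you can in general only produce $\alpha$-cartesian systems for $\alpha<1$, not exactly orthogonal ones, so the identity $\|\sum_i c_i\otimes\lambda_i\|=\max_i|c_i|\,|\lambda_i|$ should be replaced by two-sided bounds with ratio $\alpha$; letting $\alpha\to1$ exhibits $x\mapsto|g(\sigma_{K/k}(x))|$ as a uniform limit of continuous functions, which still gives continuity. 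You should also justify that a $k$-cartesian system in $K$ remains cartesian over $\mathcal{H}(x)$ after the base change $\mathcal{H}(x)\hat{\otimes}_k K$ — this is \cite[Proposition 2.1.7/4]{BGR}-type material, but it is precisely where universality enters, so it cannot be waved at.

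The genuine gap is the universality statement itself, which is the entire content of the theorem. Your reduction to ``for every complete extension $L/k$ and every complete $K/k$, the tensor seminorm on $L\hat{\otimes}_k K$ is multiplicative'' is correct, but everything after that is a placeholder: ``good control of the residue field and value group'', ``a Schauder-type basis argument'', and a proposed induction through finite extensions (which are trivial since $L=k$ — your parenthetical here is garbled) and $\mathcal{H}(x_g)$ of a disk, followed by ``a limit/approximation argument''. The complete residue field $\mathcal{H}(x)$ of an arbitrary point of an arbitrary $k$-analytic space is \emph{not} built from disk points and finite extensions in any straightforward way, and multiplicativity of a seminorm does not obviously pass to completions or direct limits of extensions without further work. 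Poineau's actual proof decomposes an arbitrary complete extension into a (transfinite) tower of elementary steps — residually transcendental, value-group-extending, and immediate extensions — and verifies, using that $\tilde{k}$ is algebraically closed and $|k^\times|$ is divisible, that universality is preserved at each step; none of that is reconstructed or even correctly signposted in your sketch. As written, the first assertion of the theorem is assumed rather than proved.
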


We conclude this section by recalling the following construction. 
 \begin{lema}\label{fibres}
 Let $X$ be a $k$-analytic space and $x$ a point in $X$.  
 Then for every complete extension $K$ of $\mathcal{H}(x)$, the fibre $\pi_{K/k}^{-1}(x)$ contains a rigid point.
 \end{lema}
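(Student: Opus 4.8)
The plan is to reduce to the affinoid case and then solve the lifting problem directly at the level of characters. First I would localize: since the statement is about a single point $x$ and its fibre, and $X$ is good, I may replace $X$ by an affinoid neighbourhood of $x$, so assume $X = \mathcal{M}(\mathcal{A})$ with $\mathcal{A}$ a $k$-affinoid algebra. The point $x$ corresponds to the character $\chi_x \colon \mathcal{A} \to \mathcal{H}(x)$, and by the fibre description recalled in \S\ref{section maps} (applied to the base change morphism $\pi_{K/k}$), the fibre $\pi_{K/k}^{-1}(x)$ is homeomorphic to $\mathcal{M}\bigl(\mathcal{A} \hat{\otimes}_k K \bigr) \times_{X} \{x\}$, which is $\mathcal{M}\bigl( (\mathcal{A}\hat\otimes_k K) \hat\otimes_{\mathcal{A}} \mathcal{H}(x)\bigr) = \mathcal{M}\bigl( \mathcal{H}(x) \hat\otimes_k K \bigr)$. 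So the task becomes: exhibit a rigid point in $\mathcal{M}(\mathcal{H}(x)\hat\otimes_k K)$, i.e. a bounded $K$-algebra character $\mathcal{H}(x)\hat\otimes_k K \to K$ — equivalently (since this is a complete-tensor-product of a Banach $k$-algebra with $K$), a bounded $k$-algebra homomorphism $\mathcal{H}(x) \to K$ extending the identity on $k$ and compatible with the $K$-structure.

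The crucial hypothesis is that $K$ is a complete extension \emph{of $\mathcal{H}(x)$}, not merely of $k$. So there is by assumption an isometric (or at least bounded) embedding $\iota \colon \mathcal{H}(x) \hookrightarrow K$ over $k$. This $\iota$, together with the identity $K \to K$, induces by the universal property of the complete tensor product a bounded $K$-algebra homomorphism $\mathcal{H}(x) \hat\otimes_k K \to K$, $a \otimes \lambda \mapsto \iota(a)\lambda$. Composing with the absolute value on $K$ gives a bounded multiplicative seminorm on $\mathcal{H}(x)\hat\otimes_k K$, hence a point $y \in \mathcal{M}(\mathcal{H}(x)\hat\otimes_k K)$; and because the character lands in $K$ with $K/k$ — and hence (since $\mathcal{H}(x)/k$ is already contained in $K$) $\mathcal{H}(y)$ a subfield of $K$ — one gets that $\mathcal{H}(y)$ is a complete subextension of $K/k$. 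To conclude that $y$ is \emph{rigid} I would additionally note that since $k$ is algebraically closed and $\mathcal{H}(x)$ is a valued field extension, one can further compose $\iota$ with a $k$-embedding of the relevant finitely generated subfield into an algebraic closure, or more simply observe that after base change the point $y$ has $\mathcal{H}(y)$ generated over $k$ by the image of finitely many coordinate functions; in the algebraically closed setting one arranges $\mathcal{H}(y) = k$ by choosing the embedding to hit only algebraic elements — but in fact the cleanest route is: the point $y$ we constructed is the image under $\sigma$-type reasoning of a $\mathcal{H}(x)$-point, so it suffices to check rigidity over $\mathcal{H}(x)$ first and then descend.

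Concretely: work over $\mathcal{H}(x)$ instead of $k$. Consider $X_{\mathcal{H}(x)} = \mathcal{M}(\mathcal{A}\hat\otimes_k \mathcal{H}(x))$. The point $x$, being universal (automatic here since $k$ is algebraically closed, by Poineau's theorem recalled above), lifts canonically to $\sigma_{\mathcal{H}(x)/k}(x) \in X_{\mathcal{H}(x)}$, whose complete residue field is $\mathcal{H}(x)\hat\otimes_k\mathcal{H}(x)$'s quotient — but more usefully, the diagonal character $\mathcal{A}\hat\otimes_k \mathcal{H}(x) \to \mathcal{H}(x)$ given by $\chi_x \otimes \mathrm{id}$ defines a point $x'$ over $x$ with $\mathcal{H}(x') = \mathcal{H}(x)$, i.e. a point of $X_{\mathcal{H}(x)}$ that is \emph{rigid over} $\mathcal{H}(x)$ and lies over $x$. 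Now base change further along $\mathcal{H}(x) \hookrightarrow K$: by functoriality $\pi_{K/\mathcal{H}(x)}^{-1}(x')$ contains $\sigma_{K/\mathcal{H}(x)}(x')$, and since $x'$ is rigid over $\mathcal{H}(x)$ its canonical lift is rigid over $K$; composing base-change morphisms, this is a rigid point of $X_K$ lying over $x \in X$. That completes the construction.

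The main obstacle I anticipate is purely bookkeeping rather than conceptual: making sure the several base-change morphisms compose correctly (that $\pi_{K/k} = \pi_{\mathcal{H}(x)/k} \circ \pi_{K/\mathcal{H}(x)}$ after identifying $X_K = (X_{\mathcal{H}(x)})_K$ via $\mathcal{A}\hat\otimes_k K = (\mathcal{A}\hat\otimes_k\mathcal{H}(x))\hat\otimes_{\mathcal{H}(x)}K$), that the diagonal character $\chi_x\otimes\mathrm{id}$ is genuinely bounded for the tensor norm, and that the canonical lift $\sigma$ of a rigid point is again rigid — the last being already noted in the excerpt just before the lemma. If one wants to avoid even the passage through $X_{\mathcal{H}(x)}$, the one-line version is: the fibre is $\mathcal{M}(\mathcal{H}(x)\hat\otimes_k K)$ and the multiplication-induced map $\mathcal{H}(x)\hat\otimes_k K \to K$ (using $\mathcal{H}(x)\subseteq K$) is a bounded $K$-character, giving a point of the fibre whose residue field embeds in $K$ over $k$; since $k$ is algebraically closed one sees directly this forces the residue field to be $k$ itself when $X$ is taken small enough that $\mathcal{H}$ of the constructed point is finitely generated — but the functorial argument above is the safe one to write down in full.
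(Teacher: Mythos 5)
Your proposal is correct and is essentially the paper's own argument: after localizing to an affinoid $\mathcal{M}(\mathcal{A})$, the paper writes down precisely the ``diagonal'' character $\mathcal{A}\hat{\otimes}_k\mathcal{H}(x)\to\mathcal{H}(x)$, $f\otimes a\mapsto\chi_x(f)\cdot a$, and observes it is a rigid point over $x$; your extra step of lifting along $\sigma_{K/\mathcal{H}(x)}$ (or, equivalently, using $f\otimes a\mapsto\chi_x(f)\cdot a$ directly on $\mathcal{A}\hat{\otimes}_k K$) handles a general complete extension $K\supseteq\mathcal{H}(x)$, which the paper dispatches with the unexplained ``we may suppose $K=\mathcal{H}(x)$''. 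The one thing to discard is your digression about ``forcing the residue field to be $k$'': the constructed point lives in the $K$-analytic space $X_K$, so rigidity there means $\mathcal{H}(y)=K$, and this holds automatically because your character is a bounded $K$-algebra map onto $K$ --- no appeal to algebraic closedness or to shrinking $X$ is needed.
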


 \begin{proof}
 Pick a point $x \in X$.
We may suppose $K = \mathcal{H}(x)$. 
Since the statement is local at $x$, we may replace $X$ by any  affinoid domain of $X$ containing  $x$. 
Denote by $\mathcal{A}$ the underlying $k$-affinoid algebra.
Consider the character $\chi_x:\mathcal{A} \to \mathcal{H}(x)$. The morphism $\mathcal{A}\hat{\otimes}_k \mathcal{H}(x) \to \mathcal{H}(x) $ sending  $f\otimes a$ to $\chi_x(f)\cdot a$
 is by definition  a rigid point in $X_{\mathcal{H}(x)}$ lying over $x$. 
 \end{proof}

We shall denote by $\tau(x) \in X_{\mathcal{H}(x)}$ the rigid point lying over $x \in X$ obtained in the previous proof.
 This point $\tau(x)$ is not to be confused with $\sigma_{K/k}(x)$.

\section{Polynomial maps of bounded degree}\label{section polynomial}

As a first step in proving Theorem \ref{thm a}, we deal with the case of sequences of polynomial maps of bounded degree. 

Throughout this section, we fix  integers $r,s, \delta >0$ and assume that the base field $k$ is algebraically closed.

The result we aim to show is the following:

\begin{prop}\label{montel poly} 
Let $k$ be an algebraically closed non-Archimedean complete field.
Let $f_n: \A^{r,\an} \to \A^{s,\an}$ be a sequence of polynomial maps of 
uniformly bounded degree satisfying $f_n (\D^r) \subset \bar{\D}^s $.
Then, there exists  a subsequence that is converging pointwise to a continuous map $f: \A^{r,\an} \to \A^{s,\an}$.
\end{prop}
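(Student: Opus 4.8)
The plan is to reduce the statement to a compactness statement about a finite-dimensional parameter space. First I would fix the degree bound: since all the $f_n$ have degree at most $\delta$, each coordinate of $f_n$ is a polynomial in $k[T_1,\dots,T_r]$ of degree $\le \delta$, hence lies in the finite-dimensional $k$-vector space $V$ spanned by the monomials $T^I$ with $|I|\le\delta$. Write $f_n=(f_{n,1},\dots,f_{n,s})$ with $f_{n,j}=\sum_{|I|\le\delta} a^{(n)}_{j,I}T^I$. The condition $f_n(\D^r)\subset\bar\D^s$ is a condition on the coefficient vector $\al_n=(a^{(n)}_{j,I})_{j,I}$; concretely it says that for each $j$ the polynomial $f_{n,j}$ has supremum norm at most $1$ on $\D^r$, equivalently (taking the Gauss point of $\bar\D^r$ and using that $k$ is algebraically closed, so the sup over $\D^r$ of a polynomial equals the sup over $\bar\D^r$, which equals $\max_I |a_{j,I}|$) that $|a^{(n)}_{j,I}|\le 1$ for all $j,I$. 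Thus each $\al_n$ is a rigid point of the closed polydisk $P:=\bar\D^{sD}$, where $D=\#\{I:|I|\le\delta\}$.

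Next I would invoke sequential compactness. The polydisk $P$ is a compact $k$-affinoid space, so by Theorem \ref{seq compactness} (Poineau) it is sequentially compact; hence after passing to a subsequence we may assume $\al_n\to\al$ in $P$ for some point $\al\in P$ (not necessarily rigid). The core of the argument is then to show that pointwise convergence of the parameters $\al_n\to\al$ forces pointwise convergence of the evaluated maps $f_n$. For this I would set up the universal evaluation map: there is a natural analytic map $\ev:\A^{r,\an}\times P\to \A^{s,\an}$, sending $(z,\beta)$ to the point obtained by evaluating the universal degree-$\le\delta$ polynomial with coefficient vector $\beta$ at $z$ — concretely, on coordinates, $(z,\beta)\mapsto\big(\sum_I b_{j,I}\,T^I(z)\big)_j$, which is manifestly a morphism of analytic spaces. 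Then $f_n$ is the composition $z\mapsto \ev(z,\al_n)$, i.e. $f_n=\ev\circ(\mathrm{id}\times\al_n)$ where we regard $\al_n$ as a $k$-point, and the candidate limit is $f:z\mapsto\ev(z,\al)$... but here one must be careful, since $\al$ is not a $k$-point. The correct statement is: fix $z\in\A^{r,\an}$; I claim $f_n(z)\to \pi\big(\ev_z(\al)\big)$ in $\A^{s,\an}$, where $\ev_z:P\to\A^{s,\an}_{\mathcal H(z)}$... more simply, $|g(f_n(z))|\to|g(\ev_z(\al))|$ for every coordinate-type function $g$, because $\ev$ is continuous and $(z,\al_n)\to(z,\al)$ in the product. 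Unwinding: for each $z$, the map $\beta\mapsto \ev(z,\beta)\in\A^{s,\an}$ obtained by fixing the first coordinate is a continuous map $P\to\A^{s,\an}$ (restriction of the continuous $\ev$ to the compact slice $\{z\}\times P$, using that $\{z\}\times P\cong P$ as topological spaces via the continuous projection and that $\ev$ is continuous), so $\ev(z,\al_n)\to\ev(z,\al)$; and $\ev(z,\al_n)=f_n(z)$. Defining $f(z):=\ev(z,\al)$ gives the pointwise limit, and $f$ is continuous because it is the composition of the continuous section $z\mapsto(z,\al)$ of $\A^{r,\an}\times P\to\A^{r,\an}$ with the continuous map $\ev$.

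The main obstacle, and the point requiring the most care, is the step identifying "$f_n(z)$ equals the slice of the universal evaluation at the parameter $\al_n$" and making sense of the limit object when $\al$ is not rigid: one has to verify that the universal evaluation morphism $\ev$ is genuinely a morphism of good $k$-analytic spaces (so that its continuity on the relevant products is legitimate), and that restricting to the non-algebraic slice $\{z\}\times P$ behaves well — this is where the identification $\{z\}\times\{\al_n\}\mapsto f_n(z)$, valid a priori only for rigid $\al_n$ and rigid $z$, gets upgraded to all of $P$ by continuity and density of rigid points. A secondary technical point is justifying the normalization $|a^{(n)}_{j,I}|\le1$: this uses that over an algebraically closed field the supremum of $|f_{n,j}|$ over the open polydisk $\D^r$ equals its value at the Gauss point of $\bar\D^r$, which equals $\max_I|a^{(n)}_{j,I}|$ — a standard but necessary computation. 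Everything else (finite-dimensionality of $V$, sequential compactness of $P$, diagonal extraction if one wants convergence for all $z$ simultaneously — though here the single subsequence works because $P$ is a fixed compact space independent of $z$) is routine.
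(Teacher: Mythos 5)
Your overall strategy is the paper's: encode the coefficient vectors as rigid points of the compact polydisk $P=\bar{\D}^{s(\delta+1)^r}$, extract a convergent subsequence $\al_{n_j}\to\al$ using Poineau's sequential compactness, and then argue that evaluation is continuous in the parameter. The first two steps are fine (including the normalization $|a_{j,I}^{(n)}|\le 1$). The gap is in the third step, which is the actual content of the proposition, and the justification you sketch for it would fail. The point is that the underlying topological space of the analytic product $\A^{r,\an}\times P$ is not the product $|\A^{r,\an}|\times|P|$: a pair of points $(z,\beta)$ does not determine a point of the analytic product, and the fibre of the first projection over a non-rigid $z$ is $P_{\mathcal{H}(z)}$, not $P$. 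So your claim that ``$\{z\}\times P\cong P$ as topological spaces via the continuous projection'' is false precisely for non-rigid $z$, and ``$\ev(z,\al)$'' is not yet defined when both $z$ and $\al$ are non-rigid. Worse, the paper shows by an explicit example (Remark 1 of \S 3.2, the computation with $\varphi((\zeta_n,1),\pm\zeta_n)$) that the joint evaluation $(\al,z)\mapsto\ev(\al)(z)$ is \emph{not} continuous on $|P|\times|\A^{r,\an}|$, so no appeal to joint continuity of the universal evaluation on the topological product can possibly work.

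What fills the gap is Poineau's canonical continuous section $\sigma_{K/k}$ for $k$ algebraically closed. One defines the limit map by $\ev(\al)(z)=\Phi\circ\iota_{\mathcal{H}(\al)}\circ\sigma_{\mathcal{H}(\al)/k}(z)$, where $\Phi$ is your universal evaluation morphism and $\iota_{\mathcal{H}(\al)}$ is the inclusion of the fibre of the first projection over $\al$; continuity of $\ev(\al)$ in $z$ is then the continuity of $\sigma_{\mathcal{H}(\al)/k}$, a non-trivial theorem of Poineau rather than a formal consequence of $\Phi$ being a morphism. For the convergence $f_{n_j}(z)\to\ev(\al)(z)$ at a fixed $z$, the paper first reduces to rigid $z$ by base-changing to $\mathcal{H}(z)$ and lifting $z$ to a rigid point (checking along the way that $\ev(\al)$ is compatible with this base change, which requires comparing $\mathcal{H}(\al)$ with $\mathcal{H}(\sigma_{\mathcal{H}(z)/k}(\al))$); for rigid $z$ one then observes that $|g(f_{n_j}(z))|=|R(\al_{n_j})|$ for an explicit polynomial $R$ in the coefficient variables, and $|R(\al_{n_j})|\to|R(\al)|$ is exactly the definition of $\al_{n_j}\to\al$ in the Berkovich topology of $P$. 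You flagged this step as ``the main obstacle'' but did not carry it out, and the route you proposed for it does not go through as stated.
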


\subsection{Parametrization of polynomial maps of uniformly bounded degree}

In order to prove this  theorem, we reinterpret polynomial maps between analytic affine spaces as rigid points in a closed polydisk.

\medskip

Given a multi-index $I=(i_1, \cdots, i_r)$, denote by $| I| = \max_j i_j$.

\bigskip

 Every polynomial map $f : \A^{r,\an} \to \A^{s,\an}$ of degree at most $\delta^r$ where $\delta \in \N^*$ satisfying $f(\D^r ) \subseteq \bar{\D}^s$ is of the form
 \begin{equation*}
  f =(f_1, \cdots, f_s) = \left( \sum_{|I| \le \delta} a_{1,I} T^I, \cdots, \sum_{|I| \le \delta} a_{s,I} T^I \right) , 
  \end{equation*}
   with  $|a_{l,I}|\leq 1$.
    Thus, the point 
\begin{equation}\label{eq:alpha delta}
\al = \alpha (f) := \left( (a_{1,I})_{\vert I \vert \leq \delta} , \cdots, (a_{s,I})_{\vert I \vert \leq \delta}\right)
\end{equation}
can be realized as rigid point in the (Berkovich) analytic space $\bar{\D}^{s(\delta+1)^r}$. 

\medskip

Additionally, to every   not-necessarily rigid point  $\alpha$ in $\bar{\D}^{s(\delta+1)^r}$ we shall associate a continuous map
$$P_\alpha = P^{r,s}_\alpha : \A^{r,\an} \to \A^{s, \an}$$ as follows.
Consider first the analytic map $\Phi:  \bar{\D}^{s(\delta + 1)^r} \times \A^{r,\an} \to \A^{s,\an}$, given by the  $k$-algebra morphism
\begin{eqnarray*}
k[T_1, \ldots, T_s] & \to & k[T_1, \ldots, T_r]\{ (a_{1, I})_{|I| \le \delta}, \cdots, (a_{s,I})_{|I| \le \delta} \}\\
T_l & \mapsto & \sum _{|I| \le \delta} a_{l,I} T^I.
\end{eqnarray*} 
Next, consider the projection $\pi_1: \bar{\D}^{s(\delta + 1)^r} \times \A^{r,\an} \to \bar{\D}^{s(\delta + 1)^r}$.
 The fibre over the point $\alpha \in \bar{\D}^{s(\delta + 1)^r}$ is isomorphic to $\A^{r,\an}_{\mathcal{H}(\alpha)}$  (cf. \S \ref{section maps})). 
   Recall  from \S \ref{section intro berko}  that the point $\alpha \in \bar{\D}^{s(\delta+1)^r}$ is associated to the character
$\chi_\alpha: k\{ (a_{1, I})_{|I| \le \delta}, \ldots, (a_{s,I})_{|I| \le \delta} \} \to \mathcal{H}(\alpha)$. Set $K:= \mathcal{H}(\alpha)$.
 The inclusion $\iota_K: \A^{r,\an}_K \to \bar{\D}^{s(\delta + 1)^r}_k \times \A^{r,\an}_k$ is given by
\begin{eqnarray*}
k[T_1, \cdots, T_r]\{ (a_{1, I})_{|I| \le \delta}, \ldots, (a_{s,I})_{|I| \le \delta}\} & \to & K[T_1, \ldots, T_r]\\
T_i & \mapsto & T_i\\
a_{l,I} & \mapsto & \chi_\alpha ( a_{l,I})~.
\end{eqnarray*}
Finally, for every $z \in \A^{r, \an}$ we set:
\begin{equation}\label{eq def Pa}
P_\alpha (z) = \Phi \circ \iota_K \circ \sigma_{K/k} (z)~,
\end{equation}
where $\sigma_{K/k} : \A^{r, \an}_k \to \A^{r, \an}_K$ is the canonical lift discussed in \S \ref{subsection universal}.
The map $P_\alpha : \A^{r,\an} \to \A^{s, \an}$ is clearly continuous. 
Explicitely, given a polynomial $g = \sum_J g_J T^J \in k[T_1, \cdots, T_s]$ and a point $z \in \A^{r, \an}$, we have
\begin{equation} \label{eq: P alpha}
\left|g (P_\alpha (z)) \right|= \left| \left( \sum_{J \in \N^s} g_J \prod_{l=1}^s \left( \sum_{|I| \le \delta} \chi_\alpha (a_{l,I}) T^I \right)^{j_l} \right) \sigma_{K/k} (z)\right| ~.
\end{equation}
To emphasize the fact that  $\bar{\D}^{s(\delta+1)^r}$ parametrizes polynomial  maps of degree $\delta$, we shall denote it from now on by $\mor^{r,s}_\delta$. 
For $r,s$ and $\delta \in \N$ fixed, we have thus constructed a map
\begin{eqnarray*}
\ev: \mathrm{Mor}_\delta^{r,s} & \rightarrow & \mathcal{C}^0 ( \A^{r, \an}, \A^{s, \an})\\
\alpha & \mapsto & \ev(\alpha) := P_\alpha ~.
\end{eqnarray*}

\subsection{Remarks on the map $\ev$}
~\smallskip

\noindent{\bf1}.
The assignment $$(\alpha, z) \mapsto \ev(\al)(z)$$ does not define a continuous map on $|\mor_\delta^{r,s} | \times |\A^{r, \an}|$. 
This phenomenon already appears when $r=s=\delta=1$. 

Indeed, suppose by contradiction that there exists a continuous map 
$ \varphi: |\bar{\D}^2| \times |\A^{1, \an}| \to | \A^{1, \an} |$ such that $\varphi \left( (\al_0, \al_1) ,z \right) = \al_0 + \al_1 z$ for any $\al_0, \al_1,  z \in k$ and $|z|\le1$.
 Pick any sequence of points $\zeta_n \in k$ such that $|\zeta_n | = 1$ and $|\zeta_n - \zeta_m|=1 $ for $n \neq m$. 
 Both the sequences $\{ \zeta_n\} $ and $\{ - \zeta_n\} $ converge  to the Gauss point $x_g$. 
 We compute:
\begin{equation*}
\lim_n \varphi \left( (\zeta_n , 1),  \zeta_n \right) = \lim_n \varphi \left( (\zeta_n , 1), - \zeta_n \right)  = \varphi \left( (x_g , 1), x_g \right)  =  x_g~.
\end{equation*} 
However, 
 we have  that $\varphi \left( (\zeta_n , 1), - \zeta_n \right)   = 0$ for all $n$,     contradicting the continuity of $\varphi$.

\noindent{\bf2}.
In general, the map
\begin{eqnarray*}
\ev: \mathrm{Mor}_\delta^{r,s} & \rightarrow & \mathcal{C}^0 ( \A^{r, \an}, \A^{s, \an})\\
\alpha & \mapsto & \ev(\alpha) 
\end{eqnarray*}
is not injective.  This  already occurs in the case $r = s = 1$ for affine maps.
\medskip

Indeed, let $r=s=\delta=1$. 
The space $\mor_1^{1,1}$ is naturally isomorphic to the polydisk $\bar{\D}^2$. 
Denote by $p_0$ and $p_1$ the first and second projections $\mathrm{Mor}^{1,1}_1 \to  \mathrm{Mor}^{1,1}_0$.
Pick two points $\alpha, \alpha^\prime \in \mor_1^{1,1}$ such that $p_0(\al) = p_0 (\al^\prime) = x_g \in \bar{\D}$. 
As seen in \S \ref{section maps},  the fibre $p_0^{-1} (x_g)$ is naturally homeomorphic to $\bar{\D}_{\mathcal{H}(x_g)}$, and so the points $\alpha$ and $\al^\prime$ correspond to points $\al_1, \al_1^\prime \in \bar{\D}_{\mathcal{H}(x_g)}$ respectively.
Write $K= \mathcal{H}(x_g)$ for simplicity, and recall that $K$ is a non-trivial extension of $k$ that contains the field of rational functions in one variable $k(S)$ as a dense subset.
Assume that both  $\al_1$ and $\al^\prime_1$ are the rigid points in $\bar{\D}_K$ given by $\al_1 = Q(S) = q_0 + q_1 S + q_2 S^2$ and $\al^\prime_1 = Q^\prime (S) = q_0 + q_1 S + q_2^\prime S^2$,
 with $q_2 \neq q_2^\prime$ and
$|q_2| = |q_2^\prime |$.

We claim that $\ev(\al) = \ev (\al^\prime)$. It suffices to check that  they agree on the set of rigid points.
Indeed, pick any $z \in \A^{1,\an}(k)$. 
Following Berkovich's classification of the points in the disk \cite[\S 1.4.4]{Berk},
the point $\ev(\al)(z)$ corresponds to the closed ball in $k$ centered at $z q_0$ and of radius $\max \{ | 1+ q_1 z|, |q_2 z| \}$.
Since  $|q_2| = |q_2^\prime|$, we conclude that $\ev(\al)(z) = \ev(\al^\prime)(z)$.

\subsection{Proof of Proposition  \ref{montel poly}}

Consider a sequence of polynomial  maps $f_n : \A^{r, \an} \to \A^{s,\an}$  of degree at most $\delta \in \N$ satisfying $f_n (\D^r) \subset \bar{\D}^s$.

For every $n \in \N$, let $\al_n$ be the  rigid point in the polydisk $\bar{\D}^{s(\delta + 1)^r}$ corresponding to the mapping $f_n$, as constructed above.
The polydisk $\bar{\D}^{s(\delta+1)^r}$ is sequentially compact by Theorem  \ref{seq compactness}, therefore we may find a subsequence $\{ \alpha_{n_j} \}_{n_j}$ converging to some point $\al \in \bar{\D}^{s(\delta+1)^r}$.
Recall that this limit point defines a continuous map $\ev(\al) : \A^{r,\an} \to \A^{s,\an}$.

It remains to verify that $\ev(\al)$ is the pointwise limit of the subsequence $\{ f_{n_j} \}$. 
 Observe that this is equivalent to checking that for every $z\in \A^{r,\an}$ and every  $g\in k[T_1, \ldots, T_s]$, the sequence of real numbers $\{ |g(f_{n_j}(z))| \}_{n\in\N}$ converges to $| g (\ev(\al)(z) ) |$.

If $z$ is a non-rigid point in $\A^{r,\an}$, 
we make a base change by $\mathcal{H}(z)$ and take a rigid point  $x\in \D^r_{\mathcal{H}(z)}$ lying over $z$ (see Lemma \ref{fibres}).
 The maps $f_{n_j}$ induce analytic maps $\A^{r,\an}_{\mathcal{H}(z)} \to \A^{s,\an}_{\mathcal{H}(z)}$ and $g$ defines an analytic function on $\A^{r,\an}_{\mathcal{H}(z)}$. 
By definition,
 $$|g(f_{n_j}(z))| = |g(f_{n_j} (\pi_{\mathcal{H}(z) / k} (x))| = |g(f_{n_j}(x))|~,$$ so that $ |g(f_{n_j}(z))|$ converges if and only if $|g(f_{n_j}(x))|$ converges.
 
 Similarly, $\ev(\al)$ defines a continuous map $\A^{r,\an}_{\mathcal{H}(z)} \to \A^{s,\an}_{\mathcal{H}(z)}$. 
  Indeed, recall from \eqref{eq def Pa} that $\ev(\al) = \Phi \circ \iota_{\mathcal{H}(\al)} \circ \sigma_{\mathcal{H}(\al)/k}$.
 As $\Phi$ is $k$-analytic, it induces a $\mathcal{H}(z)$-analytic map $\bar{\D}^{s(\delta + 1)^r }_{\mathcal{H}(z)}\times \A^{r,\an}_{\mathcal{H}(z)} \to \A^{s,\an}_{\mathcal{H}(z)}$ that we shall also denote by $\Phi$.

Denote by $L$ the complete residue field $\mathcal{H}(\sigma_{\mathcal{H}(z) / k } (\al))$, which is a complete extension of $\mathcal{H}(z)$.
Moreover, we claim that it is also a complete extension of $\mathcal{H}(\al)$.
In order to see this, notice that 
$\ker (\al) = k\{ a_{l,I}\}_{|I| \le \delta, 1 \le l \le s}  \cap \ker (\sigma_{\mathcal{H}(z) / k } (\al))$.
Thus, we have inclusions 
$$k\{ a_{l,I}\}_{|I| \le \delta, 1 \le l \le s} / \ker (\al) \subset \mathcal{H}(z)\{ a_{l,I}\}_{|I| \le \delta, 1 \le l \le s} / \ker (\sigma_{\mathcal{H}(z) / k } (\al))~,$$
and so $\mathcal{H}(\sigma_{\mathcal{H}(z) / k } (\al))$ is a complete extension of $\mathcal{H}(\al)$.

Consider next the inclusion $\iota_L : \A^{r,\an}_L \to \bar{\D}^{s(\delta + 1)^r }_{\mathcal{H}(z)}\times \A^{r,\an}_{\mathcal{H}(z)} $ given by the inclusion of the fibre of the first projection over the point $\sigma_{\mathcal{H}(z) / k } (\al)$.
We  obtain that $\ev(\al)$ induces the continuous map
$\Phi \circ \iota_L \circ \sigma_{L / \mathcal{H}(z)}$. By construction, we see that 
$$|g(\ev(\al)(z))| = |g(\ev(\al)(\pi_{\mathcal{H}(z) / k} (x))| = |g(\ev(\al)(x))|~.$$

\medskip

We may thus assume that $z$ is rigid. 
Let $g = \sum_{J \in \N^s} g_J T^J$ be a polynomial of degree $d$.
Denoting $f_{n_j} = (f_1^{(n_j)}, \ldots, f_s^{(n_j)} )$, we have:
\begin{eqnarray*}
\left| g(f_{n_j} (z))\right|  & = &  
 \left| \sum_{|J|\leq d} g_{J} \prod_{l=1}^s \left(f_l^{(n_j)}(z)\right)^{j_l} \right|  = \\
 & = &
  \left| \sum_{|J|\leq d} g_{J}\prod_{l=1}^s \left( \sum_{|I|\leq \delta} a_{l,I}^{(n_j)}z^I \right)^{j_l}\right| = (*)~.
\end{eqnarray*}
Taking the polynomial in  $s(\delta+1)^r$-variables 
\begin{equation}\label{polynomial R}
R:= \sum_{|J|\leq d} g_J \prod_{l=1}^s \left( \sum_{|I| \leq \delta} S_{l,I} z^I \right)^{j_l} \in k \big[ \{ S_{l,I} \}_{1\leq l \leq s, \vert I \vert \leq \delta } \big]~,
\end{equation}
 one sees that  $(*) = \vert R(\alpha_{n_j})\vert$, and so $\vert R(\alpha_{n_j})\vert \to \vert R(\alpha)\vert$ as $n$ tends to infinity
 since $\alpha_{n_j} \to \alpha$. 
Moreover, it is clear from \eqref{eq: P alpha} that $ R(\al)  = g(\ev(\al)(z) )$, and so the sequence
 $\left| g(f_{n_j}(z))\right| $ converges to $\left| g(\ev(\al)(z) ) \right| $,
 concluding the proof.
\qed

\section{Parametrization of the space of analytic maps}\label{section mor}

We interpret analytic maps between an open and a closed polydisk as rigid points of the spectrum of a suitable Banach $k$-algebra.
Our aim is to build an infinite dimensional analytic space $\mor (\D^r, \bar{\D}^s)$ that parametrizes in a suitable sense the set of all analytic maps 
 from $\D^r$ to $\bar{\D}^s$.
This construction shall be used in the next section to prove Theorem \ref{thm a}.

\smallskip

We shall assume troughout this section that $k$ is algebraically closed. 

We fix two integers $r,s>0$.

\subsection{Construction of the Banach $k$-algebra  $\T_\infty^{r,s}$}

Pick some integer $\delta \in \N^*$. 
 Recall from \S \ref{section polynomial}
that the set of all polynomial maps
 $P: \A^{r,\an} \to \A^{s,\an} $  of degree at most $\delta$
such that $P(\D^r) \subset \bar{\D}^s $
can be endowed with a natural structure of affinoid space whose affinoid algebra  is the Tate algebra $k\{a_{1,I}, \cdots, a_{s,I}\}_{|I| \le \delta}= k\{a_{l,I}\}_{|I| \le \delta, 1 \le l \le s}$.
We  denote this space by $\mor^{r,s}_\delta$.
 It is isomorphic as a $k$-analytic space to the unit polydisk $\bar{\D}^{s(\delta+1)^r}$. 

\medskip

Observe that for any given  $\delta \in \N^*$ there exists a natural truncation map $\mathrm{pr}_\delta : \mor_{\delta+1}^{r,s} \to \mor^{r,s}_\delta $,
which is a 
surjective analytic map dual to the inclusion of Tate algebras $k\{a_{l,I}\}_{|I|\le \delta,  1 \le l \le s} \subset k\{a_{l,I}\}_{|I|\le \delta+1,  1 \le l \le s}$.
These inclusions are isometric and we may so consider the inductive limit of this directed system.
It is a normed $k$-algebra that we  denote by $\T^{r,s}$. 

\smallskip

In order to describe the elements of $\T^{r,s}$ and its norm, we introduce the set
$\mathcal{S}$  of all maps $\M: \{ 1, \ldots , s\} \times \N^r \to \N$ having finite support  and set $|\M| = \sum_{l,I} \M(l,I)$ for every  $\M \in \mathcal{S}$.
We define $\mathcal{S}_\delta$ as the subset of $\mathcal{S}$ consisting of  all $\M \in \mathcal{S}$ such that $\M(l,I) =0$ for all $|I| \ge \delta+1$.
Observe that no such set $\mathcal{S}_\delta$  is finite.
Given  $a= \big( (a_{1,I})_{|I| \le \delta}, \ldots, (a_{s,I})_{|I| \le \delta} \big)$ and $\M \in \mathcal{S}$, we write
$$a^\M = \prod_{ 1 \le l \le s, I \in \N^r } a_{l,I}^{\M(l,I)}.$$
The $k$-algebra $\T^{r,s}$ consists of all power series that are of the form 
$$
\sum_{ \M \in \mathcal{S}_\delta } g_\M \cdot a^\M,
$$
for some  $\delta \in \N$ and whose coefficients $g_\M \in k$ are such that   $|g_\M| \to 0$ as $|\M|\to\infty$. 

Let us describe the norm on $\T^{r,s}$. Observe that by the definition of $\mathcal{S}_\delta$, every element $\sum_{ \M \in \mathcal{S}_\delta } g_\M \cdot a^\M \in \T^{r,s}$ belongs to the Tate algebra $k\{a_{l,I}\}_{|I|\le \delta,  1 \le l \le s} $, and we may associate to  $\sum_{ \M \in \mathcal{S}_\delta } g_\M \cdot a^\M$ the norm on $k\{a_{l,I}\}_{|I|\le \delta,  1 \le l \le s} $.
Since the inclusions of  $k\{a_{l,I}\}_{|I|\le \delta,  1 \le l \le s}$ in $ k\{a_{l,I}\}_{|I|\le \delta+1,  1 \le l \le s}$ are isometric, this norm is well-defined.

\begin{obs}
The $k$-algebra $\T^{r,s}$ is not complete. Take for instance $r=s=1$ and consider the sequence
$f_n = \sum_{i=1}^n g_i \cdot a_i \in \T^{1,1}$. This is a  Cauchy sequence as soon as  the coefficients $g_i \in k$ are such that $|g_i| \to 0$ when $i\to\infty$, but 
it does not have any limit in $\T^{1,1}$. 
\end{obs}

The completion $\T^{r,s}_\infty$ of $\T^{r,s}$ is the Banach $k$-algebra consisting of all power series 
$$
\sum_{ \M \in \mathcal{S}} g_\M \cdot a^\M
$$
such that $|g_\M| $ tends to zero with respect to the filter of cofinite subsets, i.e. such that for all $\epsilon >0$ the set of $\M \in \mathcal{S}$ such that
 $|g_\M| > \epsilon$  is finite. The norm on $\T^{r,s}_\infty$ is the Gauss norm given by $\max_\M |g_\M|$.

\begin{lema}\label{lema function algebra}
The algebra  $\T^{r,s}_\infty$ is a Banach function $k$-algebra.
\end{lema}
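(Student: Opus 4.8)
The claim is that $\T^{r,s}_\infty$ is a \emph{Banach function $k$-algebra}, meaning that the Gauss norm $\|\sum_\M g_\M a^\M\| = \max_\M |g_\M|$ is not merely a submultiplicative Banach norm but is \emph{multiplicative} (a field-like norm on the algebra), equivalently that $\T^{r,s}_\infty$ is reduced and its norm equals the spectral radius, so that it is isometrically realized as an algebra of functions on $\mathcal{M}(\T^{r,s}_\infty)$. The natural approach is to exhibit $\T^{r,s}_\infty$ as an increasing union (or completion of an increasing union) of Tate algebras with \emph{countably many} variables and then invoke the standard fact that the Gauss norm on a Tate algebra $k\{x_1,x_2,\dots\}$ is multiplicative. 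Concretely, first I would fix an enumeration of the countable index set $\mathcal{S}$-generating variables, i.e. list the symbols $\{a_{l,I}\}_{1\le l\le s,\, I\in\N^r}$ as a single sequence $(b_1, b_2, b_3, \dots)$; then $\T^{r,s}$ is exactly $\bigcup_{\delta} k\{b : b \text{ among the first } (\delta+1)^r s \text{ variables}\}$ with isometric transition maps, and $\T^{r,s}_\infty$ is its completion, which one identifies with the Banach algebra $k\{b_1, b_2, \dots\}$ of power series in countably many variables whose coefficients tend to $0$ along the cofinite filter, equipped with the Gauss norm.

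The key step is then: \emph{the Gauss norm on $k\{b_1,b_2,\dots\}$ is multiplicative}. For finitely many variables this is \cite[\S 5.1.4]{BGR} (or Berkovich, \cite[Theorem 1.3.1]{Berk} combined with the computation of the spectral radius of a Tate algebra); the Gauss norm is multiplicative on each $\mathcal{T}_n$. To pass to countably many variables, given $f = \sum g_\M a^\M$ and $h = \sum g'_{\M'} a^{\M'}$ in $\T^{r,s}_\infty$ with $\|f\|,\|h\| \ne 0$, I would argue that each of $f$ and $h$ can be approximated to within an arbitrarily small $\epsilon$ (in Gauss norm) by elements $f_\delta, h_\delta$ of the subalgebra $k\{b_1,\dots,b_{m(\delta)}\}$ obtained by truncating to the variables appearing with $|I|\le \delta$ and discarding coefficients of size $<\epsilon$ — the cofinite-filter condition guarantees that only finitely many monomials survive, so this truncation lands in a genuine finitely-generated Tate subalgebra on which multiplicativity is known; then $\|f_\delta h_\delta\| = \|f_\delta\|\|h_\delta\|$, and letting $\epsilon \to 0$ together with submultiplicativity of the Gauss norm and continuity of multiplication forces $\|fh\| = \|f\|\|h\|$. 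A cleaner alternative is to observe that multiplicativity of a norm is equivalent to the absence of zero divisors together with equality of the norm and the spectral seminorm; since $k\{b_1,\dots,b_n\}$ is a domain for every $n$ and the union is increasing, $\T^{r,s}$ is a domain, hence so is its completion once one checks the Gauss norm detects nilpotents — but I expect the direct truncation argument to be the most transparent to write out.

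\textbf{Main obstacle.} The only real subtlety is the passage from finitely to countably many variables: one must be careful that the Gauss norm genuinely remains multiplicative in the limit and that the completion $\T^{r,s}_\infty$ really is the power-series algebra described (rather than something larger), which is where the cofinite-filter decay condition on coefficients is essential — it is exactly what makes each element a norm-limit of elements supported on finitely many variables and finitely many monomials. I would handle this by first proving the identification $\T^{r,s}_\infty \cong k\{b_1,b_2,\dots\}$ carefully from the definition of the completion, then reducing multiplicativity of $\|fg\|$ to the finite-variable case via the truncation $f \mapsto f_\delta$ described above, using that the set $\{\M : |g_\M| \ge \epsilon\}$ is finite for each $\epsilon>0$. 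Everything else — submultiplicativity, completeness, the fact that the Gauss norm is a norm and not just a seminorm — is immediate from the definitions or already recorded in the excerpt.
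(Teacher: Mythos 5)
Your argument is correct in its main line but takes a genuinely different route from the paper, and it starts from a mischaracterization of the target that happens to be harmless. A Banach function $k$-algebra in the sense of [BGR, 3.8.3] is one whose supremum seminorm is equivalent to the given norm; this is \emph{not} equivalent to the norm being multiplicative — multiplicativity is strictly stronger (a product of two Tate algebras is a function algebra with non-multiplicative norm), and your ``cleaner alternative'' (no zero divisors plus norm equal to the spectral seminorm implies multiplicativity) is false already for the annulus algebra $k\{T,S\}/(TS-a)$ with $0<|a|<1$. Fortunately multiplicativity does \emph{imply} the function-algebra property: it gives $\|f^n\|=\|f\|^n$, hence $\rho(f)=\|f\|$, and $\rho$ coincides with the supremum over $\mathcal{M}(\T^{r,s}_\infty)$ by [Berk, Theorem 1.3.1], valid for any commutative Banach ring; so the Gauss norm equals the sup norm. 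This deduction should be made explicit, since it is the step that actually yields the lemma. Your truncation argument for multiplicativity is sound: the cofinite-filter decay makes every element an $\epsilon$-approximation of a polynomial in finitely many of the $a_{l,I}$ (truncating only to $|I|\le\delta$ is not enough, as you correctly note), multiplicativity of the Gauss norm on finite-variable Tate algebras is standard (this is [BGR, \S 5.1.2] rather than 5.1.4), and the ultrametric inequality transfers $\|f_\delta h_\delta\|=\|f_\delta\|\,\|h_\delta\|$ to $\|fh\|=\|f\|\,\|h\|$ once $\epsilon\max(\|f\|,\|h\|)<\|f\|\,\|h\|$. The paper's proof exploits the same finiteness observation more directly: it truncates a single element $f$ to the finitely many monomials with $|g_\M|>\tfrac12\|f\|$, applies the maximum principle [BGR, 5.1.4] to find a point of the unit polydisk where this polynomial — and hence $f$ itself, by the ultrametric inequality — attains the value $\|f\|$, and concludes $|f|_{\sup}=\|f\|$ without passing through multiplicativity. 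Your route costs the extra spectral-radius step but proves more, namely multiplicativity of the norm, which the paper in fact needs separately later (for the existence of the Shilov boundary in the proof that $\mor(\D^r,\bar\D^s)$ is Fréchet--Urysohn).
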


\begin{proof}
Recall that a Banach $k$-algebra is a function algebra when its sup norm is equivalent to its given norm, see \cite[3.8.3]{BGR}.
 We shall prove that the sup norm on $\T^{r,s}_\infty$ is actually equal to the Gauss norm. 
 To see this, pick a nonzero element $f = \sum_{ \M \in \mathcal{S}} g_\M \cdot a^\M\in \T^{r,s}_\infty$. The set $\mathcal{G}$ of indices $\M$ such that $|g_\M| > \frac12 |f|$ is finite, 
 so that $\sum_{ \M  \in \mathcal{G}} g_\M \cdot a^\M$ is a polynomial in finitely many variables,
 hence attains its maximum at a point in the unit polydisk by \cite[5.1.4]{BGR}. The lemma follows from \cite[Collorary 3.8.2/2]{BGR}.
\end{proof}

\begin{defini}
The space $\mor (\D^r, \bar{\D}^s) $ is the analytic spectrum of the Banach algebra $\T^{r,s}_\infty$.
\end{defini}

In particular, $ \mor (\D^r, \bar{\D}^s) $  is compact,   because it is the analytic spectrum of the $k$-Banach algebra $\T^{r,s}_\infty$.

\medskip

For every $\delta \in \N$, the isometric inclusion $k\{ a_{l,I}\}_{|I|\le \delta, 1 \le l \le s} \subset \T^{r,s}_\infty$
defines a natural surjective continuous map $\mathrm{Pr}^\infty_\delta : \mor (\D^r, \bar{\D}^s)   \to \mor^{r,s}_\delta$.
We may as well consider the inverse limit of all the spaces $\mor^{r,s}_\delta$, induced by the truncation maps $\mathrm{pr}_\delta : \mor^{r,s}_{\delta+1} \to \mor^{r,s}_{\delta}$.
 These maps verify the equality  $\mathrm{pr}_\delta \circ \mathrm{Pr}^\infty_{\delta+1} =\mathrm{Pr}^\infty_\delta$
and induce a continuous map $\varphi: \mor (\D^r, \bar{\D}^s)  \to \varprojlim_{\delta} \mor^{r,s}_\delta$.

We shall consider the inclusions $i_\delta : \mor^{r,s}_\delta \to \mor (\D^r, \bar{\D}^s) $ given by the bounded morphism $\T^{r,s}_\infty \rightarrow  k \{ a_{l,I} \}_{|I| \le \delta , 1\le l \le s}$, sending $a_{l,I}$ to itself if $|I| \le \delta$ and to 0 otherwise. These are closed immersions.

\begin{prop}\label{mor proj limit}
The map $\varphi: \mor (\D^r, \bar{\D}^s)   \to \varprojlim_{\delta} \mor^{r,s}_\delta $ is a homeomorphism.
\end{prop}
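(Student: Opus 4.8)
The plan is to prove that $\varphi$ is a homeomorphism by exhibiting it as a continuous bijection between compact Hausdorff spaces; since a continuous bijection between compact Hausdorff spaces is automatically a homeomorphism, it suffices to check that $\varphi$ is a bijection. Both $\mor(\D^r,\bar\D^s)$ (being the analytic spectrum of a Banach $k$-algebra, by \cite[Theorem 1.2.1]{Berk}) and each $\mor^{r,s}_\delta$ are compact Hausdorff, and an inverse limit of compact Hausdorff spaces along continuous maps is again compact Hausdorff; so the target of $\varphi$ is compact Hausdorff and $\varphi$ is continuous by construction. The real content is therefore surjectivity and injectivity.

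First I would make the identifications concrete: a point of $\varprojlim_\delta \mor^{r,s}_\delta$ is a compatible family $(y_\delta)_\delta$ with $y_\delta \in \mathcal{M}(k\{a_{l,I}\}_{|I|\le\delta})$ and $\mathrm{pr}_\delta(y_{\delta+1})=y_\delta$, i.e. a compatible family of bounded multiplicative seminorms $|\cdot|_\delta$ on the Tate algebras $k\{a_{l,I}\}_{|I|\le\delta}$ agreeing on the smaller algebra whenever one includes one into the next. For injectivity: if $x,x'\in\mor(\D^r,\bar\D^s)$ have $\mathrm{Pr}^\infty_\delta(x)=\mathrm{Pr}^\infty_\delta(x')$ for all $\delta$, then the associated seminorms on $\T^{r,s}_\infty$ agree on every subalgebra $k\{a_{l,I}\}_{|I|\le\delta}$, hence on their union $\T^{r,s}$, which is dense in $\T^{r,s}_\infty$; by continuity of seminorms they agree on all of $\T^{r,s}_\infty$, so $x=x'$. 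For surjectivity, given a compatible family of bounded multiplicative seminorms $(|\cdot|_\delta)_\delta$ I would glue them to a single function $\nu$ on $\T^{r,s}=\bigcup_\delta k\{a_{l,I}\}_{|I|\le\delta}$, check it is a well-defined multiplicative seminorm there (well-definedness is exactly the compatibility condition; multiplicativity and submultiplicativity are inherited on each piece since any two elements lie in a common $k\{a_{l,I}\}_{|I|\le\delta}$), note it is bounded by the Gauss norm (each $|\cdot|_\delta$ is bounded by the Gauss norm on its Tate algebra, and these norms are the restrictions of the Gauss norm on $\T^{r,s}_\infty$), and then extend $\nu$ by uniform continuity to the completion $\T^{r,s}_\infty$; the extension remains a bounded multiplicative seminorm, hence defines a point $x\in\mor(\D^r,\bar\D^s)$ with $\varphi(x)=(y_\delta)_\delta$.

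The step I expect to be the main obstacle is checking that the extension $\nu$ to the completion is still \emph{multiplicative} rather than merely a bounded submultiplicative seminorm — passing to a completion preserves the seminorm axioms by continuity, but one must make sure multiplicativity survives the limit. Here the key point is that $\T^{r,s}$ is dense in $\T^{r,s}_\infty$ and $\nu$ is continuous (bounded by the Gauss norm), so for $f,g\in\T^{r,s}_\infty$ one approximates by $f_n,g_n\in\T^{r,s}$ and uses $\nu(f_ng_n)=\nu(f_n)\nu(g_n)$ together with continuity of multiplication and of $\nu$ to pass to the limit; one should also confirm $\nu$ is not identically the degenerate seminorm vanishing on a unit, but $\nu(1)=1$ is forced since each $|\cdot|_\delta$ is a seminorm on a unital algebra compatible with the inclusions. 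A minor secondary point worth stating is that the maps $\mathrm{Pr}^\infty_\delta$ are genuinely surjective (via the closed immersions $i_\delta$ constructed just before the proposition), which guarantees the limiting seminorm one builds actually restricts to the prescribed $y_\delta$ on each finite level, so that $\varphi(x)=(y_\delta)_\delta$ exactly.
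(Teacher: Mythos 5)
Your proof is correct, and the overall frame (continuous bijection between compact Hausdorff spaces, hence a homeomorphism) and the injectivity argument via density of $\T^{r,s}$ in $\T^{r,s}_\infty$ coincide with the paper's. Where you genuinely diverge is surjectivity. The paper argues by compactness: for a point $y$ of the inverse limit it forms the nested sets $K_\delta = (\mathrm{Pr}^\infty_\delta)^{-1}(\pi_\delta(y))$, shows each is nonempty using the sections $i_\delta$ of $\mathrm{Pr}^\infty_\delta$, and takes a point in $\bigcap_\delta K_\delta$. You instead construct the preimage explicitly: glue the compatible family of bounded multiplicative seminorms into a seminorm on $\T^{r,s}$ and extend by uniform continuity to the completion, checking that multiplicativity survives the limit. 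Both are valid; the paper's route is shorter but nonconstructive and leans on the closed immersions $i_\delta$, while yours produces the point directly and correctly isolates the one delicate verification (multiplicativity of the extension), which you handle properly via continuity of $\nu$ and of multiplication. One small inaccuracy in your closing remark: surjectivity of $\mathrm{Pr}^\infty_\delta$ is not what guarantees that your glued seminorm restricts to the prescribed $y_\delta$ — that is automatic from the gluing — it is rather the ingredient the paper needs for its nonemptiness argument; this does not affect the validity of your proof.
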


\begin{proof}
The inverse limit $\varprojlim_{\delta} \mor^{r,s}_\delta $ is compact by Tychonoff.

%\smallskip

Let us show that $\varphi: \mor (\D^r, \bar{\D}^s)  \to \varprojlim_{\delta} \mor^{r,s}_\delta$ is bijective. 

Fix  $\delta \ge 0$.
 Let  $\pi_\delta: \varprojlim_{\delta} \mor^{r,s}_\delta \to \mor_\delta$ be the natural map and $\mathrm{pr}_\delta : \mathrm{Mor}^{r,s}_{\delta+1} \to \mor^{r,s}_{\delta}$ the truncation map.
We know that  $\mathrm{Pr}^\infty_\delta = \pi_\delta \circ \varphi : \mor (\D^r, \bar{\D}^s)   \to \mor^{r,s}_\delta$. 
Pick a point $y \in  \varprojlim_{\delta} \mathrm{Mor}^{r,s}_\delta$ and consider $\pi_\delta (y) \in \mor_\delta^{r,s}$. 
Consider the set $K_\delta$ consisting of all the points $\al \in \mor  (\D^r, \bar{\D}^s)  $ such that $\mathrm{Pr}^\infty_\delta ( \al) = \pi_\delta (y)$.
The closed immersion $i_\delta : \mor^{r,s}_\delta \to \mor (\D^r, \bar{\D}^s) $ constructed above is a section of $\mathrm{Pr}^\infty_\delta$.
Thus,  the map $\mathrm{Pr}^\infty_\delta$ is surjective and the subset $K_\delta$ is non-empty.
Clearly, we have that $K_{\delta + 1} \subseteq K_\delta$. 
Every $K_\delta$ is compact and so the intersection  $\cap_{\delta \ge 0} K_\delta$ is nonempty. 
This shows that $\varphi$ is surjective.

For the injectivity, let $\al,\al '$ be two points in $\mor (\D^r,\bar{\D}^s)$ having the same image in $\varprojlim_{\delta} \mathrm{Mor}^{r,s}_\delta$. 
 We have to check that $|g(\al)| = |g(\al')|$ for every $g \in \T^{r,s}_\infty$, that by density reduces to the case where $g \in \T^{r,s}$.
We know that  $\mathrm{Pr}^\infty_\delta ( \al) = \mathrm{Pr}^\infty_\delta (\al') \in \mor_\delta^{r,s}$ for all $\delta$.
  Given $g \in \T^{r,s}$ observe that it lies in $k \{ a_{l,I}\}_{|I| \le \delta, 1 \le l \le s}$ for some $\delta \ge 0$. Thus, 
\begin{equation*}
|g(\al)| = |g(\mathrm{Pr}^\infty_\delta (\al)) | = |g(\mathrm{Pr}^\infty_\delta (\al')) |= |g (\al')| ~,
\end{equation*}
concluding the proof.
\end{proof}

Recall  from \S \ref{section intro berko} the definition of the complete residue field $\mathcal{H}(\al)$ of a point $\al \in \mor (\D^r, \bar{\D}^s) $. We say that $\al$ is rigid when $\mathcal{H}(\al) = k$.
To simplify notation, we write $\al_\delta = \mathrm{Pr}^\infty_\delta (\al)$.

\begin{prop}\label{residue field direct limit}
Let $\al$ be a  point in $ \mor (\D^r, \bar{\D}^s) $.
For every $\delta \in \N$, the inclusion of Banach $k$-algebras $k\{a_{l,I}\}_{1 \le l \le s, |I|\le \delta} \subset  \T^{r,s}_\infty$  induces an extension of valued fields 
$\mathcal{H}(\al)/\mathcal{H}(\al_\delta)$. 

The complete residue field $\mathcal{H}(\al)$ is isomorphic to the 
completion of the inductive limit of valued fields $\varinjlim_\delta \mathcal{H}(\al_\delta)$.
\end{prop}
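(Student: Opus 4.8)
The plan is to unwind the definitions on both sides and match them. First I would fix $\al \in \mor(\D^r,\bar\D^s)$ and, for each $\delta$, record the character $\chi_{\al}:\T^{r,s}_\infty \to \mathcal H(\al)$ and its restriction $\chi_{\al_\delta}$ to the subalgebra $k\{a_{l,I}\}_{|I|\le\delta,\,1\le l\le s}$. Since $\mathrm{Pr}^\infty_\delta$ is dual to this inclusion, $\chi_{\al_\delta}$ is exactly the character defining $\al_\delta$, so $\mathcal H(\al_\delta)$ is the completion of the fraction field of the image of $\chi_{\al_\delta}$, while $\mathcal H(\al)$ is the completion of the fraction field of the image of $\chi_{\al}$. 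The image of $\chi_{\al_\delta}$ is a valued subfield (after passing to fraction fields) of $\mathcal H(\al)$, and the transition maps for $\delta'\ge\delta$ are the natural inclusions coming from $k\{a_{l,I}\}_{|I|\le\delta}\hookrightarrow k\{a_{l,I}\}_{|I|\le\delta'}$; these are isometric by the construction of $\T^{r,s}$, hence one genuinely gets a directed system of valued fields $\varinjlim_\delta \mathcal H(\al_\delta)$ mapping into $\mathcal H(\al)$. This gives the first assertion.

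For the isomorphism, I would argue that this map $\iota:\varinjlim_\delta \mathcal H(\al_\delta)\to \mathcal H(\al)$ is an isometric embedding with dense image, so that it extends to an isomorphism of the completion of $\varinjlim_\delta \mathcal H(\al_\delta)$ onto $\mathcal H(\al)$. Isometry on each $\mathcal H(\al_\delta)$ is immediate because the norm on $\T^{r,s}_\infty$ restricts to the norm defining $\al_\delta$, and a compatible system of isometries on a directed union is an isometry on the union. Density is the crux: $\mathcal H(\al)$ is by definition the completion of $k(\al):=\mathrm{Frac}(\T^{r,s}_\infty/\ker\chi_\al)$, so it suffices to show that $k(\al)$ lies in the closure of $\iota(\varinjlim_\delta\mathcal H(\al_\delta))$. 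Any element of $\T^{r,s}_\infty$ is, by the description of $\T^{r,s}_\infty$ in \S\ref{section mor}, a norm-limit of elements of $\T^{r,s}=\bigcup_\delta k\{a_{l,I}\}_{|I|\le\delta}$ — indeed truncating the power series $\sum_\M g_\M a^\M$ to the finitely many $\M$ with $|g_\M|\ge\varepsilon$ produces an element of some $k\{a_{l,I}\}_{|I|\le\delta}$ within $\varepsilon$ of the original. Applying $\chi_\al$ (which is norm-nonincreasing) shows every element of the image of $\chi_\al$, hence every element of $k(\al)$ after also handling denominators, is approximated by elements of $\bigcup_\delta\chi_{\al_\delta}(k\{a_{l,I}\}_{|I|\le\delta}) = \bigcup_\delta(\text{image in }\mathcal H(\al_\delta))\subset\varinjlim_\delta\mathcal H(\al_\delta)$. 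The denominators require a small remark: if $x\in k(\al)$ is $u/v$ with $u,v$ in the image of $\chi_\al$, approximate $u$ and $v$ by elements $u_\delta,v_\delta$ coming from level $\delta$; since $|v_\delta|\to|v|\ne 0$, for large $\delta$ the quotient $u_\delta/v_\delta$ lies in $\mathcal H(\al_\delta)$ (a field) and converges to $x$.

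I expect the density step to be the main obstacle, and within it the only genuinely delicate point is the interchange between "completed" and "direct limit": one must be careful that $\varinjlim_\delta\mathcal H(\al_\delta)$ already contains the uncompleted $k(\al)$ — not just its elements coming from finitely many variables, but the whole fraction field — which is why the denominator argument above is needed, and that completing afterwards recovers all of $\mathcal H(\al)$ and nothing more (no overshoot) precisely because $\iota$ is isometric. Once density and isometry are in hand, the universal property of completion (cf.\ \cite[\S2.1]{BGR}) yields the claimed isometric isomorphism $\widehat{\varinjlim_\delta\mathcal H(\al_\delta)}\xrightarrow{\sim}\mathcal H(\al)$, and this isomorphism is automatically one of valued fields since it is built from valued-field maps.
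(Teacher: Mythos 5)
Your proof is correct and follows essentially the same route as the paper's: establish the isometric inclusions $\mathcal{H}(\al_\delta)\subset\mathcal{H}(\al)$ from the compatibility of $\ker(\al_\delta)$ with $\ker(\al)$, then reduce everything to showing that $\varinjlim_\delta\mathcal{H}(\al_\delta)$ is dense in $\mathcal{H}(\al)$, which both arguments deduce from the density of $\T^{r,s}=\bigcup_\delta k\{a_{l,I}\}_{|I|\le\delta}$ in $\T^{r,s}_\infty$. Your truncation and denominator arguments simply make explicit the density step that the paper states without detail.
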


\begin{proof}
A point $\al \in \mor (\D^r, \bar{\D}^s) $ corresponds to a seminorm on the $k$-algebra $\T^{r,s}_\infty$, whose restriction to $k\{ a_{l,I} \}_{|I| \le \delta,  1\le l \le s}$ is the seminorm $\al_\delta$. 
The kernel of $\al_\delta$ is the intersection of  $k\{ a_{l,I} \}_{|I| \le \delta ,  1\le l \le s}$ with $\ker(\al)$.
This induces inclusions 
\begin{equation}\label{eq H a}
k\{ a_{l,I} \}_{|I| \le \delta ,  1\le l \le s} / \ker (\al_\delta) \subset \T^{r,s}_\infty / \ker (\al).
\end{equation}

It follows that there are inclusions $\mathcal{H}(\al_\delta) \subset \mathcal{H}(\al)$, and thus the direct limit of the $\mathcal{H}(\al_\delta)$ is naturally contained in $\mathcal{H}(\al)$.
In order to show that $\mathcal{H}(\al)$ is isometrically isomorphic to the completion of $\varinjlim_\delta\mathcal{H}(\al_\delta) $, it suffices to show that $\varinjlim_\delta\mathcal{H}(\al_\delta)$ is dense in $\mathcal{H}(\al)$.

Consider the field $K := \varinjlim_\delta \mathrm{Frac} \left( k\{ a_{l,I} \}_{|I| \le \delta ,  1\le l \le s} / \ker (\al_\delta) \right) $.
It is clear that  $K$ is contained in  $\varinjlim_\delta \mathcal{H}(\al_\delta)$. By \eqref{eq H a} and by the definition of $\T^{r,s}_\infty$, we also know that $K$ is dense in $\mathrm{Frac} \left( \T^{r,s}_\infty/ \ker(\al) \right) $.
The latter is by definition dense in $\mathcal{H}(\al)$, which proves that  $\varinjlim_\delta\mathcal{H}(\al_\delta)$ is dense in $\mathcal{H}(\al)$.
\end{proof}

\begin{prop}\label{prop rigid dense}
The set of rigid points in $\mor (\D^r, \bar{\D}^s)$  is dense.
\end{prop}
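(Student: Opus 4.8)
The plan is to exploit the homeomorphism $\varphi\colon \mor(\D^r,\bar{\D}^s)\xrightarrow{\sim}\varprojlim_\delta \mor^{r,s}_\delta$ from Proposition~\ref{mor proj limit}, together with the fact that each $\mor^{r,s}_\delta$ is isomorphic to a finite-dimensional closed polydisk, in which the rigid points form a dense set. So, fix a point $\al\in\mor(\D^r,\bar{\D}^s)$ and a basic open neighbourhood. By definition of the topology on an analytic spectrum, such a neighbourhood contains a set of the form $\{\beta : |g_i(\beta)-|g_i(\al)||<\varepsilon,\ i=1,\dots,m\}$ for finitely many $g_1,\dots,g_m\in\T^{r,s}_\infty$; and by density of $\T^{r,s}$ in $\T^{r,s}_\infty$ we may assume each $g_i$ lies in $\T^{r,s}$, hence in $k\{a_{l,I}\}_{|I|\le\delta}$ for some common $\delta$. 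Thus the neighbourhood contains the preimage under $\mathrm{Pr}^\infty_\delta$ of a neighbourhood $W$ of $\al_\delta=\mathrm{Pr}^\infty_\delta(\al)$ in $\mor^{r,s}_\delta$.

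Next I would use that $\mor^{r,s}_\delta\cong\bar{\D}^{s(\delta+1)^r}$, where the rigid points are dense (this is stated in \S\ref{section intro berko} for polydisks, being a special case of $X(k)$ dense in $X$). So $W$ contains a rigid point $\beta_0\in\mor^{r,s}_\delta$. Now I lift $\beta_0$ back up: the closed immersion $i_\delta\colon\mor^{r,s}_\delta\to\mor(\D^r,\bar{\D}^s)$ constructed just before Proposition~\ref{mor proj limit} is a section of $\mathrm{Pr}^\infty_\delta$, so $\beta:=i_\delta(\beta_0)$ satisfies $\mathrm{Pr}^\infty_\delta(\beta)=\beta_0\in W$, hence $\beta$ lies in the chosen neighbourhood of $\al$. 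It remains to check that $\beta$ is rigid in $\mor(\D^r,\bar{\D}^s)$, i.e. $\mathcal{H}(\beta)=k$. Since $i_\delta$ is a closed immersion, $\mathcal{H}(\beta)=\mathcal{H}(\beta_0)$ (a closed immersion induces isomorphisms on complete residue fields of corresponding points), and $\beta_0$ being rigid in the polydisk means $\mathcal{H}(\beta_0)=k$. Therefore $\beta$ is rigid, proving density.

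Concretely, there is nothing subtle here once the projective-limit description is in hand: the point $\beta=i_\delta(\beta_0)$ is simply the polynomial map of degree $\le\delta$ whose coefficients approximate those of the first ``block'' of $\al$, with all higher coefficients set to zero — and such a truncation is a genuine analytic map $\D^r\to\bar{\D}^s$, hence a rigid point. The only place one has to be slightly careful is the reduction to $g_i\in\T^{r,s}$ via density of $\T^{r,s}$ in $\T^{r,s}_\infty$: one picks $g_i'\in\T^{r,s}$ with $\|g_i-g_i'\|<\varepsilon/3$ and replaces the constraints accordingly, using that seminorms are $1$-Lipschitz with respect to the Banach norm so that $\bigl||g_i(\beta)|-|g_i'(\beta)|\bigr|\le\|g_i-g_i'\|$ for every $\beta$. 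I expect the main (mild) obstacle to be phrasing this $\varepsilon$-juggling cleanly; there is no deep difficulty, as the density of rigid points in a finite polydisk and the section property of $i_\delta$ do all the real work.
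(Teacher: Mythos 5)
Your proof is correct and follows essentially the same route as the paper's: reduce each basic neighbourhood of $\al$ to finite level $\delta$ using the density of $\T^{r,s}$ in $\T^{r,s}_\infty$, invoke density of rigid points in the finite polydisk $\mor^{r,s}_\delta$, and lift back via the section $i_\delta$ of $\mathrm{Pr}^\infty_\delta$. The only (immaterial) difference is organizational — the paper phrases it as $\al$ lying in the closure of a doubly-indexed family $\{\al_n^{(\delta)}\}_{n,\delta}$ of lifted rigid points, while you exhibit a rigid point directly in each basic open set.
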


\begin{proof}
Pick any point $\al \in \mor (\D^r, \bar{\D}^s)$. 
For every $\delta \in \N$, pick a sequence of rigid points $\al_n^{(\delta)} \in \mor_\delta^{r,s}$ converging to $\al_\delta$. 
By Proposition \ref{mor proj limit} and Proposition \ref{residue field direct limit}, a point in $ \mor (\D^r, \bar{\D}^s)$ is rigid if and only if for every $\delta \in \N$ its projection to  $\mor_\delta^{r,s}$ is rigid. 

We may view each point $\al_n^{(\delta)}$ as a rigid point in $\mor (\D^r, \bar{\D}^s) $
via de map $i_\delta : \mor^{r,s}_\delta \to \mor (\D^r, \bar{\D}^s) $ constructed above.
We claim that $\al$ lies in the closure of  the set $\{ \al_n^{(\delta)} \}_{n, \delta}$. 
Indeed, fix an open neighbourhood $U$ of $\al$. It is a finite intersection of open sets of the form $\{ \beta \in  \mor (\D^r, \bar{\D}^s) :  | g (\beta)| - |g(\al) | | \le r\}$ for some $r \le 1$.
Since $\T^{r,s}$ is dense in $\T_\infty^{r,s}$, we may assume that $g \in \T^{r,s}$. 
Thus, for sufficiently large $\delta$ one has that $|g(\al_\delta) | = |g (\al)|$. 
Moreover, by construction we have $|g( \al_n^{(\delta)} )| \stackrel{n \to \infty}{\rightarrow} |g(\al_\delta) | $.
It follows that for $\delta, n \gg 0$, the points $\al_n^{(\delta)} $ belong to $U$.
\end{proof}

\subsection{Universal property of the space $\mor (\D^r, \bar{\D}^s)$}

Let us specify  in which sense $\mor (\D^r, \bar{\D}^s)$  parametrizes the space of analytic maps from $\D^r_k$ to $\bar{\D}^s_k$.
Recall from \S \ref{section maps} that a  morphism  between  the spectra of two  Banach $k$-algebras  is by definition a continuous map induced by a bounded morphism between the underlying algebras.
In  the same fashion, an analytic map from a good $k$-analytic space $W$ into $\mor  (\D^r, \bar{\D}^s)$ is given by an affinoid covering $\{ W_i \}$ of $W$ and analytic maps $W_i \to \mor  (\D^r, \bar{\D}^s)$, which are induced by bounded morphisms of Banach $k$-algebras and are compatible with the restrictions.

\begin{theorem}\label{thm universal property of mor}
There exists an analytic map 
$\Phi : \mor  (\D^r, \bar{\D}^s) \times \D^r \to \bar{\D}^s$
satisfying the following universal property.
Let $W$ be the analytic spectrum of a Banach function $k$-algebra or any good reduced 
$k$-analytic space.
Then, for any analytic map
$F: W \times \D^r \to \bar{\D}^s$ there exists a unique morphism
$G: W \to \mor   (\D^r, \bar{\D}^s)$ such that 
$F(x,z) = \Phi( G(x), z)$ for all $x\in W(k)$ and $z \in \D^r(k)$.
\end{theorem}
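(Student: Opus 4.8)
The plan is to construct $\Phi$ explicitly and then establish the universal property by first treating the polynomial truncations and passing to the limit. To build $\Phi$, recall that a point of $\mor(\D^r,\bar{\D}^s)$ is a bounded multiplicative seminorm on $\T^{r,s}_\infty$; to define an analytic map $\Phi:\mor(\D^r,\bar{\D}^s)\times\D^r\to\bar{\D}^s$ it suffices, since $\D^r$ is an increasing union of closed polydisks $\bar{\D}^r(\rho)$ with $\rho_i<1$, to give compatible bounded $k$-algebra morphisms $k\{T_1,\dots,T_s\}\to\T^{r,s}_\infty\hat{\otimes}_k k\{\rho^{-1}T\}$ sending $T_l\mapsto\sum_I a_{l,I}T^I$. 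One must check boundedness: on $\bar{\D}^r(\rho)$ the element $\sum_I a_{l,I}T^I$ has norm $\max_I|a_{l,I}|\rho^I\le\max_I\rho^I\le 1$ (using $|I|=\max_j i_j$ and $\rho_i<1$, the sum over $I$ with fixed $|I|$ is finite so the Gauss norm on $\T^{r,s}_\infty$ controls it), so each $T_l$ maps into the unit ball and the morphism is well-defined and bounded. Compatibility over varying $\rho$ is immediate, so $\Phi$ is a well-defined analytic map. Note this $\Phi$ restricts on each $\mor^{r,s}_\delta\times\D^r$ to the map induced by the $\Phi$ of Section \ref{section polynomial}.

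For the universal property, I would first do the case where $W$ is $k$-affinoid with algebra $\mathcal{A}$, and $F:W\times\D^r\to\bar{\D}^s$ an analytic map. Restricting $F$ to $W\times\bar{\D}^r(\rho)$ and taking $\rho\to 1$, $F$ is given by $s$ power series $F_l=\sum_I c_{l,I}T^I$ with $c_{l,I}\in\mathcal{A}$ such that $\|\sum_I c_{l,I}T^I\|_{\bar{\D}^r(\rho)}\le 1$ for all $\rho$ with $\rho_i<1$; letting $\rho\to 1$ componentwise and using that $\mathcal{A}$ is a function algebra (when $W$ is the spectrum of a Banach function algebra) or reduced (so $\rho=|\cdot|_{\sup}$ up to equivalence) gives $|c_{l,I}|_{\sup}\le 1$, hence $|c_{l,I}|\le 1$ in the sup-norm. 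Then define $G:W\to\mor(\D^r,\bar{\D}^s)$ by the $k$-algebra morphism $\T^{r,s}_\infty\to\mathcal{A}$ (or its completion/appropriate enlargement) sending $a_{l,I}\mapsto c_{l,I}$; boundedness follows from $|c_{l,I}|_{\sup}\le 1$ and the fact that $|g_\M|\to 0$ cofinitely for $g=\sum g_\M a^\M\in\T^{r,s}_\infty$, so $\sum g_\M c^\M$ converges in $\mathcal{A}$ with norm $\le\max_\M|g_\M|$. One verifies $F(x,z)=\Phi(G(x),z)$ by pulling back the coordinate functions $T_l$ on $\bar{\D}^s$: both sides send $T_l$ to $\sum_I c_{l,I}T^I$. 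Uniqueness of $G$ holds because the $a_{l,I}$ generate a dense subalgebra of $\T^{r,s}_\infty$, so a bounded morphism out of $\T^{r,s}_\infty$ is determined by the images of the $a_{l,I}$, and $F$ forces $G^\sharp(a_{l,I})=c_{l,I}$. Finally, for general good reduced $k$-analytic $W$ one glues: cover $W$ by affinoid domains $W_i$, apply the affinoid case to get $G_i:W_i\to\mor(\D^r,\bar{\D}^s)$, and check the $G_i$ agree on overlaps by the uniqueness statement (the coefficients $c_{l,I}$ are restrictions of global sections, hence compatible), yielding a morphism $G$ in the sense defined before the theorem.

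The main obstacle I anticipate is the boundedness/convergence bookkeeping in the infinite-variable setting: one must be careful that a bounded morphism $\T^{r,s}_\infty\to\mathcal{A}$ is genuinely determined by, and can be built from, the images $c_{l,I}$ of the countably many generators, which requires that for every $g=\sum_{\M\in\mathcal{S}}g_\M a^\M$ the series $\sum_\M g_\M c^\M$ converges in $\mathcal{A}$ — this uses precisely that $|g_\M|\to 0$ along the cofinite filter together with $|c_{l,I}|\le 1$, and that only finitely many monomials $a^\M$ of a given $|\M|$ involve any fixed finite set of variables is \emph{not} needed since each $\M$ has finite support. A secondary subtlety is passing from the norm estimate $\|\sum_I c_{l,I}T^I\|_{\bar{\D}^r(\rho)}\le 1$ for all $\rho<1$ to $|c_{l,I}|\le 1$: this is where the hypothesis that $W$ is the spectrum of a Banach \emph{function} algebra (or reduced, via Berkovich's $\rho=|\cdot|_{\sup}$) is used, exactly as in the role of Proposition \ref{prop distinguished} and Lemma \ref{lema function algebra}. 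Everything else is a matter of tracking the identifications $\Phi|_{\mor^{r,s}_\delta}$ with the polynomial construction of Section \ref{section polynomial} and routine gluing.
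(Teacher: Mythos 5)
Your proposal is correct and follows essentially the same route as the paper: $\Phi$ is built from the bounded morphism $k\{S_1,\ldots,S_s\}\to\T^{r,s}_\infty\{\rho^{-1}T\}$ sending $S_l\mapsto\sum_I a_{l,I}T^I$, the affinoid/function-algebra case is handled by reading off the coefficients $c_{l,I}$ with $|c_{l,I}|_{\sup}\le 1$ and mapping $a_{l,I}\mapsto c_{l,I}$, uniqueness comes from the density of the subalgebra generated by the $a_{l,I}$ together with the fact that the identity at rigid points pins down the coefficients, and the general case is obtained by gluing over an affinoid cover. The only differences are cosmetic (you spell out the convergence bookkeeping for $\sum_\M g_\M c^\M$ slightly more explicitly than the paper does).
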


Notice that this property uniquely characterizes the space $ \mor  (\D^r, \bar{\D}^s)$, as it is the analytic spectrum of a  Banach function algebra.

\begin{proof}
Let us first construct the analytic map $\Phi : \mor  (\D^r, \bar{\D}^s) \times \D^r \to \bar{\D}^s $.
The assignment
$$(S_1, \ldots, S_s) \mapsto \left(\sum_{I \in \N^r} a_{1,I} T^I, \ldots , \sum_{I \in \N^r} a_{s,I} T^I \right)$$
 defines a bounded morphism of Banach $k$-algebras
 $\psi : k\{S_1, \ldots, S_s\} \to \mathcal{T}_\infty^{r,s} \{\rho^{-1}T_1, \ldots, \rho^{-1}T_r\}$ for every positive $\rho<1$,
 and thus an analytic map 
$\Phi : \mor  (\D^r, \bar{\D}^s) \times \D^r \to \bar{\D}^s$.

\medskip

We now prove the universal property.
Suppose first that $W$ is the analytic spectrum of a  Banach function $k$-algebra $\mathcal{A}$. 
In particular, we may assume that $\mathcal{A}$ is endowed with the sup norm.
Recall that the norm on the complete tensor product $\mathcal{A} \hat{\otimes}_k \mathcal{T}_r$  agrees with the Gauss norm $\| \sum_I b_I T^I \| = \sup_I |b_I|$.
Let $F: W\times \D^r \to \bar{\D}^s$ be an analytic map, 
 induced by some bounded homomorphism of Banach $k$-algebras
\begin{equation*}
(S_1, \ldots, S_s) \mapsto \left(\sum_{I \in \N^r} b_{1,I} T^I, \ldots , \sum_{I \in \N^r} b_{s,I} T^I\right) ~ ,
 \end{equation*}
 where $b_{l,I} \in \mathcal{A}$ are such that $\sup_{l,I} |b_{l,I} (x)| \le 1$   for all $x \in W$ by Lemma \ref{lema function algebra}.

 Consider the analytic map $G: W \to \mor (\D^r, \bar{\D}^s)$  given by $a_{l,I} \mapsto b_{l,I}$ for all $I \in \N^r$ and all $1 \le l \le s$. 
A rigid point $x \in W$ together with a rigid point $z \in \D^r$  defines a rigid point in the product $W \times \D^r$, and by construction we have $F(x,z) = \Phi( G(x), z)$.
 
 Conversely, let $H: W \to \mor  (\D^r, \bar{\D}^s)$ be an analytic map
  sending $a_{l,I}$ to some $c_{l,I} \in \mathcal{A}$ and satisfying $F(x,z) = \Phi( H(x), z)$ for all $x \in W(k)$ and all $z \in \D^r(k)$.
 For every fixed $x \in W(k)$, consider the analytic map $z\in \D^r \mapsto \Phi (H(x),z)$. 
 By hypothesis, it agrees with the map $z\in \D^r \mapsto \Phi (G(x),z)$, and so $b_{l,I}(x) = c_{l,I}(x)$ for every $I \in \N^r$ and $1 \le l \le s$. As the equalities hold for every rigid $x \in W$, we conclude that  $H=G$.
 \medskip
 
 Let now $W$ be an arbitrary  good $k$-analytic space.
Let $\{ W_i \}$ be an affinoid covering of $W$ inducing an analytic map $F : W \times \D^r \to \bar{\D}^s$. 
  By the previous case, for every affinoid domain $W_i$ of $W$ there exists a unique  analytic map $G_i : W_i \to \mor  (\D^r, \bar{\D}^s)$, induced by a bounded morphism of Banach algebras, such that $F(x,z) = \Phi( G_i(x), z)$ for all $x\in W_i(k)$ and $z \in \D^r(k)$.
 By construction, the maps $G_i$ agree on the intersections $W_i \cap W_j$ and are compatible with the restrictions. 
\end{proof}

\subsection{Points of $\mor  (\D^r,\bar{\D}^s)$ as continuous maps $\D^r \to \bar{\D}^s$}\label{section continuous maps}

The following theorem specifies in which sense the points of the space $\mor  (\D^r,\bar{\D}^s)$ correspond to continuous maps from $\D^r$ to $\bar{\D}^s$.

\begin{theorem}\label{thm mor}
There exists a map $\mathrm{Ev}$ from  $\mor  (\D^r, \bar{\D}^s)$ to the space of continuous functions $\mathcal{C}^0(\D^r, \bar{\D}^s)$
such that the following holds:
\begin{enumerate}\renewcommand{\labelenumi}{\roman{enumi})}
\item
The map $\ev(\al)$ is analytic if and only if  $\al \in \mor  (\D^r, \bar{\D}^s)$ is rigid. In that case, the map $\ev(\al)$ is precisely  $\Phi(\al,\cdot)$.
\item
For any fixed $z\in \D^r$, the assignment $\al \in \mor (\D^r, \bar{\D}^s) \mapsto \ev(\al)(z)$ is a continuous map.
\end{enumerate}
\end{theorem}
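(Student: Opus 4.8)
The plan is to construct $\ev$ by the same fibre-wise base-change recipe used for polynomial maps in \S\ref{section polynomial}, and then to deduce the two listed properties, property (ii) being the substantive one. First I would define, for $\al \in \mor(\D^r,\bar{\D}^s)$, the continuous map $\ev(\al):\D^r \to \bar{\D}^s$ by the formula
\begin{equation*}
\ev(\al)(z) = \Phi\bigl(\sigma_{\mathcal{H}(\al)/k}(\al),\,\sigma_{\mathcal{H}(\al)/k}(z)\bigr),
\end{equation*}
or more carefully: let $K=\mathcal{H}(\al)$, lift $\al$ to the rigid point $\tau(\al)\in \mor(\D^r,\bar{\D}^s)_K$ (Lemma \ref{fibres}), which by Theorem \ref{thm universal property of mor} applied over $K$ corresponds to a genuine $K$-analytic map $F_\al : \D^r_K \to \bar{\D}^s_K$, namely $F_\al = \Phi_K(\tau(\al),\cdot)$; then set $\ev(\al) = \pi_{K/k}\circ F_\al \circ \sigma_{K/k}$. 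Continuity is immediate since $\Phi$ is analytic (hence continuous), $\sigma_{K/k}$ is continuous by Poineau's theorem, and $\pi_{K/k}$ is continuous. Explicitly this gives, for $g=\sum_J g_J S^J \in k\{S_1,\dots,S_s\}$ and $z\in\D^r$,
\begin{equation*}
\bigl| g(\ev(\al)(z)) \bigr| = \Bigl| \Bigl( \sum_J g_J \prod_{l=1}^s \Bigl( \sum_{I\in\N^r} \chi_\al(a_{l,I}) T^I \Bigr)^{j_l} \Bigr)\bigl(\sigma_{K/k}(z)\bigr) \Bigr|,
\end{equation*}
in complete analogy with \eqref{eq: P alpha}.

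For property (i): if $\al$ is rigid then $\mathcal{H}(\al)=k$, the base change is trivial, and $\ev(\al) = \Phi(\al,\cdot)$ is analytic by construction. Conversely, if $\ev(\al)$ is analytic, it is given by power series $\sum_I b_{l,I}T^I$ with $b_{l,I}\in k$, $|b_{l,I}|\le 1$; comparing with the formula above on rigid points $z$ (which are dense in $\D^r$) forces $\chi_\al(a_{l,I}) = b_{l,I}\in k$ for all $l,I$. Since the elements $a_{l,I}$ generate a dense subalgebra of $\T^{r,s}_\infty$, the character $\chi_\al$ takes values in $k$, so $\mathcal{H}(\al)=k$, i.e. $\al$ is rigid. (One must be slightly careful that equality of two analytic maps on the dense set $\D^r(k)$ implies equality of coefficients; this is standard since a power series vanishing on all rigid points vanishes identically.)

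Property (ii) is where the real work lies, and it is exactly the content of the polynomial case proof globalized through Proposition \ref{mor proj limit} and Proposition \ref{residue field direct limit}. Fix $z\in\D^r$ and $g=\sum_J g_J S^J$; I must show $\al \mapsto |g(\ev(\al)(z))|$ is continuous. The key point is that $|g(\ev(\al)(z))|$ depends only on finitely much data: writing $z$ as a limit of rigid points (and reducing to $z$ rigid by the same base-change-to-$\mathcal{H}(z)$ argument as in the proof of Proposition \ref{montel poly}, using that $\ev(\al)$ commutes with base change — which needs a short check that $\mathcal{H}(\sigma_{\mathcal{H}(z)/k}(\al))$ extends $\mathcal{H}(\al)$, just as in that proof), one has for rigid $z$ that
\begin{equation*}
\bigl| g(\ev(\al)(z)) \bigr| = \bigl| R_{g,z,\delta}(\al_\delta) \bigr|
\end{equation*}
for $\delta$ large enough that $g$ has degree $\le\delta$ in the relevant sense, where $R_{g,z,\delta}$ is the polynomial in the variables $\{S_{l,I}\}_{|I|\le\delta}$ obtained by substituting $z^I$ as in \eqref{polynomial R}, and $\al_\delta = \mathrm{Pr}^\infty_\delta(\al)$. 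Since $\mathrm{Pr}^\infty_\delta : \mor(\D^r,\bar{\D}^s)\to\mor^{r,s}_\delta$ is continuous and $R_{g,z,\delta}$ is a polynomial (hence $\al_\delta\mapsto|R_{g,z,\delta}(\al_\delta)|$ is continuous on the affinoid $\mor^{r,s}_\delta$), the composite $\al\mapsto|g(\ev(\al)(z))|$ is continuous. Running this over all $g$ gives continuity of $\al\mapsto\ev(\al)(z)$ into $\bar{\D}^s$ for the Berkovich topology.

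The main obstacle I anticipate is the reduction to rigid $z$ in the proof of (ii): one needs $\ev(\al)$ to genuinely commute with the base change $\pi_{\mathcal{H}(z)/k}$, which requires re-examining the tower of complete residue fields $\mathcal{H}(\al) \subseteq \mathcal{H}(\sigma_{\mathcal{H}(z)/k}(\al))$ and checking that the fibre-wise construction of $\ev$ is compatible with it — the same delicate point that appears (and is handled) in the proof of Proposition \ref{montel poly}. Once that compatibility is in hand, the rest is a routine globalization of the polynomial argument via the inverse-limit description of $\mor(\D^r,\bar{\D}^s)$.
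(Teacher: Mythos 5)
The construction of $\ev$ you give is the paper's, and the forward direction of (i) is immediate in both treatments. The step that fails as written is the key equality in your proof of (ii): it is \emph{not} true that $\left|g(\ev(\al)(z))\right| = \left|R_{g,z,\delta}(\al_\delta)\right|$ for some finite $\delta$. Unlike the polynomial case, $\ev(\al)(z)$ involves the entire series $\sum_{I\in\N^r}\chi_\al(a_{l,I})z^I$, and no truncation computes it exactly: for $r=s=1$, $g=S$, $z=\lambda$ with $0<|\lambda|<1$ and $\al$ the rigid point corresponding to $w\mapsto w^{\delta+1}$, one has $\left|g(\ev(\al)(z))\right|=|\lambda|^{\delta+1}\neq 0$ while $R_{g,z,\delta}(\al_\delta)=0$, since $\al_\delta$ is the origin of $\mor^{1,1}_\delta$. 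The statement that is true, and that rescues your argument, is that for rigid $z$ the element
\begin{equation*}
G_z:=\sum_{J}g_J\prod_{l=1}^{s}\Bigl(\sum_{I\in\N^r}a_{l,I}\,z^I\Bigr)^{j_l}
\end{equation*}
belongs to the full Banach algebra $\T^{r,s}_\infty$ (its coefficients tend to $0$ along the cofinite filter precisely because $\max_i|z_i|<1$), and $\left|g(\ev(\al)(z))\right|=|G_z(\al)|$; continuity in $\al$ is then the very definition of the topology on $\mathcal{M}(\T^{r,s}_\infty)$. Equivalently, your truncations converge to the correct value \emph{uniformly} in $\al$, so the limit is continuous --- but you must say one of these things, since the claimed exact identity at a finite level is false. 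You also still owe the reduction to rigid $z$, which you correctly flag as delicate; the paper sidesteps it entirely by proving the symmetry $\iota_{\mathcal{H}(\al)}\circ\sigma_{\mathcal{H}(\al)/k}(z)=\iota_{\mathcal{H}(z)}\circ\sigma_{\mathcal{H}(z)/k}(\al)$ (computation \eqref{eq: P z}), which exhibits $\ev(\al)(z)$ as the value at $\al$ of the continuous map $\Phi\circ\iota_{\mathcal{H}(z)}\circ\sigma_{\mathcal{H}(z)/k}$ for \emph{arbitrary} $z$ in one stroke.

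On the converse of (i) your route differs from the paper's: you deduce $\chi_\al(a_{l,I})\in k$ from an identity theorem, whereas the paper runs a Newton-polytope argument with test points $z=(\lambda^{\beta_1},\ldots,\lambda^{\beta_r})$ to extract the extremal coefficients one by one. Your argument is viable, but note that what you need is that a power series with coefficients in $K=\mathcal{H}(\al)$ vanishing at every point of $\D^r(k)$ is identically zero --- a $K$-coefficient series tested only on $k$-points --- which requires the one-variable Weierstrass finiteness of zeros on sub-polydisks plus slicing in the variables, not merely the remark that rigid points are dense. Granting that, it is a legitimate (and arguably cleaner) alternative to the paper's computation.
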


\begin{proof}
The map $\ev: \mor (\D^r, \bar{\D}^s) \to \mathcal{C}^0(\D^r, \bar{\D}^s)$ is  given as follows.
Fix a point $\al \in \mor (\D^r, \bar{\D}^s)$ and consider the first projection $\pi_1: \mor  (\D^r, \bar{\D}^s) \times \D^r \to \mor  (\D^r, \bar{\D}^s)$.
 The fibre $\pi_1^{-1} (\al)$ is canonically isomorphic to $\D^r_{\mathcal{H}(\al)}$ (cf. \S \ref{section maps}).
We can thus consider the inclusion map $\iota_{\mathcal{H}(\al)}: \D^r_{\mathcal{H}(\al)} \to \mor  (\D^r, \bar{\D}^s) \times \D^r$, given by
\begin{eqnarray}\label{eq inclusion fibre}
\T^{r,Y}_\infty \{ \rho^{-1} T_1, \ldots, \rho^{-1} T_r \}& \to & \mathcal{H}(\al) \{ \rho^{-1} T_1, \ldots, \rho^{-1} T_r \} \nonumber \\
T_i & \mapsto & T_i\\
a \in  \T^{r,s}_\infty  & \mapsto & \chi_\al ( a) \nonumber
\end{eqnarray}
for $\rho <1$, where $\chi_\al: \T^{r,s}_\infty \to \mathcal{H}(\al)$ denotes the character associated to the point $\al$.
Let $\sigma_{\mathcal{H}(\al)/k}: \D^r \to \D^r_{\mathcal{H}(\al)}$ be the continuous map discussed in \S \ref{subsection universal}. Let $\Phi : \mor  (\D^r, \bar{\D}^s) \times \D^r \to \bar{\D}^s$ be the analytic map from Theorem \ref{thm universal property of mor}.
We set: 
$$\ev (\al) = \Phi \circ \iota_{\mathcal{H}(\al)} \circ \sigma_{\mathcal{H}(\al)/k}~.$$ 
Clearly, $\ev (\al)$ is a continuous map from $\D^r$ to $\bar{\D}^s$.
Specifically, for any $z \in \D^r$ and for any $g= \sum_{J \in \N^s} g_J S^ J$ in $k\{S_1, \ldots , S_s \}$, we have
\begin{equation}\label{eq EV(a) explicit}
\left| g( \ev(\al) (z) ) \right| =  \left| \sum_J g_J \prod_{l=1}^s \left( \sum_I \chi_\al ( a_{l,I}) \cdot T^I \right)^{j_l}   (\sigma_{\mathcal{H}(\al)/k}(z) ) \right| .
\end{equation}

Pick a rigid  point $\al \in \mor (\D^r, \bar{\D}^s)$, i.e. such that  $\mathcal{H}(\al) = k$.
 In this situation,  the fibre $\pi_1^{-1}(\al)$ is homeomorphic to $\D^r$, and so $\iota_{\mathcal{H}(\al)}$ is in fact an analytic map between $k$-analytic spaces, and 
 the map  $\sigma_{\mathcal{H}(\al)/k}$ is the identity on $\D^r$. 
 Then, for every $z \in \D^r$ the pair $(\al,z)$ defines a point in $\mor (\D^r, \bar{\D}^s) \times \D^r$, and so
 $\iota_k(z) = (\al,z)$. Thus,  $\ev(\al) = \Phi(\al, \cdot)$ is analytic.

Suppose conversely that  $\ev (\al)$ is analytic.
 %Then  $\chi_\al (a_{l,I})$ does not belong to $k$ for some $I \in \N^r$ and for some $1\le l \le s$.
It follows from \eqref{eq EV(a) explicit} that the map $\ev(\al)$  
can be decomposed as $\ev (\al) = \pi_{K/k} \circ F \circ \sigma_{K/k}$, where $F: \D^r \to \bar{\D}^s$ is the $K$-analytic map 
$$F(z) = \left(\sum_{I \in \N^r} \chi_\al ( a_{1,I}) \cdot z^I , \ldots, \sum_{I \in \N^r} \chi_\al ( a_{s,I}) \cdot z^I \right).$$
 It suffices to treat the case $s=1$.
Since $\ev(\al)$ is analytic, we may find coefficients $b_I \in k$ bounded by $1$ such that $\ev (\al) (z) =   \sum_{I \in \N^r} b_I z^I$  for every $z \in \D^r (k)$. 
Notice that the equality $\pi_{K/k} \left( \sum_{I \in \N^r} \chi_\al ( a_I) \cdot z^I \right)= \sum_{I \in \N^r} b_I z^I \in k$ implies that $\sum_{I \in \N^r} \chi_\al ( a_I) \cdot z^I  \in k$, as $k$ is algebraically closed.

Suppose by contradiction that $\al$ is not a rigid point and consider the equation 
\begin{equation}\label{eq al analytic}
 \sum_{I \in \N^r} b_I z^I = \sum_{I \in \N^r} \chi_\al ( a_I) \cdot z^I ~,
\end{equation}
where we may assume that  every $\chi_\al (a_I)$ is either $0$ or does not belong to  $k$. Since $\al$ is not rigid, not all of them are zero.
  We may consider the nonempty set $M \subseteq \N^r$ consisting of all the multi-indices $I \in \N^r$ such that $\chi_\al ( a_{I}) \notin k$.
Let $\mathrm{P}$ be the Newton polytope of $M$, i.e. the
convex hull of the union of all upper-quadrants $I + \R_+^r$ 
with $I\in M$. It is a non-compact polytope in $\R_+^r$ whose extremal points all belong to $M$.
 
Pick any extremal point $p$ of $\mathrm{P}$, and take any hyperplane in $\R^r$ with integer coefficients $H = \{  \beta_1 x_1 + \ldots + \beta_r x_r = \beta_0 \}$
  intersecting the polytope $\mathrm{P}$ exactly at the point $p$.
 In other words, we have %$p_1 \beta_1 + \ldots + p_r \beta_r = \beta_0$ and
 \begin{equation}\label{eq hyperplan}
  i_1  \beta_1 + \ldots + i_r \beta_r > \beta_0
 \end{equation}
  for every $I \in \N^r$ distinct from $p$ intervening in \eqref{eq al analytic}.
Fix any  $\lambda \in k$ with $|\lambda | < 1$ and consider the rigid point $z = (\lambda^{\beta_1}, \ldots , \lambda^{\beta_r}) \in \D^r$. Then,
\begin{eqnarray*}
 \sum_{I \in \N^r} b_I z^I & = &  \sum_{I \in \N^r} b_I (\lambda^{\beta_1}, \ldots , \lambda^{\beta_r})^I =
    \sum_{I \in \N^r} b_I \lambda^{i_1  \beta_1 + \ldots + i_r \beta_r} =\\
    & =& b_p \lambda^{\beta_0} +  \sum_{I \in \N^r, I \neq p} b_I  \lambda^{i_1  \beta_1 + \ldots + i_r \beta_r}
  =   b_p \lambda^{\beta_0} + O (\lambda^{\beta_0})~,
\end{eqnarray*}
where the last equality follows from \eqref{eq hyperplan}.
It follows that $b_p \lambda^{\beta_0} + O (\lambda^{\beta_0}) = \chi_\al (a_p ) \lambda^{\beta_0} +  O (\lambda^{\beta_0})$,
and hence  $\chi_\al (a_p ) = b_p \in k$. 
Repeating this procedure at every vertex of the polytope $\mathrm{P}$, we conclude that $\chi_\al (a_I) \in k$ for every $I \in \mathrm{P}$, contradicting the fact that  $\al$ is not rigid.

 \medskip

Let us now prove the continuity statement.
Fix a point $z \in \D^r$.
It suffices to check that for any sequence of points $ \{ \al_n  \}  \subset \mor  (\D^r, \bar{\D}^s)$ converging to some $\al \in \mor  (\D^r, \bar{\D}^s)$, we have
$\ev(\al_n)(z) \to \ev(\al)(z)$.

Consider the  second projection 
$\pi_2: \mor  (\D^r, \bar{\D}^s) \times \D^r \to \D^r$. The fibre $\pi_2^{-1} (z)$ is isomorphic to $\mor  (\D^r, \bar{\D}^s)_{\mathcal{H}(z)}$. 
The inclusion  map $\iota_{\mathcal{H}(z)}: \mor  (\D^r, \bar{\D}^s)_{\mathcal{H}(z)} \to \mor  (\D^r, \bar{\D}^s) \times \D^r$ is given by
\begin{eqnarray*}
\T^{r,s}_\infty  \{ \rho^{-1} T_1, \ldots, \rho^{-1} T_r \}& \to & \T^{r,s}_\infty \hat{\otimes}_k  \mathcal{H}(z)\\
T_i & \mapsto & \chi_z(T_i)\\
a_{l,I} & \mapsto &  a_{l,I}
\end{eqnarray*}
for $\rho <1$, where $\chi_z: k \{ \rho^{-1} T_1, \ldots, \rho^{-1} T_r\} \to \mathcal{H}(z)$ denotes the character associated to the point $z$. 
Pick some converging power series $g = \sum_{I \in \N^r } g_I T^I$ in $\T^{r,s}_\infty  \{ \rho^{-1} T_1, \ldots, \rho^{-1} T_r \}$ and compute:
\begin{multline}\label{eq: P z}
 \left| g \left( \iota_{\mathcal{H}(z)} \circ \sigma_{\mathcal{H}(z)/k} (\al)\right) \right|  
= \big| \big( \sum_{I \in \N^r} g_I \cdot  \chi_z(T)^I \big) \left( \sigma_{\mathcal{H}(z)/k} (\al)\right) \big|  \\
=  \max_{I \in \N^r } \left| g_I(\al) \right| \cdot \left| \chi_z(T)^I  \right|_{\mathcal{H}(z)}
 =  \max_{I \in \N^r } \left| \chi_\al(g_I) \right|_{\mathcal{H}(\al)} \cdot \left| T^I (z)  \right|  \\
 =   \big| \big( \sum_{I \in \N^r } \chi_\al(g_I) \cdot T^I \big) \left( \sigma_{\mathcal{H}(\al)/k} (z)\right) \big|  
 =  \left| g \left( \iota_{\mathcal{H}(\al)} \circ \sigma_{\mathcal{H}(\al)/k} (z)\right) \right|  ~.
\end{multline}
That is,  for all fixed $z \in \D^r$ and $\al \in \mor  (\D^r, \bar{\D})$, we have $$\iota_{\mathcal{H}(\al)} \circ \sigma_{\mathcal{H}(\al)/k} (z) = \iota_{\mathcal{H}(z)} \circ \sigma_{\mathcal{H}(z)/k} (\al)~.$$

Consider the continuous map $\Psi(z): \mor  (\D^r, \bar{\D}^s) \to \bar{\D}^s$, defined as the composition 
$\Psi(z) = \Phi \circ \iota_{\mathcal{H}(z)} \circ \sigma_{\mathcal{H}(z)/k}$. For every fixed $\al \in \mor  (\D^r, \bar{\D}^s)$ and every fixed $z \in \D^r$, we have
$$\Psi (z) (\al) = \ev(\al) (z).$$
If $\al_n$ is a sequence of points in $\mor  (\D^r, \bar{\D}^s)$ converging to $\al$, then the continuity of $\Psi(z)$ implies that $\Psi(z)(\al_n)$ converges to $\Psi(z)(\al)$ as $n$ goes to infinity, and so
$$\ev(\al^{(n)}) (z) \stackrel{n\to \infty}{\longrightarrow} \ev(\al) (z)~,$$
concluding the proof.
\end{proof}

\subsection{The space $ \mor  (\D^r, \bar{\D}^s)$  is  Fréchet-Urysohn.}

We prove a technical result that is a key step in the proof of Theorem \ref{thm a}.

\begin{theorem}\label{mor is FU}
The space $ \mor  (\D^r, \bar{\D}^s)$  is  Fréchet-Urysohn. 
\end{theorem}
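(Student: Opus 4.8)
The key is Proposition \ref{mor proj limit}, which identifies $\mor(\D^r,\bar\D^s)$ with the inverse limit $\varprojlim_\delta \mor^{r,s}_\delta$ of the finite-dimensional polydisks $\mor^{r,s}_\delta \cong \bar\D^{s(\delta+1)^r}$, together with the fact that being Fréchet–Urysohn is equivalent to: every point in the closure of a set $A$ is the limit of a sequence from $A$. So I would fix a subset $A \subseteq \mor(\D^r,\bar\D^s)$ and a point $\al \in \overline{A}$, and produce a sequence in $A$ converging to $\al$. Writing $\al_\delta = \mathrm{Pr}^\infty_\delta(\al)$ and $A_\delta = \mathrm{Pr}^\infty_\delta(A)$, the continuity and surjectivity of $\mathrm{Pr}^\infty_\delta$ give $\al_\delta \in \overline{A_\delta}$ in each $\mor^{r,s}_\delta$. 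Since $\mor^{r,s}_\delta$ is a (finite-dimensional, metrizable?) — no: it is a Berkovich polydisk, hence \emph{not} metrizable in general, but it \emph{is} Fréchet–Urysohn by Theorem \ref{seq compactness} (Poineau's theorem: every $k$-analytic space is Fréchet–Urysohn). So for each $\delta$ there is a sequence $(\beta^{(\delta)}_n)_n$ in $A_\delta$ with $\beta^{(\delta)}_n \to \al_\delta$.

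The obstacle — and the heart of the proof — is that these sequences live in different levels and need not be compatible under the truncation maps $\mathrm{pr}_\delta$, so one cannot naively lift them to a single sequence in $A$. I would address this by a diagonal argument exploiting the concrete structure of the $\mor^{r,s}_\delta$ as polydisks and the explicit description of the Gauss-type norm on $\T^{r,s}_\infty$. Concretely: the topology at $\al$ has a neighbourhood basis of sets of the form $\{\beta : ||g_j(\beta)| - |g_j(\al)|| < \varepsilon,\ j=1,\dots,m\}$ with $g_j \in \T^{r,s}$ (density of $\T^{r,s}$ in $\T^{r,s}_\infty$); each such $g_j$ lies in $k\{a_{l,I}\}_{|I|\le\delta_0}$ for some finite $\delta_0$, so membership in this neighbourhood depends only on the image in $\mor^{r,s}_{\delta_0}$. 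Thus, enumerating a countable cofinal family of such basic neighbourhoods $U_1 \supseteq U_2 \supseteq \cdots$ of $\al$ (countability holds because each $U$ is determined by finitely many $g_j \in \T^{r,s}$, and $\T^{r,s}$ is the increasing union of the \emph{separable} Tate algebras $k\{a_{l,I}\}_{|I|\le\delta}$ — or one restricts $\varepsilon$ to a countable set and uses the Fréchet–Urysohn property of each $\mor^{r,s}_\delta$ to reduce to countably many constraints), I pick $\al^{(n)} \in A \cap U_n$ using that $\al \in \overline A$ and each $U_n$ is open. Then $\al^{(n)} \to \al$ by construction.

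The delicate point to verify carefully is the countability of a neighbourhood basis at $\al$ \emph{within the relevant class of test functions} — this is where I expect to spend real effort, since $\mor(\D^r,\bar\D^s)$ itself is not first-countable. The resolution is that the \emph{Fréchet–Urysohn} property only requires extracting sequences from a \emph{given} set $A$: for each finite tuple $(g_1,\dots,g_m)$ from $\T^{r,s}$ and each rational $\varepsilon>0$, the set $A$ meets the corresponding neighbourhood of $\al$; organizing all such tuples and $\varepsilon$'s into a single sequence $(\al^{(n)})$ — choosing $\al^{(n)}$ to satisfy the first $n$ constraints simultaneously, which is possible because a finite intersection of such neighbourhoods is still a neighbourhood of $\al \in \overline A$ — yields a sequence in $A$ that eventually enters every basic neighbourhood of $\al$, hence converges to $\al$. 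One must check that the family of all finite tuples from $\T^{r,s}$ together with rational $\varepsilon$ is countable, which follows since $\T^{r,s} = \bigcup_\delta k\{a_{l,I}\}_{|I|\le\delta}$ and each Tate algebra over an (at worst) separable base admits a countable dense subset; reducing test functions to such a dense subset suffices by continuity of the norm. Alternatively, and perhaps more cleanly, one invokes Theorem \ref{seq compactness} at each level to get the sequences $(\beta^{(\delta)}_n)$ and then runs the diagonal extraction directly on the inverse system, using that $\mor^{r,s}_\delta$ is moreover metrizable when $k$ is, say, separably valued — but since the paper does not assume this, the density-of-$\T^{r,s}$ plus countable-tuple argument above is the safe route.
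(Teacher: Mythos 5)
Your reduction to the inverse-limit description and your observation that convergence can be tested against elements of the dense subalgebra $\T^{r,s}$ are both correct, and you rightly flag the compatibility problem with level-by-level sequences. However, the resolution you propose has a genuine gap: it amounts to constructing a \emph{countable neighbourhood basis} at $\al$, i.e.\ to proving first countability, and this is false in general. Your countability claim rests on $\T^{r,s}$ (equivalently, each Tate algebra $k\{a_{l,I}\}_{|I|\le\delta}$) admitting a countable dense subset, which holds only when $k$ itself is separable as a topological field -- an assumption the paper does not make. For a general algebraically closed complete $k$ (e.g.\ one with uncountable residue field or value group), even the one-dimensional polydisk $\bar{\D}$ fails to be first countable at the Gauss point; this is precisely why Poineau's theorem that analytic spaces are Fr\'echet--Urysohn is non-trivial, Fr\'echet--Urysohn being strictly weaker than first countable. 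Your closing sentence acknowledges the separability issue for the ``alternative'' route but then declares the density argument the ``safe route,'' when it suffers from exactly the same defect.

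The paper's proof circumvents this by following Poineau's descent strategy rather than seeking a countable basis over $k$. It first establishes Proposition \ref{prop bord de shilov}, an infinite-dimensional analogue of \cite[Th\'eor\`eme 4.22]{Poineau}: for the given $\al$ there is a subfield $l\subset k$ of countable type over the prime field such that for every intermediate $l\subset l'\subset k$, the point $\al$ is the \emph{unique} point of the Shilov boundary of the fibre of $\pi^\infty_{k/l'}$ through $\al$. One then base-changes to such an $l'$ of countable type; over $l'$ each $\mor^{r,s}_{\delta,l'}$ \emph{is} first countable, hence so is the countable product and therefore $\mor(\D^r,\bar{\D}^s)_{l'}$. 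A sequence from the image of $A$ converging to the image of $\al$ exists downstairs, and the Shilov-boundary uniqueness is what guarantees that suitably chosen lifts in $A$ converge to $\al$ itself rather than to another point of the fibre. This descent step is the missing idea in your proposal; without it the argument only covers the case of separable base fields.
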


We follow Poineau's  proof of the fact that  analytic spaces are Fréchet-Urysohn \cite[Proposition 5.2]{Poineau},  which in turn relies on \cite[Théorème 4.22]{Poineau}.  

Recall that a subset $\Gamma$ of the analytic spectrum of a $k$-Banach algebra $(\mathcal{A}, \| . \| )$ is a boundary if   for every $g \in \mathcal{A}$ there exists some $x \in \Gamma$  such that $| g(x) | = \| g \|$.
A closed boundary is called the \emph{Shilov boundary} if it is the smallest closed subset $\Gamma$ of $\mathcal{M(A)}$ satisfying this property.
Since we have excluded the trivially valued case and the norm on $\T^{r,s}_\infty $ is multiplicative, there exists a Shilov boundary in $\mor (\D^r, \bar{\D}^s)$ by \cite[Theorem C]{Shilov}.
 
In the following we deal with subfields $l$ of $k$ that are of \emph{countable type} over the prime subfield $k_p$ of $k$, i.e. such that $l$  has a dense $k_p$-vector subspace of countable dimension.
 
\medskip

The following proposition is an infinite dimensional analogue of \cite[Théorème 4.22]{Poineau}.

\begin{prop}\label{prop bord de shilov}
 For every point $\al$ in $ \mor  (\D^r, \bar{\D}^s)$ there exists a subfield $l$ of $k$ that is of countable type over the prime subfield $k_p$ of $k$ and satisfying the following property.  Let $l^\prime$ be any  subfield of $k$ with $l \subset l' \subset k$ and let $\pi^\infty_{k/l'}:  \mor  (\D^r, \bar{\D}^s) \to  \mor  (\D^r, \bar{\D}^s)_{l'}$ be the base change morphism.
 Then $\al$ is the unique point in the Shilov boundary of the fibre $(\pi^\infty_{k/l'})^{-1}(\pi^\infty_{k/l'} (\al))$.
\end{prop}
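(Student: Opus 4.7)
My plan is to reduce the statement to Poineau's original finite-dimensional result \cite[Théorème 4.22]{Poineau} via the inverse-limit presentation of Proposition~\ref{mor proj limit}. For each integer $\delta \ge 0$ the space $\mor^{r,s}_\delta$ is a strictly $k$-affinoid polydisk, so applying \cite[Théorème 4.22]{Poineau} to the point $\al_\delta := \mathrm{Pr}^\infty_\delta(\al)$ yields a subfield $l_\delta \subset k$ of countable type over $k_p$ such that for every intermediate field $l_\delta \subset l' \subset k$, $\al_\delta$ is the unique Shilov boundary point of the fibre $F_\delta(l')$ of the base-change morphism $\mor^{r,s}_\delta \to (\mor^{r,s}_\delta)_{l'}$ over the image of $\al_\delta$. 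I would then set $l$ to be the closed subfield of $k$ generated by $\bigcup_{\delta \ge 0} l_\delta$; a countable union of countable-dimensional dense $k_p$-subspaces remains countable-dimensional, so $l/k_p$ is again of countable type.

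Fix now $l' \subset k$ with $l \subset l'$ and write $F := (\pi^\infty_{k/l'})^{-1}(\pi^\infty_{k/l'}(\al))$. Since base change commutes with the truncations $\mathrm{Pr}^\infty_\delta$ (by the universal characterization of $\mor$ and the isometric nature of the Tate-algebra inclusions), each restriction $\mathrm{Pr}^\infty_\delta\mid_F : F \to F_\delta(l')$ is surjective, and its target has Shilov boundary $\{\al_\delta\}$ by the first step.

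The core step is to show that $\{\al\}$ is a closed boundary of $F$: given any $g \in \T^{r,s}_\infty$, I would approximate it in the Gauss norm by elements $g_n \in \T^{r,s}$, each belonging to some finite Tate subalgebra $k\{a_{l,I}\}_{|I|\le\delta(n)}$. By the Shilov uniqueness for $F_{\delta(n)}(l')$ together with the surjectivity $F \twoheadrightarrow F_{\delta(n)}(l')$,
\begin{equation*}
\sup_F |g_n| \;=\; \sup_{F_{\delta(n)}(l')} |g_n| \;=\; |g_n(\al_{\delta(n)})| \;=\; |g_n(\al)|.
\end{equation*}
Passing to the limit $n\to\infty$, and using Lemma~\ref{lema function algebra} to know that the Gauss norm on $\T^{r,s}_\infty$ coincides with its spectral norm so that Gauss-norm approximations control sup-norms uniformly over $F$, one deduces $|g(\al)| = \sup_F |g|$. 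Thus $\{\al\}$ is a closed boundary, and being a singleton it must coincide with the (nonempty) Shilov boundary of $F$.

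The main obstacle is the interplay between the inverse-limit topology, the base-change morphism $\pi^\infty_{k/l'}$, and the Shilov-boundary formalism: one has to check that fibres behave well under truncation (so that $F \twoheadrightarrow F_\delta(l')$ is surjective and Shilov uniqueness transfers), and that the Shilov boundary of the infinite-dimensional Banach algebra underlying $F$ exists and is nonempty so that the minimality argument actually closes. A secondary and purely formal point is the stability of countable type under countable union, which follows from the characterization of countable type in terms of dense $k_p$-vector subspaces.
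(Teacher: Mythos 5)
Your proposal is correct and follows essentially the same route as the paper: apply Poineau's Théorème 4.22 to each truncation $\al_\delta$, take the subfield of $k$ generated by the fields $l_\delta$, and use the density of $\T^{r,s}$ in $\T^{r,s}_\infty$ to see that $\al$ maximizes every function on the fibre, whence the Shilov boundary is $\{\al\}$. The only (harmless) deviations are that you invoke surjectivity of $F \to F_{\delta}(l')$, which is neither justified nor needed — the inclusion $\mathrm{Pr}^\infty_\delta(F) \subseteq F_\delta(l')$ together with $\al \in F$ already closes your chain of equalities, and this is exactly what the paper uses — and that the paper settles uniqueness by producing, for each $\beta \neq \al$ with $\beta_\delta \neq \al_\delta$, a function $g$ at level $\delta$ with $|g(\beta)| < |g(\al)|$, instead of appealing only to minimality of the (existing, nonempty) Shilov boundary as you do.
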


\begin{proof}
The space $\mor (\D^r, \bar{\D}^s)$ is the projective limit of $\mor^{r,s}_{\delta}$ with the morphisms $\mathrm{Pr}^\infty_{\delta, k}: \mor (\D^r, \bar{\D}^s) \to \mor^{r,s}_{\delta}$ for $\delta \in \N^*$ (cf. Proposition \ref{mor proj limit}).
 A point $\al$ in $\mor (\D^r, \bar{\D}^s)$ is thus determined
  by a sequence $(\al_\delta)_{\delta \ge 0}$, where each $\al_\delta$ lies  in $\mor^{r,s}_{\delta}$ and  satisfies $\mathrm{pr}_{\delta + 1} (\al_{\delta + 1}) = \al_\delta$ for the projections $\mathrm{pr}_{\delta + 1}: \mor^{r,s}_{\delta + 1} \to  \mor^{r,s}_{\delta}$.
 
  To every $\al_\delta$ we apply  \cite[Th\'eor\`eme 4.22]{Poineau}. We  obtain a field $l^\delta \subset k$ that is of countable type over the prime subfield  $k_p$ of $k$ and such that for any
 subfield $l^\delta \subset l' \subset k$ the point $\al_\delta$ is the only point in the Shilov boundary of $(\pi^\delta_{k/l'})^ {-1}(\pi^\delta_{k/l'} (\al_\delta))$, where $\pi^\delta_{k/l'}: \mor^{r,s}_{\delta} \to \mor^{r,s}_{\delta, l^\prime}$ denotes the base change morphism.

Let $l$ be the subfield of $k$ generated by all the $l^\delta$. 
By construction, $l$  is of countable type over $k_p$. We may assume in addition that $l$ is algebraically closed.

The equality $\pi^\delta_{k/l'} \circ ~\mathrm{Pr}^\infty_{\delta, k} = \mathrm{Pr}^\infty_{\delta, l^\prime} \circ ~ \pi^\infty_{k/l'}$
implies that  $\mathrm{Pr}^\infty_{\delta, k}$ maps the fibre $(\pi^\infty_{k/l'})^{-1}(\pi^\infty_{k/l'}(\al))$ to the fibre $(\pi^\delta_{k/l'})^{-1}(\pi^\delta_{k/l'}(\al_\delta))$.
   We show that $\al$ belongs to the Shilov boundary of $(\pi^\infty_{k/l'})^{-1}(\pi^\infty_{k/l'}(\al))$.
 Pick an element $g \in \T^{r,s}_\infty $.
 As $\T^{r,s}$ is dense in $\T^{r,s}_\infty$, we may assume that $g$ lies in $ k\{a_{l,I} \}_{|I| \le \delta, 1\le l \le s }$ for some  $\delta\ge 0$. 
 Thus, 
$|g(\al)| = |g (\al_\delta) |$, which is the maximum value of $g$, since $\al_\delta$ belongs to the Shilov boundary of $(\pi^\delta_{k/l'})^{-1}(\pi^\delta _{k/l'} (\al_\delta))$.

 Pick a point $\beta \in (\pi^\infty_{k/l'})^{-1}(\pi^\infty_{k/l'}(\al))$ different from $\al$, i.e. such that $\beta_\delta \neq \al_\delta$ for some $\delta \ge 0$. 
 As $\al_\delta$ is the unique point in the Shilov boundary of $(\pi^\delta_{k/l'})^{-1}(\pi^\delta _{k/l'} (\al_\delta))$, we may find some 
  $g \in k\{a_{l,I} \}_{|I| \le \delta }$ such that
\begin{equation*}
|g(\beta)| = |g (\beta_\delta)|< |g (\al_\delta) | = |g(\al)|,
\end{equation*}
showing that $\al$ is the unique point in  the Shilov boundary of the space $(\pi^\infty_{k/l'})^{-1}(\pi^\infty_{k/l'} (\al))$.
\end{proof}

\begin{proof}[Proof of Theorem \ref{mor is FU}]
Let $A$ be any subset of $\mor (\D^r, \bar{\D}^s)$ and let $\alpha$ be a point in the closure of $A$. Let $l$ be the subfield of $k$ associated to $\al$ from Proposition \ref{prop bord de shilov}.
Let $l\subset l' \subset k$ be any subfield of $k$ that is of countable type over $l$. 
Every polydisk $\mor_{\delta, l'}^{r,s}$ is first countable,  and as a consequence so is the countable product of all the $\mor_{\delta, l'}^{r,s}$. The space $\mor (\D^r, \bar{\D}^s)_{l'}$ is a subspace of the product $\prod_{\delta} \mor_{\delta, l'}^{r,s}$  by Proposition \ref{mor proj limit}, and thus is first countable.

Copying Poineau's proof of \cite[Proposition 5.2]{Poineau} and using Proposition \ref{prop bord de shilov}, we may  find a sequence of points $\al_n$ in $A$ converging to $\al$.
\end{proof}

\section{Montel's theorem}\label{section preuve montel}

This section is devoted to the proof of Theorem \ref{thm a}. 
We first apply the results and constructions from the previous sections to prove the case where the base field $k$ is algebraically closed and next we generalize this argument to an arbitrary non-Archimedean complete field.

\subsection{Proof of Theorem  ~\ref{thm a} in the algebraically closed case}

Let  $k$ be an algebraically closed complete non-Archimedean field.

Let $X$ be a good, reduced, $\sigma$-compact $k$-analytic space without boundary and $Y$ a strictly $k$-affinoid space.
Pick a sequence of analytic maps $f_n: X \to Y$. 
We claim that there exists a subsequence that is pointwise converging to a continuous map.

\medskip 

Since any $k$-affinoid space $Y$ may be embedded into a polydisk, we may readily assume that $Y= \bar{\D}^s$ for some integer $s$.

Assume first that $X = \D^r$. In this case, each analytic map $f_n$ corresponds to a rigid point $\al_n$ in  $\mor  (\D^r, \bar{\D}^s)$ by Theorem \ref{thm universal property of mor}. 
Since the space $\mor  (\D^r, \bar{\D}^s)$ is compact and Fr\'echet-Urysohn by Theorem \ref{mor is FU}, we may find a converging subsequence $\al_{n_j}$ converging to some point $\al \in  \mor  (\D^r, \bar{\D}^s)$.
The continuous map $\ev(\al) : \D^r \to \bar{\D}^s$ is the limit map of the subsequence $f_{n_j}$ by Theorem \ref{thm mor}.

Suppose now that $X$ is a basic tube in the sense of \S \ref{section tubes}.
Let $\hat{X} $ be a strictly $k$-affinoid space and $\hat{X} \to \bar{\D}^r$ a distinguished closed immersion  such that $X$ is isomorphic to $\hat{X} \cap \D^r$ (cf. Proposition \ref{basic tube intersection}). 
We may thus write $X$ as a growing countable union of affinoid spaces $X = \bigcup_{0 <\rho < 1} X_\rho$. Moreover, since $k$ is algebraically closed, we may take every $\rho$ in $|k^\times|$.
As the affinoid algebra corresponding to $\hat{X}$ is isomorphic to the quotient of the Tate algebra $\T_r$ by some closed ideal $I$, we may assume that the affinoid algebra $\mathcal{A}_\rho$ of each $X_\rho$ is of the form $k \{\rho^{-1} T_1 , \ldots , \rho^{-1} T_r \}$ modulo the ideal generated by $I$.
In particular, we have distinguished closed immersions $\varphi_\rho: X_\rho \to \bar{\D}^r(\rho)$.

Let $f_n : X \to \bar{\D}^s$ be a sequence of analytic maps. 
Fix $0 < \rho < 1$, $\rho \in |k^\times|$. We may apply Proposition \ref{prop distinguished} to the restriction of 
 $f_n$ to $X_\rho$ to obtain an  analytic map  $g_n^{(\rho)}: \bar{\D}^r(\rho) \to \bar{\D}^s$ extending $f_n |_{X_\rho}$.
Indeed, $f_n |_{X_\rho}$ is given by elements  $f_1^{(n)} , \ldots , f_s^{(n)} \in \mathcal{A}_\rho$ of norm at most $1$.
As we have a distinguished epimorphism $k \{\rho^{-1} T_1 , \ldots , \rho^{-1} T_r \} \to \mathcal{A}_\rho$, we may lift each $f_l^{(n)}$, $l= 1, \ldots , s$, to an element in $k \{\rho^{-1} T_1 , \ldots , \rho^{-1} T_r \}$ having the same norm. These define  analytic maps $g_n^{(\rho)}: \bar{\D}^r(\rho) \to \bar{\D}^s$ satisfying $g_n^{(\rho)} \circ \varphi_\rho = f_n |_{X_\rho}$ for all $n \in \N$.
We now apply the previous case to the restricted sequence $\{ g_n^{(\rho)}|_{\D^r(\rho)}\}_n$.
We conclude by  a diagonal extraction argument.

Consider now $X$ as in the theorem. 
Being $\sigma$-compact, $X$ is the union of  countably many compact sets $K_n$. Since it is a good analytic space without boundary, 
each compact set $K_n$ is included in a finite union of open sets, each isomorphic to a basic tube. 
It follows that $X$ is a countable union of basic tubes $U_m$. 
By the previous case, on every open set $U_m$ there exists a subsequence  converging pointwise, and extracting diagonally we may find a subsequence $\{f_{n_j} \}$ converging pointwise on the whole  $X$.
The limit is continuous on every $U_m$ and hence on $X$ since they are open.
\qed

\subsection{Proof of Theorem \ref{thm a} over an arbitrary base field}\label{sec:other base field}

Let $K$ be a completed algebraic closure of $k$, and $X_K, Y_K$ be the scalar extensions of $X$ and $Y$  respectively, see \S\ref{subsection universal}.

Pick a sequence $f_n: X \to Y$ of analytic maps and consider the analytic maps $F_n: X_K \to Y_K$ induced by the base change.
The following diagram commutes:
\begin{equation*}
\xymatrix{
   X_K \ar[r]^{F_n} \ar[d]_{\pi_{K/k}} & Y_K \ar[d]^{\pi_{K/k}} \\
   X \ar[r]^{f_n} & Y
}
\end{equation*}
 Observe that the  analytic space $X_K$ is good and $\sigma$-compact, since the preimage $\pi_{K/k}^{-1}(U)$ of an affinoid domain $U$ of $X$ is an affinoid domain  in $X_K$.
It follows directly from the definition of the interior that  $X_K$ is boundaryless (\cite[Proposition 3.1.3]{Berk}).
 Thus, by the algebraically closed case of Theorem \ref{thm a} proved above, we may assume that $F_n$ is pointwise converging to a continuous map $F: X_K \to Y_K$. 
 Pick a point $z \in X$. 
 As $\pi_{K/k}$ is surjective, we may choose  a point $z' \in \pi_{K/k} ^{-1}(z) $. It follows that
  $f_n ( z ) = f_n(\pi_{K/k} (z') ) = \pi_{K/k} \circ F_n(z')$, which tends to $\pi_{K/k} \circ F(z') = : f(z)$ as $n$ goes to infinity. The limit map $f$ is well-defined. Indeed, if $z'$, $z''$ are two points in $\pi_{K/k}^{-1}(z)$, then
  \begin{eqnarray*}
  && \lim_n \pi_{K/k} \circ F_n(z') = \lim_n f_n (\pi_{K/k} (z') )=\\ & = & \lim_n f_n (\pi_{K/k} (z'') ) = \lim_n \pi_{K/k} \circ F_n(z'') ~.
 \end{eqnarray*}

It remains to check that $f$ is continuous. Let $A$ be any closed (hence compact) subset of $Y$. By continuity, the set $F^{-1} \left( \pi^{-1}_{K/k} (A)\right)$ is closed. Recall that the map $\pi_{K/k}: X_K\to X$ is proper. Since  $X_K$ and $X$ are locally compact, then $\pi_{K/k}$ is closed. As a consequence, $f^{-1}(A) = \pi_{K/k} \left( F^{-1} \circ \pi^{-1}_{K/k} (A) \right)$ is closed.
\qed

\subsection{Fields with countable residue field}

We observe in this section that part of the assertion of Theorem \ref{thm a} extends to maps between any $k$-affinoid spaces when the residue field of $k$ is countable. 
Specifically, we do not exclude source spaces with boundary and show that one may always extract an everywhere converging subsequence.
This section will not be used in the rest of the paper, since the limits we obtain  this way are not necessarily  continuous.

\medskip

Recall that the boundary of an affinoid space can  be written as a finite union of affinoid spaces defined over some extension of $k$, see \cite[Lemma 3.1]{Ducrospolytopes}. 
Here we shall only use the following observation. Consider the closed $N$-dimensional polydisk $\bar{\D}^N$, and denote by $p_i: \bar{\D}^N \to \bar{\D}$  the projection to the $i$-th coordinate.
 Recall that  the boundary of $\bar{\D}$ consists only of the Gauss point.
  It follows from Lemma \ref{interieur - points fermes} that $p_i^{-1}(x_g)$ is contained in the boundary of $\bar{\D}^N$ for every $ i=1, \cdots,  N$.  
Let now $x$ be a point in $\partial \bar{\D}^N$ and consider the commutative diagram:
\begin{equation*}
 \xymatrix{
\bar{\D}^N \ar[d]_{\mathrm{red}} \ar[r]^{p_i} & \bar{\D} \ar[d]^{\mathrm{red}}\\
\A^N_{\tilde{k}}  \ar[r]^{\tilde{p_i}} &\A^1_{\tilde{k}}\\
}
\end{equation*} 
Suppose that $p_i(x) \neq x_g$ for all $i$. 
By Lemma \ref{interieur - points fermes}, the point $\widetilde{p_i (x)}$ is  closed in $\A^1_{\tilde{k}}$ corresponding to some maximal ideal $\langle T_i - \zeta_i \rangle \subset \tilde{k}[T_i]$ for every $i= 1, \ldots, N$.
The commutativity of the diagram implies that the maximal  ideal 
 $\langle T_1 - \zeta_1, \ldots,   T_N- \zeta_N \rangle$ of $\tilde{k}[T_1, \ldots, T_N]$
 is contained in the prime ideal  corresponding to $\red(x)$. As a consequence,   $\red (x) \in \A^N_{\tilde{k}}$ is  closed, contradicting the fact that $x$ belongs to $\partial \bar{\D}^N$.

The boundary of  $\bar{\D}^N$ is thus equal to the union $p_1^{-1}(x_g) \cup \ldots \cup p_N^{-1}(x_g).$
Observe that  each fibre $p_i^{-1}(x_g)$ is isomorphic to $\bar{\D}^{N-1}_{\mathcal{H}(x_g)}$.

\begin{prop}\label{corollary Cp}
Suppose $k$ is a  non-Archimedean complete valued field that is algebraically closed and such that $\tilde{k}$ is countable. Let $X$ and $Y$ be $k$-affinoid spaces and assume that $X$ is reduced and distinguished. Then, every sequence of analytic maps $f_n: X \to Y$ has an everywhere pointwise converging subsequence.
\end{prop}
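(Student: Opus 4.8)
The plan is to reduce everything to closed polydisks and then induct on the dimension. First I would embed the target $k$-affinoid space $Y$ into a polydisk $\bar{\D}^s$, so that we may assume $Y=\bar{\D}^s$. Since $X$ is reduced and distinguished, a distinguished epimorphism $\T_N\to\mathcal{A}$ from the Tate algebra onto the affinoid algebra $\mathcal{A}$ of $X$ gives a distinguished closed immersion $X\hookrightarrow\bar{\D}^N$, and by Proposition \ref{prop distinguished} every $f_n$ extends to an analytic map $\bar{\D}^N\to\bar{\D}^s$; a subsequence of the extensions converging pointwise on $\bar{\D}^N$ restricts to a subsequence of $(f_n)$ converging pointwise on $X$. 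Hence it suffices to treat $X=\bar{\D}^N$, and I would do this by induction on $N$. The case $N=0$ is immediate from the sequential compactness of $\bar{\D}^s$ (Theorem \ref{seq compactness}), since $\bar{\D}^0$ is a single point.

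For the inductive step I would use the decomposition $\bar{\D}^N=\D^N\sqcup\partial\bar{\D}^N$ with $\partial\bar{\D}^N=\bigcup_{i=1}^N p_i^{-1}(x_g)$ established just above the statement, together with the identification $p_i^{-1}(x_g)\cong\bar{\D}^{N-1}_{\mathcal{H}(x_g)}$. On the open polydisk $\D^N$, which is good, reduced, $\sigma$-compact and boundaryless, the already-proved algebraically closed case of Theorem \ref{thm a} supplies a subsequence of $(f_n)$ converging pointwise on $\D^N$ (even to a continuous limit, which I will not need). It then remains to extract, successively, subsequences converging on each of the finitely many strata $p_i^{-1}(x_g)$, and to combine these finitely many extractions at the end.

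To handle one stratum, I would restrict the maps of the subsequence obtained so far to $p_i^{-1}(x_g)$: the $s$ power series defining $f_n$ restrict to elements of $\mathcal{H}(x_g)\{T_1,\dots,\widehat{T_i},\dots,T_N\}$ of supremum norm at most $1$, so we obtain analytic maps $\bar{\D}^{N-1}_{\mathcal{H}(x_g)}\to\bar{\D}^s_{\mathcal{H}(x_g)}$ of $\mathcal{H}(x_g)$-analytic spaces whose point-evaluations coincide with those of $f_n|_{p_i^{-1}(x_g)}$. Since $\mathcal{H}(x_g)$ need not be algebraically closed, I would base change along a completed algebraic closure $K_i$ of $\mathcal{H}(x_g)$; the key point is that $\widetilde{K_i}$ is an algebraic closure of $\widetilde{\mathcal{H}(x_g)}=\tilde{k}(\tilde{T})$, hence still countable, and that $\bar{\D}^{N-1}_{K_i}$ is reduced and, being a polydisk over an algebraically closed field, distinguished. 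The induction hypothesis (Proposition \ref{corollary Cp} in dimension $N-1$ over $K_i$) then yields a further subsequence converging pointwise on $\bar{\D}^{N-1}_{K_i}$, and surjectivity of $\pi_{K_i/\mathcal{H}(x_g)}$ — choosing for each $z\in p_i^{-1}(x_g)$ a preimage $z'$ and writing $f_n(z)=\pi_{K_i/\mathcal{H}(x_g)}(F_n(z'))$ — pushes this down to pointwise convergence on $p_i^{-1}(x_g)$. Iterating over $i=1,\dots,N$ and intersecting with the subsequence already obtained on $\D^N$ gives, after these finitely many extractions, a single subsequence of $(f_n)$ converging pointwise on all of $\bar{\D}^N$.

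The main obstacle I anticipate is the bookkeeping around the base change at the boundary fibres: one must verify that restricting $f_n$ to $p_i^{-1}(x_g)$ genuinely produces analytic maps of $\mathcal{H}(x_g)$-analytic spaces with unchanged point-evaluations, so that convergence really descends, and that the hypotheses of the proposition — algebraically closed base field with countable residue field, reduced distinguished source — survive the passage to $K_i$ and to the lower-dimensional polydisk. The countability of $\widetilde{K_i}=\overline{\tilde{k}(\tilde{T})}$, which is precisely where the countable-residue-field hypothesis is consumed at each level of the induction, is the crucial input that makes the argument close up. Consistently with the statement, I do not expect the resulting pointwise limit to be continuous.
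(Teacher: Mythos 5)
Your overall strategy coincides with the paper's: reduce to $X=\bar{\D}^N$, $Y=\bar{\D}^s$ via a distinguished immersion and Proposition \ref{prop distinguished}, then induct on $N$, handling the boundary strata $p_i^{-1}(x_g)\cong\bar{\D}^{N-1}_{\mathcal{H}(x_g)}$ by an induction over the field $\mathcal{H}(x_g)$ (your extra base change to a completed algebraic closure $K_i$, followed by descent along the surjection $\pi_{K_i/\mathcal{H}(x_g)}$, is a legitimate and in fact slightly more careful way of running that step than the paper's, which simply asserts the induction hypothesis over any complete field with countable residue field). That part is fine.

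However, there is a genuine gap at the interior stage: the decomposition $\bar{\D}^N=\D^N\sqcup\partial\bar{\D}^N$ is false. What the paper establishes is $\bar{\D}^N=\Int(\bar{\D}^N)\sqcup\partial\bar{\D}^N$, and $\Int(\bar{\D}^N)$ is strictly larger than $\D^N$: by Lemma \ref{interieur - points fermes} it is the union of the residue tubes $\mathrm{red}^{-1}(\tilde{x})$ over all closed points $\tilde{x}\in\A^N_{\tilde{k}}(\tilde{k})$, and $\D^N=\mathrm{red}^{-1}(0)$ is only one of these components. Concretely, already for $N=1$ the rigid point $1\in\bar{\D}$ satisfies $|T(1)|=1$, so it lies neither in $\D$ nor in $\partial\bar{\D}=\{x_g\}$; your subsequence is never shown to converge there. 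The repair is exactly the paper's first step: the interior has countably many connected components (in bijection with $\A^N_{\tilde{k}}(\tilde{k})$, countable precisely because $\tilde{k}$ is countable), each a basic tube to which Theorem \ref{thm a} applies, and one must perform a diagonal extraction over this countable family before turning to the boundary. Note that this is the primary place where the hypothesis that $\tilde{k}$ is countable is consumed; in your write-up it only enters through $\widetilde{\mathcal{H}(x_g)}\cong\tilde{k}(T)$ at the boundary, which by itself would not suffice.
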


\begin{proof}
We may assume  $X= \bar{\D}^r$, $Y = \bar{\D}^s$ as in the proof of Theorem \ref{thm a}. 
The set of connected components of the interior of $\bar{\D}^r$ is in bijection with the set of $\tilde{k}$-points 
on its reduction $\A^r_{\tilde{k}}$ and hence is countable.
 
We now argue inductively on $r$.  When $r=1$, then the boundary of $\bar{\D}$ consists of a single point, namely the Gauss point.  
We may therefore apply Theorem \ref{thm a} to each of the (countably many) components
of the interior of $\bar{\D}$ and apply a diagonal extraction argument to conclude.

\smallskip

 Assume now that the statement holds for the polydisk of dimension $r-1$ defined over \emph{any} complete valued field with  countable residue field, and pick a sequence of analytic maps $f_n : \bar{\D}^r \to \bar{\D}^s$.
As before,  we apply  Theorem \ref{thm a} to each of the (countably many) components
of the interior of $\bar{\D}^r$ so that we may suppose that $f_n$ converges pointwise on the interior of $\bar{\D}^r$.

The boundary of $\bar{\D}^r$ is the union of $r$ unit polydisks of dimension $r-1$ defined over the field $\mathcal{H}(x_g)$ by our previous discussion. On each of  these  we may apply the induction hypothesis, as the field $\widetilde{\mathcal{H}(x_g) }$ is isomorphic to $\tilde{k}(T)$, which is countable.
This concludes the proof.
\end{proof}

\subsection[Analytic properties of pointwise limits]{Analytic properties of pointwise limits of analytic maps}\label{section thm b}

Continuous maps of the form  $\mathrm{Ev}(\al): \D^r \to \bar{\D}^s$ are very special, 
as they exhibit properties that are distinctive of analytic maps. 
We shall prove that they lift to analytic maps after a suitable base change and that the graph of $\mathrm{Ev}(\al)$ is well-defined in the analytic product $ \D^r \times \bar{\D}^s$ and not just in the topological product $|\D^r | \times | \bar{\D}^s| $.

\smallskip

Recall  from \S \ref{subsection universal} the definition of the continuous map $\sigma_{K/k}: X \to X_K$.

\begin{theorem} \label{thm lifting}
Let $k$ be a complete non-Archimedean field that is algebraically closed.

Let $\al$ be a point in $\mor ( \D^r, \bar{\D}^s)$.
 Then there exists a closed subset $\Gamma_\al$ of  $\D^r \times \bar{\D}^s $ such that the first projection $\pi_1: \Gamma_\al \to \D^r$ is a homeomorphism
  and such that for  every $z \in \D^r$ the image of $\Gamma_\al \cap \pi_1^{-1}(z)$ under
  the second projection is the point  $\ev(\al)(z) \in \bar{\D}^s$. 
  
 Moreover, there exist a complete extension $K$ of $k$ and a $K$-analytic map $F_\al: \D^r_K \to \bar{\D}^s_K$ such that $\ev(\al) = \pi_{K/k} \circ F_\al \circ \sigma_{K/k}$.
\end{theorem}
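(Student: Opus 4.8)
The plan is to take $K := \mathcal{H}(\al)$ as the base change field and to build $F_\al$ directly from the universal family $\Phi$ of Theorem \ref{thm universal property of mor}. Recall from Lemma \ref{fibres} that the fibre $\pi_{K/k}^{-1}(\al) \subset \mor(\D^r,\bar{\D}^s)_K$ contains a distinguished rigid point $\tau(\al)$, coming from the character $\chi_\al : \T^{r,s}_\infty \to \mathcal{H}(\al)$. Over $K$ this rigid point corresponds, by the $K$-analytic analogue of Theorem \ref{thm universal property of mor} applied to $\Phi_K : \mor(\D^r,\bar{\D}^s)_K \times \D^r_K \to \bar{\D}^s_K$, to a genuine $K$-analytic map
\[
F_\al : \D^r_K \to \bar{\D}^s_K, \qquad F_\al(z) = \Bigl(\sum_{I\in\N^r}\chi_\al(a_{1,I})\, z^I,\ \ldots,\ \sum_{I\in\N^r}\chi_\al(a_{s,I})\, z^I\Bigr),
\]
the coefficients being elements of $K = \mathcal{H}(\al)$ of norm $\le 1$ (so the series converge on the open polydisk and land in $\bar{\D}^s_K$). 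I would then identify $F_\al$ with the composition $\Phi_K \circ \iota_{\mathcal{H}(\al)}$, where $\iota_{\mathcal{H}(\al)} : \D^r_K \hookrightarrow \mor(\D^r,\bar{\D}^s)_K \times \D^r_K$ is the inclusion of the fibre of $\pi_1$ over $\tau(\al)$; this is exactly the map appearing in the definition of $\ev$ in Theorem \ref{thm mor}, read over $K$ instead of $k$.

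The factorization $\ev(\al) = \pi_{K/k} \circ F_\al \circ \sigma_{K/k}$ is then essentially a matter of unwinding definitions: by construction $\ev(\al) = \Phi \circ \iota_{\mathcal{H}(\al)} \circ \sigma_{\mathcal{H}(\al)/k}$, and the only point to check is that $\Phi \circ \iota_{\mathcal{H}(\al)}$ (a map $\D^r_K \to \bar{\D}^s$) coincides with $\pi_{K/k} \circ F_\al$ (a map $\D^r_K \to \bar{\D}^s_K \to \bar{\D}^s$). This is verified on rigid points of $\D^r_K$ using the explicit formula \eqref{eq EV(a) explicit}: for $g = \sum_J g_J S^J \in k\{S_1,\ldots,S_s\}$ and $z \in \D^r$,
\[
\bigl|g(\ev(\al)(z))\bigr| = \Bigl| \sum_J g_J \prod_{l=1}^s \Bigl( \sum_I \chi_\al(a_{l,I})\, T^I \Bigr)^{j_l}\bigl(\sigma_{K/k}(z)\bigr) \Bigr| = \bigl| g\bigl( \pi_{K/k}(F_\al(\sigma_{K/k}(z))) \bigr) \bigr|,
\]
since $\pi_{K/k}$ does not change the value of a seminorm. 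As the rigid points are dense (and both sides are continuous), equality holds everywhere.

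For the first assertion, I would set $\Gamma_\al := (\sigma_{K/k} \times \mathrm{id}_{\bar{\D}^s})^{-1}$ of the graph of $F_\al$ inside $\D^r_K \times \bar{\D}^s$ — more precisely, $\Gamma_\al := \{(z, F_\al(\sigma_{K/k}(z))\text{'s image in }\bar{\D}^s) : z \in \D^r\}$, which is the image of $\D^r$ under the continuous section $z \mapsto (z, \ev(\al)(z))$. Closedness follows because $\ev(\al)$ is continuous and $\bar{\D}^s$ is Hausdorff (so the graph of a continuous map into a compact Hausdorff space is closed, and $\D^r \times \bar{\D}^s$ carries the product topology of two good analytic spaces with $\bar{\D}^s$ compact); the first projection $\pi_1 : \Gamma_\al \to \D^r$ is then a continuous bijection, and a homeomorphism since $\Gamma_\al$ is closed in a locally compact space and the section is continuous. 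The fibre condition is immediate from the construction.

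\textbf{Main obstacle.} The delicate step is making precise that the rigid point $\tau(\al) \in \mor(\D^r,\bar{\D}^s)_{\mathcal{H}(\al)}$ really does correspond, via a $K$-analytic version of the universal property, to an honest $K$-analytic map $F_\al : \D^r_K \to \bar{\D}^s_K$ — i.e. that Theorem \ref{thm universal property of mor} (stated over $k$, for a Banach \emph{function} algebra source) transfers to the base-changed space $\mor(\D^r,\bar{\D}^s)_K = \mathcal{M}(\T^{r,s}_\infty \hat{\otimes}_k K)$, whose underlying algebra must still be checked to be a Banach function algebra over $K$ (this uses Lemma \ref{lema function algebra} together with the fact that scalar extension of a reduced strictly affinoid-type situation stays reduced, $k$ being algebraically closed). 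Everything else is bookkeeping with the already-established formulas for $\ev(\al)$ and the behaviour of seminorms under $\pi_{K/k}$ and $\sigma_{K/k}$.
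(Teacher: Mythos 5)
Your treatment of the second assertion (the lifting) is essentially the paper's own proof: take $K=\mathcal{H}(\al)$, set $F_\al=\bigl(\sum_I\chi_\al(a_{1,I})T^I,\ldots,\sum_I\chi_\al(a_{s,I})T^I\bigr)$, and verify $\ev(\al)=\pi_{K/k}\circ F_\al\circ\sigma_{K/k}$ against \eqref{eq EV(a) explicit}. The detour through a $K$-analytic version of the universal property and the rigid point $\tau(\al)$ is harmless but unnecessary, since the explicit formula already gives the identity for every $z\in\D^r$ with no density argument.

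The first assertion, however, has a genuine gap. The point of that assertion --- as the paper stresses immediately before the theorem --- is that the graph lives in the \emph{analytic} product $\D^r\times\bar{\D}^s$, not in the topological product $|\D^r|\times|\bar{\D}^s|$. The underlying topological space of the analytic product is neither the set-theoretic product of $|\D^r|$ and $|\bar{\D}^s|$ nor does it carry the product topology; a pair $(z,\ev(\al)(z))$ with $z$ non-rigid does not single out a point of it, since the fibre of the canonical map $|\D^r\times\bar{\D}^s|\to|\D^r|\times|\bar{\D}^s|$ over such a pair is in general not a singleton. Consequently your definition $\Gamma_\al=\{(z,\ev(\al)(z)):z\in\D^r\}$ does not make sense as a subset of the analytic product, and the closedness argument via ``graph of a continuous map into a compact Hausdorff space, product topology'' does not apply; in the topological product the assertion would be trivial and not worth stating. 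The paper instead builds a continuous section $\psi=\Upsilon\circ\iota_{\mathcal{H}(\al)}\circ\sigma_{\mathcal{H}(\al)/k}:\D^r\to\D^r\times\bar{\D}^s$, where $\Upsilon:\mor(\D^r,\bar{\D}^s)\times\D^r\to\D^r\times\bar{\D}^s$ is the analytic map dual to $T_i\mapsto T_i$, $S_l\mapsto\sum_I a_{l,I}T^I$; for each $z$ this produces an honest seminorm on $k\{\rho^{-1}T_1,\ldots,\rho^{-1}T_r\}\{S_1,\ldots,S_s\}$, given by \eqref{eq graph}, from which one reads off $\pi_1\circ\psi=\mathrm{id}$ and $\pi_2\circ\psi=\ev(\al)$, and closedness of $\Gamma_\al=\psi(\D^r)$ is then checked sequentially using $\pi_1\circ\psi=\mathrm{id}$. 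Your opening phrase about pulling back the graph of $F_\al$ points in the right direction, but to complete the argument you must exhibit the point of the analytic product explicitly, as the paper does.
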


\begin{proof}
Fix a point $\al \in \mor (\D^r, \bar{\D}^s)$ and denote by $\mathcal{H}(\al)$ its complete residue field. We define $\Gamma_\al$ as the image of a continuous map $\psi :\D^r \to \D^r \times \bar{\D}^s$, that we construct as follows.

Let $\iota_{\mathcal{H}(\al)} : \D^r_{\mathcal{H}(\al)} \to \mor  (\D^r, \bar{\D}^s) \times \D^r$ be the inclusion map defined in \eqref{eq inclusion fibre}. 
Let $\Upsilon: \mor   (\D^r, \bar{\D}^s)  \times \D^r \to \D^r \times \bar{\D}^s$ be the analytic map induced by
\begin{eqnarray*}
k\{ \rho^{-1} T_1, \ldots, \rho^{-1} T_r \} \{S_1, \ldots, S_s \}& \to & \T^{r,s}_\infty \{ \rho^{-1} T_1, \ldots, \rho^{-1} T_r \} \nonumber \\
T_i & \mapsto & T_i\\
S_l & \mapsto & \sum_I a_{l,I} T^I. \nonumber
\end{eqnarray*}
Let $\sigma_{\mathcal{H}(\al)/k}: \D^r \to \D^r_{\mathcal{H}(\al)}$. 
 We set $\psi= \Upsilon \circ \iota_{\mathcal{H}(\al)} \circ \sigma_{\mathcal{H}(\al)}$. 
 Explicitly, $\psi$ is induced by the analytic map $\Psi: \D^r \to \D^r \times \bar{\D}^s$ that maps  any $z \in \D^r$  to the seminorm sending every $g \in \T_s \{ \rho^{-1} T_1, \ldots, \rho^{-1} T_r \}$, which is of the form $g = \sum_{J \in \N^s} g_J S^J$ with  $g_J \in k\{ \rho^{-1} T_1, \ldots, \rho^{-1} T_r \}$  are such that $|g_J| \to 0$ as $|J| \to 0$, to the following real number:
 
 \begin{equation}\label{eq graph}
 \left| g(\Psi(z))\right| = \left| \sum_J g_J \prod_{l=1}^s \big( \sum_I \chi_\al ( a_{l,I}) \cdot T^I \big)^{j_l}   (\sigma_{\mathcal{H}(\al)/k}(z) ) \right| .
 \end{equation}
 Consider the projections $\pi_1$ and $\pi_2$ on $\D^r \times \bar{\D}^s$ to the first and second factor respectively.
It is an immediate consequence of the previous computation and \eqref{eq EV(a) explicit} that  
$$\pi_2 (\psi(z)) = \ev (\al) (z).$$

 If no variables $S_l$ appear in the expression of $g \in \T_s \{ \rho^{-1} T_1, \ldots, \rho^{-1} T_r \}$, 
then $g$ lies in  the algebra $k\{ \rho^{-1} T_1, \ldots, \rho^{-1} T_r \}$. Thus, by \eqref{eq graph} we see that
$|g( \Psi(z) )|= |g(z)|$, and so
 $$\pi_1 (\psi(z)) = z~.$$
 
 It remains to check that the image $\Gamma_\al$ of $\psi$ is a closed subset of $\D^r \times \bar{\D}^s$. Let $z_n$ be a sequence of points in $\D^r$ such that $\psi(z_n)$ converges to some point $x$ in $\D^r \times \bar{\D}^s$. 
 As $\pi_1 (\psi(z_n)) = z_n$, we see that $z_n$ converges to $\pi_1 (x) \in \D^r$, and by continuity of $\psi$ we have that $x = \psi(\pi_1 (x))$ lies in $\Gamma_\al$.
The set $\Gamma_\al$ is so sequentially closed, and hence closed.
 
\smallskip

Consider now the continuous map $\ev(\al) : \D^r \to \bar{\D}^s$.
Let $K$ be the complete residue field $\mathcal{H}(\al)$.
Consider the $\mathcal{H}(\al)$-analytic map
$$F_\al = \left( \sum_{I \in \N^r} \chi_\al ( a_{1,I})\cdot T^I , \ldots , \sum_{I \in \N^r} \chi_\al ( a_{s,I})\cdot T^I\right)~.$$
A direct computation together with \eqref{eq EV(a) explicit} shows that $\ev(\al)= \pi_{\mathcal{H}(\al)/ k} \circ F_\al \circ \sigma_{\mathcal{H}(\al)/k}$.
\end{proof}

\subsection{Proof of Theorem \ref{thm b}}
Let $Y$ be any $k$-affinoid space. We may fix a closed immersion of $Y$ into some polydisk $\bar{\D}^s$ and assume $Y= \bar{\D}^s$.

Suppose first that $X =\D^r$.
Each analytic map $f_n$ is of the form $f_n = \ev(\al_n)$ for some rigid point $\al_n \in \mor  (\D^r, \bar{\D}^s)$ by Theorem \ref{thm mor}. It was shown in Proposition \ref{mor proj limit} 
that the space $\mor  (\D^r, \bar{\D}^s)$  is Fr\'echet-Urysohn so that  we may assume that 
$\al_n$ converges to some point $\al \in \mor  (\D^r, \bar{\D}^s)$.
The limit map $f$ is precisely $\ev(\al)$ (cf. Theorem \ref{thm mor}) 
and we conclude by Theorem \ref{thm lifting}.

Let now $X$ be any good, boundaryless, reduced $k$-analytic space. Pick a point $x\in X$ and an affinoid neigbourhood $Z$ of $x$ containing $x$ in its interior.
Fix a distinguished closed immersion of $Z$ into some closed unit polydisk $\bar{\D}^r$.
For every $n$ we may find an analytic map $\hat{f}_n : \bar{\D}^r \to \bar{\D}^s$ such that $\hat{f}_n|_Z = f_n$ by Proposition  \ref{prop distinguished}.
We now apply   the previous case to the restriction of $\hat{f}_n$ to $\D^r$, concluding the proof.
\qed

\section{Weakly analytic maps}\label{section weakly analytic}

In this section we look more precisely at the properties of continuous limits
of analytic functions, as obtained in Theorem \ref{thm b}.

As before, $k$ is any complete non trivially valued non-Archimedean  field which is algebraically closed.

\subsection{Definition and first properties}
We begin with a definition.

\begin{defini} 
Let $X$ and $Y$ be any two good $k$-analytic spaces, 
and  let $f: X \to Y$ be a continuous map. 

We say that $f$  is weakly analytic if for every point  $x \in X$ there exist an affinoid neighbourhood $U$ of $x$, a complete 
field extension $K/k$ and an analytic map $F:U_K \to Y_K$ such that $f_{|_U} = \pi_{K/k} \circ F \circ \sigma_{K/k}$.
\end{defini}

It will be convenient to denote by $\mathrm{WA}(X, Y)$ the set of all weakly analytic maps from $X$ to $Y$.

\smallskip

Clearly, the set $\mor_k (X,Y)$ of analytic maps from $X$ to $Y$ is  a  subset of $\mathrm{WA}(X, Y)$. It is also a strict subset if $Y$ has dimension at least $1$,
since any constant map is weakly analytic, but it is analytic only if the constant is a rigid point.

\begin{prop}\label{equivalence faiblement analytique}
Let $X$ be a basic tube and $Y$ be a $k$-affinoid space.
Let $f:X \to Y$ be a continuous map. The following two conditions are equivalent.
\begin{enumerate}\renewcommand{\labelenumi}{\roman{enumi})}
\item  For any point $x \in X$ there exist an affinoid neighbourhood $Z$ of $x$ and a sequence of analytic maps $f_n: Z \to Y$ pointwise converging to $f|_Z$.
\item 
For any point $x \in X$ there exist an affinoid neighbourhood $Z$ of $x$,  a complete extension $K$ of $k$ and an analytic map $F:Z_K \to Y_K$ such that $f|_Z = \pi_{K/k} \circ F \circ \sigma_{K/k}$.
\end{enumerate}
\end{prop}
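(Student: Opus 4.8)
The plan is to prove the two implications separately, relying heavily on the machinery built in Sections \ref{section mor} and \ref{section thm b}.

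\textbf{Reduction and the implication (ii) $\Rightarrow$ (i).} First I would reduce to the case where the affinoid neighbourhood $Z$ is a basic tube sitting inside a closed polydisk; since every good reduced boundaryless space has a basis of open neighbourhoods that are basic tubes, and since a basic tube embeds via a distinguished closed immersion into $\bar{\D}^N$ with $X \cong \hat{X} \cap \D^N$ (Proposition \ref{basic tube intersection}), we may as well work with $Z = \D^r$ and $Y = \bar{\D}^s$, extending analytic maps from $Z$ to the ambient polydisk via Proposition \ref{prop distinguished} when necessary. Given (ii), so $f|_Z = \pi_{K/k} \circ F \circ \sigma_{K/k}$ for some $K$-analytic $F: Z_K \to Y_K$, I would write $F$ in coordinates as a tuple of power series with coefficients in $K$, each of norm at most $1$ on the relevant polydisk. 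Since $k$ is dense in... well, not in $K$, but the point is that the coefficients of $F$ define a $K$-point of $\mor(\D^r,\bar{\D}^s)_K$, which projects to a point $\al \in \mor(\D^r, \bar{\D}^s)$; one checks, using the explicit formula \eqref{eq EV(a) explicit} for $\ev$ together with the construction in Theorem \ref{thm lifting}, that $\ev(\al) = \pi_{K/k}\circ F \circ \sigma_{K/k} = f|_Z$. Then by Proposition \ref{prop rigid dense} the rigid points of $\mor(\D^r,\bar{\D}^s)$ are dense, so I pick a sequence of rigid points $\al_n \to \al$; by Theorem \ref{thm mor}(i) each $\ev(\al_n)$ is analytic, and by Theorem \ref{thm mor}(ii) the sequence $\ev(\al_n)$ converges pointwise to $\ev(\al) = f|_Z$. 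This gives (i).

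\textbf{The implication (i) $\Rightarrow$ (ii).} Again reduce to $Z = \D^r$, $Y = \bar{\D}^s$ (using Proposition \ref{prop distinguished} to extend the $f_n$ from the basic tube to a polydisk, shrinking $Z$ if needed). Given analytic maps $f_n : \D^r \to \bar{\D}^s$ converging pointwise to $f|_Z$, each $f_n$ corresponds by Theorem \ref{thm mor}(i) (or Theorem \ref{thm universal property of mor}) to a rigid point $\al_n \in \mor(\D^r, \bar{\D}^s)$. The space $\mor(\D^r, \bar{\D}^s)$ is compact and Fréchet–Urysohn by Theorem \ref{mor is FU}, hence sequentially compact, so after passing to a subsequence $\al_{n_j} \to \al$ for some $\al \in \mor(\D^r, \bar{\D}^s)$. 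By Theorem \ref{thm mor}(ii) the maps $\ev(\al_{n_j})=f_{n_j}$ converge pointwise to $\ev(\al)$, and since they also converge pointwise to $f|_Z$ we get $f|_Z = \ev(\al)$. Now Theorem \ref{thm lifting} provides a complete extension $K = \mathcal{H}(\al)$ of $k$ and a $K$-analytic map $F_\al : \D^r_K \to \bar{\D}^s_K$ with $\ev(\al) = \pi_{K/k}\circ F_\al \circ \sigma_{K/k}$, which is exactly (ii).

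\textbf{Main obstacle.} The routine part is the bookkeeping with the distinguished closed immersions and the extension of maps off the basic tube; the genuinely substantive inputs — sequential compactness and the Fréchet–Urysohn property of the infinite-dimensional space $\mor(\D^r,\bar{\D}^s)$, the density of its rigid points, and the lifting statement of Theorem \ref{thm lifting} — are all already established. So the delicate step is really just making sure the coordinate description of an arbitrary $K$-analytic $F$ in (ii) genuinely matches a point $\al$ of $\mor(\D^r,\bar{\D}^s)$ and that $\ev(\al)$ reproduces $f|_Z$ on \emph{all} of $\D^r$ and not merely on rigid points; here one uses that rigid points are dense in $\D^r$ together with the continuity of both $\ev(\al)$ and $\pi_{K/k}\circ F \circ \sigma_{K/k}$, plus the explicit formula \eqref{eq EV(a) explicit}. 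I expect this compatibility check — essentially unwinding that the section $\sigma_{K/k}$ and the fibre inclusions $\iota_{\mathcal{H}(\al)}$ interact correctly — to be the only place demanding real care.
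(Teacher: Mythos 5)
Your proof is correct and follows essentially the same route as the paper: the paper disposes of (i) $\Rightarrow$ (ii) by citing Theorem \ref{thm b} directly (whose proof is exactly the compactness/Fréchet--Urysohn argument you inline), and proves (ii) $\Rightarrow$ (i) just as you do, by extending $F$ to a polydisk via Proposition \ref{prop distinguished}, realizing it as a rigid point of $\mor(\D^r,\bar{\D}^s)_K$, projecting to $\al \in \mor(\D^r,\bar{\D}^s)$, and approximating $\al$ by rigid points using Proposition \ref{prop rigid dense} and Theorem \ref{thm mor}. The compatibility check you flag as the delicate step is indeed where the paper leans on the explicit formulas of Theorems \ref{thm mor} and \ref{thm lifting}.
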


A consequence of the previous result is that when $X$ has no boundary, a continuous map $f: X\to Y$ is weakly analytic whenever 
for every point  $x \in X$ there exists a basic tube $U$ containing  $x$ and a sequence of analytic maps $f_n$ from $U$ to $Y$ that converge pointwise to $f$.

\begin{proof}
The implication i) $\Rightarrow$ ii) is precisely Theorem \ref{thm b}, since basic tubes are boundaryless.

Suppose that ii) is satisfied. 
Choosing a closed immersion $Y \to \bar{\D}^s$, we may assume $Y= \bar{\D}^s$. 
Pick a point $x \in X$ and an affinoid neighbourhood $Z$ of $x$ such that there exists a complete extension $K/k$ and a $K$-analytic map  $F: Z_K \to \bar{\D}^s_K$  such that $f|_Z = \pi_{K/k} \circ F \circ \sigma_{K/k}$. 
By Proposition \ref{prop distinguished}, we may find an analytic map $\hat{F}: \D^r_K \to \bar{\D}^s_K$ that agrees with $F$ on $Z_K \cap \D^r_K$. 
By Theorem \ref{thm mor}, there exists a rigid point  $a \in \mor (\D^r, \bar{\D}^s)_K$ such that $\hat{F}=\Phi(a,\cdot)$.
The point $\al = \pi_{K/k}^\infty (a)$ in $\mor (\D^r, \bar{\D}^s)$ is not rigid in general, but we may find points $\al_n \in \mor (\D^r, \bar{\D}^s) (k)$ converging to $\al$ by Proposition \ref{prop rigid dense}, since $k$ is assumed to be non trivially valued.
The analytic maps $\ev( \al_n)$ converge pointwise to $\ev (\al): \D^r_k \to \bar{\D}^s_k$ by Theorem \ref{thm mor}, and by construction we have that $\ev(\al) = \pi_{K/k} \circ \hat{F} \circ \sigma_{K/k}$ , see Theorem \ref{thm lifting}.
\end{proof}

\subsection{Rigidity of weakly analytic maps}

We prove here the following statement:

\begin{prop}
Suppose $f: X \to Y$ is a weakly analytic map, where $Y$ is a curve. 
If  $x$  is a rigid point that is mapped to a non-rigid point by $f$, then  $f$ is locally constant near  $x$.
\end{prop}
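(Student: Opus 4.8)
The strategy is to reduce everything to a one–dimensional statement about weakly analytic maps into the closed unit disc, to settle that statement by an elementary ultrametric estimate which uses only the completeness of $k$, and then to conclude by means of Noether normalization.

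First I would record the following one–dimensional assertion: \emph{let $g\colon V\to \bar{\D}$ be a weakly analytic map, $v\in V$ a rigid point, and suppose $g(v)$ is a non‑rigid point of $\bar{\D}$; then $g$ is constant, equal to $g(v)$, on a neighbourhood of $v$.} To prove it, choose a neighbourhood $V_0\ni v$ on which $g|_{V_0}=\pi_{K/k}\circ G\circ\sigma_{K/k}$ for some complete extension $K/k$ and some analytic $G\colon (V_0)_K\to\bar{\D}_K$. Since $v$ is rigid, $\sigma_{K/k}(v)$ is a $K$‑point of $(V_0)_K$, hence $c:=G(\sigma_{K/k}(v))$ is a $K$‑point of $\bar{\D}_K$, i.e. an element $c\in K$ with $|c|\le 1$, and $g(v)=\pi_{K/k}(c)$. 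The point $\pi_{K/k}(c)\in\bar{\D}$ is the multiplicative seminorm $P\mapsto |P(c)|$ on $k\{T\}$; its completed residue field is $k$ exactly when $c\in k$, and a nontrivial extension of $k$ otherwise, because a nonzero element of $k\{T\}$ has all its zeros in $\bar k=k$. Thus $c\notin k$, and since $k$ is complete one has $d:=\inf_{a\in k}|c-a|>0$ (a Cauchy sequence in $k$ approaching $c$ would converge to $c$ in $K$). The map $z\mapsto G(\sigma_{K/k}(z))$ is continuous, because $\sigma_{K/k}$ is continuous by \cite{Poineau}; hence there is a neighbourhood $V'\subseteq V_0$ of $v$ on which every rigid $z$ has $c_z:=G(\sigma_{K/k}(z))\in K$ with $|c_z-c|<d$. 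For such $z$ and any $a\in k$ the ultrametric inequality gives $|c_z-a|=|c-a|$, and factoring an arbitrary $P\in k\{T\}$ over the algebraically closed field $k$ yields $|P(c_z)|=|P(c)|$. Therefore $g(z)=\pi_{K/k}(c_z)=\pi_{K/k}(c)=g(v)$ for every rigid $z\in V'$, and by density of rigid points and continuity of $g$ we get $g\equiv g(v)$ on $V'$.

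Now for the proposition. As all assertions are local, fix the rigid point $x$, put $y_0=f(x)$ (non‑rigid), and choose a connected strictly $k$‑affinoid neighbourhood $Y_0$ of $y_0$ in $Y$; since $y_0$ is non‑rigid, $\dim Y_0=1$, and shrinking $Y_0$ we may assume it is reduced and irreducible. By Noether normalization (\cite[Corollary 6.1.2/2]{BGR}) there is a finite surjective morphism $g_0\colon Y_0\to\bar{\D}$, in particular non‑constant. Moreover $\mathcal{O}_{Y_0,y_0}$ is a field: a nonzero germ in its maximal ideal would be a nonzero analytic function on some neighbourhood of $y_0$ vanishing at $y_0$, but on the irreducible curve $Y_0$ the zero locus of a nonzero analytic function is a proper closed analytic subset, hence $0$‑dimensional, hence a finite set of rigid points, which cannot contain the non‑rigid point $y_0$. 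Consequently $g_0(y_0)\in k$ would force $g_0-g_0(y_0)\in\mathfrak m_{y_0}=0$, contradicting that $g_0$ is non‑constant; so $g_0(y_0)$ is a non‑rigid point of $\bar{\D}$. The composition $g_0\circ f$ is weakly analytic near $x$, since locally $g_0\circ f=g_0\circ\pi_{K/k}\circ F\circ\sigma_{K/k}=\pi_{K/k}\circ\big((g_0)_K\circ F\big)\circ\sigma_{K/k}$ with $(g_0)_K\circ F$ analytic, and its value at $x$ is $g_0(y_0)$. Applying the one–dimensional assertion to $g_0\circ f$ at $x$ produces a connected open neighbourhood $U'$ of $x$ — shrunk so that $f(U')\subseteq Y_0$ — on which $g_0\circ f\equiv g_0(y_0)$. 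Hence $f(U')$ is contained in the fibre $g_0^{-1}(g_0(y_0))$, which is finite because $g_0$ is a finite morphism; being the continuous image of the connected set $U'$, it is a connected subset of a finite, hence discrete, Hausdorff space, and therefore reduces to the single point $f(x)=y_0$. Thus $f$ is locally constant near $x$.

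I expect the main obstacle to be the one–dimensional assertion itself, specifically the two appeals to completeness of $k$ (to deduce $c\notin k$ from non‑rigidity of $g(v)$, and to deduce $d>0$) together with keeping track of the fact that $z\mapsto G(\sigma_{K/k}(z))$ stays among $K$‑points near $v$; the dimension‑theoretic input that the local ring at a non‑rigid point of a curve is a field is routine once the finiteness and rigidity of the zero locus of a nonzero analytic function on a curve are granted.
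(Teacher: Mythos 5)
Your proof is correct and follows essentially the same route as the paper: reduce to the target $\bar{\D}$ via Noether normalization and the finiteness of the fibres, and settle the disc case by combining the positive distance from the non-rigid image point to $k$ (which the paper phrases as positive diameter) with the ultrametric inequality and the continuity of the lifted map. Your only addition is the explicit check that the finite morphism sends the non-rigid point to a non-rigid point (via the local ring being a field), a step the paper leaves implicit and which also follows at once from the fact that $\mathcal{H}(y_0)$ is a finite extension of $\mathcal{H}(g_0(y_0))$ and $k$ is algebraically closed.
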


\begin{proof} 
Let $x\in X$ be a rigid point such that $y=f(x)$ is not rigid. 
Since this is a local statement, we may replace $X$ and $Y$ by affinoid neighbourhoods of $x$ and  $y$ respectively. In particular, we may assume that  $X=\bar{\D}^r$ and $x=0$. After maybe reducing $X$, there exists an extension $K$ of $k$ and a $K$-analytic map $F: X_K \to Y_K$ such that $f = \pi_{K/k} \circ F \circ \sigma_{K/k}$. Observe that $F(x)$ is a rigid point of $Y_K$.

Suppose first that $Y= \bar{\D}$. The fact that $y$ is not rigid  means that $y$ has positive diameter, i.e.  
$$ \inf_{a \in k^\circ} |(T-a)(y)| = r >0.$$
 By continuity, we can find a polyradius $\epsilon > 0$ such that every rigid point $z$ in $\D^r_K(0; \epsilon)$ satisfies $|F(z)-F(0)|_K < r$, where $|.|_K$ denotes the absolute value on $K$. Pick a point $a \in k^\circ$. For every rigid point $z\in\D^r_K(0; \epsilon)$, we get
\begin{eqnarray*}
|(T-a)(y)| &=& \max \left\{ |F(z)-F(0)|_K, |(T - a) (y)| \right\}\\
& = & \max \left\{ |F(z)-F(0)|_K, |(T - a) (\pi_{K/k} \circ F(0))| \right\}\\ 
&=& \max \left\{ |F(z)-F(0)|_K, |F(0) - a|_K \right\}\\
& = & | F (z) - a |_K  \\
&= &  |(T-a)(\pi_{K/k} \circ F (z) )|~.
\end{eqnarray*}
Thus, $F$ maps the polydisk $\D^r_K(0; \epsilon)$ into the fibre $\pi_{K/k}^{-1}(y)$.
As $$\sigma_{K/k} (\D^r_k (0;\epsilon) ) \subseteq \D^r_K (0;\epsilon),$$ we conclude that $f$ is locally constant near $0$.

\smallskip

For $Y$ any affinoid of dimension 1 there exists a finite morphism $\varphi: Y \to \bar{\D}$  by Noether's Lemma. By what precedes, the composition $\varphi \circ f$ is locally constant near $0$, and by finiteness so is $f$.
\end{proof}

\begin{example}
The previous result does not hold if $Y$ has dimension greater than 2. Consider for instance the weakly analytic map $f: \bar{\D} \to \bar{\D}^2$ given by $f= \pi_{K/k} \circ F \circ \sigma_{K/k}$, where $K= \mathcal{H}(x_g)$ and $F(z) = (x_g, z)$. No rigid point in $\bar{\D}$ has rigid image under $f$, but $f$ is not locally constant at these points.
\end{example}

\subsection{Weakly analytic maps from curves}

\begin{prop} 
Let  $f: X\to Y$ be a weakly analytic map, where $X$ is a curve. 
If there exists a converging sequence of rigid points of $X$ whose images under $f$ are rigid points, then $f$ is analytic.
\end{prop}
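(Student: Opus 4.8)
The plan is to reduce to the affinoid case and exploit the analytic lift provided by weak analyticity together with the isolated-zero principle for analytic functions on curves. First I would observe that the statement is local on $X$, so I would fix a convergent sequence $x_n \to x$ of rigid points with $f(x_n)$ rigid, replace $X$ by an affinoid neighbourhood $Z$ of $x$ that is a curve (say with $x$ in its interior), and $Y$ by an affinoid neighbourhood of $f(x)$; after a closed immersion we may assume $Y = \bar{\D}^s$, and since analyticity of a map to $\bar{\D}^s$ is analyticity of each coordinate, it suffices to treat $Y = \bar{\D}$. By weak analyticity there is a complete extension $K/k$ and a $K$-analytic map $F \colon Z_K \to \bar{\D}_K$ with $f = \pi_{K/k} \circ F \circ \sigma_{K/k}$; using Proposition \ref{prop distinguished} we may extend $F$ to a $K$-analytic $\hat F \colon \D^r_K \to \bar{\D}_K$, i.e. $\hat F$ is given by a single power series $\hat F = \sum_I c_I T^I$ with $c_I \in K$, $|c_I| \le 1$.

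Next I would compare $\hat F$ with a genuine $k$-analytic candidate. The hypothesis says $f(x_n) = \pi_{K/k}(\hat F(\sigma_{K/k}(x_n)))$ is a rigid point of $\bar{\D}$, i.e. lies in $k^\circ$, for every $n$. Since $x_n$ is rigid, $\sigma_{K/k}(x_n)$ is the rigid point of $\D^r_K$ with the same coordinates $x_n \in (k^\circ)^r$, and $\hat F(\sigma_{K/k}(x_n)) = \sum_I c_I x_n^I \in K$; the image under $\pi_{K/k}$ of this rigid point is rigid exactly when $\sum_I c_I x_n^I$ actually lies in the subfield $k$ (here one uses that $k$ is algebraically closed, as in the proof of Theorem \ref{thm mor}). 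So we know $\sum_I c_I x_n^I \in k$ for all $n$. The goal is to upgrade this to: all $c_I \in k$, whence $\hat F$ is $k$-analytic, $\sigma_{K/k}$ and $\pi_{K/k}$ compose to the identity on rigid points, and $f$ coincides with the $k$-analytic map $z \mapsto \sum_I c_I z^I$ on the dense set of rigid points of $Z$, hence $f$ is analytic.

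To pass from "finitely... infinitely many rigid points where the value is in $k$" to "the coefficients are in $k$", I would pick a $k$-basis (or at least a maximal $k$-linearly independent family) expansion: write each $c_I = \sum_\lambda c_{I,\lambda} e_\lambda$ with $e_\lambda$ ranging over a $k$-basis of (the $k$-span of the $c_I$ inside) $K$, with $e_0 = 1$. Then for each fixed $\lambda \neq 0$ the function $g_\lambda(z) := \sum_I c_{I,\lambda} z^I$ is a $k$-analytic function on $\D^r$ (the coefficients lie in $k$) which vanishes at every $x_n$, because $\sum_I c_I x_n^I \in k = k \cdot e_0$ forces the $e_\lambda$-components to vanish. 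Since the $x_n$ converge in $Z$, which is a curve, and $g_\lambda$ is analytic on a neighbourhood of the limit, the isolated-zero principle for analytic functions on curves (the analytic analogue of the principle invoked for weakly analytic maps in \S\ref{section weakly analytic}) forces $g_\lambda \equiv 0$ on the connected component of $x$, i.e. $c_{I,\lambda} = 0$ for all $I$. Hence every $c_I \in k$, and we conclude as above.

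The main obstacle is the last step: making rigorous the "isolated zeros" argument in the Berkovich-analytic setting on a curve, and in particular ensuring that a convergent sequence of distinct rigid points really is a non-isolated zero set forcing vanishing on a whole component — one must be a little careful that the $x_n$ are distinct (pass to a subsequence; if they are eventually constant equal to $x$ the conclusion is trivial for that single value but one then needs the sequence to genuinely accumulate, which is guaranteed by convergence to $x$ together with distinctness) and that the relevant connected neighbourhood of $x$ in the curve $Z$ is where $g_\lambda$ is defined and analytic. Handling the reduction to $\bar{\D}$ via a finite map $\varphi \colon Y \to \bar{\D}$ when $Y$ is a general affinoid curve, exactly as in the proof of the preceding proposition on rigidity of weakly analytic maps, is routine once the polydisk case is done.
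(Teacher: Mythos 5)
Your overall architecture is exactly the paper's: localize to an affinoid neighbourhood, lift $f$ to a $K$-analytic $F$ with coefficients in $K$, decompose those coefficients along a basis of $K$ over $k$ containing $e_0=1$, observe that the components for $e_\lambda$ with $\lambda\neq 0$ give $k$-analytic functions vanishing at all the $x_n$, and kill them by the isolated-zero principle on the curve. However, there is a genuine gap at the decomposition step: you propose to use ``a $k$-basis (or at least a maximal $k$-linearly independent family)'' of the $k$-span of the coefficients $c_I$ inside $K$, i.e.\ a purely algebraic basis. This does not work, for two reasons. First, with an arbitrary algebraic basis there is no control of the norms $|c_{I,\lambda}|$ in terms of $|c_I|$, so the component series $g_\lambda=\sum_I c_{I,\lambda}T^I$ need not converge at all, hence need not define analytic functions. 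Second, and more seriously, the value $\sum_I c_I x_n^I$ is an \emph{infinite} sum (a limit), so even granting convergence of the components you cannot decompose it termwise along an algebraic basis and conclude from $\sum_I c_I x_n^I\in k$ that each $\lambda\neq 0$ component $g_\lambda(x_n)$ vanishes; that inference requires the basis decomposition to be compatible with limits.

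The paper repairs both points with one tool: after reducing (as in the remark preceding the proposition) to the case where $K/k$ is of countable type, it invokes an $\alpha$-cartesian Schauder basis $\{e_j\}$ of $K$ over $k$ with $e_0=1$ (\cite[Propositions 2.7.2/3 and 2.6.2/3]{BGR}). The $\alpha$-cartesian property gives $\max_j|a_{I,j}|\le\alpha|a_I|$, which simultaneously forces each component series to converge in the Tate algebra (so the $A^j_l$ are genuine $k$-analytic functions) and makes the decomposition topological, so that $F_l(x_n)\in k$ really does imply $A^j_l(x_n)=0$ for $j\ge 1$. Your argument becomes correct once you replace the algebraic basis by this Schauder basis and justify that $K$ may be taken of countable type; as written, the step ``the coefficients lie in $k$'' and the step ``forces the $e_\lambda$-components to vanish'' are both unjustified. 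Your remaining points (rigidity of $f(x_n)$ meaning $\sum_I c_I x_n^I\in k$ via algebraic closedness, and the caveat that the $x_n$ must genuinely accumulate for the isolated-zero principle to apply) are fine and, in the second case, slightly more careful than the paper.
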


\begin{obs}
Let $X$ be a $k$-affinoid space.
Let $f: X \to \bar{\D}^s$ be a continuous map such that 
there exists a complete extension $K/k$  such that $f = \pi_{K/k} \circ F \circ \sigma_{K/k}$ for some $K$-analytic map $F$.
We may assume that the extension $K/k$ is of countable type  \cite[\S 2.7]{BGR}.

Indeed, let $\mathcal{A}$ be the underlying $k$-affinoid algebra of $X$ and fix an epimorphism $k \{ r^{-1} T \} \to \mathcal{A}$ such that $\mathcal{A}$ is isomorphic as a Banach algebra to $k \{ r^{-1} T \} /I$ for some closed ideal $I \subset k \{ r^{-1} T \}$.
Extending scalars, we see that $\mathcal{A}_K$ is isomorphic to $K \{ r^{-1} T \}  /I$ as  a $K$-affinoid algebra.
 The map $F$ is then determined by  elements $F_1, \cdots , F_s \in \mathcal{A}_K$ with $|F_l|_{\sup} \le 1$, and hence the expression of $F$ contains at most countably many elements of $K$.
\end{obs}

\begin{proof}
Pick any sequence $x_n \in X(k)$  such that $f(x_n)$ are also rigid, and assume that $\lim_n x_n  = x$. Here $x$ may be non-rigid.
We may replace $X$ by some affinoid neighbourhood of $x$ and assume that $f = \pi_{K/k} \circ F \circ \sigma_{K/k}$ for some complete extension $K/k$ and some $K$-analytic map $F$. Observe that  $f(x_n) = F(x_n) \in Y(k)$. 
We may as well replace $Y$ by  an affinoid neighbourhood of $f(x)$ and embed it in some polydisk $\bar{\D}^s$.

\smallskip

Let $\mathcal{A}$ be the underlying $k$-affinoid algebra of $X$. The map $F$ is then determined by  elements $F_1, \cdots , F_s$ in the $K$-affinoid algebra $\mathcal{A}_K$   with $|F_l|_{\sup} \le 1$.
Pick any real number $\alpha >1$. 
By \cite[Proposition 2.7.2/3]{BGR}  there is an $\alpha$-cartesian Schauder basis $\{ e_j \}_{j\in \N}$ of $K$,
and we may  choose $e_0=1$ by \cite[Proposition 2.6.2./3]{BGR}. 

\smallskip
Fix an epimorphism $\T_M \to \mathcal{A}_K$ and lift every $F_l$ to an element $G_l$ in $\T_M$.
Then for every $l=1, \cdots, s$ we can develop $G_l= \sum_I a_I^l T^I$ with $a_I^l\in K$ and such that $|a_I^l|_K \to 0$ as $|I|$ goes to infinity.
 Using the Schauder basis we may find elements $ a_{I,j}^l\in k$ such that $a_I^l = \sum_j  a_{I,j}^l e_j$ and satisfying 
 \begin{equation*}
|a_{I,j}^l|_k \le \max_j |a_{I,j}^l|_k \le \alpha |a_I^l|_K ~.
\end{equation*}
 Since $ \alpha |a_I^l|_K \to 0$ as as $|I|$ goes to infinity, the series $A_l^j = \sum_I  a_{I,j}^l T^I$  defines an element in $\T_M$.
 Thus, we obtain a converging power series  $G_l = \sum_j \big( \sum_I  a_{I,j}^l T^I \big) e_j$.
 Recall that $F_l(x_n) \in k$ for all $n$, and so $G_l(x_n) \in k$. We infer  that for $j \ge 1$ and for all $n$, $A_l^j(x_n) = 0$. Each of these $A_l^j$ defines in turn an analytic map  on $X$ that vanishes at every $x_n$, and hence is constant equal to zero on $X$ by the principle of isolated zeros. 
It follows that   $F_{l|_X}=A_l^0$  for every $1\le i \le s$, thus they are defined over $k$.
\end{proof}

We observe that the previous result does not hold in higher dimension.

\begin{example}
Let  $\zeta_n \in k$, $|\zeta_n|=1$, $|\zeta_n -\zeta_m| = 1$ for $n\neq m$.
Let $f$ be the weakly analytic map obtained as the limit of the sequence $f_n: \D^2 \to \bar{\D}^1$, given  on the rigid points by $f_n(z_1, z_2) =  \zeta_n z_1  + z_2$.
The map $f$ is not analytic, since the rigid point $(\lambda, 0) \in \D^2$, $0<|\lambda | <1$,  is mapped to the point in $\bar{\D}$ corresponding to the closed ball $\bar{B}(0; |\lambda| )$.
However, the set $\{ 0 \} \times \D^1(k)$  is mapped  to the set of rigid points.
\end{example}

A consequence of the previous result is the following statement that can be viewed as
the principle of isolated zeroes for weakly analytic maps.

\begin{prop}\label{isolated zeros}
Let $f: X\to Y$ be  a non constant weakly analytic map where $X$ is a curve without boundary. 
Then the fibre of any rigid point in $Y$ contains no accumulation point.
\end{prop}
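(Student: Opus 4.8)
The plan is to reduce the statement to the principle of isolated zeros for analytic maps via the preceding proposition characterizing weakly analytic maps from curves as analytic under the presence of a convergent sequence of rigid points with rigid images. Suppose, for contradiction, that $y_0$ is a rigid point of $Y$ whose fibre $f^{-1}(y_0)$ contains an accumulation point $x_0 \in X$. Since rigid points are dense in $X$ and since $f$ is continuous with $f(x_0) = y_0$ rigid, I would first argue that one can find a sequence of \emph{rigid} points $x_n \to x_0$ lying in $f^{-1}(y_0)$ (or at least mapped very close to $y_0$). The subtlety here is that an accumulation point of the fibre need not itself be approached by rigid points \emph{inside} the fibre; so I would instead proceed as follows: pick rigid points $z_n$ in the fibre accumulating somewhere, and approximate each $z_n$ by a nearby rigid point of $X$; controlling continuity, these approximants are mapped into a shrinking neighbourhood of $y_0$, and because $y_0$ is rigid we can assume after passing to a subsequence that they are mapped \emph{exactly} to rigid points close to $y_0$.

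The second step applies the previous proposition: having produced a convergent sequence of rigid points of $X$ whose images under $f$ are rigid, we conclude that $f$ is analytic on an affinoid neighbourhood $Z$ of the accumulation point $x_0$. Since $Z$ is a curve and $f$ is now a genuine analytic map, the classical principle of isolated zeros for analytic functions on curves (applied to $f - y_0$ after composing with a local coordinate, or more precisely to a suitable component of $\varphi \circ f$ where $\varphi: Y \to \bar{\D}$ is a finite morphism as in the earlier proof) shows that $f$ is locally constant equal to $y_0$ on $Z$. Propagating this along the connected curve $X$ (using that $X$ has no boundary and the identity principle for analytic functions) forces $f$ to be globally constant, contradicting the hypothesis.

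I expect the main obstacle to be the first step: carefully arranging a sequence of rigid points of $X$, accumulating at $x_0$, whose images are genuinely rigid points of $Y$ — not merely close to $y_0$. The difficulty is that being weakly analytic only controls $f$ up to the base change $\pi_{K/k} \circ F \circ \sigma_{K/k}$, and a rigid point of $X$ could a priori be sent by $F$ to a point whose image under $\pi_{K/k}$ is rigid only by accident. The cleanest route is probably to work with $F$ directly: since $F$ is $K$-analytic on $Z_K$ and a rigid point $x \in Z(k) \subset Z_K(K)$ is mapped to $F(x) \in \bar{\D}^s_K(K)$, one has $f(x) = \pi_{K/k}(F(x))$, which is rigid in $Y$ precisely when $F(x) \in Y(k)$, i.e. when the coordinates of $F(x)$ lie in $k$. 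Then one invokes the \emph{identity principle} for the analytic functions $F_l - (\text{rigid value})$: if infinitely many rigid $x_n$ accumulate in the fibre, the relevant $K$-analytic functions vanish identically, so $F$ is constant on a neighbourhood, whence $f$ is too. This is essentially the argument already used in the proof of the preceding proposition, so in the write-up I would phrase the whole thing as a direct corollary of that proof rather than re-deriving it.
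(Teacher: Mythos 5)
There is a genuine gap in your first step, and it is exactly the point where the paper's argument departs from yours. You want to feed the earlier proposition a convergent sequence of \emph{rigid} points of $X$ with \emph{rigid} images, but the hypothesis only provides points $x_n\in f^{-1}(y_0)$ accumulating at $x_0$, with no rigidity anywhere. Your proposed repair --- approximate by nearby rigid points of $X$ and then, ``because $y_0$ is rigid, assume after passing to a subsequence that they are mapped exactly to rigid points close to $y_0$'' --- does not work: a rigid point of $X$ near the fibre is mapped \emph{close} to $y_0$, but proximity to a rigid point does not make a point rigid, and a weakly analytic map can perfectly well send every rigid point to a non-rigid point (the paper's own example $f=\pi_{K/k}\circ F\circ\sigma_{K/k}$ with $K=\mathcal{H}(x_g)$ and $F(z)=(x_g,z)$ does exactly this). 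Your closing paragraph comes closer to the right mechanism, but it still conditions on ``infinitely many rigid $x_n$ accumulating in the fibre'', which is not part of the hypothesis either; a priori the fibre could contain no rigid point of $X$ at all.

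The missing observation, which makes the whole reduction to the previous proposition unnecessary, is that a rigid point $y_0$ of $Y$ has a \emph{unique} preimage under $\pi_{K/k}$ in $Y_K$, namely the corresponding rigid point. Hence for every point $x_n$ of the fibre --- rigid or not --- the identity $f=\pi_{K/k}\circ F\circ\sigma_{K/k}$ forces $F(\sigma_{K/k}(x_n))$ to equal that rigid point \emph{exactly}, with no approximation. Normalizing $Y=\bar{\D}^s$ and $y_0=0$, the points $\sigma_{K/k}(x_n)$ are then genuine zeros of the $K$-analytic map $F$ on the curve $X_K$ (and are automatically rigid in $X_K$, since the zero locus of a non-constant analytic map on a curve consists of rigid points), accumulating at $\sigma_{K/k}(x_0)$ by continuity of $\sigma_{K/k}$. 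The isolated-zero principle applied to each component $F_l$ then gives $F\equiv 0$, hence $f\equiv y_0$ near $x_0$, contradicting non-constancy. So the correct proof works directly with the lift $F$ applied to the \emph{lifted fibre points themselves}, rather than manufacturing rigid points of $X$ with rigid images; once you see that $\sigma_{K/k}$ carries the accumulating fibre points to exact zeros of $F$, the rest of your argument goes through.
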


\begin{proof}
Let $y \in Y(k)$  and suppose there exist points  $x_n \in X$ converging to a point $x$ and such that $f(x_n) = y$ for all $n$.
In this situation, we may assume $Y=\bar{\D}^s$, $y=(0, \cdots, 0)$ and replace $X$ with some affinoid neighbourhood of $x$ such that $f$ lifts to a $K$-analytic map $F$ over some complete extension $K/k$. This map $F$ is given by some  elements $F_1, \cdots, F_s$ in the underlying affinoid algebra of $X_K$ of norm at most $1$. 

 The point $y$ is rigid and so it has only one preimage under $\pi_{K/k}$. Thus, 
\begin{equation*}
 (0, \cdots, 0) = f(x_n) = F \circ \sigma_{K/k} (x_n) \in \bar{\D}^s_K
\end{equation*} 
for all $n$.  Since $X$ is a curve and $F$ is non-constant (otherwise $f$ would be so), $F^{-1}(0)$ is included in the 
set of rigid points of $X$. It follows that every $\sigma_{K/k}(x_n)$ is rigid.
Each component $F_l$ of  $F$ defines an analytic map between the curves $X_K$ and $\bar{\D}_K$ and admits a sequence of zeros with an accumulation point $\sigma_{K/k}(x)$. It follows that every $F_l$  is identically zero, hence so is $f$. 
\end{proof}

\subsection{A conjecture on weakly analytic maps}

On basic tubes, we conjecture that weakly analytic maps can be globally lifted to analytic maps.

\begin{conj}\label{relever sur un ouvert basique}
Let $Y$ be a $k$-affinoid space and $X$ a basic tube. Let  $f:X \to Y$ be a weakly analytic map. Then, there exist a complete extension $K/k$ and $F: X_K \to Y_K$ analytic such $f= \pi_{K/k} \circ F \circ \sigma_{K/k}$.
\end{conj}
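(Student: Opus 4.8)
The plan is to reduce Conjecture \ref{relever sur un ouvert basique} to the polydisk case and then exploit the parametrizing space $\mor(\D^r, \bar{\D}^s)$ constructed in Section \ref{section mor}. First I would reduce to the situation where the target is a closed polydisk: fixing a closed immersion $Y \hookrightarrow \bar{\D}^s$, a map $F\colon X_K \to Y_K$ is the same as a map $X_K \to \bar{\D}^s_K$ whose image lands in $Y_K$, and since $f$ already takes values in $Y$ this last condition is automatic on rigid points and hence everywhere by density and continuity. For the source, Proposition \ref{basic tube intersection} lets me write $X \cong \hat{X} \cap \D^r$ for a distinguished closed immersion $\hat{X} \hookrightarrow \bar{\D}^r$; combining this with Proposition \ref{prop distinguished} (applied over the extension field) reduces the problem to the case $X = \D^r$, $Y = \bar{\D}^s$, provided one can lift globally on $\D^r$ and then restrict. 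So the heart of the matter is: given a weakly analytic $f\colon \D^r \to \bar{\D}^s$, produce a single complete extension $K/k$ and a $K$-analytic $F\colon \D^r_K \to \bar{\D}^s_K$ with $f = \pi_{K/k}\circ F \circ \sigma_{K/k}$.

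Next I would use the universal space. By Proposition \ref{equivalence faiblement analytique}, since $\D^r$ is boundaryless, $f$ is locally a pointwise limit of analytic maps; I would want to upgrade this to a \emph{global} statement, namely that $f = \ev(\al)$ for a single point $\al \in \mor(\D^r, \bar{\D}^s)$. The natural approach is to exhaust $\D^r$ by closed polydisks $\bar{\D}^r(\rho)$, $\rho<1$, $\rho \in |k^\times|$; on each, $f$ restricts to a weakly analytic map, which by the local statement and the isolated-zeros-type rigidity arguments of \S\ref{section weakly analytic} is $\ev(\al^{(\rho)})$ for some $\al^{(\rho)}$ in the corresponding finite-radius analogue of $\mor$. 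Compatibility of these points under the truncation/restriction maps, together with Proposition \ref{mor proj limit} realizing $\mor(\D^r, \bar{\D}^s)$ as a projective limit, would yield a single $\al \in \mor(\D^r, \bar{\D}^s)$ with $\ev(\al) = f$ globally. Then Theorem \ref{thm lifting} gives exactly what is wanted: $K = \mathcal{H}(\al)$ and the $K$-analytic map $F_\al\colon \D^r_K \to \bar{\D}^s_K$ with $f = \ev(\al) = \pi_{K/k}\circ F_\al \circ \sigma_{K/k}$. Pulling this back along $X \hookrightarrow \D^r$ and using Proposition \ref{prop distinguished} over $K$ to get an honest analytic map on $X_K$ completes the argument.

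The main obstacle I anticipate is precisely the globalization step: passing from local lifts (valid on a basis of affinoid neighbourhoods, a priori with \emph{different} extension fields $K$) to a single point $\al \in \mor(\D^r, \bar{\D}^s)$, equivalently a single extension $K$. The difficulty is that the weakly analytic structure is only given locally, so one must check that the locally-defined parametrizing points glue; this is where one needs the rigidity results (a weakly analytic map on a curve with a convergent sequence of rigid-to-rigid points is analytic, and the isolated-zero principle) to pin down the coefficients $\chi_\al(a_{l,I})$ unambiguously and see that they patch across overlaps. For $r=1$ this should follow cleanly from Proposition \ref{isolated zeros} and the analyticity criteria proved in \S\ref{section weakly analytic}; for $r \geq 2$ one would need an analogous coherence statement for power-series coefficients of weakly analytic maps in several variables, restricting to suitable curves (e.g. coordinate disks, as in the Newton-polytope argument in the proof of Theorem \ref{thm mor}) to reduce to the one-variable case. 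If such a several-variable coherence statement can be established, the rest of the proof is essentially bookkeeping with the projective limit structure and an appeal to Theorem \ref{thm lifting}.
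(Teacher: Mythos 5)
The statement you are addressing is not a theorem of the paper: it is stated there as Conjecture \ref{relever sur un ouvert basique} and is explicitly left open, and your proposal does not close it. The reductions you perform (target a closed polydisk, source the open polydisk via Propositions \ref{basic tube intersection} and \ref{prop distinguished}, then an appeal to Theorem \ref{thm lifting} once a single parametrizing point is found) only reformulate the conjecture: as the paper itself remarks immediately after stating it, in the polydisk case the conjecture \emph{is} the assertion that $\ev$ is surjective onto $\mathrm{WA}(\D^r,\bar{\D}^s)$, i.e.\ that one single $\al\in\mor(\D^r,\bar{\D}^s)$ with $\ev(\al)=f$ exists. That is precisely the ``globalization step'' you flag as the main obstacle and leave unresolved, so the heart of the argument is missing.

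Moreover, the route you sketch for that step is problematic. The local lifts furnished by weak analyticity (or by Theorem \ref{thm b}) are not canonical: the paper notes that $F$ depends on the choice of a rigid point of $\mor(\D^r,\bar{\D}^s)_{\mathcal{H}(\al)}$ above $\al$, and Remark 2 of \S 3.2 exhibits distinct points $\al\neq\al'$ with $\ev(\al)=\ev(\al')$, so the coefficients $\chi_\al(a_{l,I})$ are not determined by the map $f$ and there is no reason for locally chosen parametrizing points to agree on overlaps. The rigidity results of \S 6 constrain the \emph{map} $\ev(\al)$, not the point $\al$, so they cannot pin the coefficients down. Two further inaccuracies: the projective limit in Proposition \ref{mor proj limit} is indexed by the truncation degree $\delta$, not by radii $\rho<1$, so it does not directly glue data given on an exhaustion by $\bar{\D}^r(\rho)$; and the claim that $F$ lands in $Y_K$ ``by density'' fails because $\sigma_{K/k}(X(k))$ is in general not dense in $X_K$.
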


Notice that a weakly analytic map can be locally lifted to an analytic map over some complete extension of $k$. Conjecture \ref{relever sur un ouvert basique} means that this can be done globally.

\begin{obs}
In the case when $X$ and $Y$ are polydisks, Conjecture \ref{relever sur un ouvert basique}  amounts to saying that the map $\ev$ is surjective onto the set $\mathrm{WA}(X,Y)$.

\end{obs}

 The map $\ev$ becomes closed by Theorem \ref{thm mor} for the topology of the pointwise convergence, and so $\mathrm{WA}(X,Y)$ becomes Fréchet-Urysohn for this topology.
 
\medskip

Observe that if Conjecture \ref{relever sur un ouvert basique} holds, then using Theorem \ref{mor is FU} we have:

\begin{theorem}\label{montel for weakly analytic}
Suppose that Conjecture \ref{relever sur un ouvert basique} holds.

Let $X$ be a boundaryless $\sigma$-compact $k$-analytic space and $Y$ a $k$-affinoid space. 
Then, every sequence of weakly analytic maps $f_n: X \to Y$ admits a subsequence that is pointwise converging
to a weakly analytic map $f:X \to Y$. 
\end{theorem}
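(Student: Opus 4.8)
The plan is to mimic the proof of Theorem~\ref{thm a}, systematically replacing ``analytic map $\D^r\to\bar\D^s$'' by ``point of $\mor(\D^r,\bar\D^s)$'' and using Conjecture~\ref{relever sur un ouvert basique} to translate between the two. First I would run the same reductions as in Section~\ref{section preuve montel}: embedding $Y$ into a polydisk reduces to $Y=\bar\D^s$; since $X$ is good, reduced, boundaryless and $\sigma$-compact it is a countable union of basic tubes, so a diagonal extraction reduces matters to the case where $X$ is a single basic tube $U=\hat X\cap\D^r$; and, weak analyticity being a local property, it then suffices to extract a subsequence of $\{f_n|_U\}$ converging pointwise on $U$ to a weakly analytic map. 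Exhausting $U$ by the affinoid domains $U_\rho$ ($\rho\in|k^\times|$, $\rho<1$) equipped with distinguished closed immersions $\varphi_\rho\colon U_\rho\hookrightarrow\bar\D^r(\rho)$ with $\varphi_\rho(U_\rho)\subset\D^r(\rho)$, as in the proof of Theorem~\ref{thm a}, I would invoke Conjecture~\ref{relever sur un ouvert basique} on $U$ to write $f_n|_U=\pi_{K_n/k}\circ F_n\circ\sigma_{K_n/k}$ for a complete extension $K_n/k$ (which may be enlarged to be algebraically closed) and an analytic map $F_n\colon U_{K_n}\to\bar\D^s_{K_n}$. Since $(U_\rho)_{K_n}$ is then a reduced strictly $K_n$-affinoid space, it carries a distinguished closed immersion into $\bar\D^r(\rho)_{K_n}$, and Proposition~\ref{prop distinguished} applied over $K_n$ extends $F_n|_{(U_\rho)_{K_n}}$ to an analytic map $\widehat F^\rho_n\colon\bar\D^r(\rho)_{K_n}\to\bar\D^s_{K_n}$; pushing it down produces a weakly analytic map $g^\rho_n:=\pi_{K_n/k}\circ\widehat F^\rho_n\circ\sigma_{K_n/k}\colon\bar\D^r(\rho)\to\bar\D^s$ that restricts to $f_n|_{U_\rho}$. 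Applying the core case below to $\{g^\rho_n|_{\D^r(\rho)}\}$ and then extracting diagonally over $\rho$ leaves only the case of a sequence of weakly analytic maps $\D^r\to\bar\D^s$.

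For this core case the argument is short. By part (ii) of Theorem~\ref{thm mor} the evaluation map $\ev\colon\mor(\D^r,\bar\D^s)\to\mathcal C^0(\D^r,\bar\D^s)$ is continuous for the topology of pointwise convergence on the target; by Theorem~\ref{thm lifting} every $\ev(\al)$ is weakly analytic; and Conjecture~\ref{relever sur un ouvert basique} guarantees that $\ev$ is surjective onto $\mathrm{WA}(\D^r,\bar\D^s)$. Since $\mor(\D^r,\bar\D^s)$ is compact and, by Theorem~\ref{mor is FU}, Fréchet-Urysohn, it is sequentially compact. Thus, given weakly analytic maps $g_n\colon\D^r\to\bar\D^s$, I would lift each to a point $\al_n\in\mor(\D^r,\bar\D^s)$ with $\ev(\al_n)=g_n$, extract a subsequence $\al_{n_j}\to\al$, and conclude by continuity of $\ev$ that $g_{n_j}\to\ev(\al)$ pointwise, the limit $\ev(\al)$ being weakly analytic by Theorem~\ref{thm lifting}. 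Reassembling the successive diagonal extractions and invoking the locality of weak analyticity then finishes the proof.

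I expect the only genuinely delicate point to be the descent in the first paragraph: turning the weakly analytic maps $f_n|_U$ into honest analytic maps over a base field large enough for Proposition~\ref{prop distinguished} to apply. This is precisely where Conjecture~\ref{relever sur un ouvert basique} is needed, and some care is required because the extension fields $K_n$ vary with $n$ and must be taken algebraically closed so that $(U_\rho)_{K_n}$ remains reduced, hence distinguished. By contrast the core case is essentially formal once one has the continuity of $\ev$ and the sequential compactness of $\mor(\D^r,\bar\D^s)$ established in Section~\ref{section mor}, and the remaining reductions are exactly the bookkeeping already carried out for Theorem~\ref{thm a}.
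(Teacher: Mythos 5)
Your proof is correct and is essentially the argument the paper intends: the paper states this theorem only as an observation (citing Theorem \ref{mor is FU} and Conjecture \ref{relever sur un ouvert basique}) without writing out details, and your reduction to basic tubes and polydisks exactly parallels the proof of Theorem \ref{thm a}, with the conjecture supplying the global lifts needed to realize each weakly analytic map as a point of $\mor(\D^r,\bar{\D}^s)$ so that sequential compactness, Theorem \ref{thm mor}\,(ii) and Theorem \ref{thm lifting} finish the job. Your added hypotheses that $X$ be good and reduced are implicit in the paper's statement (they are needed both for the definition of weak analyticity and to cover $X$ by basic tubes), and your care about taking the $K_n$ algebraically closed so that $(U_\rho)_{K_n}$ stays reduced, hence distinguished, is exactly the right precaution.
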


As a consequence, we have:

\begin{cor}
Suppose that Conjecture \ref{relever sur un ouvert basique} holds.
Let $X$ be a boundaryless $\sigma$-compact $k$-analytic space and $Y$ a $k$-affinoid space. Let $\{f_n\} \subset \mathrm{WA}(X, Y)$ be a  sequence converging to some continuous map $f$. Then, $f$ is weakly analytic. 
\end{cor}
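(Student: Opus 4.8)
The plan is to derive this corollary directly from Theorem \ref{montel for weakly analytic}, which we are assuming holds, by a standard uniqueness-of-limits argument. Suppose $\{f_n\} \subset \mathrm{WA}(X,Y)$ converges pointwise to a continuous map $f: X \to Y$. By Theorem \ref{montel for weakly analytic}, the sequence $\{f_n\}$ admits a subsequence $\{f_{n_j}\}$ converging pointwise to some weakly analytic map $g: X \to Y$. The point is that $g$ must coincide with $f$: for every $x \in X$, the sequence $f_n(x)$ converges to $f(x)$ in $Y$, so in particular the subsequence $f_{n_j}(x)$ converges to $f(x)$; but it also converges to $g(x)$ by construction. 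Since $Y$ is Hausdorff (being a $k$-affinoid space, hence compact Hausdorff), limits of sequences are unique, whence $f(x) = g(x)$. As this holds for every $x \in X$, we conclude $f = g$, and therefore $f$ is weakly analytic.

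First I would state that it suffices to check $f$ is weakly analytic pointwise against a candidate supplied by Theorem \ref{montel for weakly analytic}. Then I would invoke that theorem to extract the convergent subsequence and its weakly analytic limit $g$. The one genuinely needed ingredient is uniqueness of pointwise limits, which reduces to the Hausdorff property of the target $Y$; since $Y$ is $k$-affinoid, its underlying topological space is compact Hausdorff by \cite[Theorem 1.2.1]{Berk}, so this is immediate.

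The proof is therefore essentially a one-line deduction, and there is no real obstacle once Theorem \ref{montel for weakly analytic} is in hand; the only subtlety worth spelling out is that we do not need the full sequence to converge to $g$, merely that \emph{some} subsequence does, and this already forces the pointwise limit of the full sequence to equal $g$. One could phrase the argument as follows.

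\begin{proof}
Suppose $\{f_n\} \subset \mathrm{WA}(X,Y)$ converges pointwise to a continuous map $f \colon X \to Y$. By Theorem \ref{montel for weakly analytic}, there is a subsequence $\{f_{n_j}\}$ that converges pointwise to a weakly analytic map $g \colon X \to Y$. Fix any point $x \in X$. On the one hand, $f_{n_j}(x) \to f(x)$ since it is a subsequence of the convergent sequence $f_n(x)$. On the other hand, $f_{n_j}(x) \to g(x)$ by the choice of $g$. The underlying topological space of the $k$-affinoid space $Y$ is Hausdorff by \cite[Theorem 1.2.1]{Berk}, so limits of sequences in $Y$ are unique, and therefore $f(x) = g(x)$. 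As $x$ was arbitrary, $f = g$, and in particular $f$ is weakly analytic.
\end{proof}
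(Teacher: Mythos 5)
Your proof is correct and is precisely the argument the paper intends: the corollary is stated as an immediate consequence of Theorem \ref{montel for weakly analytic}, obtained by extracting a subsequence with weakly analytic limit and identifying that limit with $f$ via uniqueness of limits in the Hausdorff space $Y$. Nothing is missing.
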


\section{Applications to dynamics}\label{section dynamics}

In this section, we attach  two different notions of Fatou sets
to an endomorphism $f$ of the projective space $\mathbb{P}^{N,\an}$ of degree at least $2$ and study their geometry, which exhibit similar properties  to the complex case.

\subsection{Strongly pluriharmonic functions}

We recall the definition from \cite{CLheights}:

\begin{defini}
Let $X$ be any boundaryless $k$-analytic space.
 A continuous function $u: X \to \R$ is strongly pluriharmonic if for every $x\in X$ there exist an open neighbourhood $U$ of $x$,   a sequence of invertible analytic functions $h_n$ on $U$ and real numbers $b_n$ such that 
\begin{equation*}
u = \lim_{n \to + \infty}  b_n \cdot \log |h_n|
\end{equation*}
locally uniformly on $U$.
\end{defini}

Harmonic functions have  been widely studied in dimension 1. 
 Baker-Rumely \cite{BR} and Favre-Rivera Letelier \cite{FRergodique}, and  Thuillier  \cite{Thuillier} have defined non-Archimedean analogues of the Laplacian operator,
  on $\mathbb{P}^{1, \an}$ and on general analytic curves respectively.

 If $X$ is an analytic curve, strongly harmonic functions
  are harmonic in the sense of Thuillier. It is not known yet whether the converse holds, see \cite[Remark 2.4.6]{CLheights}.
  However, if $X$ is a connected open subset of $\mathbb{P}^{1, \an}$, then all definitions  agree  by \cite[Corollary 7.32]{BR}.

Observe that over $\C$, pluriharmonic functions are in fact locally  the logarithm of the norm of an invertible function, whereas this is  not true in the non-Archimedean setting. Counterexamples appear already for curves, see \cite[\S 2.3]{CLheights}.

\begin{obs}
Let $X$ be any boundaryless $k$-analytic space.
The set of all strongly pluriharmonic functions on $X$ forms a $\R$-vector space.
\end{obs}

\subsection{Harmonic functions on open subsets of $\mathbb{P}^{1, \an}$}\label{section harmonic}

Recall from \cite[\S 4.2]{Berk} that the analytic projective line $\mathbb{P}^{1, \an}$ is the one-point compactification of $\A^{1, \an}$.
The points in $\A^{1, \an}$  can be explicitly described as follows \cite[\S 1.4.4]{Berk}.

Pick $a\in k$ and $r \in \R_+$ and denote by 
$\bar{B}(a;r)$  the closed ball in $k$ centered at $a$ and of radius $r$.
To  $\bar{B}(a;r)$ we can associate a point $\eta_{a,r} \in \A^{1,\an}$
by setting $|P(\eta_{a,r})| := \sup_{|y-a| \le r} |P(y)|$ for every polynomial $P\in k[T]$. 
Points of the form $\eta_{a,0}$ are called type I points, and these are precisely the rigid points of $\A^{1, \an}$.
Consider the point $\eta_{a,r}$ with $r>0$.  If $r \in |k^\times|$ we say that  $\eta_{a,r}$ is of type II and if $r \notin  |k^\times|$ of type III.
A decreasing sequence of closed balls $\bar{B}(a_i; r_i)$ in $k$ with empty intersection defines a converging sequence of points $\eta_{a_i,r_i} \in \A^{1, \an}$. The limit point  is   called a type IV point.
Any point in  $\A^{1, \an}$ is of one of these four types. 
%%%%%%

\medskip

It is a fundamental fact that the Berkovich projective line carries  a tree structure.
Roughly speaking, it is obtained by patching together one-dimen\-sional line segments
in such a way that it contains no loop. We refer to \cite[\S 2]{Jonssonlow} for a precise definition. Suffice it to say that for any two points $x, y \in \mathbb{P}^{1, \an}$ there exists a closed subset $[x,y] \subset  \mathbb{P}^{1, \an}$ containing $x$ and $y$ that can be endowed with a partial order making it isomorphic to the real closed unit interval $[0,1]$ or to $\{ 0 \}$.
These ordered sets are required to satisfy a suitable set of axioms.
For instance, for any triple $x,y, z$ there exists a unique point $w$ such 
that $[z,x] \cap [y,x] = [w,x] $ and
 $ [z,y] \cap [x,y] = [w,y]$. Any subset of the form $[x, y]$ is called a segment.

As a consequence, $\mathbb{P}^{1, \an}$ is uniquely path-connected, meaning that given any two distinct points $x, y \in \mathbb{P}^{1, \an}$ the image of every injective continuous map $\gamma$ from the real unit interval $[0,1]$ into $\mathbb{P}^{1, \an}$ with $\gamma (0) = x$ and $\gamma(1) = y$ is isomorphic to the segment $[x,y]$.

\smallskip
 
A nonempty closed subset $\Gamma \subseteq \mathbb{P}^{1, \an}$ is called a subtree if it is  connected. 
An endpoint of  $\Gamma$ is a  point $x \in \Gamma$  such that $\Gamma \setminus \{ x \}$ either remains connected or is empty.
For every subtree $\Gamma$  of  $\mathbb{P}^{1, \an}$ there is a canonical retraction $r_\Gamma: \mathbb{P}^{1, \an} \to \Gamma$, which sends a point $x \in \mathbb{P}^{1, \an}$ to the unique point in $\Gamma$ such that the intersection of the segment $[x, r_\Gamma (x)]$ with $\Gamma$ consists only of the point $r_\Gamma (x)$.

A strict finite subtree $\Gamma$ of $\mathbb{P}^{1, \an}$ is the convex finitely many type II points $x_1, \ldots , x_n$. As a set, it is  the union of all the paths $[x_i, x_j]$, $i=1, \ldots, n$.

\medskip

Recall that a disk in $\mathbb{P}^{1,\an}$ is by definition either a  disk in $\A^{1,\an}$ or the complement of a disk in $\A^{1,\an}$.
Basic tubes  in $\mathbb{P}^{1,\an}$ are  \emph{strict simple domains} in the terminology of \cite{BR}.
 They are either  $\mathbb{P}^{1, \an}$ or  strict open disks in $\mathbb{P}^{1,\an}$ with a finite number of strict closed disks of $\mathbb{P}^{1,\an}$ removed. 
In particular, basic tubes different from $\mathbb{P}^{1,\an}$ and strict open disks
can be obtained as an inverse image $r_\Gamma^{-1}(\Gamma^0)$, where $\Gamma$ is a strict finite subtree of $\mathbb{P}^{1, \an}$ and  $\Gamma^0$  the open subset of $\Gamma$ consisting of $\Gamma$ with its endpoints removed. 

\smallskip

Similarly, every connected affinoid domain of $\mathbb{P}^{1,\an}$ is either a closed disk or a closed disk in $\mathbb{P}^{1,\an}$ with a finite number of open disks of $\mathbb{P}^{1,\an}$ removed.
In particular, an affinoid subset of the form $\bar{\D}(a;r) \setminus \bigcup_{i=1}^n \D(a_i; r_i)$ 
is homeomorphic to the Laurent domain  
of underlying affinoid algebra 
$$ k \{ r^{-1}(T-a), r_1 S_1, \ldots, r_n S_n\} / (S_1 (T-a_1)- 1 , \ldots, S_n (T-a_n)- 1).$$

\medskip

Given a subset $W \subset \mathbb{P}^{1, \an}$, denote by $\overline{W}$ its closure and by $\partial_{\mathrm{top}} W$ its topological boundary.
 If $W$ is a basic tube strictly  contained in $\mathbb{P}^{1, \an}$, then $\partial_{\mathrm{top}} W$ consists of a  finite set of type II points.

\begin{prop}\label{prop tubes BR}
Let $U$ be a proper connected open subset of $\mathbb{P}^{1, \an}$. Then there exist an increasing  sequence $W_m$ of basic tubes of $\mathbb{P}^{1, \an}$ exhausting $U$  and a sequence of strictly affinoid subspaces  $X_m$ of $\mathbb{P}^{1, \an}$ satisfying
$$\overline{W}_m \subset X_m \subset W_{m+1} \subset U$$
for every $m\in \N^*$.
\end{prop}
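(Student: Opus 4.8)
The plan is to build the exhausting sequences by hand using the tree structure of $\mathbb{P}^{1,\an}$ and its canonical retractions onto finite subtrees. First I would fix a point $x_0 \in U$ and, using that $U$ is open connected in the uniquely path-connected space $\mathbb{P}^{1,\an}$, observe that $U$ is the increasing union of the preimages $r_{\Gamma}^{-1}(\Gamma^{0})$ as $\Gamma$ ranges over a suitable cofinal family of strict finite subtrees contained in $U$. More precisely, for each $m$ I would choose a strict finite subtree $\Gamma_m$ of $U$ containing $x_0$, with $\Gamma_m \subset \Gamma_{m+1}$ and $\bigcup_m \Gamma_m^{0}$ dense in $U$; since $U$ is a (second countable) open subset of the compact space $\mathbb{P}^{1,\an}$, such a sequence exists. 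The key point is to be able to arrange the endpoints of $\Gamma_m$ to lie in $U$ (they do, by construction) and to be type II points (by density of type II points on any segment one can push each endpoint slightly inward along the tree, keeping it inside $U$ and inside the next subtree). Then I would set $W_m := r_{\Gamma_m}^{-1}(\Gamma_m^{0})$.

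The second step is to verify that each $W_m$ is a basic tube. This is exactly the description recalled in Section~\ref{section harmonic}: a basic tube of $\mathbb{P}^{1,\an}$ that is neither all of $\mathbb{P}^{1,\an}$ nor a strict open disk is precisely an inverse image $r_\Gamma^{-1}(\Gamma^0)$ for $\Gamma$ a strict finite subtree, and strict open disks also occur (when $\Gamma$ is a single point or a single segment with one free end, suitably interpreted). I would also need $W_m \subsetneq \mathbb{P}^{1,\an}$, which holds because $U$ is a \emph{proper} open subset, so the $\Gamma_m$ can be taken proper and their complements contain disks; if $U$ happens to be a strict open disk the construction degenerates gracefully to an exhaustion by smaller strict open disks. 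The inclusions $\overline{W}_m \subset W_{m+1}$ follow from the fact that $\partial_{\mathrm{top}} W_m$ is the finite set of endpoints of $\Gamma_m$, each of which lies in $\Gamma_{m+1}^{0}$ and hence in $W_{m+1}$, together with $W_m \subset W_{m+1}$.

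For the third step I insert the affinoid spaces $X_m$. Given $\overline{W}_m \subset W_{m+1}$ with $\overline{W}_m$ compact and $W_{m+1}$ a basic tube, I would pick a strict finite subtree $\Gamma'$ with $\Gamma_m \subsetneq \Gamma' $, all endpoints of $\Gamma'$ type II points lying in $W_{m+1}$, and such that $r_{\Gamma'}^{-1}(\Gamma')$ (the preimage of the \emph{closed} subtree, i.e.\ including endpoints) still sits inside $W_{m+1}$; this is possible because $\overline{W}_m$ is compact and $W_{m+1}$ is open, so there is room to move the endpoints strictly outward from $\Gamma_m$ while staying inside $W_{m+1}$. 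Then $X_m := r_{\Gamma'}^{-1}(\Gamma')$ is a closed disk with finitely many open disks removed — hence a strictly $k$-affinoid domain of $\mathbb{P}^{1,\an}$ by the Laurent-domain description recalled above (one may shrink or enlarge radii slightly to arrange that all radii lie in $|k^\times|$, using that $k$ is nontrivially valued) — and by construction $\overline{W}_m \subset X_m \subset W_{m+1}$.

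The main obstacle is the bookkeeping needed to guarantee \emph{simultaneously} that every endpoint introduced is a type II point, that the endpoints at stage $m$ lie in the interior $\Gamma_{m+1}^{0}$ of the next subtree (so that the topological-boundary/closure inclusions are strict in the right direction), and that the whole family is cofinal in $U$. I expect to handle this by a careful inductive choice along the countably many "directions'' of $U$ at the tree level: at stage $m+1$ I enlarge $\Gamma_m$ to capture one more point of a fixed countable dense subset of $U$ while only displacing the old endpoints inward by a controlled amount, invoking density of type II points on segments at each displacement. The analytic content is light; essentially everything reduces to the combinatorics of finite subtrees and the explicit affinoid description of annular domains in $\mathbb{P}^{1,\an}$.
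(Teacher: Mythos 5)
Your second and third steps follow the same route as the paper (the paper also produces $X_m$ by enlarging a finite subtree inside $W_{m+1}$ and taking a retraction preimage), but there are two genuine problems.

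First, your construction of the exhaustion $\{W_m\}$ rests on the claim that $U$ is second countable and admits a countable dense subset. This fails whenever the residue field $\tilde{k}$ is uncountable: $U$ contains some open disk, hence uncountably many pairwise disjoint open residue disks, so no countable subset of $U$ is dense; in particular a countable union of finite subtrees $\bigcup_m \Gamma_m^{0}$ can never be dense in $U$ (each finite subtree meets only finitely many of those residue disks). Density is in any case the wrong target — what is needed is $\bigcup_m r_{\Gamma_m}^{-1}(\Gamma_m^{0}) = U$ together with $\overline{W}_m \subset W_{m+1}$, and your argument gives no mechanism for producing the $\Gamma_m$ without the (nonexistent) countable dense set. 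The paper avoids this entirely by quoting \cite[Corollary 7.11]{BR} for the existence of an exhaustion of any domain of $\mathbb{P}^{1,\an}$ by basic tubes with $\overline{W}_m \subset W_{m+1}$; if you want a self-contained construction you need a different argument (and you should also note that the exceptional ends of $U$, where $W_m$ must be taken to be an open disk rather than a set of the form $r_\Gamma^{-1}(\Gamma^{0})$, occur not only when $U$ itself is a disk).

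Second, your definition $X_m := r_{\Gamma'}^{-1}(\Gamma')$ does not produce an affinoid: since $r_{\Gamma'}$ is a retraction \emph{onto} $\Gamma'$, the preimage of the whole closed subtree $\Gamma'$ is all of $\mathbb{P}^{1,\an}$, so the requirement $r_{\Gamma'}^{-1}(\Gamma') \subset W_{m+1}$ is never satisfiable. The correct object — and the one the paper uses — is $X_m := r_{\Gamma'}^{-1}(\Gamma_m)$, the preimage of the \emph{smaller} closed tree under the retraction onto the \emph{enlarged} tree $\Gamma'$, where $\Gamma'$ is obtained from $\Gamma_m$ by prolonging it, inside $W_{m+1}$ and with type II endpoints, in every tangent direction at an endpoint of $\Gamma_m$ that meets the complement of $U$. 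This set is $\mathbb{P}^{1,\an}$ minus finitely many strict open disks (those hanging off the added endpoints of $\Gamma'$), hence strictly affinoid, and it satisfies $\overline{W}_m \subset X_m \subset W_{m+1}$. With these two repairs your outline matches the paper's proof.
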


The proof makes extensive use of the tree structure of $\mathbb{P}^{1,\an}$.
Recall from \cite[Appendix B]{BR} that the tangent space at a point $x \in \mathbb{P}^{1,\an}$  is defined as  the set $T_x \mathbb{P}^{1, \an} $ of paths leaving from $x$ modulo the relation   having a common initial segment.
The space $T_x \mathbb{P}^{1, \an} $  is in bijection with the connected components of $\mathbb{P}^{1, \an} \setminus \{ x \}$. 
Given any tangent direction $\vec{v} \in T_x \mathbb{P}^{1, \an} $, we denote by $U(\vec{v})$ the corresponding connected component of $\mathbb{P}^{1,\an} \setminus \{ x\}$. 

\begin{proof}
By \cite[Corollary 7.11]{BR} there exists a sequence of basic tubes $W_m$ exhausting $U$
and such that $\overline{W}_m \subset W_{m+1} \subset U$ for every $m \in \N^*$. 

Fix a positive integer $m>0$. As we have assumed that $U$ is strictly contained in $\mathbb{P}^{1, \an}$, the topological boundary of  $W_m$ is a non-empty finite set of type II points of $\mathbb{P}^{1, \an}$. The convex hull $\Gamma_m$  of $\partial_{\mathrm{top}} W_m$ is thus a subgraph of $\mathbb{P}^{1, \an}$ with finitely many endpoints. 

If $W_m$ is an open disk, we set $X_m$ to be the closed disk of same centre and same radius as $W_m$. 
Otherwise, consider the following strict finite subtree $\Gamma$  of $\mathbb{P}^{1, \an} $. 
Let $\Gamma_m^0$ be the open subset of $\Gamma_m$ consisting of $\Gamma_m$ with its endpoints removed. 
Pick a point $x$ in $\Gamma_m \setminus \Gamma_m^0$. There are at most finitely many tangent directions at $x$  containing points of the complement in $U$ and not contained in $\Gamma_m$.
For every such tangent direction, attach a segment to $\Gamma_m$ in that direction and in such a way that it is contained in $W_{m+1}$ and such that its endpoint is a type II point.
If no such tangent direction exists, lengthen that edge ending at $x$ such that the new endpoint is again of type II and belongs to $W_{m+1}$. Denote by $\Gamma$ the strict finite subtree obtained this way. Observe that all the boundary points of $\Gamma_m$ are contained in  $\Gamma^0$.

Let  $r_{\Gamma}: \mathbb{P}^{1, \an} \to \Gamma$ be the  natural retraction map.
The basic tube $W_m$ is precisely $r_{\Gamma}^{-1} (\Gamma_m^0)$. 
Setting $X_m = r_{\Gamma}^{-1} (\Gamma_m)$, clearly one has $\overline{W}_m \subset X_m \subset W_{m+1}$.
Let $x_{i_1}, \ldots, x_{i_m}$ be the endpoints of  $\Gamma_m$, where $x_{i_j} = \eta_{a_{i_j},r_{i_j}}$ are of type II.
 The set $X_m$ is homeomorphic to $\mathbb{P}^{1,\an}$  minus the strict open disks $\D(a_{i_j}; r_{i_j})$, $j=1, \ldots , m$, and is thus strictly affinoid.
\end{proof}

The following proposition will be essential  for the proof of Theorem \ref{thm c}.

\begin{prop}\label{lema bound harmonic functions}
Let $U$ be a basic tube in  $\mathbb{P}^{1, \an}$.
There exists a positive constant $C$ depending only on $U$ such that for every harmonic function $g: U \to \R$  
 there exists an analytic function $h: U \to \A^{1, \an} \setminus \{ 0 \}$  such that 
$$\sup_U \big| g - \log |h| \big| \le C.$$
\end{prop}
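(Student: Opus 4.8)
The plan is to reduce the statement to a combinatorial estimate on the finite tree $\Gamma$ that cuts out $U$. Recall that a harmonic function $g$ on a basic tube $U \subsetneq \mathbb{P}^{1,\an}$ factors through the retraction $r_\Gamma$, where $\Gamma$ is a strict finite subtree with $U = r_\Gamma^{-1}(\Gamma^0)$; so $g = \bar g \circ r_\Gamma$ for a continuous function $\bar g$ on $\Gamma^0$ that is affine on each edge (with integer-constrained behaviour coming from the Laplacian being supported at branch points) and harmonic, meaning the sum of outgoing slopes vanishes at each interior vertex. The boundary $\partial_{\mathrm{top}} U$ consists of finitely many type~II points $x_1, \dots, x_n$; these are the endpoints of $\Gamma$. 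The key point is that the \emph{combinatorial structure} of $\Gamma$ — its number of edges, the lattice of integer slopes, and the lengths $\log|k^\times|$-rationally of its edges — depends only on $U$, not on $g$.

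First I would recall the explicit harmonic functions attached to rational functions: if $\varphi \in k(T)$ has divisor supported outside $\overline U$, then $\log|\varphi|$ is harmonic on a neighbourhood of $\overline U$, and $\log|\varphi| \circ r_\Gamma$ realizes, on $\Gamma$, the piecewise-affine function whose slope on each edge is (up to sign) the number of zeros minus poles of $\varphi$ on the appropriate side. By choosing $\varphi$ to be a product of factors $(T - a_i)$ with $a_i$ a rigid point in the disk chopped off at $x_i$, together with possibly a factor involving the point at infinity, one gets harmonic functions of the form $\log|h|$ with $h$ invertible (i.e. non-vanishing) \emph{on $U$} realizing any prescribed integer slopes at the endpoints of $\Gamma$ subject to the one linear relation $\sum (\text{slopes}) = 0$ (degree of the divisor). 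Since an arbitrary harmonic $g$ has \emph{real} endpoint-slopes summing to zero, I would approximate the vector of real slopes of $g$ by an integer vector summing to zero, with error at most $1$ in each coordinate; this is possible because the relevant lattice $\{v \in \mathbb{Z}^n : \sum v_i = 0\}$ has covolume controlled only by $n$. The resulting $\log|h|$ and $g$ then have the same restriction to $\Gamma$ \emph{up to} a piecewise-affine function whose edge-slopes are bounded by $1$ in absolute value; since $\Gamma$ has finitely many edges of finite length (both depending only on $U$), the sup of this difference over $\Gamma$, hence over $U$, is bounded by $C := (\text{number of edges of }\Gamma) \cdot (\text{max edge length})$, plus a correction for the freedom in the additive constant, which we absorb by translating $h$ by a scalar in $k^\times$ with the right absolute value (here nontriviality of the valuation is used).

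I expect the main obstacle to be the bookkeeping needed to ensure that the lifting function $h$ is genuinely \emph{invertible on all of $U$}, not merely on a sub-tube: one must place the poles and zeros of $\varphi$ strictly outside $\overline U$, i.e. inside the open disks removed to form $U$ (and, for the slope "towards infinity", use the point $\infty \notin \overline U$ when $U \ne \mathbb{P}^{1,\an}$; the case $U = \mathbb{P}^{1,\an}$ is trivial since then there are no nonconstant harmonic functions and one takes $h$ constant). A secondary technical point is checking that when $U$ is a strict open disk (the one case where $\Gamma$ is a single point or edge with one free endpoint) the slope relation is replaced by a single free integer slope, and the same rounding-by-$1$ estimate applies; the constant $C$ is then just the length of that edge. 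Throughout, the finiteness of $\Gamma$ and the fact that harmonic functions on $\mathbb{P}^{1,\an}$ are piecewise-affine with the slope-sum condition (as in \cite{BR}) are what make the constant uniform in $g$.
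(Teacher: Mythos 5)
Your argument is correct and is essentially the paper's proof: the paper simply invokes the Poisson formula of Baker--Rumely to write $g = c_0 + \sum_{i} c_i \log|T-a_i|$ with $\sum_i c_i = 0$ and the $a_i$ the centres of the removed closed disks, rounds the $c_i$ to integers $n_i$ and $c_0$ to $\log|b|$ with $b \in k^\times$, and takes $h = b\prod_i (T-a_i)^{n_i}$, which is invertible on $U$ since the $a_i$ (and $\infty$) lie outside $U$; your rounding of the endpoint slopes on the skeleton $\Gamma$ is the same step phrased in tree language. The resulting error, bounded by $1 + \sum_i \sup_U \big|\log|(T-a_i)(z)|\big|$ in the paper (equivalently, by the finitely many finite edge lengths of $\Gamma$ in your formulation), depends only on $U$, exactly as you say.
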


\begin{proof}
If $U$ is either $\mathbb{P}^{1, \an}$ or $\D$, the assertion is trivial, because every harmonic function on $\D$ or on $\mathbb{P}^{1, \an}$ is constant by \cite[Proposition 7.12]{BR}.
We may thus assume that $U$ is 
of the form $\D \setminus \cup_{i=1}^m \bar{\D}(a_i, r_i)$ with $r_i \in |k^\times|$, $0 <r_i < 1$ and $|a_i| <1$ for $i= 1, \ldots , m$.
The topological boundary of $U$ consists of $m+1$ type II  points.

By the Poisson formula \cite[Proposition 7.23]{BR}, we may find real numbers $c_0, \ldots, c_m$ with $\sum_{i=1}^m c_i =0$ such that for all $z \in U$
$$g(z) = c_0 + \sum_{i=1}^m c_i \cdot \log |(T-a_i)(z)|.$$
Pick non-zero integers $n_1, \ldots, n_m$ such that $|c_i - n_i |< 1$ and $b\in k$ such that $| \log |b| - c_0| < 1 $.
Consider the map  $h: U \to \A^{1, \an} \setminus \{ 0 \}$,
$$h(z) = b \prod_{i=1}^m (T-a_i)^{n_i}(z).$$
Since $a_i \notin U$, the function $\log |h|$ is harmonic on $U$ and we have
\begin{equation*}
\sup_U |g - \log |h|| \le |c_0 - \log |b|| + \sum_{i=1}^m |c_i - n_i| \cdot \sup_U \log |(T-a_i)(z)|.
 \end{equation*}
 The functions $\log |(T-a_i)(z)|$  are bounded on $U$ and it follows that the right-hand side of the inequality is bounded.
\end{proof}

\subsection{Green functions after Kawaguchi-Silverman}

Consider an endomorphism of the $N$-dimensional  projective analytic space $f:\mathbb{P}^{N, \an} \to \mathbb{P}^{N, \an}$ of degree $d \ge 2$.
 Denote by $f^n$ its $n$-th iterate.
Fixing homogeneous coordinates, such a map can be written as $f= [F_0 : \cdots : F_N]$, with $F_i$ homogeneous polynomials of degree $d$ without non-trivial common zeros.

Denote by $\rho: \A^{N+1, \an}\setminus \{ 0\} \to \mathbb{P}^{N, \an}$ the natural projection map.
 An endomorphism $f$ of $\mathbb{P}^{N, \an}$ can be lifted to a map $F: \A^{N+1, \an} \to \A^{N+1, \an}$ such that $\rho \circ F = f \circ \rho$.
 One can take for instance $F= (F_0, \cdots, F_N)$.  In the sequel, we will always choose lifts of $f$ such that all the coefficients of the $F_i$'s lie in $k^\circ$ and at least one of them has norm 1. 
\medskip

Given $T_0, \ldots, T_N$ affine coordinates of $\A^{N+1, \an}$ and a point $z \in \A^{N+1, \an}$, we define its norm as $| z| = \max_{0 \le i \le N} |T_i(z)|$. Analogously, we set $|F (z)| = \max_{0 \le i \le N} |F_i(z)|$. 
With these norms in hand, we may now define the Green function associated to $f$ following Kawaguchi and Silverman \cite{Silvkawadynamics,SilverKawagreen}, see \cite{Sibony} for the complex case.

\begin{defprop}
The sequence of functions  
\begin{equation*}
G_n (z) =  \frac{1}{d^n} \log |F^n(z)|
\end{equation*}
converges uniformly on $\A^{N+1,\an}$.

One defines the dynamical Green function associated to $f$ as $G_f(z) = \lim_{n\to\infty} G_n$.
\end{defprop}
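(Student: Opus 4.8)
The plan is to run the standard telescoping argument used by Kawaguchi--Silverman; the only real input is a two--sided comparison $C^{-1}|z|^{d}\le |F(z)|\le |z|^{d}$ for the chosen lift $F=(F_0,\dots,F_N)$, where $|w|:=\max_{0\le i\le N}|T_i(w)|$ and $|F(w)|:=\max_i|F_i(w)|$.

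First I would record the \emph{upper bound}. Each $F_i$ is homogeneous of degree $d$ with coefficients in $k^\circ$, so since the point $z\in\A^{N+1,\an}$ corresponds to a multiplicative seminorm the ultrametric inequality gives $|F_i(z)|\le\max_{|I|=d}|T^I(z)|\le |z|^{d}$, hence $|F(z)|\le |z|^{d}$ for all $z$. Applying this to $w=F^{n}(z)$ yields $|F^{n+1}(z)|=|F(F^{n}(z))|\le |F^{n}(z)|^{d}$, so $(G_n)$ is a nonincreasing sequence.

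Next comes the \emph{lower bound}, which is the heart of the matter. Since the $F_i$ have no common zero in $\A^{N+1}\setminus\{0\}$ (recall $k$ is algebraically closed), Hilbert's Nullstellensatz provides an integer $m\ge d$ and homogeneous polynomials $H_{ij}$ of degree $m-d$ with $T_j^{m}=\sum_{i=0}^{N}H_{ij}F_i$ for each $j$. Let $C\ge 1$ be the maximum of the absolute values of the finitely many coefficients of the $H_{ij}$. Evaluating at $z$ and using the ultrametric inequality gives $|T_j(z)|^{m}\le C\,|z|^{m-d}|F(z)|$ for each $j$; taking the maximum over $j$ yields $|z|^{m}\le C\,|z|^{m-d}|F(z)|$, hence $|F(z)|\ge C^{-1}|z|^{d}$ for every point $z\neq 0$ (the locus $|z|=0$ being exactly the origin, where $F$ vanishes). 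Iterating shows $|F^{n}(z)|>0$ for all $n$ whenever $z\neq 0$, so each $G_n$ is finite and continuous on the open set $\A^{N+1,\an}\setminus\{0\}$.

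With both bounds in hand I would conclude by telescoping. For $z\neq 0$, writing $w=F^{n}(z)$,
\[
G_{n+1}(z)-G_n(z)=\frac{1}{d^{\,n+1}}\log\frac{|F(w)|}{|w|^{d}},
\]
and the estimates $C^{-1}|w|^{d}\le |F(w)|\le |w|^{d}$ give $\bigl|G_{n+1}(z)-G_n(z)\bigr|\le (\log C)/d^{\,n+1}$. Hence $(G_n)$ is uniformly Cauchy on $\A^{N+1,\an}\setminus\{0\}$, since $\bigl|G_{n+p}(z)-G_n(z)\bigr|\le (\log C)\,/\,((d-1)d^{\,n})$ for all $p\ge 1$; it therefore converges uniformly there to a continuous function $G_f$ with $\log|z|-(\log C)/(d-1)\le G_f(z)\le \log|z|$. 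Since each $F_i$ is homogeneous of positive degree, $F(0)=0$, so $G_n(0)=-\infty$ for every $n$; setting $G_f(0)=-\infty$ the displayed estimate and the convergence extend to all of $\A^{N+1,\an}$. The one genuine obstacle is the lower bound $|F(z)|\ge C^{-1}|z|^{d}$: it is precisely here that the no-common-zero hypothesis on $f$ enters, via the Nullstellensatz and the control of the coefficients in the Bézout-type identity; everything else is formal, and one should note that the constant $C$, hence the rate of convergence, depends on the chosen lift $F$.
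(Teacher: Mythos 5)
Your proof is correct and follows essentially the same route as the paper: the trivial upper bound $|F(z)|\le|z|^{d}$, the lower bound $|F(z)|\ge C^{-1}|z|^{d}$ via the homogeneous Nullstellensatz, and the telescoping/Cauchy estimate $|G_{n+1}-G_n|\le C_1/d^{n+1}$. Your treatment is in fact slightly more careful than the paper's at the origin, where all the $G_n$ equal $-\infty$, and in tracking on which side of the inequality the constant $C$ sits.
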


\begin{proof}
Let us show that the limit $\lim_n G_n$ exists.
The inequality $|F(z)| \le |z|^d$ is clear.
Since the polynomials $F_i$ have no common zeros other than the origin, by the homogeneous Nullstellensatz we may find a positive integer $s$ such that the homogeneous polynomial $T_i^s \in k[T_0, \ldots, T_N]$ belongs to the ideal generated by $F_0, \ldots , F_N$  for every $i=0, \ldots, N$. That is, for every $i$ there are homogeneous polynomials $\lambda_j^i  \in k[T_0, \ldots, T_N]$ such that $T_i^s = \sum_{j=0}^N \lambda_j^i F_j$. 
For any $z \in \A^{N+1,\an}$, we have:
\begin{equation*}
|z|^s = \max_{0\le i \le N}|z_i|^s \le \max_{0\le i, j \le N} | \lambda_j^i (z) F_j (z) | \le \max_{0\le i, j \le N}C |z|^{s-d} \cdot \max_{0\le j \le N} | F_j (z) | 
\end{equation*}
for some positive constant $C$ depending only on the polynomials $\lambda_j^i$.
Hence,  for all $z$ we have that 
\begin{equation}\label{eq null}
C \cdot |z|^d \le |F(z)| \le |z|^d,
\end{equation}
and so
\begin{equation*}
C \cdot |F^n(z)|^d \le |F^{n+1}(z)| \le |F^n(z)|^d.
\end{equation*}
 Set $ C_1= |\log C |$. Taking logarithms, one obtains
\begin{equation}\label{eq green cauchy}
\left| G_{n+1} - G_n \right| \le \frac{C_1}{d^n}.
\end{equation}
 By the ultrametric inequality, $\left| G_{n+j} - G_n \right| \le \frac{C_1}{d^n}$ for all $j \ge 0$ and for all $n$, and so the limit $G_f = \lim_{n \to \infty} G_n$ exists.
 \end{proof}
 
\begin{obs}
Letting $j$ go to infinity in \eqref{eq green cauchy}, one obtains the inequality
 \begin{equation}\label{eq bound G}
  \left| G_f - G_n \right| \le \frac{C_1}{d^n}.
 \end{equation}
 \end{obs}

\begin{theorem}[\cite{Silvkawadynamics}]\label{thm properties green}
\begin{enumerate}\renewcommand{\labelenumi}{\roman{enumi})}
\item The function $G_f$ is continuous.
\item For every $\lambda \in k^*$ and for every $z \in \A^{N+1, \an}$, we have that
$$G_f(\lambda \cdot z) = G_f(z) + \log | \lambda|.$$
\item There exists a positive constant $C$ such that  
$$\sup_{z \in \A^{N+1,\an}} |G_f(z) - \log |z| | \le C~.$$
\end{enumerate}
\end{theorem}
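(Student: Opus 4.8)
The plan is to obtain all three statements by passing to the limit in properties that already hold for the finite-level functions $G_n(z) = d^{-n}\log|F^n(z)|$, using that the convergence $G_n \to G_f$ is \emph{uniform} — this is exactly the content of \eqref{eq bound G} established above in the proof of the Proposition–Definition. I would begin with the homogeneity and the uniform bound, since the continuity at the origin is most cleanly deduced from the latter.

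\emph{Homogeneity.} Since the chosen lift $F=(F_0,\dots,F_N)$ consists of homogeneous polynomials of degree $d$, one has $F(\lambda z) = \lambda^{d} F(z)$ for $\lambda \in k^*$, and then by induction $F^n(\lambda z) = \lambda^{d^n} F^n(z)$, because $F^{n+1}(\lambda z) = F\bigl(\lambda^{d^n}F^n(z)\bigr) = (\lambda^{d^n})^{d} F^{n+1}(z)$. Taking norms gives $|F^n(\lambda z)| = |\lambda|^{d^n}\,|F^n(z)|$, hence $G_n(\lambda z) = \log|\lambda| + G_n(z)$ for every $n$; letting $n\to\infty$ yields $G_f(\lambda z) = G_f(z) + \log|\lambda|$, which is (ii).

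\emph{The uniform bound.} Here I would iterate the two-sided estimate \eqref{eq null}. Applying $C\,|w|^{d} \le |F(w)| \le |w|^{d}$ successively with $w = F^{j}(z)$, $j = 0,\dots,n-1$, one gets by induction
$$ C^{\,(d^n-1)/(d-1)}\,|z|^{d^n} \;\le\; |F^n(z)| \;\le\; |z|^{d^n}. $$
Taking $d^{-n}\log(\cdot)$ gives $\tfrac{d^n-1}{d^n(d-1)}\log C + \log|z| \le G_n(z) \le \log|z|$, and letting $n\to\infty$ produces $\tfrac{\log C}{d-1} + \log|z| \le G_f(z) \le \log|z|$. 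Therefore $\sup_{z}\bigl|G_f(z) - \log|z|\bigr| \le |\log C|/(d-1) = C_1/(d-1)$, which is (iii).

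\emph{Continuity.} On $\A^{N+1,\an}\setminus\{0\}$ each $G_n$ is continuous: the map $z \mapsto |F^n(z)| = \max_{0\le i\le N}|F^n_i(z)|$ is continuous by the very definition of the topology on an analytic space and is strictly positive away from the origin (the $F_i$, hence the coordinates of $F^n$, have no common zero other than $0$), so composing with $\log$ and scaling by $d^{-n}$ gives a continuous function; since $G_n \to G_f$ uniformly by \eqref{eq bound G}, $G_f$ is continuous on $\A^{N+1,\an}\setminus\{0\}$. Finally, continuity at the origin, where $G_f$ takes the value $-\infty$, follows from the bound just proved, since it forces $G_f(z)\le \log|z| + C \to -\infty$ as $z\to 0$. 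There is no genuine obstacle in this argument — every ingredient is already available from the proof of the Proposition–Definition; the only point requiring slight care is the behaviour at $0$, which is why I would establish (iii) before finishing (i).
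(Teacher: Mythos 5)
Your proposal is correct. Note that the paper itself does not prove this theorem: it is stated as a cited result from Kawaguchi--Silverman, so there is no in-text argument to compare against. What you have written is the standard derivation, and it correctly extracts everything from the estimates already established in the proof of the Proposition--Definition: part (ii) from homogeneity of the components of $F^n$ (for non-rigid points $z$ this uses that the seminorm is multiplicative, so that $|F^n_i(\lambda\cdot z)|=|\lambda^{d^n}F^n_i(z)|=|\lambda|^{d^n}|F^n_i(z)|$ -- worth saying explicitly, but not a gap); part (iii) by iterating \eqref{eq null} and summing the geometric series $1+d+\cdots+d^{n-1}=(d^n-1)/(d-1)$; and part (i) from uniform convergence of the continuous functions $G_n$ on $\A^{N+1,\an}\setminus\{0\}$ together with the bound $G_f\le\log|z|$ to handle the value $-\infty$ at the origin. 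The only stylistic remark is that your constant in (iii) is the sharper $C_1/(d-1)$ rather than an unspecified $C$; this is consistent with the statement.
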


\subsection{Fatou and Julia sets}

Let us first discuss the one-dimensional situation, both in the complex and in the non-Archimedean setting.

Recall that there are several characterizations of the  Fatou and Julia  sets of an endomorphism $f$ of $\mathbb{P}^1_\C$.
The Fatou  set $F(f)$ can be defined as the normality locus of the family of the iterates of $f$, and  the Julia set $J(f)$ as its complement.
Equivalently,  one can set $J(f)$  to be the support of unique measure of maximal entropy, also referred to as the equilibrium   measure, see \cite{Sibony}, or as  the closure of the repelling periodic  points.

Some of these equivalences  have a non-Archimedean counterpart.
There is a well-defined notion of the canonical measure of an endomorphism $f$ of $\mathbb{P}^{1,\an}$ (see \cite{FavreRLbrolin,FavreRLeq} and \cite[\S 10.1]{BR}),
 and so  one sets $J(f)$ to be its support and $F(f)$ its complement.
Using a similar definition of normality as ours, it can be shown that the Fatou set agrees with the normality locus of the family of the iterates of $f$ \cite[Theorem 5.4]{FKT}.

\medskip

One may as well consider the Fatou and Julia sets in restriction to the set of rigid points of $\mathbb{P}^{1,\an}$, see \cite{Silvermanbook} for a survey on the topic.
 However, notice that if $f$ is a map with good reduction, i.e. if the reduction $\tilde{f}$ of $f$ is a selfmap of $\mathbb{P}^1_{\tilde{k}}$  of the same degree as $f$, then its Julia set contains no rigid points  \cite[Theorem 2.17]{Silvermanbook}. %reason: equicontinuity

We mention  the following two characterizations of the intersections of $J(f)$ and $F(f)$ with $\mathbb{P}^{1, \an}(k)$.
It was shown in \cite[Theorem C]{FKT} that  the intersection of the Fatou set $F(f)$ with the set of rigid points in $\mathbb{P}^{1, \an}$  agrees with the set of rigid points where the sequence of the iterates $f^n$ is equicontinuous with respect to the chordal metric on $\mathbb{P}^{1,\an}(k)$.

\medskip

The Fatou set of a non-invertible complex endomorphism $f$ of $\mathbb{P}^N_\C$ for $N\ge 2$ is defined as the normality locus of the family of the iterates. Its complement is the support of the Green current, which is the unique positive closed $(1,1)$-current that is forward invariant by $f$,  see \cite[Th\'eor\`eme 1.6.5]{Sibony} for 
a proof. There are several possible definitions for the Julia set of $f$, see \cite[D\'efinition 3.31]{Sibony}. We define the Julia set of $f$ as the complement of the Fatou set.

\medskip

We now explore the non-Archimedean higher dimensional case.
We consider two different Fatou sets of $f$:

\begin{defini}
The \emph{normal Fatou set} $F_{\mathrm{norm}} (f)$ of an endomorphism $f: \mathbb{P}^{N,\an} \to \mathbb{P}^{N,\an}$ of degree at least 2 is 
the set of all points $ z \in \mathbb{P}^{N, \an}$ where the family $ \{ f^n \} $ is normal.

The normal Julia set $J_{\mathrm{norm}}  (f)$ is the complement of $F_{\mathrm{norm}} (f)$.
\end{defini}

\begin{defini}
Let $\rho : \A^{N+1, \an} \to \mathbb{P}^{N,\an}$ be the usual map.
We define the harmonic Fatou set $F_{\mathrm{harm}} (f)$ of $f$ as the set of points $z \in \mathbb{P}^{N,\an}$ having a neighbourhood $U$ such that the Green function $G_f$ is strongly pluriharmonic on $\rho^{-1} (U)$.

The harmonic Julia set $J_{\mathrm{harm}}  (f)$ is the complement of $F_{\mathrm{harm}} (f)$.
\end{defini}

It follows directly from the definitions that both Fatou sets $F_{\mathrm{norm}} (f)$ and  $ F_{\mathrm{harm}} (f)$ are open  and  totally invariant.

The set $J_{\mathrm{harm}} (f)$ is always nonempty. Indeed,
Chambert-Loir has constructed a natural invariant probability measure $\mu_f$ on $\mathbb{P}^{N,\an}$ and shown that its support is
 contained in the complement of the locus where $G_f$ is strongly pluriharmonic, see \cite[Proposition 2.4.4]{CLheights}. 
In other words, the support of $\mu_f$ is included in the harmonic Julia set of $f$.

We do not know whether the Fatou set is always non-empty.

\begin{example}
Let $z \in \mathbb{P}^{N,\an}$ be any rigid fixed point for $f$ such that the eigenvalues of its differential $Df(z)$ are all of norm at most $1$.
Then, we may find an arbitrarily small open neighbourhood $U$ of $z$ which is $f$-invariant, i.e. such that $f(U) \subseteq U$.
After maybe reducing $U$, we may assume 
that  $U \subset \{ z_0 = 1, |z_i| <2, i=1, \cdots , N\}$. We thus have:
\begin{eqnarray*}
G_n &=& \frac{1}{d^n} \log \left|(F_0^n, \cdots , F_N^n)  \right| \\
& = &\frac{1}{d^n} \log \left| F_0^n \right|   + \frac{1}{d^n} \log \max_{1 \le i \le N} \left|\frac{F_i^n}{F_0^n} \right|~. 
\end{eqnarray*}
The second term converges uniformly to $0$.
 On the open set $\rho^{-1}(U)$,  the function $G_f$ is thus the uniform limit of the sequence $\frac{1}{d^n} \log \left| F_0^n \right|$, hence strongly pluriharmonic.
Hence $z$ belongs to the harmonic Fatou set.
\end{example}

In dimension $1$, it follows from the Woods Hole formula that 
any rational map admits at least one indifferent fixed point $p$, i.e. such that $|f^\prime (p)| = 1$.
We observe that the same result holds for any polynomial map  $f: \A^{2,\an} \to \A^{2,\an}$ that extends to an endomorphism of $\mathbb{P}^{2,\an}$
so that  $F_{\mathrm{harm}} (f)\neq \emptyset$ in this case.

\medskip

\begin{obs}
In \cite{SilverKawagreen}, the authors define the Fatou set of an endomophism  of the $N$-th projective space $\mathbb{P}^N_k$ as the equicontinuity locus of the family of iterates, which they prove to be the same as the locus where it is locally uniformly Lipschitz. However, the definition of the Fatou set in terms of equicontinuity presents some difficulties already in dimension one.
Indeed, let  $k$ be a field of characteristic $p >0$ and consider the polynomial $f(z) = pz^2+cz$, with $|c| = 1$. 
Then, the family of the iterates $f^n$ is normal at the Gauss point, but it is not equicontinuous at $x_g$,
 see \cite[Example 10.53]{BR}.
\end{obs}

\subsection{Comparison between $F_{\mathrm{norm}}$ and $F_{\mathrm{harm}}$}

We expect our two notions of Fatou sets to coincide. 
\begin{conj}
For every non-invertible endomorphism $f$ of the projective space, we have that $F_{\mathrm{norm}} (f) = F_{\mathrm{harm}} (f)$.
\end{conj}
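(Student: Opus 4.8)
The inclusion $F_{\mathrm{harm}}(f)\subseteq F_{\mathrm{norm}}(f)$ is already at hand: the equicontinuity description of the harmonic Fatou set, combined with Theorem~\ref{thm a} applied to the family of iterates $\{f^n\}$ on a small basic tube, shows that strong pluriharmonicity of $G_f$ on $\rho^{-1}(U)$ forces $\{f^n\}$ to be normal on $U$. So the whole content of the conjecture is the reverse inclusion $F_{\mathrm{norm}}(f)\subseteq F_{\mathrm{harm}}(f)$, and the plan is to extract it from the dominant-coordinate decomposition of the approximants $G_n=d^{-n}\log|F^n|$ of the Green function, using the uniform bound $|G_f-G_n|\le C_1d^{-n}$ of~\eqref{eq bound G}.

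Concretely, I would fix $z_0\in F_{\mathrm{norm}}(f)$, a connected affinoid neighbourhood $Z\ni z_0$ contained in a neighbourhood on which every subsequence of $\{f^n\}$ has a pointwise convergent subsequence with continuous limit, and a section over $Z$ realising a connected strictly affinoid lift $\hat Z\subset\A^{N+1,\an}\setminus\{0\}$. For $i=0,\dots,N$ set $R_i^{(n)}=\{\,|F_i^{(n)}|=\max_l|F_l^{(n)}|\,\}$; since $F_0^{(n)},\dots,F_N^{(n)}$ have no common zero away from the origin, $F_i^{(n)}$ is zero-free, hence invertible in the relevant affinoid algebra, on $R_i^{(n)}$, and there $G_n=d^{-n}\log|F_i^{(n)}|$ exactly. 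The geometric heart of the argument would then be: after shrinking $Z$, for every large $n$ there is a coordinate $i(n)$ with $\hat Z\subseteq R_{i(n)}^{(n)}$ — equivalently, $f^n$ sends $Z$ into a single chart-ball $\bar B_{i(n)}=\{[z_0:\dots:z_N]:|z_l|\le|z_{i(n)}|\ \forall l\}$. Granting this, $G_n|_{\hat Z}=d^{-n}\log|h_n|$ with $h_n:=F_{i(n)}^{(n)}$ invertible on $\hat Z$, and since $G_n\to G_f$ uniformly on $\hat Z$ (indeed globally), $G_f=\lim_n d^{-n}\log|h_n|$ locally uniformly near $\rho^{-1}(z_0)$, which is exactly strong pluriharmonicity; repeating the $h_n$ along a subsequence if necessary upgrades a subsequential version to an honest sequence, so $z_0\in F_{\mathrm{harm}}(f)$.

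The main obstacle is precisely this geometric step, and it reflects the absence of a current formalism in higher-dimensional Berkovich geometry. Over $\C$ one argues directly with the Green current $T_f=dd^cG_f$: from $f^*T_f=d\,T_f$, hence $T_f=d^{-n}(f^n)^*T_f$, and the local uniform convergence $f^{n_j}\to g$ one gets $(f^{n_j})^*T_f\to g^*T_f$, whence $T_f=0$ near $z_0$ — the limit disposes of all scales simultaneously, with no need to produce one neighbourhood valid for every $n$. In the non-Archimedean setting Theorem~\ref{thm a} only supplies a \emph{pointwise} convergent subsequence $f^{n_j}\to g$ with $g$ continuous; this yields, for each large $j$, that $f^{n_j}$ dominates in one homogeneous coordinate on a neighbourhood of $z_0$, but that neighbourhood a priori shrinks with $j$, whereas strong pluriharmonicity requires a fixed $Z$ on which $d^{-n}\log|F_{i(n)}^{(n)}|$ represents $G_n$ for \emph{all} large $n$. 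Bridging this gap — by a genuinely local-uniform normality statement, or a Montel-type bound forcing the preimages $f^{-n}(H)$ of a fixed hypersurface away from $F_{\mathrm{norm}}(f)$, or a maximum-principle argument on an affinoid exhausting $Z$, which would also rule out that $g(z_0)$ sits on the ``corner'' locus $\{|z_0|=\dots=|z_N|\}$ of $\mathbb{P}^{N,\an}$ (a locus that does belong to the Julia set, as the good-reduction map $[z_0^d:\dots:z_N^d]$ illustrates) — is where the real work lies. An independent difficulty is that even a potential theory for Berkovich spaces would at best give that $G_f$ is \emph{harmonic} near $z_0$, and it is open already for curves (cf.\ the remark after the definition of strong pluriharmonicity) whether harmonicity implies strong pluriharmonicity, so the hands-on construction of the $h_n$ above seems unavoidable. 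For $N=1$ both issues disappear, since $F_{\mathrm{norm}}(f)$ is the complement of the support of the canonical measure by \cite[Theorem 5.4]{FKT} and harmonic equals strongly harmonic on open subsets of $\mathbb{P}^{1,\an}$ by \cite[Corollary 7.32]{BR}; reproducing this picture in higher dimension is the goal.
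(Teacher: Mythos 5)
The statement here is stated in the paper as a \emph{conjecture}: the paper does not prove it, and explicitly establishes only the inclusion $F_{\mathrm{harm}}(f)\subseteq F_{\mathrm{norm}}(f)$ (as a corollary of Proposition \ref{prop fatou pluriharm} combined with Theorem \ref{thm a}), together with the one-dimensional case via \cite[Theorem 5.4]{FKT}. Your treatment of that inclusion coincides with the paper's: Proposition \ref{prop fatou pluriharm} converts strong pluriharmonicity of $G_f$ on $\rho^{-1}(U)$ into uniformly bounded lifts $F_n$ of $f^n$, and Theorem \ref{thm a} applied to those lifts yields normality of $\{f^n\}$ on $U$. So up to this point you have faithfully reproduced what the paper actually proves.

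For the reverse inclusion $F_{\mathrm{norm}}(f)\subseteq F_{\mathrm{harm}}(f)$ your proposal is a programme rather than a proof, and the gap you yourself flag is genuine and is not closed anywhere in the paper: the assertion that, after shrinking $Z$, there is for every large $n$ a single coordinate $i(n)$ with $f^n(Z)$ contained in the chart-ball $\bar B_{i(n)}$ is exactly the missing ingredient. Theorem \ref{thm a} only supplies a \emph{pointwise} convergent subsequence $f^{n_j}\to g$ with continuous limit; this gives no neighbourhood of $z_0$ that works uniformly in $n$, and the limit $g(z_0)$ may well land on the locus where several homogeneous coordinates have equal norm, in which case no single $F^{(n)}_{i}$ dominates on any fixed $Z$. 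Your remark about ``repeating the $h_n$ along a subsequence'' also elides a real issue: the definition of strong pluriharmonicity demands locally uniform convergence of $b_n\log|h_n|$ on a fixed open set for a full sequence of \emph{invertible} functions, whereas your construction would produce invertibility only along the subsequence where domination is known. In short, your proposal correctly identifies the easy inclusion, proves it the same way the paper does, and correctly isolates the obstruction to the converse — but it does not prove the conjecture, which the paper itself leaves open.
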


In dimension $1$, the equality follows from
\cite[Theorem 5.4]{FKT}, and we are able to prove one inclusion in general. Our argument relies on the
 following result which gives a characterization of $F_{\mathrm{harm}} (f)$ in terms of a sort of equicontinuity property for the iterates of $f$.
 Its proof follows its complex  counterpart.

\begin{prop}\label{prop fatou pluriharm}
Let $f: \mathbb{P}^{N, \an} \to \mathbb{P}^{N, \an}$ be an endomorphism of degree $d\ge 2$ and $U$  a basic tube  in $\mathbb{P}^{N,\an}$.

The Green function $G_f$ is strongly pluriharmonic on the open set $\rho^{-1}(U) \subset \A^{N+1, \an}  \setminus \{ 0 \}$
if and only if for every $n \in \N$ 
 there exists a lift $F_n$ of $f^n$ on $U$ and a positive constant $C_1$ such that $e^{-C_1} \le | F_n | \le e^{C_1}$ on $\rho^{-1} (U)$ for all $n \in \N$.
\end{prop}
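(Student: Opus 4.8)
The plan is to establish the two implications separately, using the lift $F=(F_0,\dots,F_N)$ fixed earlier (coefficients in $k^\circ$, at least one of norm $1$) so that $F^n$ is a lift of $f^n$, together with the characterization of strong pluriharmonicity as a local uniform limit $b_m\log|h_m|$ and the bounds on the Green function from Theorem \ref{thm properties green}.

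\emph{($\Leftarrow$)} Suppose that for every $n$ there is a lift $F_n$ of $f^n$ on $U$ with $e^{-C_1}\le |F_n|\le e^{C_1}$ on $\rho^{-1}(U)$. Any two lifts of $f^n$ differ by an invertible analytic function on $\rho^{-1}(U)$, so if $F_n = u_n\cdot F^n$ on $\rho^{-1}(U)$ then $u_n$ is invertible analytic and the bound reads $\big|\tfrac{1}{d^n}\log|F^n| - \tfrac{1}{d^n}\log|u_n^{-1}|\big|\le C_1/d^n$ on $\rho^{-1}(U)$. Since $G_n=\tfrac1{d^n}\log|F^n|$ and by \eqref{eq bound G} $|G_f-G_n|\le C_1'/d^n$ on all of $\A^{N+1,\an}$, we get
\begin{equation*}
\Big| G_f + \tfrac{1}{d^n}\log|u_n| \Big| \le \frac{C_1+C_1'}{d^n} \quad \text{on } \rho^{-1}(U).
\end{equation*}
Thus $G_f$ is the locally uniform limit of $-\tfrac{1}{d^n}\log|u_n| = \tfrac{1}{d^n}\log|u_n^{-1}|$ on $\rho^{-1}(U)$, with $u_n^{-1}$ invertible analytic and $b_n=1/d^n$; hence $G_f$ is strongly pluriharmonic there. (One may alternatively absorb $u_n$ into $F_n$ directly, writing $G_f=\lim \tfrac1{d^n}\log|F_n|$ up to an additive error $O(d^{-n})$.)

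\emph{($\Rightarrow$)} Suppose $G_f$ is strongly pluriharmonic on $\rho^{-1}(U)$. By definition, locally around each point there are invertible analytic functions and exponents realizing $G_f$ as a uniform limit of $b_m\log|h_m|$; the goal is to upgrade this to a single invertible analytic function $H_n$ on $\rho^{-1}(U)$ with $\big|G_f\cdot d^n - \log|H_n|\big|$ bounded independently of $n$, and then set $F_n = H_n^{-1}\cdot(\text{anything lifting }f^n)$ appropriately — more precisely, to produce for each $n$ an invertible analytic $v_n$ on $\rho^{-1}(U)$ with $\big|\,d^nG_f - \log|v_n|\,\big|$ bounded by a constant independent of $n$, so that $F_n := v_n^{-1} F^n$ is a lift of $f^n$ satisfying the claim by the computation above in reverse. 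The key point is to pass from the local description of strong pluriharmonicity on $\rho^{-1}(U)\subset\A^{N+1,\an}\setminus\{0\}$ to a \emph{global} invertible analytic function on the basic tube $\rho^{-1}(U)$: this is where the hypothesis that $U$ is a basic tube enters, via a $1$-dimensional slicing argument. Restricting to the fibres of $\rho$ and using the homogeneity $G_f(\lambda z)=G_f(z)+\log|\lambda|$ from Theorem \ref{thm properties green}(ii), one reduces to understanding strongly pluriharmonic functions on $U$ itself; on a basic tube of $\mathbb{P}^{1,\an}$ Proposition \ref{lema bound harmonic functions} supplies exactly such a global approximation of a harmonic function by $\log|h|$ with a \emph{uniform} constant, and the analogous statement on $U\subset\mathbb{P}^{N,\an}$ is obtained by restriction to curves together with the structure of basic tubes. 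Applying this to the harmonic function $d^n G_n$ (which differs from $d^n G_f$ by the uniformly bounded quantity $d^n(G_f-G_n)=O(1)$ on $\rho^{-1}(U)$ by \eqref{eq bound G}) yields an invertible analytic $v_n$ with $\big|\log|v_n| - d^n G_f\big|\le C_1$ for a constant $C_1$ independent of $n$, and hence $F_n:=v_n^{-1}F^n$ is a lift of $f^n$ with $e^{-C_1'}\le|F_n|\le e^{C_1'}$ on $\rho^{-1}(U)$, as required.

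\emph{Main obstacle.} The delicate step is the forward implication: extracting from the purely local notion of strong pluriharmonicity a single invertible analytic function on the whole basic tube $\rho^{-1}(U)$ with a bound on $\big|d^nG_f-\log|\cdot|\big|$ that is \emph{uniform in $n$}. The uniformity is what makes the constant $C_1$ in the statement independent of $n$, and it is precisely here that one must invoke the basic-tube hypothesis and a global approximation result of Poisson-formula type (Proposition \ref{lema bound harmonic functions} and its higher-dimensional analogue via slicing by curves), rather than a naive patching of the local data, which would only give constants growing with $n$. Controlling the transition functions between local lifts — invertible analytic functions on overlaps of a finite cover of the basic tube — and showing their logarithms are uniformly bounded is the technical heart of the argument.
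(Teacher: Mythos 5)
Your reverse implication is essentially the paper's argument: write $F_n = u_n\cdot F^n$ with $u_n$ invertible on $\rho^{-1}(U)$, combine the hypothesis with $|G_f-G_n|\le C/d^n$, and read off $G_f$ as a uniform limit of $\frac{1}{d^n}\log|u_n^{-1}|$. That half is fine.

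The forward implication, however, has a genuine gap, and the route you sketch is not the one that works. You try to produce, for each $n$, a \emph{single} invertible function $v_n$ with $|d^nG_f-\log|v_n||$ bounded uniformly in $n$ by invoking a higher-dimensional, global analogue of Proposition \ref{lema bound harmonic functions} ``via slicing by curves.'' No such statement is available: Proposition \ref{lema bound harmonic functions} is a strictly one-dimensional result resting on the Poisson formula on $\mathbb{P}^{1,\an}$, and gluing slice-wise data into a global invertible function on $\rho^{-1}(U)\subset\A^{N+1,\an}$ is precisely the kind of cohomological problem you would need to solve, not assume. Moreover, the function you propose to feed into that result, $d^nG_n=\log|F^n|=\log\max_i|F_i^n|$, is not (pluri)harmonic — it is only plurisubharmonic — so the approximation result would not apply to it even in dimension one. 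The actual argument is much more elementary and avoids all of this: by definition of strong pluriharmonicity there is a sequence $b_m\log|h_m|$ with $h_m$ invertible converging uniformly to $G_f$ on (a neighbourhood inside) $\rho^{-1}(U)$; one simply \emph{reindexes} this sequence so that $\sup|G_f-b_n\log|h_n||\le C_1/d^n$, and then the ultrametric triangle inequality gives $\bigl|\frac{1}{d^n}\log|F^n|-b_n\log|h_n|\bigr|\le C_1/d^n$, i.e.\ $e^{-C_1}\le |F^n|/|h_n|^{b_nd^n}\le e^{C_1}$, so that $F_n:=F^n/h_n^{b_nd^n}$ is the desired lift. The uniformity in $n$ that you correctly identify as the crux is obtained for free by matching the speed of convergence of the approximating sequence to $d^{-n}$ and then multiplying by $d^n$ — no global Poisson-type approximation is needed. (Proposition \ref{lema bound harmonic functions} is used later, in the proof of Theorem \ref{thm c}, not here.)
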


This result together with
 Theorem \ref{thm a} implies the following:
\begin{cor}
The harmonic Fatou set $F_{\mathrm{harm}} (f)$ is contained in $F_{\mathrm{normal}} (f)$. 
\end{cor}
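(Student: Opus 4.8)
Fix a point $z\in F_{\mathrm{harm}}(f)$; the goal is to show $z\in F_{\mathrm{norm}}(f)$. By definition there is an open neighbourhood $U_0\ni z$ with $G_f$ strongly pluriharmonic on $\rho^{-1}(U_0)$. First I would replace $U_0$ by a smaller, more convenient neighbourhood. Since $\mathbb{P}^{N,\an}$ is good, reduced and boundaryless, basic tubes form a basis of its topology (the Remark after Proposition~\ref{prop tube cc aff}); as some homogeneous coordinate of $z$ is non-zero, I can choose a basic tube $U$ with $z\in U\subset U_0\cap U_{i_0}$, where $U_{i_0}\cong\A^{N,\an}$ is a standard affine chart. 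Strong pluriharmonicity is local, so $G_f$ remains strongly pluriharmonic on $\rho^{-1}(U)$. Over $U_{i_0}$ the tautological $\mathbb{G}_m$-bundle $\rho$ is trivial, whence $\rho^{-1}(U)\cong U\times(\A^{1,\an}\setminus\{0\})$. This space is good, reduced and, being open in the boundaryless space $\A^{N+1,\an}\setminus\{0\}$, boundaryless; it is $\sigma$-compact because $U$ is $\sigma$-compact (Proposition~\ref{basic tube intersection}) and $\A^{1,\an}\setminus\{0\}$ is. Thus $\rho^{-1}(U)$ satisfies all the hypotheses imposed on the source space in Theorem~\ref{thm a}.

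Next I would feed in Proposition~\ref{prop fatou pluriharm}, applied to the basic tube $U$: it produces, for each $n$, a lift $F_n$ of $f^n$ over $U$ together with a constant $C_1>0$ \emph{independent of $n$} such that $e^{-C_1}\le|F_n|\le e^{C_1}$ on $\rho^{-1}(U)$. Pick $R\in|k^\times|$ with $R\ge e^{C_1}$ (possible since $k$ is nontrivially valued); then each $F_n$ is an analytic map $\rho^{-1}(U)\to\bar{\D}^{N+1}(R)$ into a strictly $k$-affinoid target. Given any sequence of iterates $\{f^{n_m}\}_m$, Theorem~\ref{thm a} yields a subsequence $\{n_{m_j}\}_j$ along which $F_{n_{m_j}}$ converges pointwise to a continuous map $\Phi\colon\rho^{-1}(U)\to\bar{\D}^{N+1}(R)$. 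Since $|F_{n_{m_j}}(w)|\ge e^{-C_1}$ for every $w$ and $j$ and $w'\mapsto|w'|$ is continuous, the limit satisfies $|\Phi(w)|\ge e^{-C_1}>0$; hence $\Phi$ takes values in $\A^{N+1,\an}\setminus\{0\}$ and $\rho\circ\Phi$ is a well-defined continuous map on $\rho^{-1}(U)$.

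It then remains to descend along $\rho$. For $x\in U$ and any $w\in\rho^{-1}(x)$ one has $\rho\bigl(F_{n_{m_j}}(w)\bigr)=f^{n_{m_j}}(\rho(w))=f^{n_{m_j}}(x)$, and continuity of $\rho$ forces $f^{n_{m_j}}(x)\to\rho(\Phi(w))$. As the left-hand side does not depend on $w$ and $\mathbb{P}^{N,\an}$ is Hausdorff, $\rho\circ\Phi$ is constant on each fibre $\rho^{-1}(x)$; writing $h(x)$ for that common value we get $h\circ\rho=\rho\circ\Phi$, and $h$ is continuous because $\rho\colon\rho^{-1}(U)\to U$ is, after the trivialisation above, a projection with non-empty fibre, hence a topological quotient map. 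By construction $f^{n_{m_j}}\to h$ pointwise on $U$, so the family $\{f^n\}$ is normal at $z$, i.e.\ $z\in F_{\mathrm{norm}}(f)$. Since $z$ was arbitrary, $F_{\mathrm{harm}}(f)\subseteq F_{\mathrm{norm}}(f)$.

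\textbf{Main obstacle.} The conceptual content is entirely carried by Proposition~\ref{prop fatou pluriharm} (which bounds suitable lifts of the iterates on $\rho^{-1}(U)$) and Theorem~\ref{thm a}; the only delicate point is the preliminary reduction ensuring that $\rho^{-1}(U)$ has the precise shape required by Theorem~\ref{thm a}. In particular $\sigma$-compactness is not automatic for open subsets of $\sigma$-compact spaces, so it is essential to shrink $U$ inside a single affine chart, making $\rho^{-1}(U)$ a genuine product of $\sigma$-compact spaces; the remaining properties (goodness, reducedness, boundarylessness, and the fact that $\rho$ restricts to a quotient map) are then immediate.
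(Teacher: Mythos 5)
Your argument is correct and is exactly the fleshing-out of what the paper intends: it derives the corollary from Proposition \ref{prop fatou pluriharm} (uniformly bounded lifts $F_n$ of the iterates over a basic tube) combined with Theorem \ref{thm a}, then descends along $\rho$; the paper itself states the corollary as an immediate consequence of these two results without writing out the details. Your care with the $\sigma$-compactness of $\rho^{-1}(U)$ and with the continuity of the descended limit (which can also be seen directly from $h=\rho\circ\Phi\circ s_{i_0}$ using the local section $s_{i_0}$) is exactly the right way to make the implication rigorous.
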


\begin{proof}[Proof of Proposition \ref{prop fatou pluriharm}]
Pick any lift $F= (F_0, \cdots, F_N)$ of $f$,  where $F_i \in k[T_0, \cdots , T_N]$ are homogeneous polynomials of degree $d$ without nontrivial common zeros.  We may assume that $\sup_{\D} | F(z)| = 1$. 
Recall from \eqref{eq bound G} that there exists a positive constant $C_1$ such that  $\left|G_f-G_n \right| \le \frac{C_1}{d^n}$ for all $n \in \N$. 

Let $U$ be a basic tube on which $G_f$ is strongly pluriharmonic. Let $h_n \in \mathcal{O}_{\A^{N+1}}^\times(U)$ and let $b_n$ be non-zero real numbers   such that  $G_f$ is the uniform limit of the sequence $b_n \cdot \log |h_n|$.
After maybe extracting a subsequence and renumbering it, we may assume that 
\begin{equation*}
\left| G_f - b_n \cdot \log |h_n| \right| \le \frac{C_1}{d^n} \quad \forall n\gg 0
\end{equation*}
on U. Thus, we have
\begin{eqnarray*}
\left| \frac{1}{d^n} \log | F^n| -  b_n \cdot \log |h_n|  \right| & = & \left| \frac{1}{d^n }\log \left( \frac{| F^n| }{|h_n|^{b_n \cdot d^n}} \right) \right| \\
& \le & \max \left\{ \left|G_f -  b_n \cdot \log |h_n| \right| ,  \left| G_f-G_n \right| \right\} \\
& \le &  \frac{C_1}{d^n}~.
\end{eqnarray*}

So we see that for $n \gg 0$
\begin{equation}\label{bound lifts}
e^{-C_1} \le \frac{| F^n| }{|h_n|^{b_n \cdot d^n}} \le e^{C_1}.
\end{equation}
Since the functions $h_n$ have no zeros on $U$, each $F_n : = \frac{F^n}{h_n^{b_n \cdot d^n}}$ is a lift of $f^n$. 
\medskip

Assume conversely that on $U$, for every $n \in \N$  there exists a  lift $F_n$ of $f^n$ such that  $e^{-C_1} \le |F_n| \le e^{C_1}$  for some  positive constant $C_1$.
Then, for every $n \in \N$ we may choose a non-vanishing function $h_n$ on $U$ such that $F^n = h_n \cdot F_n$.
It follows that 
\begin{equation*}
G_n = \frac{1}{d^n} \log \left| F^n  \right| 
 = \frac{1}{d^n} \log \left| h_n \right|   + \frac{1}{d^n} \log \left| F_n  \right| ~.
\end{equation*}
The second term converges uniformly to $0$.
 On the open set $\rho^{-1}(U)$,  the function $G_f$ is thus the uniform limit of the sequence $\frac{1}{d^n} \log \left| h_n \right|$, hence strongly pluriharmonic.
\end{proof}

\subsection{Hyperbolicity of the Fatou components}

Recall that $\mathrm{Mor}_k(X,Y)$ denotes the set of analytic maps from $X$ to $Y$.

\begin{defini}
Let $\Omega$ be a relatively compact subset of an analytic space $Y$ and   $U$ a basic tube.

The family $\mathrm{Mor}_k(U, \Omega)$ is said to be \emph{normal} if for every sequence of analytic maps $\{ f_n \} \subset \mathrm{Mor}_k(U, \Omega)$ there exists a subsequence $f_{n_j}$ that is pointwise converging to a continuous map $f: U \to Y$. 
\end{defini}

\begin{obs}
In the complex setting, the previous definition corresponds to the 
 family $\mathrm{Hol}(U,\Omega)$ being relatively compact in $\mathrm{Hol}(U,Y)$.
The complex definition of normality for a non-compact target is slightly different, since it allows for a sequence to be compactly divergent \cite[\S I.3]{Kobbook}.
\end{obs}

%XXXX Thesis: discussion of tautness.

Let  $f:\mathbb{P}^{N, \an} \to \mathbb{P}^{N, \an}$ be an endomorphism of degree at least 2.
Theorem \ref{thm c} thus states that for every connected component $\Omega$ of the harmonic Fatou set $F_{\mathrm{harm}} (f)$  and for every  connected open subset $U$ of $\mathbb{P}^{1, \an}$, 
the family $\mathrm{Mor}_k(U, \Omega)$ is normal.

\begin{proof}[Proof of Theorem \ref{thm c}]
Let $\Omega$ be a connected component of  $F_{\mathrm{harm}} (f)$ of an endomorphism $f:\mathbb{P}^{N, \an} \to \mathbb{P}^{N, \an}$ of degree at least 2. 
 Let $U$ be any connected open subset of $\mathbb{P}^{1, \an}$. 
Our aim is to show that the family $\mor_k (U, \Omega)$ is normal.

\smallskip
The projective space $\mathbb{P}^{N, \an}$ can be covered by $N+1$ charts $V_0, \ldots, V_N$ analytically isomorphic  to $\bar{\D}^N$.
For every $i=0, \cdots, N$, let $s_i: \{z \in \mathbb{P}^ {N, \an} : z_i \neq 0 \} \to \A^{N+1, \an}$ be the analytic local section of $\rho$ sending the point $z=[z_0: \ldots : z_N]$ to $(\frac{z_0}{z_i}, \ldots , \frac{z_{i-1}}{z_i}, 1, \frac{z_{i+1}}{z_i}, \ldots, \frac{z_N}{z_i} ) $. 
Let  $g: U \to \Omega$ be an analytic map. 
We claim that for any compact subset $\mathrm{K} \subset U$ the map $g_{|_\mathrm{K}}$ admits a lift to $\rho^{-1}(\Omega)$.

\smallskip

Suppose first that $U$ is not the whole $\mathbb{P}^{1, \an}$. By Proposition \ref{prop tubes BR}, there exists a sequence of basic tubes  $W_m$ exhausting $U$  and a sequence of affinoid subspaces $X_m$  satisfying 
$$\overline{W}_m \subset X_m \subset U~.$$

Pick any compact subset $\mathrm{K} \subset U$. For $m$ sufficiently large, $\mathrm{K}$ is contained in some $X_m$.
Fix $m \in \N^*$. Cover $X_m$ by sets $U^{(m)}_i = g^{-1}(V_i) \cap X_m$ with $0 \le i \le N$.
 On every  $U^{(m)}_{ij} = g^{-1}(V_i) \cap g^{-1}(V_j) \cap X_m $, 
 we know that $\rho \circ s_i \circ g = \rho \circ s_j \circ g$, and thus $s_i \circ g =\varphi^{(m)}_{ij} \cdot ( s_j \circ g)$ for some $\varphi^{(m)}_{ij} \in \mathcal{O}^\times(U^{(m)}_{ij})$.
Since $X_m$ is an affinoid subspace of $\mathbb{P}^{1, \an}$ we have that $H^1(X_m, \mathcal{O}^\times ) = 0$ by   \cite{Put1980}.
 We may thus find  $\varphi_i \in \mathcal{O}^\times(U^{(m)}_i)$ and $ \varphi_j \in \mathcal{O}^\times(U^{(m)}_j)$
 such that $\varphi^{(m)}_{ij} = \frac{\varphi^{(m)}_i}{ \varphi^{(m)}_j}$. 
On   $X_m$, consider the following local lifts of $g$:
\begin{equation*}
\widehat{g_i}^m : U^{(m)}_i \to \rho^{-1}(\Omega), \quad \widehat{g_i}^m= \frac{s_i \circ g}{\varphi^{(m)}_i}.
\end{equation*}
It follows that $\widehat{g_i}^m= \widehat{g_j}^m$ on $U^{(m)}_{ij}$, and hence we have a lift $\widehat{g}^m: X_m \to \rho^{-1} (\Omega)$ of $g$ as required.

\medskip

By definition of the harmonic Fatou set,  the Green function $G_f$ of $f$ is strongly pluriharmonic on $\rho^{-1}(\Omega)$, and thus  $G_f \circ\widehat{g}^m$ is harmonic on $X_m$.

Let $g_n: U \to \Omega$ be a sequence of analytic maps.
For every $X_m$ consider the lifts $\widehat{g_n}^m: X_m \to \rho^{-1}(\Omega)$ of the restriction of $g_n$ to $X_m$ constructed  above.

Fix a sufficiently large real number $C >0$ and consider the set  $\mathrm{M}= \{z \in \A^{N+1, \an}  \setminus \{ 0 \} : \frac{1}{C} \le |G_f(z)| \le C \}$. 
By Theorem \ref{thm properties green}, the set $\mathrm{M}$ is compact. By Proposition \ref{lema bound harmonic functions}, for every $n$ and every $m$ there exists an analytic map $h_n^m: W_m \to \A^{1, \an} \setminus \{ 0 \}$ such that
$$\sup_{W_m} \big| G_f \circ \widehat{g_n}^m - \log |h_n^m| \big| \le C.$$
We set $\widetilde{g_n}^m = \frac{\widehat{g_n}^m}{h_n^m}$.
 Each $\widetilde{g_n}^m: W_m \to \rho^{-1}(\Omega)$ is a lift of $g_n$  and its image lies in the compact $\mathrm{M}$.
 By Theorem \ref{thm a}, there exists a subsequence of $\widetilde{g_n}^m$ converging pointwise to a continuous map.
 By a diagonal extraction argument, we conclude that the family $\mathrm{Mor}_k (U, \Omega)$ is normal.
 \medskip

The case $U=\mathbb{P}^{1, \an}$ follows by writing $\mathbb{P}^{1,\an}$ as a finite union open disks. 
\end{proof}

\subsection{Curves in Fatou sets}
The aim of this section is to prove Theorem \ref{thm d}, i.e. to show that harmonic Fatou components 
contain no nontrivial image of $\A^{1,\an} \setminus \{ 0\}$.

\medskip

We briefly observe the following fact that  follows almost directly from the work of Chambert-Loir.

\begin{prop}\label{prop algebraic curve}
Suppose that $C$ is an algebraic curve in $\mathbb{P}^{N, \an}$, and let  $f:\mathbb{P}^{N, \an} \to \mathbb{P}^{N, \an}$ be any endomorphism of degree at least 2. 
Then the harmonic  Fatou  set of $f$ cannot contain a Zariski open subset of $C$.
\end{prop}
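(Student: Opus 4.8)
The plan is to use the fact that the restriction of the canonical measure $\mu_f$ to an algebraic curve $C$ detects the non-pluriharmonicity of $G_f$ along $C$. Concretely, let $C\subset \mathbb{P}^{N,\an}$ be an algebraic curve, let $\nu: \tilde C \to C$ be its normalization, and pull back the dynamical Green function via the composition $\tilde C \to C \hookrightarrow \mathbb{P}^{N,\an}$. On the preimage of a Zariski open $C^\circ$ on which $\rho^{-1}(C^\circ)$ sits inside the harmonic Fatou set, the pulled-back Green function would be strongly pluriharmonic, hence (since $\tilde C$ is a curve, and by the comparison recalled in \S\ref{section harmonic}) harmonic in the sense of Thuillier on that open subset of $\tilde C$. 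The contradiction will come from the fact that the Monge--Ampère (Laplacian) of this function is, up to a positive multiple, the restriction to $\tilde C$ of the canonical measure $\mu_f$, whose total mass on $C$ is $\deg(C)\cdot(\text{something positive})$ by Chambert-Loir's construction, and in particular is \emph{not} supported away from $C^\circ$: removing finitely many points cannot remove positive mass that is spread by an equilibrium-type measure.

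First I would set up the objects: fix a lift $F$ of $f$, recall from Theorem \ref{thm properties green} that $G_f - \log|\cdot|$ is bounded on $\A^{N+1,\an}$, so that $G_f$ defines a continuous metrization of $\mathcal{O}(1)$ whose curvature current is the canonical current $T_f$, and $\mu_f = (T_f)^{\wedge N}$. Restricting to $C$, Chambert-Loir's theory (\cite{CLheights}, as invoked after the definition of $F_{\mathrm{harm}}$) gives a positive measure $\mu_{f,C}$ on $C^{\an}$ of total mass $\deg_{\mathcal{O}(1)}(C)>0$, whose local potentials are exactly (restrictions of) $G_f$; by construction $\mu_{f,C}$ is not a finite atomic measure — indeed its support is a perfect set, being the support of an equilibrium measure on the curve, or at the very least it is non-atomic away from the finitely many singular/branch points. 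Next I would argue by contradiction: if $C^\circ\subset C$ is Zariski open with $\rho^{-1}(C^\circ)\subset \rho^{-1}(F_{\mathrm{harm}}(f))$, then on $\nu^{-1}(C^\circ)$ the function $G_f\circ(\text{local section of }\rho)\circ\nu$ is strongly pluriharmonic, hence harmonic, hence its Laplacian vanishes there; but that Laplacian equals (a positive multiple of) $\nu^*\mu_{f,C}$ on $\nu^{-1}(C^\circ)$, so $\mu_{f,C}$ would be supported on the finite set $C\setminus C^\circ$, contradicting non-atomicity.

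The main obstacle I anticipate is making the identification ``(Laplacian of $G_f|_C$) $=$ (positive multiple of) $\mu_{f,C}$'' precise and citing it cleanly: one must pass through the normalization to land on a nice analytic curve, check that strong pluriharmonicity on the total space of $\rho$ over $C^\circ$ really descends to harmonicity of the potential on $C^{\circ,\an}$ (using that $\rho$ has local analytic sections and invariance of strong pluriharmonicity under pullback/addition of $\log|\lambda|$ from Theorem \ref{thm properties green}(ii)), and invoke the equality of currents $dd^c G_f|_C = \mu_{f,C}$ together with the positivity/non-atomicity of $\mu_{f,C}$ from \cite[\S2.4]{CLheights}. Once these ingredients are assembled the contradiction is immediate, which is why — as the author remarks — the statement ``follows almost directly from the work of Chambert-Loir.''
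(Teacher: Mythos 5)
Your proposal is correct and follows essentially the same route as the paper: metrize $\mathcal{O}(1)$ by $G_f$, restrict the metrized bundle to $C$, and use that its curvature is a positive measure of total mass $\deg_C(\mathcal{O}(1))>0$ charging no rigid point yet supported in $J_{\mathrm{harm}}(f)$, which is incompatible with the Fatou set containing a Zariski open subset of $C$. One small caution: the relevant measure is the curvature of the \emph{restricted} metrized line bundle $(\mathcal{O}(1),|\cdot|_F)|_C$ (as in Thuillier), not literally the restriction of the canonical measure $\mu_f=c_1(\mathcal{O}(1),|\cdot|_F)^N$ to $C$ — your opening framing blurs this, but the object you actually work with is the correct one.
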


In particular, a Fatou component contains no complete algebraic curve. This supports the conjectural   fact that any Fatou component should be Stein (in the sense of \cite{Kiehl}). 
Over the complex numbers, this result is proved in~\cite{Ueda,FS2,Maegawa}, but the proof relies on pluripotential techniques which are not available at the moment over a non-Archimedean field.

\begin{proof}[Proof of Proposition \ref{prop algebraic curve}]
Since the result is not central to our studies, we shall only give a sketch of proof,
which relies on special metrizations of line bundles. We refer to~\cite[\S 2]{CLheights} for a detailed exposition of these notions. 
Choose a homogeneous lift $F= (F_0, \cdots, F_N)$ of $f$ to $\A^{N+1,\an}_k\setminus\{0\}$, and consider the associated Green function
$G_f = \lim_n \frac1{d^n} \log|F^n|$. The function $G_f$ induces a continuous and semi-positive metrization $|\cdot|_F$ in the sense of Zhang on the 
 tautological line-bundle $\mathcal{O}(1)$ on $\mathbb{P}^{N,\an}$, see  \cite[\S 2.1]{CLheights}.

Pick any algebraic curve $C$ in $\mathbb{P}^{N, \an}$. The restriction of the metrized line bundle $(\mathcal{O}(1), |\cdot|_F)$
to $C$ is again continuous and semi-positive. We may thus consider its curvature, see~\cite[Proposition 4.2.3]{Thuillier}. 
It is a positive measure $\mu_C$  on the Berkovich analytification of $C$ of mass $\deg_C (\mathcal{O}(1))$
which does not charge any rigid point, see \cite[\S 4.2.1]{Thuillier}.
The support of $\mu_C$ is contained in $J_{\mathrm{harm}}(f)$, which
 implies the result.
\end{proof}

We shall use the following  proposition:

\begin{prop}\label{prop punctured brody}
Let $\Omega$ be an open subset of $\mathbb{P}^{N,\an}$.

 If  the family of analytic maps $\mathrm{Mor}_k(\A^{1, \an} \setminus \{ 0 \}, \Omega)$ is normal, then every analytic map $\A^{1, \an} \setminus \{ 0 \} \to \Omega$  is constant.
\end{prop}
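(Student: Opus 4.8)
The plan is to argue by contradiction: assume there is a non-constant analytic map $\phi\colon\A^{1,\an}\setminus\{0\}\to\Omega$, and produce from it a sequence in $\mathrm{Mor}_k(\A^{1,\an}\setminus\{0\},\Omega)$ with no pointwise convergent subsequence having a continuous limit, contradicting normality. The sequence will consist of reparametrisations of $\phi$ by the self-maps of $\A^{1,\an}\setminus\{0\}$: for $\lambda\in k^\times$ the multiplication $m_\lambda\colon z\mapsto\lambda z$ is an automorphism, and for $m\ge 1$ prime to the residue characteristic the power map $p_m\colon z\mapsto z^m$ is a finite surjective endomorphism. Recall that $\A^{1,\an}\setminus\{0\}$ retracts onto its skeleton $\Sigma=\{\eta_{0,t}:t\in(0,\infty)\}$, that $m_\lambda$ acts on $\Sigma$ by $\eta_{0,t}\mapsto\eta_{0,|\lambda|t}$, and that $p_m$ fixes the Gauss point $x_g$, acts on $\Sigma$ by $\eta_{0,t}\mapsto\eta_{0,t^m}$, and acts on the residual directions at a type II point of $\Sigma$ by $\tilde c\mapsto\tilde c^{\,m}$ on reductions. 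Since an analytic map that is constant on a non-empty open subset of a connected space is constant, $\phi$ is non-constant on every annulus; in particular one may fix two rigid points $a,b\in k^\times$ with $\phi(a)\neq\phi(b)$, and we may assume $\phi(a)\neq\phi(x_g)$.

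First I would consider the maps $g_m:=\phi\circ p_m\in\mathrm{Mor}_k(\A^{1,\an}\setminus\{0\},\Omega)$. By the normality hypothesis and Theorem \ref{seq compactness}, some subsequence $g_{m_j}$ converges pointwise to a continuous map $\psi\colon\A^{1,\an}\setminus\{0\}\to\mathbb{P}^{N,\an}$ with $\psi(x_g)=\phi(x_g)$; by Theorem \ref{thm b} the map $\psi$ is moreover weakly analytic. Along the skeleton one has $\psi(\eta_{0,t})=\lim_j\phi(\eta_{0,t^{m_j}})$, and writing $\phi$ in homogeneous coordinates $[\phi_0:\dots:\phi_N]$ the functions $t\mapsto\log|\phi_i(\eta_{0,t})|$ are convex and piecewise affine in $\log t$; analysing these functions, together with the compactness of $\mathbb{P}^{N,\an}$, controls the behaviour of $\phi$ at the two ends of $\Sigma$ and, via the continuity of $\psi$ at $x_g$, pins $\psi$ down on $\Sigma$. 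The key point is then the analysis transverse to $\Sigma$: since $p_m$ raises residual directions to the $m$-th power and $k$ is algebraically closed, the value $\phi(a)$ (respectively $\phi(b)$) is attained by $g_m$ on all of the finite set $p_m^{-1}(a)$ (respectively $p_m^{-1}(b)$), and since $|a|^{1/m}\to1$ and the $m$ preimages have $m$ distinct reductions, these sets accumulate onto $x_g$ as $m\to\infty$. Combining this with the rigidity of the weakly analytic limit $\psi$ — the isolated-zero principle of Proposition \ref{isolated zeros} together with a Hurwitz-type convergence of zeros for weakly analytic maps, in the spirit of \S\ref{section weakly analytic} — should force $\psi$ either to take the value $\phi(a)$ at points arbitrarily close to $x_g$, contradicting its continuity at $x_g$, or to have a non-isolated $\phi(a)$-fibre, contradicting Proposition \ref{isolated zeros}.

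The main obstacle is bridging the gap between the \emph{pointwise} convergence $g_{m_j}\to\psi$ and the need to evaluate along a moving sequence of points accumulating at $x_g$: a priori, pointwise convergence gives no information about $\psi(w)$ unless $g_{m_j}(w)$ is eventually constant, which happens only when $w$ is an $m_j$-th root of $a$ for infinitely many $j$ — impossible for a fixed $w$ unless $a$ is a root of unity. Overcoming this is precisely where the weak-analyticity of $\psi$ (Theorem \ref{thm b}) and the structure of weakly analytic maps from curves studied in \S\ref{section weakly analytic} must be brought in, through a diagonal argument interlocked with the extraction defining $\psi$ and with the combinatorics of the action of $p_m$ on reductions. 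If this proves unwieldy, an alternative is to compose $\phi$ with a linear projection of $\mathbb{P}^{N,\an}$ onto $\mathbb{P}^{1,\an}$ from a generic centre so as to reduce to the case $N=1$ and then argue directly on the tree $\mathbb{P}^{1,\an}$ in the spirit of \cite{FKT}, exhibiting from a non-constant $\phi$ a sequence of analytic maps into $\Omega$ drifting towards the topological boundary of $\Omega$ and hence contradicting the normality of $\mathrm{Mor}_k(\A^{1,\an}\setminus\{0\},\Omega)$.
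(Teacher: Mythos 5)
You have the right global shape (argue by contradiction, reparametrise $\phi$ by power maps, and contradict continuity of the limit at the Gauss point; also the reduction to $N=1$ by the projection $[G_0:\cdots:G_N]\mapsto[G_0:G_1]$, which is exactly what the paper does in its last paragraph). But the obstacle you correctly identify in your final paragraph --- that pointwise convergence tells you nothing about the limit at the moving points $p_m^{-1}(a)$ --- is not overcome by what you propose, and the key idea that resolves it is missing. The paper's trick is to take exponents $n!$ rather than $n$, and to precompose with multiplication by points $x_{n!}$ chosen in the fibre $g^{-1}(a_0)$ of a \emph{single} rigid point $a_0$: setting $g_n(z)=g(x_{n!}z^{n!})$, every $m$-th root of unity $\zeta$ satisfies $\zeta^{n!}=1$ for all $n\ge m$, so $g_n(\zeta)=g(x_{n!})=a_0$ is \emph{eventually constant at the fixed points} $\zeta\in R_m$. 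Since the sets $R_m$ accumulate at $x_g$ while $g_n(x_g)=g(\eta_{0,|x_{n!}|})$ is controlled (equal to $x_g$, or converging to some $y_\infty$, according to the two cases), continuity of the limit fails at $x_g$. The existence of the points $x_n$ with a common value $a_0$ is itself nontrivial and is extracted from the surjectivity of the tangent map at type II points of the skeleton (or from Picard's big theorem in the second case); your proposal has no substitute for this step.

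Your proposed bridges do not close the gap. First, a ``Hurwitz-type convergence of zeros for weakly analytic maps'' is not proved anywhere in the paper, and Proposition \ref{isolated zeros} only forbids accumulation of a rigid fibre of the \emph{limit}; it gives no way to transfer information from the fibres of the $g_{m_j}$ to those of $\psi$ under mere pointwise convergence. Second, invoking Theorem \ref{thm b} to conclude that $\psi$ is weakly analytic is not immediate here: that theorem requires an affinoid target, whereas your maps land in $\mathbb{P}^{N,\an}$, and pointwise convergence alone does not let you localise the $g_{m_j}$ into a fixed affinoid chart near a given point. The paper's argument deliberately avoids all of this machinery and uses only continuity of the limit at $x_g$ together with eventual constancy on the root-of-unity sets $R_m$. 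Your alternative route via a reduction to $\mathbb{P}^{1,\an}$ and ``drifting to the boundary'' is the right first step but is left unspecified exactly where the real work lies.
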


As a direct application, we obtain:

\begin{proof}[Proof of Theorem \ref{thm d}]
It follows from Theorem \ref{thm c} and Proposition  \ref{prop punctured brody}.
\end{proof}

As a first step in proving Proposition \ref{prop punctured brody}, we deal with a simpler particular case, that of entire curves.

\begin{proof}[Proof of the particular case of entire curves]
Let $\Omega$ be any open subset of $\mathbb{P}^{N,\an}$ and assume  that the family  $\mathrm{Mor}_k(\A^{1, \an}, \Omega)$ is normal.
Suppose that
 there exists a non-constant analytic map $g: \A^{1, \an} \to \Omega$.
 Consider the sequence of analytic maps from $\A^{1, \an}$ into $\Omega$ given by   $f_n(z) = g (z^n)$. 
 By normality  there is a subsequence $\{ f_{n_j} \}$  that is pointwise converging to a continuous map $f : \A^{1,\an} \to \mathbb{P}^{N,\an}$.

The Gauss point $x_g$ is fixed by all the maps $z\mapsto z^n$, and so $f(x_g) = g(x_g)$. 
For every integer $m >0$ let $z_m = \eta_{0, 1-\frac{1}{m}} \in \A^{1, \an}$.
 Since every $z_m$ lies in the open unit disk $\D$, we have that
\begin{equation*}
f(z_m) = \lim_{n_j \to \infty} f_{n_j} ( z_m) = \lim_{n_j \to \infty} g\left( (z_m)^{n_j} \right) = g(0)
\end{equation*}
 for all $m$. The continuity of $f$ implies that the $f(z_m)$ tend to $f(x_g)$ as $m$ goes to infinity. It follows that $g(x_g) = g(0)$ is a rigid point of $\Omega$. As the source $\A^{1,\an}$ is one-dimensional, $g$  must be constant.
\end{proof}

In order to prove Proposition \ref{prop punctured brody}, we need to recall some basic topological facts. 
Recall from \S \ref{section harmonic} that given a point $x \in \mathbb{P}^{1,\an}$, we denote by $U(\vec{v})$ the connected component of $\mathbb{P}^{1,\an} \setminus \{ x\}$ corresponding to the tangent
 direction $\vec{v} \in T_x \mathbb{P}^{1,\an}$.

Let $g : U \subseteq  \mathbb{P}^{1,\an} \to \mathbb{P}^{1,\an}$ be a non-constant  analytic  map.
For every  point $x\in U$, the map $g$ induces a tangent map $dg(x) $ between $T_x U $ and $T_{g(x)} \mathbb{P}^{1, \an}$. 
Let $\vec{v}$ be a tangent direction at $x$ that is mapped to $\vec{v}' \in T_{g(x)} \mathbb{P}^{1,\an}$ by $dg(x)$. Then either $g(U(\vec{v})) = U(\vec{v}')$ or $g(U(\vec{v})) = \mathbb{P}^{1,\an}$. This follows from the fact that the map $g$ is open \cite[Corollary 9.10]{BR}.

Of special interest for us  is the case when $x$ is a type II point. Assume for simplicity that both $x$ and $g(x)$ are the Gauss point.
The space $T_{x_g} \mathbb{P}^{1,\an}$ is  isomorphic to  $\mathbb{P}^1_{\tilde{k}}$,
and the tangent map $dg (x): \mathbb{P}^1_{\tilde{k}} \to \mathbb{P}^1_{\tilde{k}}$  and   can be described as follows.
In homogeneous coordinates $g$ can be written as $g=[G_0: G_1]$ with $G_0, G_1 \in \mathcal{O}(\A^{1,\an})$ without common zeros by \cite[Theorem 2.7.6]{vanderputFresnel}, 
where all the coefficients of $G_0$ and $G_1$ are of norm less or equal than one and  least one has norm one. Thus, we may consider the reduction map of $g$, which  is a  non-constant rational map from $\mathbb{P}^1_{\tilde{k}}$ to itself, and hence surjective.
One can show that $dg(x)$ is given by the reduction of $g$ \cite[Corollary 9.25]{BR}.

\begin{proof}[Proof of Proposition \ref{prop punctured brody}]
Suppose that  $\mathrm{Mor}_k(\A^{1, \an}\setminus \{ 0 \}, \Omega)$ is normal.
We first deal with the case where  $\Omega$ is contained in $\mathbb{P}^{1, \an}$.
 Let $g: \A^{1,\an} \setminus \{ 0 \} \to \mathbb{P}^{1,\an}$ be a  non-constant analytic map.
We may assume that it is of the form $g=[G_0: G_1]$ with $G_i : \A^{1, \an} \setminus \{ 0 \} \to \A^{1, \an}$ analytic without common zeros by \cite[Theorem 2.7.6]{vanderputFresnel}.
Our goal is to construct a sequence of analytic maps from $\A^{1, \an}\setminus \{ 0 \}$ to itself such that the composition with $g$ gives a sequence $g_n: \A^{1, \an}\setminus \{ 0 \} \to \Omega$ that admits no converging subsequence with continuous limit.

\medskip

Suppose first that there exists a type II point in $\mathbb{P}^{1, \an}$ 
having infinitely many preimages in the segment $T = \{\eta_{0,r} \in \A^{1,\an} : 0<r<\infty \}$. 
 Composing with an automorphism of $\mathbb{P}^{1,\an}$, we may assume that this point is the Gauss point.
Let thus $\{ \eta_{0,r_n} \}$  be a sequence of preimages of $x_g$.

 Denote by $V_n$ the compact set containing $\eta_{0 , r_n} $ consisting of $\A^{1,\an} \setminus \{ 0 \}$ minus the open sets $U(\vec{v}_0)$ and  $U(\vec{v}_\infty)$, where $\vec{v}_0$ and $\vec{v}_\infty$ are the tangent directions at $\eta_{0, r_n} $ pointing at 0 and at infinity respectively.
 As $dg (\eta_{0, r_n})$ is surjective, we deduce that $g(V_n)$  avoids at most two tangent directions at $x_g$.
 After maybe extracting a subsequence, we may find a connected component $B$ of $\mathbb{P}^{1, \an} \setminus \{ x_g \}$   that is contained in  $g(V_n)$ for all $n \gg 0$.
As a consequence, we may pick a rigid point $a_0$ in $B$ and rigid points $x_n \in V_n$ such that $g(x_n) = a_0$ for every $n \in \N$.

Consider  the sequence  in $\mathrm{Mor}_k(\A^{1,\an} \setminus \{ 0 \}, \mathbb{P}^{1, \an})$ given by  $g_n (z)= g(x_{n!}  z^{n!})$. 
By normality, we may assume that $g_n$  converges  to a continuous map $g_\infty$.
The Gauss point $x_g$ is fixed by  $g_\infty$, as $g_n(x_g) = x_g$ for all $n \in \N$. 
For every fixed $n \in \N$ and  every $m\le n$, the map $g_n$ sends the set of all the $m$-th roots of unity  $R_m$ to $a_0$,
and so $g_\infty$  maps every $R_m$ to $a_0$.
For every $m \in \N$ pick a point $\zeta_m \in R_m$ such that $\zeta_m \to x_g$ as $m$ tends to infinity. We have that
 $$g_\infty (x_g) = \lim_{m \to \infty} g_\infty (\zeta_m) =  a_0~,$$
 contradicting  the continuity.
\medskip

Suppose next that every type II point in $\mathbb{P}^{1, \an}$ has at most finitely many preimages in the segment $T$.
 Pick a sequence of type II points $\{ \eta_{0 , r_n} \}$ with  $r_n \to + \infty$ as $n$ goes to infinity. 
By compactness, we may assume that  the points $g( \eta_{0,r_n})$ converge to some point $y_\infty \in \mathbb{P}^{1, \an}$.
We claim that the points $g( \eta_{ 0 , r})$ converge to a point $y_\infty$ as $r$ tends to infinity. To see this,
fix a  basic tube $V$ containing $y_\infty$. Recall  that $\partial_{\mathrm{top}} V$ is a finite set of type II points.  
By assumption,  $g( \eta_{ 0 , r} )$ does not belong to  $\partial_{\mathrm{top}} V$  for sufficiently large $r$.
For $n\gg 0$ we have that $g(\eta_{ 0 , r_n})$ lies in $V$. Thus, $g(\eta_{ 0 , r})$ must belong to $V$ for $r \gg 0$.

Pick any $r\in \R_+$ and consider the tangent direction $\vec{v}$ at $\eta_{0,r}$ pointing towards infinity.  We may assume that   $g(U(\vec{v}))$ avoids at most one rigid point in $\mathbb{P}^{1, \an}$, as otherwise Picard's Big theorem \cite{Cherrypicard} asserts that $g$  admits an analytic extension at  infinity  and we conclude by the case of entire curves.
After maybe varying the $r_n$,  we may find a rigid point $a_0 \in \mathbb{P}^{1,\an}$ and rigid points
 $x_n$   with $|x_n| = r_n$ such that $g(x_n) = a_0$ for all $n$.

Consider the sequence $g_n(z) = g(x_{n!} z^{n!})$ and assume that it admits a  continuous limit $g_\infty$.
Our previous argument shows that $g_\infty$ maps every set $R_m$ to $a_0$.
 The points $g_n(x_g)$ converge to $y_\infty$ by our claim, and hence $g_\infty$ is not continuous.

\medskip

Assume now that $\Omega$ is an open subset of $\mathbb{P}^{N, \an}$. Let $g: \A^{1, \an} \setminus \{ 0 \} \to \Omega$ be a non-constant analytic map.
This map can be written in homogeneous coordinates as $g= [G_0 : \ldots : G_N]$, with $G_i \in \mathcal{O}^\times (\A^{1, \an} \setminus\{ 0 \})$.
As  $g$ is not constant we may assume that $G_0$ is non-constant and that $G_1$ is not a scalar multiple of $G_0$.
We may assume by \cite[Theorem 2.7.6]{vanderputFresnel} that $G_0$ and $G_1$ have no common zeros.
 As a consequence, the map defined on the image of $g$ by
 $$\pi: [G_0(z) : \ldots : G_N(z)] \mapsto [ G_0(z): G_1(z)]$$
is well-defined and analytic.
By construction $\pi \circ g$ is non-constant and analytic.
By the previous case we may find $x_n \in k^\times$ such that no subsequence of $\{ \pi \circ g (x_{n!} z^{n!} )\}$ has a continuous limit, and thus neither $\{ g (x_{n!} z^{n!} )\}$.
\end{proof}

\bibliographystyle{amsalpha}

\bibliography{biblio}

\end{document}